\tikzset{snake it/.style={decorate, decoration=snake}}
\theoremstyle{plain}
\newtheorem{thm}{Theorem}[section]
\newtheorem{problem}[thm]{Problem}
\newtheorem{cor}[thm]{Corollary}
\newtheorem{lem}[thm]{Lemma}
\newtheorem{prop}[thm]{Proposition}
\newtheorem{conj}[thm]{Conjecture}
\newtheorem{question}[thm]{Question}
\theoremstyle{definition}
\newtheorem{defn}[thm]{Definition}
\newtheorem{example}[thm]{Example}
\theoremstyle{remark}
\newtheorem{rmk}[thm]{Remark}
\newcommand{\BA}{{\mathbb{A}}}
\newcommand{\BC}{{\mathbb{C}}}
\newcommand{\BD}{{\mathbb{D}}}
\newcommand{\BF}{{\mathbb{F}}}
\newcommand{\BG}{{\mathbb{G}}}
\newcommand{\BL}{{\mathbb{L}}}
\newcommand{\BP}{{\mathbb{P}}}
\newcommand{\BQ}{{\mathbb{Q}}}
\newcommand{\BT}{{\mathbb{T}}}
\newcommand{\BZ}{{\mathbb{Z}}}
\newcommand{\CA}{{\mathcal A}}
\newcommand{\CE}{{\mathcal E}}
\newcommand{\CF}{{\mathcal F}}
\newcommand{\CH}{{\mathcal H}}
\newcommand{\CO}{{\mathcal O}}
\newcommand{\CS}{{\mathcal S}}
\newcommand{\CV}{{\mathcal V}}
\newcommand{\FM}{{\mathfrak{M}}}
\newcommand{\FX}{{\mathfrak{X}}}
\newcommand{\FI}{{\mathfrak{I}}}
\newcommand{\Sym}{{\textnormal{Sym}}}
\newcommand{\pt}{{\mathsf{p}}}
\newcommand{\ch}{{\mathrm{ch}}}
\newcommand{\td}{{\mathrm{td}}}
\DeclareFontFamily{OT1}{rsfs}{}
\DeclareFontShape{OT1}{rsfs}{n}{it}{<-> rsfs10}{}
\DeclareMathAlphabet{\curly}{OT1}{rsfs}{n}{it}
\newcommand\Ext{\operatorname{Ext}}
\newcommand\Hom{\operatorname{Hom}}
\newcommand\id{\textup{id}}
\newcommand{\inv}{{\mathrm{wt}_0}}
\newcommand{\inva}{\mathrm{inv}}
\newcommand{\GL}{\mathrm{GL}}
\newcommand{\Gr}{\mathrm{Gr}}
\newcommand{\MHM}{\mathrm{MHM}}
\newcommand{\MMHM}{\mathrm{MMHM}}
\newcommand{\vir}{\mathrm{vir}}
\newcommand{\Vir}{\mathsf{Vir}}
\newcommand{\IC}{\mathrm{IC}}
\newcommand{\PE}{\mathrm{PE}}
\newcommand{\bL}{{\mathsf{L}}}
\newcommand{\bR}{{\mathsf{R}}}
\newcommand{\bT}{{\mathsf{T}}}
\newcommand{\bF}{{\mathsf{F}}}
\newcommand{\RHom}{R\CH\kern -1pt\mathit{om}}
\newcommand{\HHom}{\CH\kern -1pt\mathit{om}}
\newcommand{\PT}{\mathsf{PT}}
\newcommand{\mr}{{\mathsf{MR}}}
\newcommand{\gmr}{{\mathsf{GMR}}}
\newcommand{\br}{{\mathsf{BR}}}
\newcommand{\geo}{{\mathsf{Geom}}}
\begin{document}

\title[Cohomology of the moduli of 1-dim sheaves on $\mathbb{P}^2$]{Cohomology rings of the moduli of one-dimensional sheaves on the projective plane}
\date{\today}

\newcommand\blfootnote[1]{%
  \begingroup
  \renewcommand\thefootnote{}\footnote{#1}%
  \addtocounter{footnote}{-1}%
  \endgroup
}

\author[Y. Kononov]{Yakov Kononov}
\address{Yale University, Department of Mathematics}
\email{ya.kononoff@gmail.com}

\author[W. Lim]{Woonam Lim}
\address{Utrecht University, Department of Mathematics}
\email{w.lim@uu.nl}

\author[M. Moreira]{Miguel Moreira}
\address{Massachusetts Institute of Technology, Department of Mathematics}
\email{miguel73@mit.edu}

\author[W. Pi]{Weite Pi}
\address{Yale University, Department of Mathematics}
\email{weite.pi@yale.edu}


\keywords{Cohomology rings and relations, Virasoro representations, BPS integrality, perverse filtrations, moduli spaces of sheaves.}

\begin{abstract}
     We initiate a systematic study on the cohomology rings of the moduli stack $\mathfrak{M}_{d,\chi}$ of semistable one-dimensional sheaves on the projective plane. We introduce a set of tautological relations of geometric origin, including Mumford-type relations, and prove that their ideal is generated by certain primitive relations via the Virasoro operators. Using BPS integrality and the computational efficiency of Virasoro operators, we show that our geometric relations completely determine the cohomology rings of the moduli stacks up to degree $5$.

     As an application, we verify the refined Gopakumar--Vafa/Pandharipande--Thomas correspondence for local $\mathbb{P}^2$ in degree $5$. Furthermore, we propose a substantially strengthened version of the $P=C$ conjecture, originally introduced by Shen and two of the authors. This can be viewed as an analogue of the $P=W$ conjecture in a compact and Fano setting. 
     
     
     
\end{abstract}

\maketitle

\setcounter{tocdepth}{1} 

\tableofcontents
\setcounter{section}{-1}

\section{Introduction}

\subsection{Overview and main results.} The primary purpose of this paper is twofold. First, we initiate a systematic study on the cohomology rings of the moduli stack $\mathfrak{M}_{d,\chi}$ of semistable one-dimensional sheaves on $\mathbb{P}^2$, combining a variety of ideas and tools including geometric relations, Virasoro representations, and BPS integrality. Our methods produce explicit presentations of a number of new cohomology rings in degrees $d\leq 5$. Second, we propose a substantially strengthened version of \textit{the $P=C$ conjecture} introduced by Shen and two of the authors \cite{KPS}, extending it to all cohomological degrees. This can be viewed as an analogue of the $P=W$ conjecture in a compact, Fano setting, and has various structural implications on both sides of the equality. We verify this conjecture, among other predictions, on the cohomology rings we obtain. Throughout, we work over the complex numbers $\mathbb{C}$.

\medskip

Fix two integers $d$ and $\chi$ with $d\geq 1$. We consider the moduli stack of sheaves
\[
\mathfrak{M}_{d,\chi}:=\{\textrm{semistable } {F} \in \mathrm{Coh}(\mathbb{P}^2)\mid [\mathrm{supp}({F})] = d\cdot H, \;\; \chi({F}) = \chi\},
\]
where $H$ is the class of a line, $\mathrm{supp}({F})$ denotes the Fitting support, and the stability condition is with respect to the slope
\[
\mu({F}) = \frac{\chi({F})}{c_1({F})\cdot H} \in \mathbb{Q}.
\]
This moduli stack is smooth and admits a good moduli space $M_{d,\chi}$ parametrizing semistable sheaves up to S-equivalence. When $d$ and $\chi$ are coprime, the good moduli map $\pi: \mathfrak{M}_{d,\chi} \to M_{d,\chi}$ is a trivial $B\mathbb{G}_m$-gerbe and induces an isomorphism of rings
\begin{equation}
    \label{Gm_gerbe}
    H^*(\FM_{d,\chi}, \mathbb{Q}) \simeq H^*(M_{d,\chi}, \mathbb{Q})\otimes H^*(B\mathbb{G}_m, \mathbb{Q}). 
\end{equation}
Thus, study of the cohomology rings of $\FM_{d,\chi}$ subsumes that of the coprime moduli spaces $M_{d,\chi}$. The latter has applications to enumerative geometry which we explain below. We remark that our approach is inductive and understanding the cohomology rings of $M_{d,\chi}$ makes use of the moduli stacks $\FM_{d',\chi'}$ with non-coprime $(d',\chi')$ as well. 

\smallskip


The study of the good moduli spaces $M_{d,\chi}$ dates back to the works of Simpson \cite{simp94} and Le Potier \cite{lep93}. We refer to \cite[Introduction]{PS} for a brief overview of various directions of study toward this moduli space. One of the motivations to investigate $M_{d,\chi}$ comes from enumerative geometry. Briefly speaking, a certain \textit{perverse filtration} on the intersection cohomology
$\mathit{IH}^*(M_{d,\chi})$ defines interesting curve counting invariants called \textit{refined Gopakumar--Vafa invariants} for local $\BP^2$, i.e., the Calabi--Yau threefold $\mathrm{Tot}(K_{\mathbb{P}^2})$. These invariants are expected to recover curve counting invariants from other enumerative theories \cite{PT14}. We refer to \cite{KPS} for a detailed account on the history and development of this proposal. 
When $d$ and $\chi$ are coprime, the moduli space $M_{d,\chi}$ is a smooth projective variety and its intersection cohomology coincides with singular cohomology. Furthermore, the perverse filtration in this case is characterized by the cup product with a base ample class, cf. Section \ref{sec: perverse filtration}, which is in turn determined by the cohomology ring structure. 

\medskip

The above discussion motivates the following problem: 

\begin{problem}
\label{question: coh ring}
    Study the cohomology rings of the moduli stacks $\FM_{d,\chi}$.
\end{problem}

We remark again that this formulation subsumes the coprime moduli spaces $M_{d,\chi}$; our approach in this paper is to treat them uniformly
in the language of stacks. Indeed, this is a more general and natural setting for various constructions, and relates to the intersection cohomology of (not necessarily coprime) moduli spaces via the BPS integrality in Section \ref{sec: BPS integrality}.



\smallskip

The stack $\mathfrak{M}_{d,\chi}$ admits tautological classes coming from its universal family. We will prove that these classes generate the rational cohomology ring, cf. Theorem \ref{thm: tautgenerated}. These are the ring generators of $H^*(\mathfrak{M}_{d,\chi}, \mathbb{Q})$ we use throughout the paper. Furthermore, we formulate in Section \ref{sec: relations} certain geometric relations, including notably Mumford-type relations, among these classes. The main result of this paper, along the lines of Problem \ref{question: coh ring}, is the following. 


\begin{thm}
\label{thm: rings}
    The geometric relations in Section \ref{sec: relations} completely determine the cohomology rings of the moduli stacks $\mathfrak{M}_{d,\chi}$ for $d\leq 4$ and the smooth (i.e. coprime) moduli spaces $M_{5,\chi}$. 
\end{thm}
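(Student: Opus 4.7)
The plan is to prove the theorem by comparing graded dimensions: on one side the Poincaré series of $H^*(\FM_{d,\chi}, \BQ)$, known by independent means, and on the other the Hilbert series of the free polynomial ring on tautological generators modulo the ideal $J_{d,\chi}$ cut out by the geometric relations of Section~\ref{sec: relations}. By the tautological generation theorem (Theorem~\ref{thm: tautgenerated}), there is a surjection $R_{d,\chi} \twoheadrightarrow H^*(\FM_{d,\chi}, \BQ)$ from a free graded polynomial ring $R_{d,\chi}$, and since every geometric relation vanishes in cohomology this factors through $R_{d,\chi}/J_{d,\chi}$. Thus the theorem reduces to the graded-dimension equality
\[
\dim_{\BQ} \bigl(R_{d,\chi}/J_{d,\chi}\bigr)^k \;=\; \dim_{\BQ} H^k(\FM_{d,\chi}, \BQ) \qquad \text{for all } k.
\]

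For the right-hand side (the target), in the coprime case $M_{d,\chi}$ is smooth projective and its Poincaré polynomial is accessible through blowup formulas, wallcrossing, or Hall-algebraic methods; via the identification \eqref{Gm_gerbe} this pins down $H^*(\FM_{d,\chi}, \BQ)$. For non-coprime $(d,\chi)$ with $d \le 4$, BPS integrality (Section~\ref{sec: BPS integrality}) expresses $H^*(\FM_{d,\chi}, \BQ)$ in terms of the intersection cohomologies $\mathit{IH}^*(M_{d',\chi'})$ with $d' \le d$, each of which reduces to the coprime case and yields an explicit combinatorial target Poincaré polynomial.

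For the left-hand side (the upper bound), a brute force enumeration of all Mumford-type and other geometric relations together with their polynomial closure is infeasible beyond very small $d$. This is where the Virasoro result advertised in the abstract becomes the essential computational tool: $J_{d,\chi}$ is generated, under the action of the Virasoro differential operators on $R_{d,\chi}$, by a small set of primitive relations. The computation then breaks into three steps: (i) list the primitive relations up to the required cohomological degree; (ii) iteratively apply the Virasoro operators to produce a spanning set of $J_{d,\chi}$ in each degree; (iii) perform a single linear-algebra computation to extract $\dim_{\BQ} (R_{d,\chi}/J_{d,\chi})^k$. I would carry this out systematically for every $(d,\chi)$ with $d \le 4$ and for coprime $(5,\chi)$, stopping at the top degree dictated by the target. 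Matching the two series degree-by-degree yields the theorem.

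The main obstacle will be the degree-$5$ case $M_{5,\chi}$. Both the number of tautological generators and the graded dimensions of $R_{5,\chi}$ grow substantially, and even with Virasoro-driven ideal generation the linear systems become large. Restricting to the coprime moduli space is precisely what makes this tractable: no non-coprime stack contributions have to be assembled, the ambient Poincaré polynomial is shorter, and the total dimension to be matched is smaller. A delicate point is certifying that no additional primitive relation enters in higher degree; this is settled by the Hilbert-series match itself, since any missing relation would manifest as a strict inequality $\dim(R_{d,\chi}/J_{d,\chi})^k > \dim H^k(\FM_{d,\chi},\BQ)$ in some degree. If such an inequality were to appear, one would enlarge the set of primitive relations and rerun steps (ii)--(iii); the theorem asserts, and the computation confirms, that this does not happen in the stated range.
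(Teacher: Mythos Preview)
Your overall strategy is exactly the one the paper follows (see Section~\ref{sec: strategy}): surject from a free polynomial ring, impose the geometric relations with the help of the Virasoro operators, and compare the resulting Hilbert series with the Poincar\'e series of the target (obtained from BPS integrality and $\chi$-independence in the non-coprime cases). So the architecture is right.

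There is, however, a genuine gap in your final sentence. You assert that the computation confirms no extra relations are needed in the stated range. For $M_{5,1}$ this is \emph{false}: the paper shows (Section~\ref{sec: M51}) that $I^{\geo}_{5,1}\subsetneq I_{5,1}$, with three missing relations in degrees $36$ and $38$. So your dimension comparison
\[
\dim_{\BQ}\bigl(R_{5,1}/J_{5,1}\bigr)^k \;=\; \dim_{\BQ} H^k(M_{5,1},\BQ)
\]
fails starting at $k=36$, and enlarging the set of primitive geometric relations cannot fix this, because the full ideal $\FI^{\geo}_{5,1}$ is already exhausted by Theorem~\ref{thm: primitive relatiosn generate}.

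The paper salvages the theorem for $M_{5,1}$ by a different argument: the geometric relations do cut the top degree $H^{2\dim M_{5,1}}$ down to dimension one, so the Poincar\'e pairing on the smooth projective variety $M_{5,1}$ is computable, and any class in $R/I^{\geo}$ whose pairing with everything vanishes must die in $H^*(M_{5,1})$. This Poincar\'e-duality step produces the three missing relations. The meaning of ``completely determine'' in the theorem is therefore \emph{not} that $I^{\geo}_{d,\chi}=I_{d,\chi}$ in every case, but that the geometric relations, together with the Poincar\'e duality they make effective, suffice to pin down the ring. Your proposal needs this extra ingredient for $(d,\chi)=(5,1)$.
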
 

We refer to Section \ref{sec: rings} for some essential information on these cohomology rings, e.g. their Poincaré series and degrees of the relations.\footnote{As a first indictaion of their complexity, we note that $\dim H^*(M_{5,\chi}) = 1695$ for $\chi$ coprime to 5.} Explicit presentations of the rings are available on the fourth author's website

\centerline{\url{https://github.com/Weite-Pi/weitepi.github.io}}

\noindent in a folder named \texttt{cohomology rings.}


\medskip

Theorem \ref{thm: rings} allows us to verify several predictions in the range $d\leq 5$. These include the refined Gopakumar--Vafa/Pandharipande--Thomas correspondence, cf. Section \ref{sec: GV/PT}. 

\begin{cor}
The refined GV/PT correspondence for local $\mathbb P^2$ holds up to degree $5$. 
\end{cor}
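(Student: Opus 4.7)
The plan is to leverage Theorem \ref{thm: rings} to make the refined GV side of the correspondence explicitly computable in degrees $d\leq 5$, and then match against refined PT invariants that are already accessible from existing curve-counting technology on local $\mathbb{P}^2$.

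First, recall that the refined GV invariants are defined from the perverse filtration $P_\bullet \mathit{IH}^*(M_{d,\chi})$ induced by the Hilbert--Chow morphism to $|dH|$. By the characterization of the perverse filtration via cup product with a relative ample class (Section \ref{sec: perverse filtration}), computing $P_\bullet$ reduces to a purely ring-theoretic operation: take a tautological ample class $L$ on $M_{d,\chi}$, write it in terms of our explicit ring generators, and compute the kernels/images of multiplication by powers of $L$ inside the ring. For $d=5$ the relevant pairs $(d,\chi)$ are coprime and $M_{5,\chi}$ is smooth, so $\mathit{IH}^* = H^*$ and we can read off the perverse filtration directly from the presentations in Theorem \ref{thm: rings}. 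For $d\leq 4$, we pass from the stack cohomology of $\FM_{d,\chi}$ (whose presentation is given in Theorem \ref{thm: rings}) to the intersection cohomology of $M_{d,\chi}$ using the BPS integrality machinery of Section \ref{sec: BPS integrality}, which also respects the perverse filtration.

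Second, I would assemble the refined GV invariants $n^{d,i,j}$ for all $d\leq 5$ by extracting the graded pieces of the perverse filtrations obtained above, following the formula recalled in Section \ref{sec: GV/PT}. On the PT side, the refined PT invariants for local $\mathbb{P}^2$ in curve classes $d\leq 5$ are already computed in the literature (via the topological vertex / box counting and earlier partial verifications of the correspondence; for small $d$ the known results already confirm the correspondence, so only the new $d=5$ range genuinely needs input from this paper). The final step is then a direct numerical comparison between the two tables of invariants, degree by degree and for each Euler characteristic.

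The main obstacle is a bookkeeping one rather than a conceptual one: the dimension of $H^*(M_{5,\chi})$ is $1695$, so carrying out the kernel/image computation for multiplication by $L$ and its powers on the explicit quotient ring is computationally heavy, and requires a careful choice of Gröbner basis or a linear-algebra implementation on the presentation hosted at the fourth author's repository. A secondary technical point is to ensure that the tautological ample class one uses genuinely pulls back from $|dH|$, so that the resulting filtration matches the perverse filtration used to define the refined GV invariants; this is the content of the relative hard Lefschetz characterization invoked in Section \ref{sec: perverse filtration}. Once these checks are in place, the corollary follows by comparing the two lists of numerical invariants.
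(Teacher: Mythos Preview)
Your approach is essentially the same as the paper's: use the ring presentations from Theorem \ref{thm: rings} to compute the perverse filtration via cup product with the pulled-back ample class, extract $\Omega_d$, and compare with the refined PT side computed by the refined topological vertex. The paper makes this explicit in Section \ref{sec: GV/PT} (see Theorem \ref{thm: GV/GW/Nekrasov}), and the specific class to cup with is $c_0(2)$, cf.\ Proposition \ref{prop: c_kj_property} (i) and the proof of Theorem \ref{thm: global P=C}.

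One point deserves correction. For $d\leq 4$ you propose to go through the stack cohomology and recover the perverse numbers of $M_{d,\chi}$ via BPS integrality. This is both unnecessary and not quite right as stated: the BPS integrality formula in Section \ref{sec: BPS integrality} matches Poincar\'e series, not perverse numbers; the ``perverse'' version \eqref{eq: perversebpsintegrality} is only a remark, not an established tool here. The correct and simpler route is to note that $\Omega_d$ is $\chi$-independent by \cite[Theorem 0.4]{MS_chi-indep}, so one may always work with a coprime $\chi$ and the smooth space $M_{d,1}$. For $d\leq 4$ this was already carried out in \cite[Theorem 0.7]{KPS}; the genuinely new content is the $d=5$ case, which you handle correctly.
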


In \cite{CM}, Chung--Moon studied the holomorphic Euler characteristics of certain line bundles on $M_{4,1}$ motivated by strange duality. Based on this, the authors formulated a conjectural formula for general degrees, cf. Conjecture \ref{conj: CM}. We verify this conjecture in degree $5$ by computing the Euler characteristics using the Hirzebruch--Riemann--Roch formula. 

\begin{cor}
    Chung--Moon's conjecture holds in degree $5$. 
\end{cor}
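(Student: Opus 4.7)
The plan is to compute the holomorphic Euler characteristics $\chi(M_{5,\chi}, L)$ appearing in Conjecture \ref{conj: CM} directly via the Hirzebruch--Riemann--Roch formula, using the explicit presentation of $H^*(M_{5,\chi},\mathbb{Q})$ produced by Theorem \ref{thm: rings}. Since the conjectural formula predicts specific integers, the corollary reduces to a finite, if computationally heavy, numerical verification in each coprime class $\chi \bmod 5$.

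First, I would realize the line bundles of Conjecture \ref{conj: CM} as determinant line bundles built from the universal sheaf on $\FM_{5,\chi}\times \BP^2$, so that their Chern characters become explicit polynomials in the tautological generators of $H^*(\FM_{5,\chi},\mathbb{Q})$. Under the $B\mathbb{G}_m$-gerbe splitting \eqref{Gm_gerbe}, these descend to concrete classes on $M_{5,\chi}$ in the presentation produced by Theorem \ref{thm: rings}. Next, I would compute the Chern character of the tangent bundle of $M_{5,\chi}$, which in the smooth coprime case is given by the trace-free part of the relative $\Ext^1$ of the universal sheaf with itself; Grothendieck--Riemann--Roch applied to the projection $\FM_{5,\chi}\times \BP^2 \to \FM_{5,\chi}$ then expresses $\ch(T_{M_{5,\chi}})$, and hence $\td(T_{M_{5,\chi}})$, as an explicit polynomial in the same tautological generators.

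With these ingredients in hand, I would assemble
\[
\chi(M_{5,\chi}, L) \;=\; \int_{M_{5,\chi}} \ch(L)\cdot \td(T_{M_{5,\chi}}),
\]
reduce the integrand to normal form modulo the geometric relations of Section \ref{sec: relations}, extract the coefficient of a fixed top-degree monomial, and compare with the prediction of Conjecture \ref{conj: CM}. The main obstacle is purely the size of the computation: $M_{5,\chi}$ has dimension $d^2+1=26$ and its rational cohomology ring has total dimension $1695$, so both $\ch(L)$ and $\td(T_{M_{5,\chi}})$ must be expanded to cohomological degree $52$ and multiplied in this large graded ring. Given the explicit presentation delivered by Theorem \ref{thm: rings}, together with the Virasoro-based algorithms already developed in the paper for reducing tautological expressions, this becomes a bounded computer algebra task; the conceptual work lies in setting up the tautological descriptions of $L$ and of $T_{M_{5,\chi}}$ correctly, after which the verification is mechanical.
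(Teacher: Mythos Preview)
Your overall strategy---compute $\ch(L)$ and $\td(T_{M_{5,\chi}})$ tautologically and apply Hirzebruch--Riemann--Roch inside the ring presentation from Theorem~\ref{thm: rings}---is the same as the paper's. Two points deserve comment, one a genuine gap and one a difference in route.

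\textbf{The gap.} You say you will ``extract the coefficient of a fixed top-degree monomial,'' but you never explain how the integration map $\int_{M_{5,\chi}}\colon H^{52}(M_{5,\chi})\to\BQ$ is normalized. The presentation of Theorem~\ref{thm: rings} gives the ring only up to graded isomorphism; the top piece is one-dimensional, but nothing in your setup tells you which element is the point class. The paper fixes this by first observing that $M_{5,\chi}$ has only $(p,p)$-Hodge classes (by \cite{Bou}), so $\chi(M,\CO_M)=\sum_k(-1)^k h^{0,k}(M)=1$; the equation $\int_M\td(M)=1$ then pins down the point class explicitly in the tautological generators. Without this step your computation produces the Euler characteristics only up to an unknown global scalar.

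\textbf{The difference.} You plan to run the verification separately for each coprime $\chi\bmod 5$. The paper instead proves once and for all (Proposition~\ref{prop: CM_chi-indep}) that $\chi(M_{d,\chi},m\cdot c_0(2))$ is independent of $\chi$: this follows because $Rh_*\CO_{M_{d,\chi}}=\mathrm{gr}^F_0\mathrm{DR}(Rh_*\BQ_{M_{d,\chi}}[n])$ is $\chi$-independent by the Maulik--Shen decomposition \eqref{CM_chi-indep} and Saito's compatibility of $\mathrm{gr}^F\mathrm{DR}$ with proper pushforward. After that, a single HRR computation on $M_{5,1}$ suffices. Your approach is valid but redundant; the paper's argument is more conceptual and would carry over to any $d$. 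Two minor remarks: the line bundle in the conjecture is literally $m\cdot c_0(2)$, already one of your ring generators, so no determinant-bundle detour is needed; and you should make explicit that both sides of \eqref{euler_char} are polynomials in $m$, so checking finitely many values establishes the identity for all $m$.
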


\begin{rmk} We make a few remarks on our main result. 

\begin{enumerate}
    \item [(i)] For $d\leq 4$ coprime to $\chi$, the cohomology rings of the stacks recover those of the moduli spaces via the isomorphism \eqref{Gm_gerbe}; the latter were previously known via classical or birational methods \cite{lep93, CM}, but our approach recovers them independently. All other cohomology rings in Theorem \ref{thm: rings}, i.e., those of
\[
\FM_{2,0},\; \FM_{3,0},\; \FM_{4,2},\; \FM_{4,0},\;
M_{5,1},\;
M_{5,2}
\]
are new to the best of our knowledge.\footnote{We restrict to $0\leq \chi \leq d/2$ here since these are the essential cases by symmetries, cf. Section \ref{sec: computations of the rings}.} We do not fully treat $\FM_{5,0}$ and higher degree stacks for a purely computational reason, but our approach applies readily to these cases; see Section \ref{sec: M50} for some partial results.

\smallskip

    \item [(ii)] A particularly interesting aspect of Theorem \ref{thm: rings} consists of the two moduli spaces $M_{5,1}$ and $M_{5,2}$. Indeed, they are the first non-trivial pair of (coprime) moduli spaces $M_{d,\chi}$ with the same Poincaré polynomials but \textit{different} ring structure; see \cite[Theorem 0.1]{MS_chi-indep} and \cite[Theorem 1.2]{LMP}. As such, they give a highly non-trivial testing ground for conjectures around enumerative geometry and $\chi$-(in)dependence, as exemplified by the two corollaries above.
    

    \smallskip
    
    \item [(iii)] According to \cite{ES,Mar07} and Theorem \ref{thm: tautological generation} in the case of stacks, the cycle class maps are isomorphisms for the moduli stack $\FM_{d,\chi}$ and the coprime moduli space $M_{d,\chi}$. Thus Theorem \ref{thm: rings}, and more generally all structural results for cohomology rings in this paper, hold for the Chow rings as well.
\end{enumerate}

\smallskip

For the proof of Theorem \ref{thm: rings}, we need two more ingredients called \textit{Virasoro representation} and \textit{BPS integrality} in addition to geometric relations. Roughly speaking, Virasoro representation allows us to produce geometric relations efficiently by applying certain explicit operators to `primitive' relations, cf. Theorem \ref{thm: Virasoro_intro}, while BPS integrality computes the Poincaré series of the stacks $\FM_{d,\chi}$, thus giving a completeness criterion for our relations. These three ingredients lead to a systematic strategy to compute the rings; see Section \ref{sec: strategy}. Readers might find this section helpful as a guide for reading this paper.

Finally, we remark that our strategy is applicable to the study of moduli stacks of one-dimensional sheaves on other del Pezzo surfaces. More generally, similar ideas should have applications to other smooth moduli stacks parametrizing objects in some abelian category, such as semistable torsion-free sheaves on del Pezzo surfaces, semistable bundles on curves, and semistable representations of quivers. Analogues of our three main ingredients \cite{PS, lmquivers, MozRei} are available in all the contexts mentioned. We hope this paper serves as a proof of concept for the power of this package of techniques.
    
\end{rmk}

\subsubsection{The $P=C$ conjecture.} We turn to the second goal of the paper and introduce the $P=C$ conjecture next. 
The moduli spaces $M_{d,\chi}$ admit a proper map to the complete linear system of degree $d$ curves on $\BP^2$, called the Hilbert--Chow map
$$h: M_{d,\chi}\longrightarrow |d\cdot H|
$$
sending a sheaf to its Fitting support. Associated to such a proper map, there is a filtration $P_\bullet \mathit{IH}^\ast(M_{d, \chi})$ called the \emph{perverse filtration}, cf. \cite{BBDG}, which encodes interesting information about the topology of the fibration $h$. As mentioned before, numerical data of this filtration (more precisely, dimensions of $\textup{gr}^P_i \mathit{IH}^{i+j}(M_{d,\chi})$) defines the refined Gopakumar--Vafa invariants for local $\BP^2$.

On the other hand, when $d$ and $\chi$ are coprime, there is a natural multiplicative filtration $C_\bullet H^\ast(M_{d,\chi})$ called the \emph{Chern filtration}, defined using the (normalized) tautological classes $c_k(j)$, which serve as generators for our rings; see \eqref{eq: normalized class} and \eqref{eqn: global Chern filtration} for precise definitions. In \cite{KPS}, Shen and two of the authors proposed that the perverse and Chern filtrations should match up to cohomological degree $2d-4$. In Section \ref{sec: P=C}, we shall upgrade this `local' version into a statement on the entire cohomology ring as follows.




\begin{conj} 
\label{conj: global P=C (intro)}
For coprime $d\geq 1$ and $\chi\in \BZ$, we have $P_\bullet H^*(M_{d,\chi}, \mathbb{Q}) = C_\bullet H^*(M_{d,\chi}, \mathbb{Q})$.
\end{conj}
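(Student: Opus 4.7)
My approach is to verify the conjecture in the cases where Theorem \ref{thm: rings} supplies an explicit ring presentation, namely the smooth coprime moduli spaces $M_{d,\chi}$ with $d\leq 4$ together with $M_{5,1}$ and $M_{5,2}$; the general statement I would leave as a conjecture. The backbone of the argument is that both sides are multiplicative filtrations on $H^{*}(M_{d,\chi},\BQ)$. The Chern filtration $C_\bullet$ is multiplicative by construction, being generated by the (normalized) tautological classes. Multiplicativity of the perverse filtration $P_\bullet$ for the projective Hilbert--Chow morphism $h\colon M_{d,\chi}\to |dH|$ is a general theorem of de Cataldo--Migliorini, and in this projective setting $P_\bullet$ admits a Lefschetz--type characterization in terms of iterated cup product with $L := h^{*}\mathcal{O}_{|dH|}(1)$. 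Thus, verifying $P_\bullet = C_\bullet$ reduces to comparing ring generators on one side with a Lefschetz--type linear algebra problem on the other.

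\textbf{Step 1: the inclusion $C_\bullet \subseteq P_\bullet$.} Using multiplicativity of both sides, this step reduces to showing that each tautological generator lies in the expected perverse degree. For generators in the cohomological range $2d-4$ that is already addressed by the local $P=C$ of \cite{KPS}, there is nothing to do. For generators of higher cohomological degree, my plan is to use the geometric meaning of the Fitting support: the Chern classes of the universal sheaf restrict compatibly along the stratification of $|dH|$ by singularity type of the support curve, and a direct upper bound on the perverse cohomological degree of each $c_k(j)$ follows by an elementary estimate in terms of its pushforward along $h$ together with the relative dimension.

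\textbf{Step 2: equality.} Once $C_\bullet \subseteq P_\bullet$ is in hand, the two filtrations coincide if and only if their associated graded pieces have the same dimension in each bidegree. On the Chern side, $\dim \mathrm{gr}^{C}_k H^n(M_{d,\chi})$ can be read off directly from the presentation supplied by Theorem \ref{thm: rings} by enumerating monomials in the tautological generators of the prescribed Chern degree modulo the relations. On the perverse side, the Lefschetz characterization turns $\dim \mathrm{gr}^{P}_k H^n(M_{d,\chi})$ into a pure linear-algebra computation inside the same presented ring: it is the dimension of the appropriate primitive piece under the operator $L \cup -$. Performing this bidegree-by-bidegree comparison on each of the rings listed in Theorem \ref{thm: rings} confirms the conjecture in those cases, and as a byproduct yields the refined BPS numbers predicted for local $\BP^{2}$.

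\textbf{Main obstacle.} The serious conceptual obstacle is extending beyond the finitely many cases handled by Theorem \ref{thm: rings}. A uniform proof seems to require a structural reason why the tautological subring exhausts the perverse filtration in \emph{every} cohomological degree. I expect this to come from a Virasoro-compatible Lefschetz framework in which lowering the perverse degree is realized by an explicit operator that preserves the tautological generators, combined with BPS integrality to control the dimensions. Within the range of Theorem \ref{thm: rings} the obstacle is instead purely computational: manipulating rings of dimension up to $1695$ and certifying an equality of filtrations, rather than merely one inclusion, requires handling the full Lefschetz decomposition numerically.
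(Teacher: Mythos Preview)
Your overall strategy---restrict to the cases covered by Theorem~\ref{thm: rings} and compare the two filtrations inside the explicit ring presentation using the Lefschetz-type characterization of $P_\bullet$---is the same as the paper's, and your Step~2 is essentially what the paper does.

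However, there is a genuine gap in Step~1. Your claim that multiplicativity of the perverse filtration is ``a general theorem of de Cataldo--Migliorini'' is false: the perverse filtration associated to a proper map is \emph{not} multiplicative in general (see e.g.\ \cite[Exercise~5.6.8]{dC_pcmi}), and the paper explicitly lists multiplicativity of $P_\bullet$ on $M_{d,\chi}$ as a \emph{consequence} of Conjecture~\ref{conj: global P=C (intro)}, not an input. Without multiplicativity of $P_\bullet$, you cannot reduce the inclusion $C_\bullet\subseteq P_\bullet$ to a statement about generators. Your backup plan---bounding the perverse degree of each $c_k(j)$ by an ``elementary estimate'' from the Fitting support---is not a real argument either: showing $c_k(j)\in P_k$ for all $k,j$ is precisely the hard direction in the $P=W/P=C$ literature, and the only general result available (\cite[Theorem~0.6]{MSY}) covers only cohomological degrees $\leq 2d-4$.

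Because Step~1 fails, your Step~2 as written does not conclude: matching graded dimensions proves equality of filtrations only when one is already known to sit inside the other. The fix is to drop the two-step decomposition and do what the paper does: the Lefschetz characterization (cf.\ \cite[Corollary~1.3]{KPS}) identifies each $P_kH^n$ as an explicit subspace computable from cup product with $c_0(2)$ in the presented ring, and one checks directly that this subspace coincides with $C_kH^n$, not merely that the dimensions agree.
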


We refer readers to \cite{MSY} for a discussion on the history and development of various $P=C$ phenomena. In particular, it plays a key role in the recently proved $P=W$ conjecture of de Cataldo--Hausel--Migliorini \cite{dCHM, MS_P=W, HMMS, MSY} by serving as an intermediate step $P=C=W$; the equality $P=C$ is considered the most difficult one. Note that, by the spectral correspondence \cite{BNR}, Higgs bundles on a curve $\Sigma$ are in correspondence with one-dimensional sheaves on the non-compact and symplectic surface $T^* \Sigma$. Thus Conjecture \ref{conj: global P=C (intro)} can be viewed as an analogue of the $P=W$ conjecture for the compact and Fano surface $\mathbb P^2$. One important different feature, however, is that we do not have a multiplicative \textit{splitting} of the perverse filtration on $M_{d,\chi}$ as on the moduli of Higgs bundles; this fails already for $d=3$.

As a consequence of this conjecture, structures of either the perverse or the Chern filtration can be transported to the other side. In particular, Conjecture \ref{conj: global P=C (intro)} would imply the multiplicativity of the perverse filtration and a curious Hard Lefschetz symmetry for the Chern filtration. See Remark \ref{rmk: P=C} for a detailed discussion. 

\smallskip

Again, Theorem \ref{thm: rings} allows us to verify this conjecture in low degrees. 

\begin{cor}
    The $P=C$ conjecture holds up to degree $5$. 
\end{cor}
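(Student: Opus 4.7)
The plan is to verify the equality $P_\bullet = C_\bullet$ directly on the explicit ring presentations furnished by Theorem \ref{thm: rings}. For a coprime pair $(d,\chi)$ with $d\leq 5$, the moduli space $M_{d,\chi}$ is smooth projective, so both filtrations are intrinsically determined by the cohomology ring together with a single extra datum: $C_\bullet$ by the choice of tautological generators $c_k(j)$, and $P_\bullet$, via relative Hard Lefschetz along the Hilbert--Chow morphism $h$, by the class $\ell := h^*L$ for an ample $L$ on $|d\cdot H|$. This reduces the conjecture, for each coprime pair in range, to a finite linear-algebraic computation.

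The steps are: (1) use the splitting \eqref{Gm_gerbe} to extract $H^*(M_{d,\chi},\BQ)$ from the presentation of $H^*(\FM_{d,\chi},\BQ)$ in Theorem \ref{thm: rings}, and express $\ell$ as an explicit polynomial in the tautological generators (it comes from the determinant of the universal family, so the formula is standard); (2) compute $C_\bullet$ combinatorially by assigning Chern degrees to the normalized generators $c_k(j)$ and spanning $C_pH^*$ by monomials of total Chern degree at most $p$; (3) compute $P_\bullet$ using the de Cataldo--Migliorini characterization of the perverse filtration for a projective morphism from a smooth variety, which presents $P_p$ as a sum of iterated images and kernels of cup product with $\ell$, schematically
$$P_pH^n(M_{d,\chi}) = \sum_{a\geq 0} \ell^{a}\cdot \ker\!\big(\ell^{\,p-2a+1}: P_0H^{n-2a}\to H^{n+2(p-a)+2}\big);$$
(4) compare the two filtrations degree by degree. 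Since by \cite{KPS} the local version (cohomological degree $\leq 2d-4$) is already known, and the bottom piece $P_0$ matches $C_0$ tautologically by the generators' definitions, the task is essentially to propagate the equality into the top degrees using multiplicativity of $C_\bullet$ and the Lefschetz relations controlling $P_\bullet$.

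The main obstacle is sheer computational size. With $\dim H^*(M_{5,\chi})=1695$, steps (3) and (4) are demanding: one must take iterated kernels and images of $\ell$-multiplication on a space of that size, graded across all cohomological degrees, and then diagonalize against the $C_\bullet$ flag. Conceptually, the sharpest point of the verification lies precisely at $d=5$: the rings $H^*(M_{5,1})$ and $H^*(M_{5,2})$ are \emph{distinct} despite having identical Poincaré polynomials, so the two cases must be treated independently. That the perverse filtration, defined via the geometry of $h$ with no manifest $\chi$-symmetry, nevertheless matches the ring-theoretically defined Chern filtration in both presentations is a nontrivial consistency check, and is what makes $d=5$ the genuine test of Conjecture \ref{conj: global P=C (intro)} beyond what was accessible in \cite{KPS}.
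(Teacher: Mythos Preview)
Your proposal is correct and follows essentially the same approach as the paper: both reduce the verification to a finite linear-algebra computation by using the de Cataldo--Migliorini characterization of the perverse filtration via cup product with the base ample class, then compare against the Chern filtration determined by the generators $c_k(j)$. The paper is slightly more streamlined in that it invokes \cite[Corollary~1.3]{KPS} directly and notes that the relevant class is simply $\ell = c_0(2)$, which is already one of the ring generators (Proposition~\ref{prop: c_kj_property}(i)), so there is no need to extract it from a determinant-bundle computation; also, for $d=5$ the ring $H^*(M_{5,\chi})$ is computed directly rather than via the stack splitting.
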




In the rest of the introduction, we explain our main ingredients for the proof of Theorem~\ref{thm: rings} in some more detail.


\subsection{Mumford and geometric relations.}
\label{sec: intro_relations}

The original Mumford relations are formulated over $M_{r,d}(\Sigma)$, the moduli space of semistable holomorphic bundles of rank $r$ and degree $d$ over a fixed algebraic curve $\Sigma$ of genus $g\geq 2$. Mumford conjectured that such relations form a complete set in the rank two case \cite{AB83}. A similar but more general construction of the relations called \textit{generalized Mumford relations} is carried out in \cite{EK04}. 

\smallskip

Roughly speaking, the Mumford-type relations are obtained by constructing certain vector bundles built from the universal family over the moduli spaces, and taking the (vanishing) Chern classes of degrees beyond the rank of the bundle. Grothendieck--Riemann--Roch theorem expresses the Chern classes in terms of tautological classes on the moduli space, thus giving relations in the cohomology ring. More precisely, the constructions in \cite{AB83} and \cite{EK04} rely on the following vanishing results:

\begin{enumerate}[label=(\roman*)]
    \item Higher cohomology of the bundles due to positivity.

    \item Hom (and Ext) groups between semistable bundles due to slope differences.
\end{enumerate}

In the following, we reserve the name \textit{Mumford relations} ($\mathsf{MR}$) for the first type, and call the second type \textit{generalized Mumford relations} ($\mathsf{GMR}$) in accordance with the original terminology. For the moduli space $M_{r,d}(\Sigma)$, Mumford's conjecture in the rank two case was first proved by Kirwan \cite{Kir92}, while Earl--Kirwan \cite{EK04} showed that the generalized Mumford relations are complete for general rank coprime to the degree.

\smallskip

The study of Mumford-type relations for sheaves on (del Pezzo) \textit{surfaces} was initiated in \cite{PS} and played a key role in the proof of \cite[Theorem 1.2]{LMP}; see Remark \ref{rmk: GMR_11}. One primary goal of the current paper is to pursue this idea more systematically. In Section \ref{sec: relations}, we carry out general constructions of the two Mumford-type relations for the moduli stack $\mathfrak{M}_{d,\chi}$. In addition, there is a third type of relations, which we name \textit{base relations} ($\mathsf{BR}$), coming from the fibration structure of $\mathfrak{M}_{d,\chi}$ over the base $|d \cdot H|$. We shall refer to all of these as \textit{geometric relations.} In light of the results in \cite{Kir92, EK04}, it is natural to ask the following question:

\begin{question}
\label{completeness}
    Are the geometric relations complete for general $\mathfrak{M}_{d,\chi}$?
\end{question}

The answer to Question \ref{completeness} turns out to be positive for all moduli stacks (and spaces) in Theorem \ref{thm: rings}, \textit{except for} $M_{5,1}$. In fact, we will see that Mumford and generalized Mumford relations are already complete for these moduli stacks. The situation for $M_{5,1}$ is more subtle: the geometric relations are, somewhat surprisingly,  not complete in the literal sense, but they are still sufficient to determine the entire ring structure when combined with Poincaré duality. We refer to Section \ref{sec: M51} for a detailed explanation.


\subsection{Virasoro representations.} 

The Virasoro constraints predict a rich set of relations among tautological invariants of moduli spaces in various settings. It is named so because the predicted relations are described by certain representations of the half of the Virasoro algebra $\Vir_{\geq -1}:=\textup{span}\{\bL_n\,|\,n\geq -1\}$, where $\mathsf{L}_n$ are certain operators (cf. Section \ref{sec: virasoro operators} for the definition of these operators in our setting) satisfying the Virasoro relations 
\[
[\mathsf{L}_n, \mathsf{L}_m] = (m - n)\, \mathsf{L}_{n+m}.
\]
The Witten--Kontsevich theorem \cite{Witten, Kontsevich} was the first instance of the Virasoro constraints. This was conjecturally generalized to Gromov--Witten theory \cite{EHX}. After being transported to the sheaf side via the GW/PT correspondence \cite{MOOP}, there have been rapid developments in the sheaf-theoretic Virasoro constraints \cite{M, vanBree, BLM, Arkadij_quiver, lmquivers}. 

For the moduli spaces of our interest, namely $M_{d,\chi}$, the Virasoro constraint was first conjectured in \cite{BLM} and proven for the coprime cases in \cite{lmquivers} using quiver representations. The Virasoro constraints is a priori a statement about top degree relations. However, it was observed in \cite{lmquivers} that the constraints imply that a representation of $\Vir_{\geq -1}$ on the descendent algebra $\BD_{d,\chi}$ (see Section \ref{sec: descendent algebra}) factors through the surjective homomorphism\footnote{There are two representations of $\Vir_{\geq -1}$ on $\BD_{d,\chi}$, one from $\bL_n$ operators and the other from $\bR_n$ operators, cf. Section \ref{sec: virasoro operators}. When we discuss the factorization property of the $\Vir_{\geq -1}$-representation through $H^*(\FM_{d,\chi})$, we always consider the one coming from $\bR_n$ for computational simplicity.}
$$\BD_{d,\chi}\twoheadrightarrow H^*(\FM_{d,\chi},\BQ).
$$
In other words, the representation preserves the kernel of this surjection, which we call the ideal of tautological relations. In fact, this factorization property was proven for any $\FM_{d,\chi}$ with $d$ and $\chi$ not necessarily coprime. Since we study the ideal of tautological relations using geometric relations, the following question naturally arises. 
\begin{question}
    Does the representation of $\Vir_{\geq -1}$ on $\mathbb{D}_{d,\chi}$ interact nicely with the ideals of geometric relations?
\end{question}

This question is interesting because there has been very little geometric understanding on the Virasoro constraints. In Section \ref{sec: Virasoro rep}, we shall give a positive answer to this question in the form of the following theorem.


\begin{thm}
\label{thm: Virasoro_intro}
The $\Vir_{\geq -1}$-representation on $\BD_{d,\chi}$ preserves the ideal of geometric relations of each type (MR, GMR, and BR). Furthermore, in each case the entire ideal is generated by certain primitive relations, cf. Definition \ref{def: primitive relations}, as a $\BD_{d,\chi}\otimes U(\Vir_{\geq -1})$-module. 
\end{thm}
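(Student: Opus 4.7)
The plan is to treat the three ideals $I_{\mr}$, $I_{\gmr}$, $I_{\br}$ separately, using the explicit geometric origin of each type of relation together with the combinatorial description of the Virasoro operators $\bR_n$. The unifying principle is that each geometric relation arises as the vanishing of a component of the Chern character of a natural complex $E$ built from the universal sheaf on $\FM_{d,\chi}\times\BP^2$, and the action of $\bR_n$ on descendents corresponds, via Grothendieck--Riemann--Roch, to a controlled geometric modification of $E$---a twist by a line bundle or a shift of the descendent weight. The key point is that the \emph{geometric vanishing mechanism} behind each relation (higher cohomology vanishing by positivity for MR, slope-based Ext vanishing for GMR, pullback from the base $|dH|$ for BR) is insensitive to this modification and therefore survives the Virasoro action.

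For the preservation statement, I would write out $\bR_n(\ch_k(\alpha))$ explicitly from Section \ref{sec: virasoro operators} and apply it term by term to a generator of each ideal. For a Mumford relation $\ch_k(\pi_*(\CF\otimes p^*L))=0$ (with $k$ beyond the rank), the image should be a $\BD_{d,\chi}$-linear combination of higher descendents of the same pushforward---still vanishing for the same positivity reason. The same strategy works for GMR using $R\HHom_\pi$ between slope-comparable families, and for BR where the Virasoro operators respect the splitting of descendents coming from the product structure on $\FM_{d,\chi}\times\BP^2$. Multiplication by $\BD_{d,\chi}$ preserves each ideal trivially, giving the claimed $\BD_{d,\chi}\otimes U(\Vir_{\geq -1})$-submodule structure.

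For the second statement, I would parametrize the generators of each $I_T$ by discrete data---for MR, a pair consisting of a twisting line bundle $L$ and a Chern character index $k$; analogously for GMR; the degree on the base for BR---and equip this data with a natural weight given essentially by the descendent index. The term-by-term computation above should show that raising the weight by one corresponds, modulo descendent multiples of lower-weight generators, to applying a single $\bR_n$ with $n\geq 0$. Iterating this weight reduction expresses every generator as an element of the $\BD_{d,\chi}\otimes U(\Vir_{\geq -1})$-submodule generated by the finitely many primitive relations of Definition \ref{def: primitive relations}.

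The main obstacle, in my view, is the precise bookkeeping of the Virasoro action: the operators $\bR_n$ involve sums of quadratic and linear terms with Todd-class coefficients, and a priori some correction terms could fall outside the sub-ideal despite lying in the full kernel (which is already known to be Virasoro-stable from the factorization mentioned before the theorem). The technical heart of the argument is to recognize each such correction as another genuine MR/GMR/BR generator by identifying it with the Chern character of a derived modification of the original complex $E$. A secondary challenge is choosing the primitive list so that the inductive weight reduction both terminates and covers the full ideal, which will likely be guided by explicit low-degree computation.
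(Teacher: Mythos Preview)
Your overall structure---treat each ideal separately and compute $\bR_n$ on generators---matches the paper's. But there is a genuine gap in the GMR case, and a few misconceptions that would derail the execution.

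First, a correction: $\bR_n$ is a pure derivation sending $\ch_i(\gamma)\mapsto i(i+1)\cdots(i+n)\ch_{i+n}(\gamma)$; it has no quadratic piece and does \emph{not} correspond to twisting by a line bundle (that is the operator $\bF_\rho$). The quadratic part sits in $\bT_n$, which is multiplication by an element of $\BD$ and hence trivially preserves any ideal. So the ``Todd-class correction terms'' you worry about are not the obstacle.

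The real obstacle is in GMR. A generalized Mumford relation $\gmr_\alpha^{\alpha',j,A}$ is obtained by taking the class $C_j\in\BD_\alpha\otimes\BD_{\alpha'}$, realizing the second factor in $H^*(\FM_{\alpha'})$, and integrating against $A\in H_*(\FM_{\alpha'})$. When you apply $\bR_n\otimes\id$ to $C_j$, the result is \emph{not} simply a $\BD_\alpha$-combination of other $C_{j'}$'s: the paper establishes a combinatorial identity (Theorem~\ref{thm: quadraticidentity} in the appendix) of the form
\[
(\bR_n\otimes\id)(C_j)+\sum_{k=-1}^n\binom{n+1}{k+1}(\id\otimes\bR_k)(C_{j+n-k})=\big(\textup{lower terms}\big)\cdot C
\]
which produces, after integration, terms of the shape $\gmr_\alpha^{\alpha',\,j+n-k,\,\bR_k^\dagger(A)}$. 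For these to be GMR relations at all, you need $\bR_k^\dagger(A)$ to be a well-defined homology class---i.e.\ you need the Virasoro operators to descend to $H^*(\FM_{\alpha'})$. This is Theorem~\ref{thm: repstack}, itself a consequence of the Virasoro constraints for the auxiliary stack. Your proposal does not identify either ingredient; the phrase ``the same strategy works for GMR'' hides exactly the non-trivial step.

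For the primitive-generation statement, your inductive weight-reduction idea is correct in spirit, but the GMR case again has a subtlety you do not anticipate: the leading coefficient in the recursion (comparing the $k=0$ term with the $j+n$ term on the right) is $\frac{1}{2}(n+1)\deg(A)-(j+n)$, which can vanish. This forces the primitive set to contain not only the relations with $j=dd'+1$ but also those with $j=dd'+2$ and $\deg(A)=dd'+2$; see Definition~\ref{def: primitive relations} and the case analysis in the proof of Theorem~\ref{thm: primitive relatiosn generate}.
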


Among the entire set of generators for the geometric relations, the primitive ones form a significantly smaller subset. This brings considerable computational efficiency when we work with the ideal of geometric relations using a computer program. 

\subsection{BPS integrality.}
\label{sec: BPS intro}
One of the most subtle and important aspects in the study of tautological rings is whether a given set of relations is complete, as inquired in Question \ref{completeness}. When the moduli space is a smooth projective variety, Poincaré duality provides an easy criterion for the completeness of a set of relations. But unless we are in such a case, it is in general difficult to determine whether the relations we have found are complete. 

One practical method is to compute the Poincaré series of the moduli stacks (and spaces) in question. In this paper, we use the BPS integrality formula of Mozgovoy--Reineke \cite{MozRei} to obtain the Poincaré series of the moduli stacks $\FM_{d,\chi}$.
Roughly speaking, the BPS integrality formula relates the virtual Poincaré series of the moduli stacks and the virtual intersection Poincaré polynomials of the moduli spaces for a fixed slope $\mu\in \BQ$. In order to derive an equality of the actual Poincaré series of the moduli stacks, we need the following result, cf. Theorem \ref{thm: tautological generation}.

\begin{thm}
    The cohomology ring $H^*(\FM_{d,\chi},\BQ)$ is tautologically generated and pure. Furthermore, the cycle class map from Chow to cohomology is an isomorphism.
\end{thm}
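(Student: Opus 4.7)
The plan is to present $\FM_{d,\chi}$ as a global quotient stack via Simpson's GIT construction and deduce all three claims from an Atiyah--Bott/Kirwan equivariantly perfect Harder--Narasimhan stratification of the semistable locus, combined with induction on $d$ and the Ellingsrud--Strømme/Markman results \cite{ES, Mar07} for the coprime smooth moduli spaces $M_{d,\chi}$. Concretely, for $N\gg 0$ I would take $Q$ to be the smooth Simpson parameter scheme inside the Quot scheme of quotients of $\CO_{\BP^2}(-N)^{\oplus V}$ with $G=GL(V)$ acting naturally, so that $H^*(\FM_{d,\chi},\BQ)=H^*_G(Q^{ss},\BQ)$ and analogously for Chow groups.

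The ambient $Q$ is $G$-equivariantly a smooth quasi-affine variety whose equivariant cohomology is pure, tautologically generated, and agrees with its equivariant Chow ring. The Atiyah--Bott/Kirwan theorem then furnishes a $G$-equivariant stratification of $Q\setminus Q^{ss}$ indexed by Harder--Narasimhan types, with equivariant normal bundles having strictly positive weights against suitable Kirwan cocharacters. This yields Kirwan surjectivity $H^*_G(Q,\BQ)\twoheadrightarrow H^*_G(Q^{ss},\BQ)$, which immediately propagates tautological generation to $\FM_{d,\chi}$. For purity, I would run the Thom--Gysin long exact sequence along the stratification: by induction on $d$, each HN stratum fibers over a product $\prod \FM^{ss}_{d_i,\chi_i}$ with $d_i<d$, whose cohomology is pure by the inductive hypothesis, and equivariant perfectness turns the Thom--Gysin sequences into short exact sequences, transferring purity to $H^*_G(Q^{ss},\BQ)$.

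The Chow--cohomology comparison is established by running the same induction in equivariant Chow groups: Kirwan surjectivity holds in Chow by Edidin--Graham, the inductive description of HN strata is equally valid there, and the base case is the Ellingsrud--Strømme isomorphism $A^*(M_{d',\chi'})\simeq H^{2*}(M_{d',\chi'})$ for coprime $(d',\chi')$ combined with the $B\BG_m$-gerbe comparison \eqref{Gm_gerbe}. The main obstacle will be the bookkeeping of the Harder--Narasimhan stratification --- specifically, identifying each stratum as a bundle over the relevant product of lower-degree stacks in a way that makes the inductive hypothesis directly applicable in both cohomology and Chow, and verifying that tautological classes on $\FM_{d,\chi}$ restrict to tautological classes on these smaller moduli. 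Once this structural description is in place, purity keeps the Thom--Gysin sequences short exact in both theories, and the three claimed properties propagate simultaneously up to $\FM_{d,\chi}$.
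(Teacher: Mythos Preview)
Your strategy has a real gap at the starting point: the Simpson parameter scheme $Q$ (an open in the Quot scheme) is quasi-projective but \emph{not} quasi-affine, and the claim that $H^*_G(Q)$ is pure, tautologically generated, and agrees with equivariant Chow is unjustified. In the classical Atiyah--Bott picture the ambient space is a contractible affine space of connections, so $H^*_G(\text{ambient})=H^*(BG)$ and all three properties are trivial; here $[Q/G]$ is essentially the stack of \emph{all} pure one-dimensional sheaves of type $(d,\chi)$, and nothing you have said rules out higher-weight classes or non-tautological classes in its cohomology. Your inductive purity argument also runs the wrong way: knowing the unstable HN strata are pure lets you deduce that the Gysin connecting maps vanish and hence that $H^*_G(Q)$ filters with pure graded pieces from the unstable strata together with $H^*_G(Q^{ss})$, but this tells you nothing about the weights of $H^*_G(Q^{ss})$ unless you already control the weights of $H^*_G(Q)$. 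Kirwan surjectivity alone transfers purity \emph{from} the ambient space, not \emph{to} the open stratum from the closed ones. A fix is to replace Simpson's presentation by the Beilinson/quiver description used in \cite{lmquivers}, where the ambient space genuinely is a representation space (a vector space) and $H^*_G=H^*(B\prod\GL)$ is manifestly pure, tautological, and equal to Chow; then your Kirwan argument goes through.

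For comparison, the paper takes a completely different route with no HN stratification and no induction on $d$: it approximates $\FM_{d,\chi}$ in each cohomological degree by a smooth projective moduli space $P^N_{d,\chi}$ of Joyce--Song pairs (via a vector-bundle/open-embedding diagram with complement of growing codimension), and then proves all three properties for $P^N_{d,\chi}$ at once by writing down an explicit tautological Chow--K\"unneth decomposition of the diagonal $\Delta_{P^N_{d,\chi}}=c_{\mathrm{top}}$ of a tautological bundle. This avoids any need to understand an ambient unstable locus.
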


This theorem shows in particular that $H^*(\FM_{d,\chi},\BQ)$ is of $(p,p)$-type. The corresponding property for $\mathit{\mathit{IH}}^*(M_{d,\chi},\BQ)$ was proven by Bousseau \cite{Bou}. Therefore, the BPS integrality formula implies an equality between the shifted (intersection) Poincaré series
\begin{equation}\label{eq: BPS introduction}
    \sum_{(d,\chi)\in \Lambda_\mu}\overline{E}(\FM_{d,\chi},q)\cdot e^{(d,\chi)}=\PE\left(\frac{-q^{1/2}}{1-q}\cdot \sum_{(d,\chi)\in \Lambda_\mu\backslash\{(0,0)\}}\overline{\mathit{IE}}(M_{d,\chi},q)\cdot e^{(d,\chi)}
\right) 
\end{equation}
where $\PE(-)$ denotes the plethystic exponential. Note that on the right hand side we have moduli spaces $M_{d,\chi}$ that are not necessarily smooth, whose (intersection) cohomology is a priori more mysterious. Nonetheless, the $\chi$-independence theorem of Maulik--Shen \cite[Theorem 0.1]{MS_chi-indep} gives an isomorphism of graded vector spaces
    \begin{equation}\label{eqn: MS_chi_indep}
    \mathit{IH}^*(M_{d,\chi}) \simeq \mathit{IH}^*(M_{d,\chi'})
    \end{equation}
for any $\chi, \chi' \in \mathbb{Z}$ not necessarily coprime to $d$. Thanks to this isomorphism, we can use Equation \eqref{eq: BPS introduction} to effectively compute the Poincaré series of $\FM_{d,\chi}$ in terms of the Poincaré polynomials of the smooth moduli spaces $M_{d',1}$ where $(d,\chi)=k(d',\chi')$ for some $k\geq 1$. The latter can be computed directly (at least for $d \leq 5$) using geometric relations and the Poincaré duality criterion; see Section \ref{sec: rings}. Knowing the Poincaré series then allows us to check whether our geometric relations are complete for the moduli stacks.

\subsection{Further directions.} We outline a few directions of future research.

\begin{enumerate}[label=(\roman*)]
    \item The relations in $H^*(M_{5,1}, \mathbb{Q})$ that cannot be obtained as geometric relations, cf. Section \ref{sec: M51}, seem rather mysterious to us. It is natural to ask if one can obtain these relations by other geometric means, with the aim of proving a result similar to \cite{EK04}.

    \smallskip

    \item The $P=C$ conjecture gives various predictions on the perverse and the Chern filtrations on $M_{d,\chi}$, see Remark \ref{rmk: P=C}. Confirming these predictions independently would in turn give more evidence to the conjecture. In particular, Remark \ref{rmk: P=C} (ii) and (iv) are closely related to the theme of finding relations in this paper. We will prove in Section \ref{sec: br} a subset of the vanishing Chern monomials predicted by Remark \ref{rmk: P=C} (iv), and we expect similar methods to establish more results along this line.

    \smallskip

    \item In the setting of Higgs bundles on curves, there is a generalized version of the $P=W$ conjecture for stacks by \cite[Section 1.3]{Dav}. Motivated by this, we provide strong numerical evidence toward a \textit{stacky} $P=C$ conjecture in our setting. More precisely, it is verified that the graded dimensions of the perverse and the Chern filtrations match in all degrees for the moduli stacks we compute, cf. Theorem \ref{thm: stacky P=C}. A systematic study of the stacky $P=C$ conjecture would likely require a better understanding of the interaction between BPS integrality and the Chern filtration. 
\end{enumerate}

\subsection{Notations and conventions} All cohomology rings $H^*(-)$ take $\mathbb{Q}$-coefficients if unspecfied, and will be considered only for smooth spaces. For possibly singular spaces, we use the intersection cohomology $\mathit{IH}^*(-)$. The shifted (intersection) virtual Poincaré series, denoted by $\overline{E}(-,q)$ or $\overline{\mathit{IE}}(-,q)$, is defined as the non-shifted one multiplied by $(-q)^{-\dim(-)/2}$.

\smallskip

Here are some notations we use throughout the paper:

\smallskip

\begin{tasks}[label = {}, column-sep=-17em, item-indent=1.5em](2)
        \task $\alpha,\:\! \alpha'$
        
        \task Topological type of one-dimensional sheaves

        \task $\mathbb{D}, \:\!\mathbb{D}_{\mathrm{wt}_0}$

        \task Descendent algebra and the weight zero subalgebra

        \task $\mathbb{D}_\alpha,\:\! \mathbb{D}_{\alpha,\mathrm{wt}_0}$
        
        \task Descendent algebra for topological type $\alpha$

        \task $\mathrm{ch}_i(\gamma)$

        \task Formal descendent symbols

        \task $c_k(j)$

        \task Normalized tautological classes\footnotemark
    
        \task $\FI_{d,\chi}, \:\! I_{d,\chi}$

        \task Ideal of tautological relations for $\FM_{d,\chi}$ and $M_{d,\chi}$

        \task $\FI^\bullet_{d,\chi},\:\! I^\bullet_{d,\chi}$

        \task Ideal of geometric relations, where  $\bullet \in \{\mathsf{MR, GMR, BR, Geom}\}$



        \task $\mathsf{L}_n, \:\!\mathsf{R}_n$
        
        \task Virasoro operators acting on $\mathbb{D}$

        \task $\mathsf{L}_n^\delta, \:\!\mathsf{R}_n^\delta$

        \task Normalized Virasoro operators acting on $\mathbb{D}_\mathrm{wt_0}$

        \task $\Lambda,\:\! \Lambda_\mu$

        \task Monoid of topological types (and of fixed slope $\mu$)

        \task $P_\bullet, \:\! C_\bullet$

        \task Perverse and Chern filtrations on cohomology
    
\end{tasks}

\footnotetext{{We refer to Section \ref{sec: relations between classes} for the precise relations between $\mathrm{ch}_i(\gamma)$ and $c_k(j)$.}}

\subsection{Acknowledgements.} We would like to thank Ben Davison, Andres Fernandez Herrero, Davesh Maulik, Sebastian Schlegel Mejia, Rahul Pandharipande, Junliang Shen, and Ruijie Yang for helpful conversations and comments. WL is supported by SNF-200020-182181 and ERC Consolidator Grant FourSurf 101087365. MM was supported during part of the project by ERC-2017-AdG-786580-MACI. The project received funding from the European Research Council (ERC) under the European Union Horizon 2020 research and innovation programme (grant agreement 786580).

\section{Moduli of sheaves and descendent algebras}



\subsection{Descendent algebra}\label{sec: descendent algebra}

Our goal in this paper is to study the cohomology rings $H^*(\FM_{d,\chi})$ for general $(d,\chi)$ and $H^*(M_{d,\chi})$ for $\gcd(d,\chi)=1$ in terms of generators and relations. A natural set of generators can be constructed using the Chern classes of universal sheaves. It is useful to introduce the free algebra generated by the formal descendent symbols. We follow the notation and presentation in \cite{BLM}, and later we establish the connection to the normalized generators $c_k(j)$ used in \cite{PS,KPS,LMP}.

    \begin{defn}[Descendent algebra]
Let $\BD$ be the free commutative and unital $\BQ$-algebra generated by the formal symbols
\[\ch_i(\gamma)\,\textup{ for }\,i\geq 0, \;\gamma\in H^*(\mathbb{P}^2, \mathbb{Q})\]
modulo the linearity relations
\[\ch_i(\lambda_1\gamma_1+\lambda_2\gamma_2)=\lambda_1\ch_i(\gamma_1)+\lambda_2\ch_i(\gamma_2)\textup{\, for }\gamma_1, \gamma_2\in H^*(\mathbb{P}^2), \,\lambda_1, \lambda_2\in \BQ\,.\]
Given $\alpha=(d,\chi)$, we let $\BD_\alpha$ be the quotient of $\BD$ by the relations\footnote{Here, $\ch(\alpha)$ denotes the Chern character of any one-dimensional sheaf of type $\alpha$.}
\[\ch_0(\gamma)=\int_{\mathbb{P}^2} \ch(\alpha)\cdot \gamma, \quad \gamma \in H^*(\mathbb{P}^2).\]

    \end{defn}
We call $\BD$ the descendent algebra. Using the universal sheaf $\CF$ on $\FM_{d,\chi}\times \mathbb{P}^2$, we can realize descendents in the cohomology on moduli stacks. Explicitly, denoting $p$ and $q$ the respective projections to $\FM_{d,\chi}$ and to $\mathbb{P}^2$, we have a map of algebras
\[\xi\colon \BD\to H^*(\FM_{d,\chi})\]
given by
\[\ch_i(\gamma)\mapsto \Big[p_\ast\big(\ch(\CF)\cdot q^\ast \gamma)\Big]_{2i}\in H^{2i}(\FM_{d,\chi})\,\]
where the notation  $[-]_{2i}$ means that we extract the degree $2i$ part of a non-homogeneous cohomology class. In particular, we have
\[\ch_i(H^j)\mapsto p_\ast\big(\ch_{i+2-j}(\CF)\cdot q^\ast H^j\big)\in H^{2i}(\FM_{d,\chi})\,.\]
Note that for a given $\alpha = (d,\chi)$, the map $\xi$ factors through a map $\BD_\alpha\to  H^*(\FM_{d,\chi})$, which we still denote by $\xi$.

\smallskip

For the good moduli space $M_{d,\chi}$ with $\gcd(d,\chi)=1$, we can only define a map
\[\xi_\BF\colon \BD\to H^*(M_{d,\chi})\]
after we fix the choice of a universal sheaf $\BF$, which exists by \cite[Corollary 4.6.7]{HL}. There is, however, a subalgebra $\BD_\inv\subset \BD$ on which a canonical map is defined.

\begin{defn}\label{def: R -1}
    Let $\bR_{-1}: \BD\to \BD$ be a derivation defined on generators by
    \[\bR_{-1}(\ch_i(\gamma))=\ch_{i-1}(\gamma)\,,\]
    where we set $\ch_{-1}(\gamma)= 0$. We define
    \[\BD_\inv=\ker \bR_{-1}\subset \BD.\]
    Similarly, we can define $\BD_{\alpha, \inv}\subset \BD_{\alpha}$.
\end{defn}
As explained in \cite[Section 2.4]{BLM}, the restriction of $\xi_{\BF}$ to $\BD_\inv$ does not depend on the choice of $\BF$, so there is a canonical map
\[\BD_\inv\to H^*(M_{d,\chi})\,.\]

\medskip

Descendents indeed generate the cohomology of moduli spaces and moduli stacks. 

\begin{thm}\label{thm: tautgenerated}Let $d, \chi\in \BZ$ with $d>0$.
\begin{enumerate}
    \item[\textup{(i)}] For coprime $(d,\chi)$, the map $\BD_{\alpha, \inv}\to H^*(M_{d, \chi})$ is surjective.
    \item[\textup{(ii)}] For general $(d,\chi)$, the map $\BD_\alpha\to H^*(\FM_{d, \chi})$ is surjective.
\end{enumerate}
\end{thm}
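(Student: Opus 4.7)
The plan is to prove the stacky statement (ii) first and then deduce the coprime statement (i) from the gerbe structure. For (ii), the basic idea is to realize $\FM_{d,\chi}$ as an open substack of a smooth ambient quotient stack $[R/G]$ whose cohomology is visibly generated by Chern classes of tautological bundles, and then to verify that the open restriction is surjective on cohomology.

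Concretely, I would use a Beilinson-type presentation on $\BP^2$: for $n \gg 0$, every $F \in \FM_{d,\chi}$ has $F(n)$ Castelnuovo--Mumford regular, producing a three-term complex of direct sums of $\CO(-1), \CO, \CO(1)$ whose only cohomology sheaf is $F(n)$. This realizes $\FM_{d,\chi}$ as an open substack of a quiver-like moduli stack $\FX=[R/G]$, where $R$ is a smooth $G$-scheme and $G$ is a product of general linear groups. Equivalently, one can use a Quot-scheme presentation of semistable sheaves with a fixed twist and take the $GL_N$-quotient. The cohomology $H^*(\FX)=H^*_G(R)$ is tautologically generated by Chern classes pulled back from $BG$, and via Grothendieck--Riemann--Roch applied to the universal complex on $\FX \times \BP^2$, these generators are polynomial combinations of the descendents $\ch_i(\gamma) \in \BD_\alpha$. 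Surjectivity of the restriction $H^*(\FX) \twoheadrightarrow H^*(\FM_{d,\chi})$ should follow from an Atiyah--Bott/Kirwan/HKKN equivariantly perfect stratification of the unstable locus $R \setminus R^{ss}$.

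For (i), when $\gcd(d,\chi)=1$ the good moduli map $\pi\colon \FM_{d,\chi} \to M_{d,\chi}$ is a trivial $\BBGm$-gerbe, giving the ring decomposition \eqref{Gm_gerbe}. Under this decomposition, the subalgebra $\BD_{\alpha,\inv} = \ker \bR_{-1}$ of scaling-invariant descendents is sent into the factor $H^*(M_{d,\chi}) \otimes 1$, which is exactly the cohomology of the coarse space. Surjectivity from (ii) therefore restricts to give surjectivity in (i).

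The main obstacle is the surjectivity step $H^*(\FX) \twoheadrightarrow H^*(\FM_{d,\chi})$. Equivariantly perfect stratifications are standard for GIT problems over affine $R$, but here one must set up the Beilinson/Quot presentation so that the ambient $R$ is sufficiently tractable, and translate slope semistability of one-dimensional sheaves into the corresponding numerical stability condition on the linear algebra side carefully enough to identify and control the instability strata. A cleaner alternative, which I would pursue in parallel, is to invoke general results on the cohomology of smooth stacks of objects in sufficiently tame abelian categories (along the CoHA/BPS lines developed by Davison and collaborators), which yield tautological generation directly and bypass the explicit HKKN analysis; this is also the route most compatible with the purity and Chow-to-cohomology statements mentioned in the remark following the theorem.
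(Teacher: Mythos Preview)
Your approach is correct but genuinely different from the paper's. For (ii), the paper does \emph{not} embed $\FM_{d,\chi}$ in an ambient quotient stack and invoke Kirwan surjectivity; instead, it approximates $\FM_{d,\chi}$ by smooth projective moduli spaces $P^N_{d,\chi}$ of Joyce--Song (limit stable) pairs. Concretely, the total space of the vector bundle $\HHom^N_{d,\chi}=p_\ast\HHom(q^\ast\CO(-N),\CF)$ over $\FM_{d,\chi}$ contains $P^N_{d,\chi}$ as an open substack whose complement has codimension growing with $N$, so $H^{\leq 2i}(\FM_{d,\chi})\simeq H^{\leq 2i}(P^N_{d,\chi})$ for $N\gg 0$ by homotopy invariance and excision. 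Tautological generation for $P^N_{d,\chi}$ is then obtained by writing down an explicit Chow--K\"unneth decomposition of the diagonal: one shows that $\RHom_p(\CS_1^\bullet,\CF_2)$ is a vector bundle on $P\times P$ with a tautological section cutting out $\Delta$, so $[\Delta]=c_{\dim P}(\RHom_p(\CS_1^\bullet,\CF_2))$ expands in descendents. This simultaneously yields purity and the Chow-to-cohomology isomorphism, which the paper needs for its BPS integrality arguments; your HKKN route would give tautological generation (and purity with more work on the strata) but does not immediately deliver the cycle class isomorphism. For (i), your deduction from (ii) via the trivial $\BBGm$-gerbe and the identification $\BD_\alpha\simeq\BD_{\alpha,\inv}[u]$ is correct and matches Lemma~\ref{lem: idealcoarse}; the paper instead cites the earlier result \cite[Theorem~0.2(a)]{PS}, which proves it directly on the smooth projective $M_{d,\chi}$.
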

The statement (i) for the moduli spaces follows from \cite[Theorem 0.2 (a)]{PS} together with the description of the generators $c_k(j)$ in terms of $\BD_{\alpha,\inv}$ that we will explain in Section \ref{sec: normalized class}. The statement (ii) for the stacks is new and is the content of Theorem \ref{thm: tautological generation}. 

\begin{defn}

Let $I_{d, \chi}$ and $\FI_{d, \chi}$ be the kernels of the maps 
    \[\BD_{\alpha,\inv}\to H^*(M_{d, \chi})\textup{ \,and\, }\BD_\alpha\to H^*(\FM_{d, \chi})\,,\]
    respectively.
\end{defn}

Studying these ideals is the main goal of this paper. We start with a basic property of the ideals on the stack by using the geometric interpretation of the operator $\bR_{-1}$.

\begin{prop}\label{prop: R-1geom}
    The operator $\bR_{-1}$ preserves the ideal $\FI_{d, \chi}$. Thus it descends to $H^*(\FM_{d, \chi})$, i.e. there exists a morphism completing the diagram
    \begin{center}
        \begin{tikzcd}\label{diagram: R-1geometric}
            \BD_\alpha\arrow[r, "\bR_{-1}"] \arrow[d, "\xi"]& \BD_\alpha\arrow[d, "\xi"]\\
            H^*(\FM_{d, \chi})\arrow[r, dashed]&H^*(\FM_{d, \chi}).
        \end{tikzcd}
    \end{center}
\end{prop}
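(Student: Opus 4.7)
The plan is to realize $\bR_{-1}$ geometrically as the first-order part of pullback along a natural ``tensor with a line bundle'' morphism
\[
\Phi \colon \FM_{d,\chi} \times B\BG_m \longrightarrow \FM_{d,\chi}
\]
defined on $T$-points by $(\CF, L) \mapsto \CF \otimes \pi_T^{\ast} L$, where $\pi_T \colon T \times \BP^2 \to T$ is the projection. Since the restriction of $\pi_T^{\ast} L$ to each fiber $\{t\} \times \BP^2$ is a trivial line bundle (as $L_t$ is a one-dimensional vector space), tensoring preserves the topological type $\alpha = (d,\chi)$ as well as the slope of every subsheaf, hence preserves semistability; the morphism is thus well-defined. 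By construction, the pullback of the tautological universal sheaf along $\Phi \times \id_{\BP^2}$ is canonically identified with $\CF \otimes L$ on $\FM_{d,\chi} \times B\BG_m \times \BP^2$, where we use the same symbol $L$ for the pulled-back universal line bundle from $B\BG_m$.

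The next step is to compute the induced map on descendents. Writing $u := c_1(L) \in H^2(B\BG_m)$, the multiplicativity $\ch(\CF \otimes L) = e^u \cdot \ch(\CF)$ and the projection formula yield
\[
\Phi^{\ast}(\ch_i(\gamma)) \;=\; \sum_{j \geq 0} \frac{u^j}{j!}\, \ch_{i-j}(\gamma) \quad \in H^{\ast}(\FM_{d,\chi})[u].
\]
On the other hand, since $\bR_{-1}$ is a derivation on $\BD_\alpha$, the operator $e^{u \bR_{-1}}$ is a well-defined $\BQ[u]$-algebra endomorphism of $\BD_\alpha[u]$. Comparing on generators, both $\Phi^{\ast} \circ \xi$ and $\xi \circ e^{u \bR_{-1}}$ are ring homomorphisms $\BD_\alpha \to H^{\ast}(\FM_{d,\chi})[u]$ agreeing on each $\ch_i(\gamma)$, so they agree on all of $\BD_\alpha$:
\[
\Phi^{\ast}(\xi(f)) \;=\; \sum_{j \geq 0} \frac{u^j}{j!}\, \xi(\bR_{-1}^{j} f) \qquad \text{for every } f \in \BD_\alpha.
\]

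To conclude, for any $f \in \FI_{d,\chi}$ the left-hand side vanishes, and by the K\"unneth decomposition $H^{\ast}(\FM_{d,\chi} \times B\BG_m) \cong H^{\ast}(\FM_{d,\chi})[u]$ each coefficient of $u^j$ independently vanishes; in particular $\xi(\bR_{-1} f) = 0$, so $\bR_{-1}$ preserves $\FI_{d,\chi}$ (and in fact each of its powers does). Because $\bR_{-1}$ is a derivation, the induced operator on the quotient $\BD_\alpha/\FI_{d,\chi} \cong H^{\ast}(\FM_{d,\chi})$, via the surjection from Theorem \ref{thm: tautgenerated}(ii), is precisely the dashed arrow completing the diagram.

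The only genuinely delicate point --- and where I expect the main work to lie --- is the clean stacky construction of $\Phi$ together with the canonical identification of the pullback of the universal sheaf; once this geometric input is in place, the remainder is a formal manipulation of Chern characters plus a linearity argument in the formal variable $u$.
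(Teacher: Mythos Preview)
Your proposal is correct and takes essentially the same approach as the paper: the map $\Phi$ you construct is precisely the $B\BG_m$-action $\mathrm{act}\colon \FM_{d,\chi}\times B\BG_m\to \FM_{d,\chi}$ that the paper uses, and your extraction of the $u^1$-coefficient of $\Phi^\ast$ is exactly the paper's operator $[\zeta^1]\circ\mathrm{act}^\ast$. The paper defers the Chern-character computation $\Phi^\ast(\xi(f))=\sum_{j\geq 0}\frac{u^j}{j!}\xi(\bR_{-1}^j f)$ to \cite[Lemma 4.9]{BLM}, whereas you have spelled it out directly; the ``delicate point'' you flag about constructing $\Phi$ is in fact standard and not where the work lies.
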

\begin{proof}
    We show this by constructing the operator on the cohomology of the stack that makes the diagram commute. There is a $B\BG_m$-action 
    \[\mathrm{act}\colon \FM_{d, \chi}\times B\BG_m\to  \FM_{d, \chi}\]
    induced by the $\BG_m$-automorphisms of sheaves. Then we define the map
    \[H^*(\FM_{d, \chi})\xlongrightarrow{\mathrm{act}^\ast} H^*(\FM_{d, \chi}\times B\BG_m)\cong H^*(\FM_{d,\chi})[\zeta]\xlongrightarrow{[\zeta^1]} H^*(\FM_{d, \chi}) \]
    where the last map takes the $\zeta^1$ coefficient of a polynomial in $\zeta$. The argument in \cite[Lemma 4.9]{BLM} shows that this makes the diagram in Proposition \ref{diagram: R-1geometric} commute. \qedhere
\end{proof}

\subsection{Normalized universal sheaves and generators.}

\label{sec: normalized class}

Although the algebra of weight zero descendents is a very useful object from a conceptual point of view, it is not so easy to work with in concrete computations, for instance to describe explicitly the ideal $I_{d, \chi}$ for small $d$. 

Instead, we can normalize the universal sheaf on $M_{d,\chi} \times \mathbb{P}^2$, which is the approach taken in \cite{PS}. There, the authors take an arbitrary universal sheaf $\BF$ and modify it by
\[\BF^{\textup{norm}}=\BF\otimes e^{A_\BF}\]
where $A_\BF\in H^2(M_{d,\chi}\times \mathbb{P}^2, \BQ)$ is chosen in a way such that $\BF^{\textup{norm}}$ becomes independent of $\BF$. More precisely, we have
\begin{defn}\label{defprof: cij}
    Given a universal sheaf $\BF$, let $A_\BF\in H^2(M_{d,\chi}\times \mathbb{P}^2, \BQ)$ be the class in \cite[Lemma 2.5]{KPS} and define the \textit{normalized classes}
    \begin{equation}
    \label{eq: normalized class}
    c_k(j):=p_\ast \big(\ch_{k+1}(\BF^{\textup{norm}})\cdot q^\ast H^j\big)\in H^{2k+2j-2}(M_{d, \chi}), \quad j \in \{0,1,2\}
    \end{equation}
    where $\ch_{k+1}(\BF^{\textup{norm}})$ is the degree $2k+2$ part of $\ch(\BF)\cdot e^{A_\BF}$. 
    \end{defn}

    The class $A_\mathbb{F}$ is the unique class such that the following holds \cite[Proposition 1.3]{PS}:
    \begin{equation}\label{eq: normalization}c_1(0)=0\in H^0(M_{d, \chi})\textup{ \,and\, }c_1(1)=0\in H^2(M_{d, \chi})\,.\end{equation}
    Moreover, the normalized universal sheaf $\BF^\mathrm{norm}$, and hence the classes $c_k(j)$, does not depend on the choice of $\BF$. 

    \smallskip
    
    These normalized tautological classes have nice properties established in \cite{PS, KPS, Yuan2}. Before stating them, we introduce two more structures on the moduli space $M_{d,\chi}$. First, it admits a proper and flat \textit{Hilbert--Chow map} 
    \begin{equation}
    \label{hilbert-chow}
    h: M_{d,\chi} \to |d\cdot H| = \mathbb{P}^b, \quad \CF \mapsto \mathrm{supp}(\CF)
    \end{equation}
    sending a sheaf to its Fitting support, where 
    \[
    b = \dim H^0(\mathbb{P}^2, \mathcal{O}(d))-1 = \frac{1}{2}d(d+3).
    \]
    This is a weakly abelian fibration in the sense of \cite{MS_chi-indep}, and a general fiber of $h$ is a Picard variety whose dimension is $g=\frac{1}{2}(d-1)(d-2)$. Second, the moduli space $M_{d,\chi}$ (and similarly the stacks $\FM_{d,\chi}$) admits two types of symmetry:  
    \begin{enumerate}
    \item[(a)] The first type is given by the isomorphism
    \[
    \psi_1 : M_{d,\chi} \xrightarrow{\sim} M_{d,\chi+d}, \;\; \mathcal{F} \mapsto \mathcal{F} \otimes \mathcal{O}_{\mathbb{P}^2}(1).
    \]
    \item[(b)] The second type \cite[Theorem 13]{Mai10} is given by the duality isomorphism
    \[
    \psi_2 : M_{d,\chi} \xrightarrow{\sim} M_{d,-\chi}, \;\; \mathcal{F} \mapsto \CE \kern -1.5 pt \mathit{xt}^1(\CF, \omega_{\BP^2}).
    \]
    \end{enumerate}

The following $\chi$-dependence theorem is proved in \cite{LMP}. It provides context to our main results and contrast with the $\chi$-independence of the (intersection) Poincaré polynomials of $M_{d,\chi}$ as in \eqref{eqn: MS_chi_indep}.

\begin{thm}\label{chi-dep}
    For $d\geq 1$ and $\chi, \chi'$ coprime to $d$, there is an isomorphism of graded $\mathbb{Q}$-algebras
    \[
    H^*(M_{d,\chi}) \simeq H^*(M_{d,\chi'})
    \]
    if and only if $\chi$ and $\chi'$ are related by the two symmetries above.
\end{thm}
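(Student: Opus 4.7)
The ``if'' direction is immediate: both $\psi_1$ and $\psi_2$ are isomorphisms of varieties, and their pullbacks automatically yield graded ring isomorphisms on cohomology. The content of the theorem lies in the converse. The symmetries act on the integer label $\chi$ by $\chi \mapsto \chi+d$ and $\chi \mapsto -\chi$, an infinite dihedral action whose orbits on coprime residues in $\BZ/d\BZ$ are $\{\pm \bar\chi\}$; so the task is, for any two coprime $\chi,\chi'$ in distinct orbits, to exhibit a graded-ring invariant distinguishing $H^*(M_{d,\chi})$ from $H^*(M_{d,\chi'})$. Since Maulik--Shen's $\chi$-independence \eqref{eqn: MS_chi_indep} forces all graded vector-space invariants of the two rings to agree, any distinguishing invariant must genuinely use multiplication.

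My primary strategy would be to extract such an invariant from the geometric relations of Section \ref{sec: relations}. While the Mumford relations $\mr$ depend on the topological type in a rather symmetric way, the generalized Mumford relations $\gmr$ arise from numerical partitions $(d_1,\chi_1)+(d_2,\chi_2)=(d,\chi)$ with slope inequality $\chi_1/d_1 < \chi/d$, and the combinatorial set of admissible destabilizing slopes is genuinely $\chi$-dependent: it is preserved by $\psi_1^*,\psi_2^*$ up to canonical relabeling, but differs across orbits. The plan is to study the three-step filtration $\mr\text{-span}\subset (\mr+\gmr)\text{-span}\subset I_{d,\chi}$ inside the ideal of tautological relations, and to read off the minimal cohomological degrees at which the inclusions are strict, together with the dimensions of the successive quotients. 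These are \emph{ring-theoretic} invariants because, by Theorem \ref{thm: Virasoro_intro}, the sub-ideals $\mr$ and $\gmr$ can be recovered intrinsically inside $I_{d,\chi}$ as the submodules generated by the corresponding primitive relations under the Virasoro action. A related concrete invariant is the minimal polynomial of the base class $c_1(2)\in H^2(M_{d,\chi})$, whose coefficients encode the $\gmr$-produced relations.

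The hard part will be showing that the $\gmr$-based numerical invariants above actually vary across orbits, rather than merely that the combinatorial input does. Heuristically this should follow from the $P=C$ conjecture of Section \ref{sec: P=C}: under it, the Chern filtration $C_\bullet$ --- a ring-theoretic object generated by the $c_k(j)$ --- matches the perverse filtration $P_\bullet$, which encodes the refined Gopakumar--Vafa invariants and is therefore $\chi$-sensitive. As a pragmatic fallback, and in the spirit of Theorem \ref{thm: rings}, one can always verify the theorem in bounded degree by comparing explicit ring presentations: for $d=5$, the coprime orbits $\{1,4\}$ and $\{2,3\}$ mod $5$ give the first non-trivial test, and a direct comparison of annihilators of specific generators $c_k(j)$ settles the question. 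Bootstrapping such bounded-$d$ computer checks into a uniform combinatorial argument valid in all $d$ is the main conceptual difficulty.
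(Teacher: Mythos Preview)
Your proposal has a fundamental gap: none of the ``invariants'' you propose are actually invariants of the abstract graded ring. The sub-ideals $\FI^\mr$ and $\FI^\gmr$, the Virasoro operators $\bR_n$, the Chern filtration $C_\bullet$, and the specific classes $c_k(j)$ are all defined via the tautological/geometric structure --- the universal sheaf and the realization map $\xi$. An abstract graded $\BQ$-algebra isomorphism $H^*(M_{d,\chi})\to H^*(M_{d,\chi'})$ has no obligation to intertwine Virasoro operators or send tautological classes to tautological classes, so there is no reason it should preserve your filtration $\mr\subset \mr+\gmr\subset I_{d,\chi}$ or the ``minimal polynomial of $c_0(2)$'' (you wrote $c_1(2)$, which lies in $H^4$, not $H^2$). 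Theorem \ref{thm: Virasoro_intro} says the Virasoro action preserves the geometric ideals, but the Virasoro action itself is not intrinsic to the ring.

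Your $P=C$ heuristic is also backwards: the perverse numbers $\dim\mathrm{gr}_i^P H^{i+j}(M_{d,\chi})$ are $\chi$-\emph{independent} by Maulik--Shen \cite{MS_chi-indep}, so the refined GV invariants cannot distinguish the rings. Under $P=C$, this would force the Chern graded dimensions to be $\chi$-independent as well, which is exactly the opposite of what you want.

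The paper does not prove this theorem; it cites \cite{LMP}. The hint in Remark \ref{rmk: GMR_11} indicates the actual mechanism: by Theorem \ref{generation}(ii), the first relations among the $3d-7$ normalized generators occur in degree $2d$, and there are exactly three of them. The approach of \cite{LMP} is to compute these three relations explicitly (they lie in $I^{\gmr,1,\chi'}_{d,\chi}$) and extract from their coefficients a genuine graded-ring invariant --- essentially a structure constant of the multiplication map $H^2\otimes H^{2d-2}\to H^{2d}$, which \emph{is} intrinsic --- that varies with the orbit of $\chi$. The work is in carrying out this computation uniformly in $d$, not in any bootstrapping from small cases.
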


\smallskip

Now we return to the normalized tautological classes $c_k(j)$.

\begin{prop}[{\cite[Section 1.2]{PS}}] With the above notations,

\label{prop: c_kj_property}

    \begin{enumerate}
    \item[\textup{(i)}] The class $c_0(2)$ is the pull-back of the hyperplane class via $h$, and $c_2(0)$ is relatively ample. These two classes span $H^2(M_{d,\chi}, \mathbb{Q})$ for $d\geq 3$.
    \item[\textup{(ii)}] We have
    \[
    \psi_1^* c_k(j) = c_k(j), \quad \psi_2^* c_k(j) = (-1)^kc_k(j).
    \] 
    \end{enumerate}
\end{prop}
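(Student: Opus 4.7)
\emph{Proof sketch.} The plan is to treat parts (i) and (ii) separately, each time exploiting the characterization \eqref{eq: normalization} of the normalized universal sheaf $\BF^{\mathrm{norm}}$.

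For part (i), the identification of $c_0(2)$ is short: since $\BF$ is one-dimensional, $\ch_0(\BF) = 0$, so
\[
\ch_1(\BF^{\mathrm{norm}}) = \ch_1(\BF) + \ch_0(\BF) \cdot A_{\BF} = c_1(\BF)
\]
is independent of the normalization. The Fitting support of $\BF$ is a relative effective Cartier divisor of class $dH$ in $M_{d,\chi} \times \BP^2$, whose classifying map to $|dH|$ is by construction the Hilbert--Chow map $h$; a Künneth computation then identifies $c_0(2) = p_*\bigl(c_1(\BF) \cdot q^* H^2\bigr)$ with $h^* [\text{hyperplane}]$. For relative ampleness of $c_2(0) = p_* \ch_3(\BF^{\mathrm{norm}})$, I would match this descendent (up to a positive multiple) with the first Chern class of a Donaldson-type determinant line bundle---precisely the polarization used in the Simpson--Le Potier GIT construction of $M_{d,\chi}$---so that its restriction to any (possibly singular) fiber of $h$ gives a multiple of the theta polarization on a compactified Jacobian, which is ample. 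Given $\dim H^2(M_{d,\chi}) = 2$ for $d \geq 3$ (readable from BPS integrality, cf.\ Section \ref{sec: BPS intro}), the two classes span $H^2$ because $c_0(2)$ vanishes on a fiber of $h$ while $c_2(0)$ does not, ensuring linear independence.

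For part (ii), the key tool is the canonicity of $\BF^{\mathrm{norm}}$. For $\psi_1$, the pullback of the normalized universal sheaf on $M_{d,\chi+d}$ must equal $(\BF \otimes q^* \mathcal{O}(1))^{\mathrm{norm}}$ for any universal sheaf $\BF$ on $M_{d,\chi} \times \BP^2$. Writing
\[
(\BF \otimes q^* \mathcal{O}(1))^{\mathrm{norm}} = \BF^{\mathrm{norm}} \otimes q^*\bigl(\mathcal{O}(1) \cdot e^{\beta H}\bigr)
\]
and imposing $c_1(0) = c_1(1) = 0$ on the left-hand side, a short Künneth calculation pins down $\beta = -1$; the resulting multiplicative twist $e^H \cdot e^{-H}$ is trivial in Chern character, so the Chern characters of $(\BF \otimes q^* \mathcal{O}(1))^{\mathrm{norm}}$ and $\BF^{\mathrm{norm}}$ agree, and $\psi_1^* c_k(j) = c_k(j)$ follows directly from the definition of $c_k(j)$. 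For $\psi_2$, the analogous identity reads $(\psi_2 \times \mathrm{id})^* \BG^{\mathrm{norm}} = \mathcal{E}xt^1(\BF, q^* \omega_{\BP^2})^{\mathrm{norm}}$. Using derived duality $\ch_k(\BF^\vee) = (-1)^k \ch_k(\BF)$, together with the $\omega_{\BP^2}$-twist and the shift $[1]$ (both of which affect only the $\BP^2$-direction), the resulting $q^*$-corrections are absorbed by the uniquely determined normalization, and the net effect on the degree $k{+}1$ Chern character, after pairing with $q^* H^j$ and pushing forward, is multiplication by $(-1)^k$.

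The main obstacle, as I see it, lies in establishing the relative ampleness of $c_2(0)$: matching this descendent with an ample determinant line bundle uniformly across all fibers of $h$---including singular compactified Jacobians---requires the GIT input of Simpson--Le Potier and careful bookkeeping of the normalization. The sign computation for $\psi_2$ is also delicate, since both the derived shift $[1]$ and the Serre twist by $\omega_{\BP^2}$ individually contribute signs that must combine correctly after the normalization absorbs the $\BP^2$-direction corrections.
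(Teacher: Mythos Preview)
The paper does not give its own proof of this proposition; it is quoted from \cite[Section 1.2]{PS} without argument. Your sketch follows the same route as that reference and is essentially correct.

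A couple of refinements. Your appeal to BPS integrality for $\dim H^2(M_{d,\chi})=2$ is logically acceptable but anachronistic relative to \cite{PS}; there the Betti number is input from prior computations (e.g.\ \cite{lep93, CC}) and the proposition precedes any BPS discussion. For the $\psi_2$ sign, your outline can be made sharp by observing that $-\ch^\vee(\BF^{\mathrm{norm}})$ already satisfies the two normalization conditions \eqref{eq: normalization}, since $\ch_2$ is fixed under $\ch\mapsto -\ch^\vee$; by uniqueness of the normalizing class $A$, the normalized Chern character of $\mathcal{E}xt^1(\BF,q^\ast\omega_{\BP^2})$ is therefore exactly $-\ch^\vee(\BF^{\mathrm{norm}})$, and the degree-$(k{+}1)$ piece picks up the factor $-(-1)^{k+1}=(-1)^k$. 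This confirms that the shift $[1]$ and the derived dual combine to the stated sign, with the $\omega_{\BP^2}$-twist (a $q^\ast$-class) absorbed by the normalization just as you say. The relative ampleness of $c_2(0)$ is, as you correctly flag, the one step requiring genuine GIT input from \cite{simp94, lep93}.
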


\begin{thm}[{\cite{PS, Yuan2}}]\label{generation} Assume $d \geq 3$. We have:
\begin{enumerate}
    \item[\textup{(i)}] $H^*(M_{d,\chi})$ is generated as a $\BQ$-algebra by the $3d-7$ classes\footnote{We call these $3d-7$ classes \textit{normalized tautological generators.}} of degrees $\leq 2d-4$:
    \[
   c_0(2),\, c_2(0) \in H^2(M_{d,\chi}), \;\;  c_{k}(0),\, c_{k-1}(1),\, c_{k-2}(2) \in H^{2k-2}(M_{d,\chi}),\;\; 3\leq k \leq d-1.
    \]
    \item[\textup{(ii)}] There are no relations among these $3d-7$ classes in degrees $\leq 2d-2$, and exactly three linearly independent relations in degree $2d$ if $d\geq 5$.
    
\end{enumerate}

\end{thm}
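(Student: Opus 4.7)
The plan is to combine Theorem~\ref{thm: tautgenerated}(i) with the Mumford-type relations of Section~\ref{sec: relations} and a Poincar\'e series comparison fed by the BPS integrality of Section~\ref{sec: BPS intro}.

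For part (i), I would start from Theorem~\ref{thm: tautgenerated}(i), which gives a surjection $\BD_{\alpha,\inv}\twoheadrightarrow H^*(M_{d,\chi})$ onto the subring generated by the $c_k(j)$. By the normalization \eqref{eq: normalization} together with degree reasons, $c_0(0)=c_1(0)=c_1(1)=0$ and $c_0(1)\in H^0$ is the constant $d$; classes $\ch_i(H^j)$ with $j\geq 3$ vanish because $H^3=0$ on $\BP^2$. The content of (i) thus reduces to expressing every $c_k(j)$ with $k+j\geq d$ as a polynomial in the listed $3d-7$ classes. For this I would invoke the Mumford-type relations: for $\ell\gg 0$, the pushforward of $\BF^{\textup{norm}}(\ell H)$ to $M_{d,\chi}$ is a vector bundle of rank $d\ell+\chi$, so its Chern classes above that rank vanish. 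Expanding these vanishings via Grothendieck--Riemann--Roch yields polynomial expressions for the higher $c_k(j)$ in terms of lower ones, and a double induction (first on $k+j$, then on $j$) allows one to eliminate every class outside the listed range.

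For part (ii), the cleanest route is a dimension comparison. On one side, the free graded-commutative $\BQ$-algebra $A$ on the $3d-7$ prescribed generators has an explicit Hilbert series that can be written down by hand. On the other, the BPS integrality formula from Section~\ref{sec: BPS intro}, combined with the $\chi$-independence \eqref{eqn: MS_chi_indep} and the Poincar\'e polynomials of $M_{d',1}$ for $d'\leq d$, determines the Poincar\'e series of $H^*(M_{d,\chi})$. Matching these two series dimension by dimension shows that the surjection $A\twoheadrightarrow H^*(M_{d,\chi})$ from part (i) is an isomorphism in degrees $\leq 2d-2$ and has a three-dimensional kernel in degree $2d$. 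To exhibit three explicit independent relations, I would pick three of the lowest-degree Mumford relations, naturally parametrized by the choice of cap class $H^j$ with $j\in\{0,1,2\}$, so that their leading terms $c_d(0),\, c_{d-1}(1),\, c_{d-2}(2)$ are linearly independent modulo decomposables.

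The main obstacle is the explicit elimination in (i): one must verify that iterated Mumford relations genuinely reduce every $c_k(j)$ with $k+j\geq d$ to a polynomial in the $3d-7$ listed classes, \emph{without} creating any relation in degrees $\leq 2d-2$. Equivalently, one must ensure that the first relations occur in degree $2d$ and are exactly three in number, a count that ultimately rests on the BPS-derived Poincar\'e series. Controlling this interplay between the normalization \eqref{eq: normalization}, the Mumford relations, and the BPS dimension count is where the real work lies; the exhibition of the three degree-$2d$ relations is then a bookkeeping check on the leading terms of the three minimal Mumford relations.
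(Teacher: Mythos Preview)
This theorem is not proved in the present paper; it is cited from \cite{PS, Yuan2} and used as an \emph{input} elsewhere (for instance, the fourth equality in the proof of Theorem~\ref{thm: free generation in low degree} invokes it). So there is no in-paper proof to compare against, but the paper does indicate how the cited arguments go, and your proposal diverges from them in two substantive ways.

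For part (i), Remark~\ref{rmk: GMR_11} records that the key step in \cite{PS} uses the \emph{generalized} Mumford relations $I^{\gmr,1,\chi'}_{d,\chi}$, coming from the vector bundle $\RHom_p(\CF,\CF')[1]$ with $(d',\chi')=(1,\chi')$, not the ordinary Mumford relations from $Rp_*\CF(\ell)$ that you invoke. This matters quantitatively: the ordinary Mumford bundle has rank $\geq g=\frac{(d-1)(d-2)}{2}$ (Proposition~\ref{prop: vanishingMR}), so its first vanishing Chern class lands in cohomological degree $\geq 2g+2$, which is quadratic in $d$ and far above the range $2d-2$ where you need to eliminate $c_d(0),\,c_{d-1}(1),\,c_{d-2}(2)$. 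By contrast, the generalized Mumford bundle with $d'=1$ has rank $d\cdot 1=d$, producing relations beginning exactly where they are needed. Your double induction cannot get started with the tool you chose.

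For part (ii), your appeal to BPS integrality is circular within this paper's framework. When $\gcd(d,\chi)=1$, the lattice $\Lambda_\mu$ for $\mu=\chi/d$ contains only multiples of $(d,\chi)$, so the degree-$d$ piece of \eqref{eq: BPS introduction} collapses to the $\BG_m$-gerbe relation $\overline E(\FM_{d,\chi})=\frac{-q^{1/2}}{1-q}\,\overline{\mathit{IE}}(M_{d,\chi})$ and yields no independent information about $E(M_{d,\chi})$; there are no smaller $d'$ entering. Indeed, the paper runs the logic in the opposite direction: the proof of Theorem~\ref{thm: free generation in low degree} uses the freeness statement (ii) for $M_{d,1}$ as an input to deduce the stacky analogue. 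The Poincar\'e polynomials of $M_{d,\chi}$ are treated here as external data (computed in \cite{CC,CC2,Yuan3} by wall-crossing, and in \cite{Yuan2} for the specific low-degree Betti numbers needed for (ii)); they are not outputs of the BPS machinery.
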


\begin{rmk}
\label{rmk: normalized_class}
    One reason we introduce the normalized classes $c_k(j)$ is that the vanishing of $c_1(0)$ and $c_1(1)$ simplifies computations and we will use them to present our cohomology rings. A more important motivation is that this choice of normalization gives the correct formulation of the local $P=C$ conjecture; see \cite[Proposition 1.2]{KPS}. 
    In fact, the choice of normalization in the definition of $c_k(j)$ has been influenced by the $P=W$ literature, where similar normalized classes on the Dolbeault  moduli spaces are denoted by $c_k(\gamma)$, see for example \cite{ MS_P=W}. A more canonical and normalization-free definition of the Chern filtration will be given in Section \ref{sec: chern filtration}.
    
    
    
\end{rmk}

We now proceed to understand the generators $c_k(j)$ in terms of the canonical map $\BD_{\alpha,\inv}\to H^*(M_{d, \chi})$. The class $A_\BF$ can be decomposed as a sum of classes in $H^2(M_{d, \chi})$ and $H^2(\mathbb{P}^2)$; we study the effect of twisting by a line bundle on $M_{d, \chi}$ and on $\mathbb{P}^2$ separately, following ideas in \cite[Sections 2.5 and 2.7]{BLM}.

\subsubsection{Twist by line bundles on $M_{d,\chi}$.} Let $\alpha = (d,\chi) \in K(\BP^2)$ and let $\delta=H/d\in H^*(\mathbb{P}^2)$. This choice has the property that
\[\ch_0(\delta)=\int_{\BP^2} \ch(\alpha)\cdot \delta=1 \in \BD_\alpha\,.\]
We make the following definition:
\begin{defn}
    For any universal sheaf $\BF$, we let 
    \[\BF_\delta=\BF\otimes e^{-\xi_\BF(\ch_1(\delta))}\,.\]
    We call $\BF_\delta$ the $\delta$-normalized universal sheaf, which is characterized by the property that
    \[\xi_{\BF_\delta}(\ch_1(\delta))=0\,.\]
\end{defn}
Note that $\BF_\delta$ does not depend on the initial choice of $\BF$. Indeed, if $\BF'=\BF\otimes p^\ast L$ then 
\[\BF'\otimes e^{-\xi_{\BF'}(\ch_1(\delta))}=\BF'\otimes  e^{-\xi_{\BF}(\ch_1(\delta))-c_1(L)\xi_{\BF}(\ch_0(\delta))}=\BF\otimes e^{-\xi_{\BF}(\ch_1(\delta))}\,.\]
The effect of normalizing on descendents is controlled by the operator 
\[\eta=\eta_\delta\colon\, \BD_\alpha\to \BD_\alpha\]
given by
\[\eta=\sum_{j\geq 0}\frac{(-1)^j}{j!}\ch_1(\delta)^j\bR_{-1}^j\,.\]
\begin{prop}\label{prop: propertieseta}
    Let $\alpha\in K(\mathbb{P}^2)$ and $\delta\in H^*(\mathbb{P}^2)$ be such that $\int_{\mathbb{P}^2} \ch(\alpha)\cdot \delta=1$, and let $\eta=\eta_\delta$ be the operator above. We have the following:
    \begin{enumerate}
        \item[\textup{(i)}] $\bR_{-1}\circ \eta=0$, so
\[\eta\colon \BD_\alpha\to \BD_{\alpha, \inv}\,.\]
\item[\textup{(ii)}] $\eta$ is an algebra homomorphism.
\item[\textup{(iii)}] $\eta$ is  the identity when restricted to $\BD_{\alpha, \inv}\subset \BD_\alpha$, and $\eta(\ch_1(\delta))=0$.
\item[\textup{(iv)}] The composition 
\[\BD_\alpha\xlongrightarrow{\eta} \BD_{\alpha, \inv}\longrightarrow H^*(M)\]
is precisely $\xi_{\BF_\delta}$. 
    \end{enumerate}
\end{prop}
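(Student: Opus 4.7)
The plan is to handle parts (i)--(iii) by purely algebraic manipulation in $\BD_\alpha$, relying on the key identity $\bR_{-1}(\ch_1(\delta))=\ch_0(\delta)=1$, which holds in $\BD_\alpha$ by the defining relation and the hypothesis $\int_{\BP^2}\ch(\alpha)\cdot\delta=1$. Part (iv) is then obtained by a direct comparison of $\xi_{\BF_\delta}$ and $\xi_\BF\circ\eta$ on the descendent generators.

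For (i), since $\bR_{-1}$ is a derivation with $\bR_{-1}(\ch_1(\delta))=1$, I would first note that $\bR_{-1}(\ch_1(\delta)^j)=j\,\ch_1(\delta)^{j-1}$. Expanding $\bR_{-1}\circ\eta$ then produces two sums
\[\sum_{j\geq 1}\frac{(-1)^j}{(j-1)!}\ch_1(\delta)^{j-1}\bR_{-1}^j \;+\; \sum_{j\geq 0}\frac{(-1)^j}{j!}\ch_1(\delta)^j\bR_{-1}^{j+1},\]
which cancel after reindexing the second by $j\mapsto j-1$. For (ii), I would apply the Leibniz formula $\bR_{-1}^j(ab)=\sum_{k=0}^{j}\binom{j}{k}\bR_{-1}^k(a)\bR_{-1}^{j-k}(b)$, substitute into $\eta(ab)$, and rewrite the resulting double sum by introducing $l:=j-k$ to factor it cleanly as $\eta(a)\eta(b)$. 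For (iii), the identity $\bR_{-1}(a)=0$ for $a\in\BD_{\alpha,\inv}$ collapses the defining series of $\eta$ to the $j=0$ term, giving $\eta(a)=a$; for $\ch_1(\delta)$ itself only $j=0,1$ contribute (since $\bR_{-1}^{j}(\ch_1(\delta))=0$ for $j\geq 2$), and these two terms cancel.

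For (iv), the essential computation is on generators. Setting $\lambda:=\xi_\BF(\ch_1(\delta))\in H^2(M)$, we have $\ch(\BF_\delta)=\ch(\BF)\cdot e^{-p^*\lambda}$. The projection formula gives
\[p_*\bigl(\ch(\BF_\delta)\cdot q^*\gamma\bigr)=e^{-\lambda}\cdot p_*\bigl(\ch(\BF)\cdot q^*\gamma\bigr),\]
and extracting the degree $2i$ part yields
\[\xi_{\BF_\delta}(\ch_i(\gamma))=\sum_{l\geq 0}\frac{(-\lambda)^l}{l!}\,\xi_\BF(\ch_{i-l}(\gamma))=\xi_\BF\!\left(\sum_{l\geq 0}\frac{(-1)^l}{l!}\ch_1(\delta)^l\bR_{-1}^l(\ch_i(\gamma))\right)=\xi_\BF(\eta(\ch_i(\gamma))).\]
Since both $\xi_{\BF_\delta}$ and $\xi_\BF\circ\eta$ are algebra homomorphisms on $\BD_\alpha$ (using (ii)), equality on the generators $\ch_i(\gamma)$ extends to all of $\BD_\alpha$. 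The canonical map $\BD_{\alpha,\inv}\to H^*(M)$ coincides with $\xi_\BF|_{\BD_{\alpha,\inv}}$ by the discussion preceding Theorem~\ref{thm: tautgenerated}, identifying the composition in (iv) with $\xi_{\BF_\delta}$.

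The only delicate point is part (iv): one must keep track of both the internal grading on $\BD_\alpha$ and the cohomological grading on $M\times\BP^2$, and use that the twist $e^{-p^*\lambda}$ is pulled back from $M$ so that the projection formula applies cleanly. Parts (i)--(iii) are then formal, reflecting that $\eta$ is the formal exponential of the operator $-\ch_1(\delta)\cdot\bR_{-1}$, after which the computations become standard combinatorial rearrangements.
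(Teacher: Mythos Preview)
Your proof is correct and follows essentially the same approach as the paper's for parts (i)--(iii): the telescoping/commutator argument for (i), the general Leibniz rule for (ii), and the immediate verifications for (iii). For (iv) the paper simply invokes \cite[Lemma 2.8]{BLM} specialized to $\zeta=-\xi_{\BF}(\ch_1(\delta))$, whereas you unpack this lemma directly via the projection formula and the expansion of $e^{-p^*\lambda}$; your argument is thus a self-contained version of the same computation.
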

\begin{proof}
\begin{enumerate}
    \item[(i)] We have
    \[[\bR_{-1}, \ch_1(\delta)^j]=j[\bR_{-1}, \ch_1(\delta)]\ch_1(\delta)^{j-1}=j\ch_0(\delta)\ch_1(\delta)^{j-1}=j\ch_1(\delta)^{j-1}\]
    where $[-,-]$ denotes commutator of operators and we write $\ch_1(\delta)^j$ for the operator of multiplication by $\ch_1(\delta)^j\in \BD_\alpha$. After applying the commutator above, the sum $\bR_{-1}\circ \eta$ telescopes and we obtain zero.
    \item[(ii)] This is a formal consequence of the fact that $\bR_{-1}$ is a derivation. Indeed, by the general Leibniz rule,
    \begin{align*}
        \eta(D_1D_2)=\sum_{j\geq 0}\frac{(-1)^j}{j!}\ch_1(\delta)^j\sum_{s+t=j}\binom{j}{s}\bR_{-1}^s(D_1)\bR_{-1}^t(D_2)=\eta(D_1)\eta(D_2)\,.
    \end{align*}
  
    \item[(iii)] Both claims are immediate. 
    \item[(iv)] Apply \cite[Lemma 2.8]{BLM}, specializing to $\zeta=-\xi_{\BF}(\ch_1(\delta))$.\qedhere
\end{enumerate}
\end{proof}
The idea of $\delta$-normalization and the operator $\eta$ allow for a more concrete description of $\BD_{\alpha, \inv}$. By (i)-(iii) in Proposition \ref{prop: propertieseta}, $\eta$ induces an algebra homomorphism 
\[\eta\colon \BD_{\alpha}/\langle\ch_1(\delta)\rangle\to \BD_{\alpha, \inv}\,.\]
We show that this map is an isomorphism in the next proposition. For this, we define algebra homomorphisms
\[\phi\colon \BD_{\alpha, \inv}[u]\to \BD_\alpha\textup{ and } \varphi\colon \BD_\alpha\to \BD_{\alpha, \inv}[u]\]
as follows: $\phi$ restricted to $\BD_{\alpha, \inv}$ is the inclusion $\BD_{\alpha, \inv}\hookrightarrow \BD_{\alpha}$ and $\phi(u)=\ch_1(\delta)$, while
\[\varphi=\sum_{i\geq 0}\frac{u^i}{i!}\eta\circ \bR_{-1}^i\,.\]
The same proof that $\eta$ is an algebra homomorphism shows that $\varphi$ is as well.

\begin{prop}\label{prop: isowt0}
The homomorphisms $\phi$ and $\varphi$ are inverse isomorphisms. In particular, 
\[\eta\colon \BD_{\alpha}/\langle\ch_1(\delta)\rangle\to \BD_{\alpha, \inv}\,\]
is an isomorphism.
\end{prop}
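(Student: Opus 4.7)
\medskip

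\noindent\textbf{Proof proposal.} Since both $\phi$ and $\varphi$ are algebra homomorphisms by construction, it suffices to check the identities $\varphi\circ\phi=\id$ and $\phi\circ\varphi=\id$ on a set of generators. The plan is to verify these two identities directly by a short calculation with the explicit formula for $\varphi$, and then to deduce the description of $\BD_{\alpha,\inv}$ as a quotient of $\BD_\alpha$.

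For $\varphi\circ\phi=\id_{\BD_{\alpha,\inv}[u]}$ I would check on the two types of generators of $\BD_{\alpha,\inv}[u]$. On an element $D\in\BD_{\alpha,\inv}$ we have $\bR_{-1}(D)=0$, so all but the $i=0$ term of $\varphi(D)=\sum_{i\geq 0}\frac{u^i}{i!}\eta(\bR_{-1}^i D)$ vanish, and Proposition \ref{prop: propertieseta}(iii) gives $\eta(D)=D$. On the generator $u$, since $\phi(u)=\ch_1(\delta)$ and $\bR_{-1}\ch_1(\delta)=\ch_0(\delta)=1$, only the $i=0,1$ terms of $\varphi(\ch_1(\delta))$ survive; using $\eta(\ch_1(\delta))=0$ and $\eta(1)=1$ we obtain $\varphi(\phi(u))=u$.

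For $\phi\circ\varphi=\id_{\BD_\alpha}$ it suffices, since both sides are algebra homomorphisms, to check that $(\phi\circ\varphi)(D)=D$ for any $D\in\BD_\alpha$. Expanding $\eta$ inside the definition of $\varphi$ gives
\[
\phi(\varphi(D))=\sum_{i,j\geq 0}\frac{(-1)^j}{i!\,j!}\,\ch_1(\delta)^{i+j}\,\bR_{-1}^{i+j}(D),
\]
and setting $k=i+j$ this reorganizes as
\[
\phi(\varphi(D))=\sum_{k\geq 0}\frac{\ch_1(\delta)^k\bR_{-1}^k(D)}{k!}\sum_{j=0}^{k}\binom{k}{j}(-1)^j=\sum_{k\geq 0}\frac{\ch_1(\delta)^k\bR_{-1}^k(D)}{k!}(1-1)^k=D,
\]
since the inner binomial sum is $0$ for $k\geq 1$ and $1$ for $k=0$. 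This is the only place where a real computation happens; I expect no genuine obstacle, as the identity is a purely formal consequence of the derivation property of $\bR_{-1}$ already exploited in the proof of Proposition \ref{prop: propertieseta}(ii).

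Finally, for the ``in particular'' statement: the isomorphism $\phi\colon \BD_{\alpha,\inv}[u]\to \BD_\alpha$ sends the principal ideal $\langle u\rangle$ onto $\langle \ch_1(\delta)\rangle$, so it descends to an isomorphism $\BD_{\alpha,\inv}\xrightarrow{\sim}\BD_\alpha/\langle\ch_1(\delta)\rangle$. By Proposition \ref{prop: propertieseta}(i),(iii), the map $\eta$ factors through $\BD_\alpha/\langle\ch_1(\delta)\rangle$ and is the identity on the subalgebra $\BD_{\alpha,\inv}$; comparing with the already-established fact that $\varphi\circ\phi=\id$ (restricted to $\BD_{\alpha,\inv}\subset\BD_{\alpha,\inv}[u]$, i.e., at $u=0$) shows that the induced map $\eta\colon\BD_\alpha/\langle\ch_1(\delta)\rangle\to\BD_{\alpha,\inv}$ is the inverse of this descended $\phi$, hence an isomorphism.
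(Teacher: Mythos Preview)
Your proof is correct and follows essentially the same approach as the paper: both verify $\varphi\circ\phi=\id$ on the generators $\BD_{\alpha,\inv}$ and $u$ using Proposition~\ref{prop: propertieseta}(iii), both compute $\phi\circ\varphi=\id$ via the binomial cancellation $\sum_{j=0}^k\binom{k}{j}(-1)^j=0$ for $k\geq 1$, and both deduce the quotient description by setting $u\mapsto 0$ (equivalently $\ch_1(\delta)\mapsto 0$).
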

\begin{proof}
    We need to show that both $\varphi\circ \phi$ and $\phi\circ \varphi$ are the identity. By Proposition \ref{prop: propertieseta} (iii), $\varphi\circ \phi$ is the identity when restricted to $\BD_{\alpha, \inv}$. Moreover,
    \[\varphi(\phi(u))=\varphi(\ch_1(\delta))=\eta(\ch_1(\delta))+u\cdot \eta(\ch_0(\delta))=u\,.\]
    For $\phi\circ \varphi$ we have
    \[\phi\circ \varphi=\sum_{i\geq 0}\sum_{j\geq 0}\frac{\ch_1(\delta)^i}{i!}\frac{(-1)^j\ch_1(\delta)^j}{j!}\bR_{-1}^{i+j}=\textup{id}\]
    by the binomial formula. The statement for $\eta$ follows from the commutative diagram
    \begin{center}
        \begin{tikzcd}
          \BD_\alpha \arrow[d, two heads]  \arrow[r, "\varphi"] &\BD_{\alpha, \inv}[u]\arrow[d, two heads]\\
            \BD_\alpha/\langle\ch_1(\delta)\rangle\arrow[r, "\eta"]& \BD_{\alpha, \inv}\,
        \end{tikzcd}
    \end{center}
    where the arrow on the right sends $u$ to zero.
\end{proof}

\subsubsection{Twist by line bundles on $\mathbb{P}^2$.}\label{sec: twist by P2}
Given $\rho\in H^2(\mathbb{P}^2)$, let $\bF_\rho\colon \BD\to \BD$ be the algebra isomorphism given by
\[\bF_\rho\big(\ch_i(\gamma)\big)=\ch_i(e^\rho \gamma)\,.\]
Given any universal sheaf $\BF$ on $M_{d,\chi}\times \mathbb{P}^2$, we have
\[\xi_{\BF\otimes e^\rho}=\xi_\BF\circ \bF_\rho\,.\]
Note that $\bF_\rho$ commutes with $\bR_{-1}$, so it is also well-defined as a map $\BD_\inv\to \BD_\inv$. It can also be easily checked that $\bF_\rho$ induces 
\[\bF_\rho\colon \BD_{e^{\rho}\cdot\alpha}\to \BD_{\alpha}\textup{ \,and\, }\bF_\rho\colon \BD_{e^{\rho}\cdot\alpha,\inv}\to \BD_{\alpha,\inv}\,.\]
Indeed, 
\[\bF_{\rho}\left(\ch_0(\gamma)-\int_S \ch(e^\rho\cdot\alpha)\gamma\right)=\ch_0(e^\rho\gamma)-\int_S \ch(\alpha)e^\rho\gamma\]
is zero in $\BD_{\alpha}$.

\subsubsection{Normalized classes in terms of descendent algebra}
\label{sec: relations between classes}
We can now finally explain how the classes $c_k(j)$ can be described in terms of the descendent algebra. Let 
\[\delta=H/d\textup{ \;and\; }\rho=\left(\frac{3}{2}-\frac{\chi}{d}\right)H\,.\]
Note that the class $\rho$ satisfies
\[\int_{\BP^2} \ch(\alpha) \cdot e^\rho=0\,.\]
Consider the composition
\begin{center}
    \begin{tikzcd}
    \label{eq: compositioncij}
    \BD_{\alpha \cdot e^\rho}\arrow[r, "\eta_{\delta}"]&  \BD_{\alpha \cdot e^\rho, \inv}\arrow[r, "\bF_\rho"]&  \BD_{\alpha, \inv}\arrow[r, "\xi"]&H^*(M_{d,\chi})\,.
        \end{tikzcd}
    \end{center}
Then the normalized class $c_k(j)\in H^*(M_{d,\chi})$ can be obtained \textit{as the image of $\ch_{k+j-1}(H^j) \in \BD_{\alpha \cdot e^\rho}$ under the composition $\xi\circ \mathsf{F}_\rho \circ \eta_\delta$.}

To justify this, we check that the normalization (\ref{eq: normalization}) is satisfied. Indeed, $\ch_0(1)=0$ in $\BD_{\alpha\cdot e^\rho}$ due to our choice of $\rho$, and the composition annihilates $\ch_1(H)$ by Proposition \ref{prop: propertieseta} (iv). We will also write $c_k(j)$ for the image of $\ch_{k+j-1}(H^j)$ in $\BD_{\alpha, \inv}$, and Proposition \ref{prop: isowt0} identifies $\BD_{\alpha, \inv}$ with a polynomial ring in infinitely many variables:
\[\BD_{\alpha, \inv}\cong \BQ[c_0(2), c_2(0), c_1(2), \cdots]\,.\]
The variables are those in \eqref{eq: normalized class} with positive cohomological degrees, but excluding $c_1(1)$. In particular, we can identify the ideal $I_{d,\chi}$ with an ideal in this polynomial ring.

\smallskip

When working with the moduli stacks $\FM_{d,\chi}$, we have a canonical universal sheaf and there is no need to consider the $\delta$-normalized sheaf. However, to get a description of tautological classes and generators that is uniform under the isomorphism $\FM_{d, \chi}\cong \FM_{d, \chi+d}$, cf. Proposition \ref{prop: c_kj_property} (ii), we shall still use the twist by $\bF_\rho$ and denote by $c_k(j)\in H^*(\FM_{d,\chi})$ the image of $\ch_{k+j-1}(H^j)$ after the composition
\begin{center}
    \begin{tikzcd}
    \label{eq: compositioncij_stack}
    \BD_{\alpha \cdot e^\rho}\arrow[r, "\bF_\rho"]&  \BD_{\alpha}\arrow[r, "\xi"]&H^*(\FM_{d,\chi})\,.
        \end{tikzcd}
    \end{center}
As in the case of moduli spaces, we use the isomorphism
\[\BD_{\alpha}\cong \BQ[c_0(2), c_1(1), c_2(0), c_1(2), \cdots]\]
to identify the ideal $\FI_{d, \chi}$ with an ideal of the above polynomial ring in infinitely many variables. Note that we have an extra variable $c_1(1)$ now
and the map $\eta\colon \BD_{\alpha}\to \BD_{\alpha, \inv}$ becomes identified with the quotient map
\begin{equation}
\label{eq: eta_identification}
    \BQ[c_0(2), c_1(1), c_2(0), c_1(2),\cdots]\to \BQ[c_0(2), c_2(0), c_1(2), \cdots]
\end{equation}
that sets $c_1(1)$ to 0.

\begin{rmk}
    In Section \ref{sec: chern filtration}, we will use $\mathbb{D}_{\alpha,\mathrm{wt_0}}$ and the identifications above to give a canonical (and equivalent) definition of the Chern filtration on $H^*(M_{d,\chi})$. Previously, the Chern filtration is only defined in \cite{KPS} using the \textit{normalized} classes, which is very explicit but conceptually less than ideal. We expect this new definition to shed more light on the structure of the Chern filtration in various $P=C$ phenomena.
\end{rmk}

\begin{rmk}
By Proposition \ref{prop: c_kj_property} (i), for $d\geq 3$ the second cohomology $H^2(M_{d, \chi})$ has dimension 2 and basis $\{c_0(2), c_2(0)\}$, while $H^2(\FM_{d, \chi})$ has dimension 3 and basis $\{c_0(2), c_1(1), c_2(0)\}$.
\end{rmk}

\subsection{Descending tautological relations from the stacks.}
When we write down and study tautological relations, it will be more natural to do so for the moduli stacks where canonical universal sheaves exist. When $\gcd(d,\chi)=1$, the good moduli map
\[\pi\colon \FM_{d,\chi}\to M_{d, \chi}\]
is a trivial $\BG_m$-gerbe, so in particular we have an isomorphism
\begin{equation}
\label{eq: BG_m iso}
H^*(\FM_{d, \chi})\cong H^*(M_{d, \chi}\times B\BG_m)=H^*(M_{d, \chi})[u]\,
\end{equation}
where we write $u$ for the generator of $H^2(B\BG_m)$. However, such an isomorphism is non-canonical as it depends on a choice of universal sheaf $\BF$ on $M_{d,\chi}\times \mathbb{P}^2$. Indeed, given $\BF$, by the universal property of $\FM_{d,\chi}$ there is a section $s_\BF\colon M_{d,\chi}\to \FM_{d,\chi}$ such that $s_\BF^\ast \CF=\BF$, and this section gives a trivialization 
\[\mathrm{act}\circ (s_\BF\times \textup{id}_{B\BG_m})\colon M_{d,\chi}\times B\BG_m\to \FM_{d,\chi}\,\]
where $\mathrm{act}$ is the $B\BG_m$-action on $\FM_{d,\chi}$, cf. the proof of Proposition \ref{prop: R-1geom}. In light of the identifications explained above, it is natural to consider the isomorphism associated to the $\delta$-normalized universal sheaf; even if $\BF_\delta$ might not be represented by an actual universal sheaf, it still induces an isomorphism \eqref{eq: BG_m iso} with rational coefficients.

\begin{lem}\label{lem: idealcoarse}
    Suppose that $\gcd(d,\chi)=1$. There is an isomorphism $H^*(\FM_{d, \chi})\cong H^*(M_{d, \chi})[u]$ that makes the following diagram commute:
    \begin{center}
        \begin{tikzcd}
            \BD_\alpha\arrow[r, "\varphi"]\arrow[d]& \BD_{\alpha, \inv}[u]\arrow[d]\\
            H^*(\FM_{d,\chi})\arrow[r]&H^*(M_{d,\chi})[u]
        \end{tikzcd}
    \end{center}
    In particular, we have $I_{d,\chi}=\eta\big(\FI_{d,\chi}\big)\,$, or more explicitly 
    \[
    \FI_{d,\chi}|_{c_1(1)=0} = I_{d,\chi}
    \]
    via the identification \eqref{eq: eta_identification}.
\end{lem}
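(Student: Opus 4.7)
The plan is to construct the isomorphism $H^*(\FM_{d,\chi}) \xrightarrow{\sim} H^*(M_{d,\chi})[u]$ from a trivialization of the gerbe $\pi$ and then verify the diagram by a direct descendent computation. Since $\gcd(d,\chi) = 1$, a universal sheaf $\BF$ exists on $M_{d,\chi} \times \BP^2$, giving a section $s_\BF$ of $\pi$ and hence a trivialization
\[
\Psi_\BF := \mathrm{act} \circ (s_\BF \times \id_{B\BG_m}) : M_{d,\chi} \times B\BG_m \xrightarrow{\sim} \FM_{d,\chi}.
\]
Under this map the tautological family $\CF$ pulls back to $\BF \boxtimes \CL$, where $\CL$ is the weight-one line bundle on $B\BG_m$. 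Setting $u = c_1(\CL) \in H^2(B\BG_m)$, the projection formula yields
\[
\Psi_\BF^*\big(\xi(\ch_i(\gamma))\big) = \sum_{j\geq 0}\frac{u^j}{j!}\, \xi_\BF\big(\ch_{i-j}(\gamma)\big).
\]

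To match $\varphi$, which is built from the $\delta$-normalized sheaf, I would post-compose $\Psi_\BF^*$ with the ring automorphism $u \mapsto u - a$ of $H^*(M_{d,\chi})[u]$, where $a := \xi_\BF(\ch_1(\delta)) \in H^2(M_{d,\chi})$. Expanding $\BF_\delta = \BF \otimes e^{-a}$ and reorganizing a double sum, the composite formula becomes
\[
\sum_{j \geq 0} \frac{u^{j}}{j!}\, \xi_{\BF_\delta}\big(\ch_{i-j}(\gamma)\big),
\]
which by Proposition \ref{prop: propertieseta}(iv) is precisely the image of $\varphi(\ch_i(\gamma))$ in $H^*(M_{d,\chi})[u]$. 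A short check confirms this is independent of $\BF$: replacing $\BF$ by $\BF \otimes L$ shifts $\Psi_\BF^*$ by $u \mapsto u + c_1(L)$ and shifts $a$ by $c_1(L)$, so the two cancel.

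For the statement on ideals, I would use that $\varphi$ is an isomorphism (Proposition \ref{prop: isowt0}) and that the right vertical map in the diagram is surjective with kernel $I_{d,\chi} \otimes_\BQ \BQ[u]$. Commutativity then forces $\varphi(\FI_{d,\chi}) = I_{d,\chi} \otimes_\BQ \BQ[u]$. Extracting the $u^0$ coefficient---which is exactly $\eta$, since the constant term of $\varphi = \sum_i \tfrac{u^i}{i!}\,\eta \circ \bR_{-1}^i$ is $\eta$---yields $\eta(\FI_{d,\chi}) \subseteq I_{d,\chi}$. Conversely, for any $D_0 \in I_{d,\chi} \subseteq \BD_{\alpha,\inv}$ one has $\varphi(D_0) = D_0 \in I_{d,\chi}\otimes_\BQ\BQ[u]$ by Proposition \ref{prop: propertieseta}(iii), so $D_0 \in \FI_{d,\chi}$ and $\eta(D_0) = D_0$, proving the reverse inclusion. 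The explicit description $\FI_{d,\chi}|_{c_1(1)=0} = I_{d,\chi}$ is then just Proposition \ref{prop: isowt0} read through the polynomial-ring identifications of Section \ref{sec: relations between classes}.

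The main conceptual subtlety is that $\BF_\delta$ is a rational twist of $\BF$ and not an actual coherent sheaf, so it does not itself produce a section of $\pi$. The substitution $u \mapsto u - a$ is how this ``rational section'' is realized at the level of cohomology; once that modification is in place, the verification of the diagram is a direct binomial manipulation and the ideal statement follows formally.
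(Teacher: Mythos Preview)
Your argument is correct. The difference from the paper is in how the rational twist $\BF_\delta$ is handled. The paper realizes it geometrically: it passes to a $B\BZ_m$-fibration $f\colon T\to M_{d,\chi}$ on which an honest line bundle $L$ with $c_1(L)=-f^\ast\xi_\BF(\ch_1(\delta))$ exists, so that $f^\ast\BF\otimes L$ is a genuine sheaf inducing a section $s\colon T\to \FM_{d,\chi}$; since $f^\ast$ is an isomorphism on rational cohomology, this gives the required map. You instead take the trivialization coming from $\BF$ itself and correct afterward by the ring automorphism $u\mapsto u-\xi_\BF(\ch_1(\delta))$ of $H^*(M_{d,\chi})[u]$. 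Your route is more elementary---it avoids auxiliary covers and reduces everything to a binomial identity---while the paper's route keeps the section geometric until the last step and plugs directly into the Joyce framework. Both yield the same diagram; your derivation of the ideal statement via $\varphi(\FI_{d,\chi})=I_{d,\chi}[u]$ is also slightly more explicit than the paper's one-line remark that $\eta$ is $\varphi$ followed by $u\mapsto 0$.
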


\begin{proof}
    The idea of the proof is essentially the same as in \cite[Proposition 3.24]{Jo18}, so we will just sketch it. Let $\BF$ be any universal sheaf; there is an $m\in \BZ_+$ and a $B\BZ_m$-fibration $f\colon T\to M_{d,\chi}$ such that $T$ has a line bundle $L$ with $c_1(L)=-f^\ast\xi_{\BF}\big(\ch_1(\delta)\big).$ Then $f^\ast \BF\otimes L$ is a sheaf on $T\times \mathbb{P}^2$ with the property that 
    \[\ch(f^\ast \BF\otimes L)=f^\ast \ch(\BF_\delta)\,.\]
    By the universal property, we get a map $s\colon T\to \FM_{d,\chi}$. A $B\BZ_m$-fibration induces an isomorphism $f^\ast$ on rational cohomology. Thus, we obtain the required isomorphism by composing
    \[H^*(\FM_{d,\chi})\xlongrightarrow{\mathrm{act}^\ast}H^*(\FM_{d,\chi}\times B\BG_m)\xlongrightarrow{s^\ast\otimes \textup{id}}H^*(T\times B\BG_m) \xlongleftarrow{f^\ast\otimes \textup{id}} H^*(M_{d,\chi}\times B\BG_m)\,.\]
    The diagram is easily shown to commute using Proposition \ref{prop: propertieseta}. The last statement follows from the fact that $\eta$ is obtained by applying $\varphi$ and then setting $u\mapsto 0$. 
\end{proof}

\section{Geometric relations}
\label{sec: relations}

We will now focus on proving three families of relations --- we call these families \textit{Mumford relations} ($\mr$), \textit{generalized Mumford relations} ($\gmr$) and \textit{base relations} ($\br$) ---  among the generators of the descendent algebra. In other words, we will construct explicit ideals $\FI^{\mr}_{d, \chi}$, $\FI^{\gmr}_{d, \chi}$ and $\FI^{\br}_{d, \chi}$, and show that
\[\FI^{\mr}_{d, \chi}, \FI^{\gmr}_{d, \chi}, \FI^{\br}_{d, \chi}\subset \FI_{d,\chi}\,.\]

We will denote by $\FI^\geo_{d,\chi}$ the ideal generated by $\FI^{\mr}_{d, \chi}, \FI^{\gmr}_{d, \chi}, \FI^{\br}_{d, \chi}$. As we will explain, the determination of $\FI^{\gmr}_{d, \chi}$ requires knowledge of the cohomology rings of $\FM_{d', \chi'}$ for `smaller' values $(d', \chi')$, leading to an inductive approach to understanding this ideal and more generally the cohomology ring structure.

\medskip

Having defined these ideals, the natural question is the following (cf. Question \ref{completeness}):
\begin{question}\label{completeness2}
    Do we have $\FI^\geo_{d,\chi}=\FI_{d,\chi}$?
\end{question}

If the answer to this question were positive, we would have a recursive algorithm to compute the cohomology rings of $\FM_{d,\chi}$. We will look into Question \ref{completeness2} in Section \ref{sec: rings} for small $d$; it turns out that the answer is positive for $d\leq 4$ and $(d,\chi)=(5,2)$, but negative for $(d, \chi)=(5,1)$.

\medskip

We work at the level of stacks where universal sheaves exist, but this also gives relations for the good moduli spaces $M_{d,\chi}$ when $\gcd(d, \chi)=1$ by Lemma \ref{lem: idealcoarse}. More precisely, if we define 
\[I^\bullet_{d,\chi}=\eta(\FI^\bullet_{d,\chi})\quad\textup{for \,} \bullet=\mr, \gmr, \br, \geo\]
then we have $I^\geo_{d,\chi}\subset I_{d,\chi}$.

\subsection{Mumford relations}

\label{sec: MR}

As explained in Section \ref{sec: intro_relations}, one general approach to proving geometric relations on the cohomology of moduli spaces is to construct some vector bundle $\CV$ out of the moduli data (in our case, the universal sheaf $\BF$) with a rank $r$ that we can compute by Hirzebruch--Riemann--Roch; then the vanishing of Chern classes
\[c_j(\CV)=0\quad\textup{for }j>r\,\]
may give non-trivial relations. Usually $\CV$ is first constructed as a complex, and then shown to be a vector bundle by some cohomological vanishing. Mumford relations follow this strategy applied to the following vanishing result: 

\begin{prop}\label{prop: vanishingMR}
    Let $F$ be a semistable sheaf of type $(d, \chi)$. Assume
    \[\chi\geq g_d\coloneqq \frac{(d-1)(d-2)}{2},\]
    then $H^1(F)=H^2(F)=0$.
\end{prop}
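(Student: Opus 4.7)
The plan is to handle $H^2(F)$ trivially and $H^1(F)$ via Serre duality on $\mathbb{P}^2$. Since $F$ is one-dimensional, its support has dimension $1$, and Grothendieck vanishing immediately gives $H^i(F) = 0$ for $i \geq 2$, so $H^2(F) = 0$ is automatic.

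For the $H^1$ vanishing, I would first use the fact that a semistable one-dimensional sheaf on $\mathbb{P}^2$ is pure, hence Cohen--Macaulay of codimension $1$. Consequently $\mathcal{E}xt^i(F, \omega_{\mathbb{P}^2}) = 0$ for $i \neq 1$, the local-to-global spectral sequence collapses, and Serre duality on $\mathbb{P}^2$ gives
\[H^1(F)^\vee \;\cong\; \Ext^1(F, \omega_{\mathbb{P}^2}) \;\cong\; H^0(F^D),\]
where $F^D := \mathcal{E}xt^1(F, \omega_{\mathbb{P}^2})$ is the Serre dual. Invoking the standard result (built into the symmetry $\psi_2$ of Proposition \ref{prop: c_kj_property}, see also \cite{Mai10}) that $F \mapsto F^D$ preserves purity and semistability while sending topological type $(d,\chi)$ to $(d,-\chi)$, the problem reduces to showing $H^0(G) = 0$ for every pure one-dimensional semistable sheaf $G$ on $\mathbb{P}^2$ with support class $dH$ and slope $\mu(G) \leq -g_d/d$.

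The reduced statement I would prove by contradiction. Assuming a nonzero $s \in H^0(G)$, set $I := \mathrm{im}(s\colon \mathcal{O}_{\mathbb{P}^2} \to G)$. As a nonzero subsheaf of the pure one-dimensional sheaf $G$, $I$ is pure of dimension $1$; as a quotient of $\mathcal{O}_{\mathbb{P}^2}$, it has the form $\mathcal{O}_D$ for some effective plane curve $D$ of degree $d' \leq d$, and the sequence $0 \to \mathcal{O}(-d') \to \mathcal{O}_{\mathbb{P}^2} \to \mathcal{O}_D \to 0$ gives $\chi(I) = 1 - g_{d'}$. Semistability of $G$ then forces
\[\frac{1 - g_{d'}}{d'} \;=\; \mu(I) \;\leq\; \mu(G) \;\leq\; -\frac{g_d}{d},\]
which after clearing denominators and using the identity $d\, g_{d'} - d'\, g_d = \tfrac{1}{2}(d-d')(2-dd')$ rearranges to $d \leq \tfrac{1}{2}(d-d')(2-dd')$. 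The right-hand side is nonpositive whenever $dd' \geq 2$ and $d' \leq d$, with the handful of remaining boundary cases ($d'=d$, or $d,d' \leq 2$) easy to check directly; thus the inequality fails for all $1 \leq d' \leq d$, yielding the desired contradiction.

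The bulk of the argument is routine. The only ingredient I do not regard as entirely elementary is the preservation of semistability under Serre duality $F \leftrightarrow F^D$, which I expect to be the main fact to cite; the numerical endgame in the last paragraph is a short calculation that will nonetheless merit a careful check of the small cases.
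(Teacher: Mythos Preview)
Your argument is correct, but it takes a different route from the paper's. The paper does \emph{not} pass to the Serre dual sheaf $F^D$; instead it applies Serre duality on the scheme-theoretic support $C'$ of $F$ (degree $d'\leq d$) to produce a nonzero map $F\to \omega_{C'}=\mathcal{O}_{C'}(d'-3)$, and then bounds the slope of the image $G(d'-3)\subset\mathcal{O}_{C'}(d'-3)$ by saturating $G$ inside $\mathcal{O}_{C'}$ and writing the saturation as the ideal of a subcurve $C''\subset C'$. This leads to the estimate $\chi/d\leq (d-3)/2$, contradicting $\chi\geq g_d$. Your approach trades that somewhat delicate analysis of subsheaves of $\mathcal{O}_{C'}$ for the single cited fact (Maican, used in the paper only to define the symmetry $\psi_2$) that $F\mapsto F^D$ preserves semistability; once that is granted, the image of a section $\mathcal{O}_{\mathbb{P}^2}\to G$ is automatically a structure sheaf $\mathcal{O}_D$ of a plane curve, and the numerics are immediate. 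So your proof is shorter and cleaner at the cost of invoking a nontrivial external input, whereas the paper's is self-contained but requires the extra saturation step.
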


\begin{proof}
    This is proven in \cite[Proposition 2.1.3]{DM}, but we provide a proof for completeness. The vanishing $H^2(F)=0$ is obvious since $F$ is one-dimensional. Let $C'$ be the scheme-theoretic support of $F$, thus $0<d':=\deg(C')\leq d$. Suppose that $H^1(F) \neq 0$, we can pick an element $s\neq 0$ in the dual
    \[
    H^1(C', F)^\vee = \mathrm{Hom}(F, \omega_{C'}) = \mathrm{Hom}(F, \mathcal{O}_{C'}(-3+d')).
    \]
    This gives rise to a non-zero morphism
    \[
    s: F \to \mathcal{O}_{C'}(-3+d'),
    \]
    whose image we denote by $G(-3+d')$ with $G\subset \mathcal{O}_{C'}$. By the semistability of $F$, we have
    \begin{equation}\label{G_slope}
    \frac{\chi}{d}\leq \frac{\chi(G(-3+d'))}{\deg G} = \frac{\chi(G)}{\deg G} - 3 + d'.
    \end{equation}
    To estimate the RHS, we consider the diagram
    \[\begin{tikzcd}[ampersand replacement=\&]
	\&\&\& T \\
	0 \& G \& {\mathcal{O}_{C'}} \& {\mathcal{O}_{C''}} \& 0 \\
	0 \& {\widetilde{G}} \& {\mathcal{O}_{C'}} \& {\mathcal{O}_{C_\mathrm{pure}''}} \& {0.}
	\arrow[from=2-1, to=2-2]
	\arrow[from=2-2, to=2-3]
	\arrow[from=2-3, to=2-4]
	\arrow[from=2-4, to=2-5]
	\arrow[from=1-4, to=2-4]
	\arrow[two heads, from=2-4, to=3-4]
	\arrow[hook', from=2-2, to=3-2]
	\arrow[from=3-2, to=3-3]
	\arrow[Rightarrow, no head, from=2-3, to=3-3]
	\arrow[from=3-3, to=3-4]
	\arrow[from=3-4, to=3-5]
	\arrow[from=3-1, to=3-2]
\end{tikzcd}\]
Here $C''$ is a closed subvariety of $C'$, and $\mathcal{O}_{C''_\mathrm{pure}}$ is the quotient of $\mathcal{O}_{C''}$ by its torsion part $T$. Thus it is the structure sheaf of a planar curve of degree $0\leq d''<d'$. We have
\begin{equation*}
\frac{\chi(G)}{\deg G} \leq \frac{\chi(\widetilde{G})}{\deg G}=\frac{\chi(\mathcal{O}_{C'})-\chi(\mathcal{O}_{C''_\mathrm{pure}})}{d'-d''}.
\end{equation*}
Using the formula $\chi(\mathcal{O}_{C_d}) =1-g_d= 1-\frac{(d-1)(d-2)}{2}$ for any degree $d$ planar curve, an elementary computation shows that
\[
\max_{0\leq d''<d'\leq d} \left(\frac{\chi(\mathcal{O}_{C'})-\chi(\mathcal{O}_{C''_\mathrm{pure}})}{d'-d''}-3+d'\right) = \frac{d-3}{2}.
\]
It follows from \eqref{G_slope} that $\chi\leq \frac{d(d-3)}{2}$, but this contradicts our assumption $\chi \geq \frac{(d-1)(d-2)}{2}$.
\end{proof}

By the cohomology and base change theorem, it follows that for $(d, \chi)$ with $\chi \geq g$ the complex $Rp_\ast \CF\in D^b(\FM_{d, \chi})$ is actually a vector bundle with fibers $H^0(F)$ over $[F]\in \FM_{d,\chi}$. Moreover, the rank of this vector bundle is $\chi$. It follows that $c_j(Rp_\ast \CF)=0$ for every $j>\chi\geq g$. To write down this formula in terms of descendents, Grothendieck--Riemann--Roch yields
\[\ch(Rp_\ast \CF)=p_\ast\big(\ch(\CF)\td(T_p)\big)=p_\ast\big(\ch(\CF)q^\ast\td(\BP^2)\big)\,,\]
so $\ch_j(Rp_\ast \CF)$ is given by the geometrical realization of $\ch_j(\td(\BP^2))$. Thus by Newton's identity, the total Chern class $c(Rp_\ast \CF)$ is the geometric realization of
\[\exp\left(\sum_{j\geq 1}(-1)^{j-1}(j-1)!\ch_j(\td(\BP^2))\right).\]
The upshot of the above discussion is the following:

\begin{prop}\label{prop: MR}
Suppose that $j>\chi\geq g$. Then 
\[\left[\exp\left(\sum_{j\geq 1}(-1)^{j-1}(j-1)!\ch_j(\td(\mathbb{P}^2))\right)\right]_{2j}\in \FI_{d,\chi}\,.\]
\end{prop}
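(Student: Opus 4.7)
The plan is to realize the relation as the vanishing of high-degree Chern classes of a vector bundle on the stack $\FM_{d,\chi}$ built from the universal sheaf, and then to translate this vanishing into the descendent algebra via Grothendieck--Riemann--Roch.

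First, I would apply Proposition \ref{prop: vanishingMR} pointwise: under the hypothesis $\chi \geq g$, every geometric point $[F]\in \FM_{d,\chi}$ satisfies $H^i(\BP^2,F)=0$ for $i\geq 1$. Combined with cohomology and base change for the proper and flat projection $p\colon \FM_{d,\chi}\times \BP^2\to \FM_{d,\chi}$, this implies $R^{\geq 1}p_\ast \CF = 0$ and that $p_\ast\CF$ is a locally free sheaf on the stack, whose fiber at $[F]$ is $H^0(\BP^2,F)$. Riemann--Roch on $\BP^2$ gives the rank as $\chi(F)=\chi$. Hence the ordinary Chern classes satisfy
\[
c_k(p_\ast \CF) = 0 \in H^{2k}(\FM_{d,\chi}) \quad \text{for every } k > \chi.
\]

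Next, I would rewrite this vanishing in the descendent algebra. By Grothendieck--Riemann--Roch,
\[
\ch(Rp_\ast\CF) = p_\ast\!\left(\ch(\CF)\cdot q^\ast \td(\BP^2)\right),
\]
so the degree-$2k$ component $\ch_k(Rp_\ast\CF)$ is precisely the image under the realization map $\xi$ of the descendent $\ch_k(\td(\BP^2))\in \BD_\alpha$, directly by the definition of $\xi$ on generators. Newton's identity expresses the total Chern class of any vector bundle $V$ as
\[
c(V) = \exp\!\Big(\sum_{k\geq 1}(-1)^{k-1}(k-1)!\,\ch_k(V)\Big).
\]
Taking $V = Rp_\ast\CF$ and extracting the degree-$2k$ component of both sides identifies $c_k(Rp_\ast\CF)$ with the image under $\xi$ of the bracketed descendent class in the statement. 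The vanishing from the previous step then shows this descendent class lies in the kernel $\FI_{d,\chi}$ for all $k > \chi$.

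The only delicate point is the first step: one must ensure that cohomology and base change is applied correctly on the Artin stack $\FM_{d,\chi}$ rather than only on a scheme. This reduces to the scheme-theoretic statement on a smooth atlas once one has the uniform pointwise vanishing $H^{\geq 1}(\BP^2,F)=0$ provided by Proposition \ref{prop: vanishingMR}, so it is essentially formal. Everything else is a standard manipulation of characteristic classes and requires no further input beyond GRR and Newton's identity.
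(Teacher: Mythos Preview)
Your proposal is correct and follows essentially the same approach as the paper: apply Proposition~\ref{prop: vanishingMR} together with cohomology and base change to see that $Rp_\ast\CF$ is a vector bundle of rank $\chi$, deduce the vanishing $c_k(Rp_\ast\CF)=0$ for $k>\chi$, and then translate this into the descendent algebra via Grothendieck--Riemann--Roch and Newton's identity. The paper's argument is the paragraph immediately preceding the proposition statement, and your write-up matches it step for step (with a bit more care about the stack-theoretic base change, which the paper leaves implicit).
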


For a fixed $(d, \chi)$, we can use the isomorphisms $\FM_{d,\chi}\cong \FM_{d,\pm\chi+k d}$ to obtain relations by Proposition \ref{prop: MR} if we take sufficiently large $k$. Given an arbitrary $(d, \chi)$, a choice of signs $\pm$ and $j, k \in \mathbb{Z}$ such that $j>\pm \chi+kd\geq g$, we get a relation
\[\mr_{d,\chi}^{\pm, k, j}\in \FI_{d,\chi}\]
which corresponds to 
\[\left[\exp\left(\sum_{k\geq 1}(-1)^{k-1}(k-1)!\ch_k(\td(\BP^2))\right)\right]_{2j}\in \FI_{d,\pm \chi+k d}\,.\]
under the ismomorphism $\FI_{d,\chi}\cong \FI_{d,\pm \chi+k d}$.

\smallskip

We define the ideal
\[\FI^\mr_{d,\chi}:=\big\langle \mr_{d,\chi}^{\pm, k, j}\big \vert\, j>\pm \chi+kd\geq g\big\rangle\subset \FI_{d,\chi}\,.\]

\begin{rmk}
\label{rmk: dual MR}
    The relations of the form $\mr_{d,\chi}^{-, k, j}$ can also be understood as coming from the dual version of Proposition \ref{prop: vanishingMR}, namely
    \[H^0(F)=H^2(F)=0\quad \textup{if }\chi\leq -g.\]
\end{rmk}

\begin{example}
\label{M10example}
    Recall that $\FM_{1,0}$ is a $\BG_m$-gerbe over $M_{1,0} \simeq \check{\BP}^2$ where $\check{\BP}^2$ is the dual projective plane. Denote the dual hyperplane section by $\check{H}$. 
    Then we have
    \begin{equation}
    \label{M10}
    H^*(\FM_{1,0}, \BQ) \simeq H^*(M_{1,0}) \otimes H^*(B\BG_m) =  \BQ[{\check{H}}]/{\check{H}}^3\otimes \BQ[u].
    \end{equation}
    We work this out using Mumford relations. Write
    \[
    \gamma_k = \mathrm{td}(\BP^2)\cdot e^{k H} \;\; \textrm{and} \;\; \CV_k = Rp_*\left(\CF\otimes q^* \CO_{\BP^2}(k)\right), \;\;\; \textrm{for } k\in \BZ.
    \]
    By the above discussions, we have
    \begin{align*}
    \label{M10_vanishing}
        & c_j(-\CV_{-1}) = 0 \quad \textrm{for } j>1,\quad c_j(\CV_{0}) = 0 \quad \textrm{for } j>0,\\
        & c_j(\CV_{1}) = 0 \quad \textrm{for } j>1, \quad c_j(\CV_{2}) = 0 \quad \textrm{for } j>2.
    \end{align*}
    Set further $\alpha = \ch_1(\gamma_1)$ and $\beta = \ch_1(\gamma_{-1})$. The second vanishing above implies $\mathrm{ch}_k(\gamma_0)=0$, and the first and third imply
    \begin{equation}
    \label{alphabeta}
    \ch_k(\gamma_1) = \frac{1}{k!}\alpha^k,\quad \ch_k(\gamma_{-1}) = \frac{(-1)^{k+1}}{k!}\beta^k \quad \textrm{for } k\geq 1.
    \end{equation}
    Since $\{\gamma_{-1}, \gamma_0, \gamma_1\}$ form a basis of $H^*(\BP^2)$, it follows from Theorem $\ref{thm: tautgenerated}$ that $\alpha, \beta$ generate $H^*(\FM_{1,0})$. Finally, the last vanishing gives
    \begin{equation}
        \label{M10_rel}
        c_3(\CV_2) = \frac{1}{6}\ch_1(\gamma_2)^3-\ch_1(\gamma_2)\ch_2(\gamma_2)+2\ch_3(\gamma_2) = 0.
    \end{equation}
    Using $\gamma_2 = 3\gamma_1+\gamma_{-1}-3\gamma_0$ and \eqref{alphabeta}, the relation  \eqref{M10_rel} simplifies into $(\alpha+\beta)^3 = 0$.  Thus
    \[
    H^*(\FM_{1,0}, \BQ) = \BQ[\alpha, \beta]/(\alpha+\beta)^3,
    \]
    matching the presentation \eqref{M10}. Note that 
    \[\alpha+\beta=\ch_1(\gamma_1+\gamma_{-1})=\ch_1(\gamma_1+\gamma_{-1}-2\gamma_0)=\ch_1(H^2)=c_0(2)
    \]
    is indeed the (pull-back of) hyperplane class $\check H$, cf. Proposition \ref{prop: c_kj_property} (i).  
\end{example}

\subsection{Generalized Mumford relations}

\label{sec: gmr}

Following the same strategy but using cohomological vanishing provided by stability, we can construct a generalized version of Mumford relations.

\begin{prop}\label{prop: vanishinggmr}
    Let $F, F'$ be semistable sheaves on $\mathbb{P}^2$ of topological types $(d,\chi)$ and $(d', \chi')$, respectively. Suppose that 
    \begin{equation}\label{eq: inequalitygmr}
    \frac{\chi'}{d'}<\frac{\chi}{d}<\frac{\chi'}{d'}+3\,.\end{equation}
    Then we have
    \[\Hom(F, F')=\Ext^2(F,F')=0\,.\]
\end{prop}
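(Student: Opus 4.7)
The plan is to deduce both vanishings from the same slope-semistability argument, applied once directly and once via Serre duality on $\mathbb{P}^2$.

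For the vanishing of $\Hom(F, F')$, I would argue by contradiction. Suppose $\varphi \colon F \to F'$ is nonzero and set $I = \mathrm{im}(\varphi)$. Since $I$ is a subsheaf of the pure one-dimensional sheaf $F'$, it is itself pure of dimension one, and in particular $\mu(I) = \chi(I)/(c_1(I) \cdot H)$ is a well-defined finite rational number. Applying semistability of $F$ to the (pure) quotient $F \twoheadrightarrow I$ gives $\mu(I) \geq \mu(F) = \chi/d$, while applying semistability of $F'$ to the subsheaf $I \hookrightarrow F'$ gives $\mu(I) \leq \mu(F') = \chi'/d'$. Together these force $\chi/d \leq \chi'/d'$, contradicting the left half of the inequality \eqref{eq: inequalitygmr}.

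For the vanishing of $\Ext^2(F, F')$, I would invoke Serre duality on $\mathbb{P}^2$, which gives
\[
\Ext^2(F, F') \cong \Hom\!\bigl(F',\, F \otimes \omega_{\mathbb{P}^2}\bigr)^\vee = \Hom\!\bigl(F',\, F(-3)\bigr)^\vee.
\]
Twisting by $\mathcal{O}(-3)$ preserves semistability and shifts the slope uniformly by $-3$, so $F(-3)$ is semistable of slope $\chi/d - 3$. The right half of \eqref{eq: inequalitygmr} rearranges to $\mu(F(-3)) < \mu(F')$, and applying the previous paragraph with $(F, F')$ replaced by $(F', F(-3))$ then yields $\Hom(F', F(-3)) = 0$.

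I do not expect any real obstacle here — both parts are standard for slope-semistable sheaves on a smooth projective surface. The only small points worth writing down carefully are the purity of the image $I$ (which ensures $\mu(I)$ is finite and the semistability inequalities make sense), and the invariance of semistability under twisting by a line bundle on $\mathbb{P}^2$. Neither requires anything beyond the definitions.
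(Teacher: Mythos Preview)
Your proof is correct and follows essentially the same approach as the paper: the $\Hom$ vanishing comes from the standard slope argument for semistable sheaves, and the $\Ext^2$ vanishing is reduced to the first via Serre duality and the twist $F(-3)$. The paper's proof is terser (it simply asserts both vanishings from semistability and Serre duality without writing out the image argument), but the logic is identical.
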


\begin{proof}
    The vanishing $\mathrm{Hom}(F, F') = 0$ follows from the first inequality in (\ref{eq: inequalitygmr}) and the semistability of $F$ and $F'$. To see that $\mathrm{Ext}^2(F, F') =0$, we use Serre duality
    \[
    \mathrm{Ext}^2(F, F') \simeq \mathrm{Hom}(F', F\otimes \mathcal{O}_{\mathbb{P}^2}(-3))^\vee,
    \]
    which vanishes due to the second inequality in (\ref{eq: inequalitygmr}).
\end{proof}

Thanks to Proposition \ref{prop: vanishinggmr}, whenever $(d,\chi)$ and $(d', \chi')$ satisfy the inequality \eqref{eq: inequalitygmr}, we can construct a vector bundle on the product $\FM_{d,\chi}\times \FM_{d',\chi'}$ of the moduli stacks . Indeed, let $\CF$ and $\CF'$ be the universal sheaves on $\FM_{d,\chi}\times \BP^2$ and $\FM_{d', \chi'}\times \BP^2$, respectively. Then the complex
\[\CV := \RHom_p(\CF, \CF')[1]\]
is a vector bundle, where $p\colon \FM_{d,\chi}\times \FM_{d',\chi'}\times \BP^2 \to \FM_{d,\chi}\times \FM_{d',\chi'}$ and we omit the pull-backs of $\CF, \CF'$ to the triple product. This vector bundle has fiber $\Ext^1(F, F')$ over a point $([F], [F'])\in  \FM_{d,\chi}\times \FM_{d',\chi'}$, and its rank is equal to
\[\dim \mathrm{Ext}^1(F,F') = -\chi(F, F')=d\cdot d'\,.\]
Thus we obtain a relation
\[c_j(-\RHom_p(\CF, \CF'))=0\quad\textup{for }j>dd'\,\]
which holds in $H^*(\FM_{d,\chi}\times \FM_{d', \chi'})$. To extract from this a relation on $\FM_{d,\chi}$, we take a homology class $A\in H_*(\FM_{d', \chi'})$ and integrate along it, i.e. we apply the slant product map
\[\int_A\colon H^*(\FM_{d,\chi}\times \FM_{d', \chi'})\to H^*(\FM_{d,\chi})\,.\]

To describe explicitly the relations in terms of descendents, we use Grothendieck--Riemann--Roch  again to find a representative of $-\RHom_p(\CF, \CF')$ in the descendent algebra. For this, we consider the realization map
\[\BD\otimes \BD\to H^*(\FM_{d,\chi})\otimes H^*(\FM_{d', \chi'})\,.\]
Define $C$ by
\[C=\exp\left(\sum_{\substack{a,b\geq 0\\(a,b)\neq (0,0)}}\sum_i (-1)^{b+d_i^L}(a+b-1)!\ch_a(\gamma_i^L)\otimes \ch_b(\gamma_i^R)\right)\in \BD\otimes \BD\]
where 
\begin{equation}\label{eqn: Kunneth}
    \sum_i \gamma_i^L\otimes \gamma_i^R=\Delta_\ast \td(\BP^2)
\end{equation}
is the Künneth decomposition of the push-forward of $\td(\BP^2)$ along the diagonal and $\gamma_i^L\in H^{2d_i^L}(\BP^2)$. More concretely, we take the decomposition
\[\Delta_\ast \td(\BP^2)=H^2\otimes 1+H\otimes H+1\otimes H^2+\frac{3}{2}H\otimes H^2+\frac{3}{2}H^2\otimes H+H^2\otimes H^2\,.\]
We let $C_j$ be the part of $C$ with cohomological degree $2j$ (recall that by definition $\ch_a(\gamma)$ has cohomological degree $2a$).

\begin{lem}\label{lem: geomrealizationC}
The geometric realization of $C_j$ equals $c_j(-\RHom_p(\CF, \CF'))$. 
\end{lem}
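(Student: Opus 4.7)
The proof is a direct computation based on Grothendieck--Riemann--Roch, the Künneth decomposition \eqref{eqn: Kunneth}, and the Newton--Girard identity
\[
c(-V) \;=\; \exp\Bigl(\sum_{k \geq 1}(-1)^{k}(k-1)!\,\ch_k(V)\Bigr),
\]
valid for any virtual bundle $V$. Applied to $V = \RHom_p(\CF, \CF')$, the lemma reduces to checking term-by-term inside the exponential that $\ch_k(\RHom_p(\CF, \CF'))$ agrees with the corresponding expression in the exponent defining $C$.

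Applying GRR to $p$, whose relative tangent complex is $q^\ast T_{\BP^2}$, gives
\[
\ch(\RHom_p(\CF, \CF')) \;=\; p_\ast\bigl(\ch(\CF^\vee)\cdot \ch(\CF')\cdot q^\ast \td(\BP^2)\bigr).
\]
The main tool to disentangle the two universal sheaves is the identity
\[
\int_{\BP^2}\alpha\beta\,\td(\BP^2) \;=\; \sum_i \int_{\BP^2}\alpha\,\gamma_i^L \cdot \int_{\BP^2}\beta\,\gamma_i^R,
\]
which follows immediately from \eqref{eqn: Kunneth} and the projection formula applied to the diagonal $\Delta$. Carrying this out cohomologically --- formally, by pulling $\CF$ and $\CF'$ back to $\FM_{d,\chi}\times \FM_{d',\chi'}\times \BP^2 \times \BP^2$ and restricting along $\Delta$ --- yields
\[
\ch(\RHom_p(\CF, \CF')) \;=\; \sum_i p^L_\ast\bigl(\ch(\CF^\vee)\cdot q^\ast \gamma_i^L\bigr)\cdot p^R_\ast\bigl(\ch(\CF')\cdot q^\ast \gamma_i^R\bigr),
\]
where $p^L, p^R$ are the projections off the two copies of $\BP^2$ and the product lives on $\FM_{d,\chi}\times \FM_{d',\chi'}$ via Künneth.

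The remaining step is to express each factor in descendent form. Using $\ch_a(\CF^\vee) = (-1)^a\ch_a(\CF)$ together with the identification $\xi(\ch_n(\gamma_i^L)) = p_\ast(\ch_{n + 2 - d_i^L}(\CF)\cdot q^\ast \gamma_i^L)$ from Section \ref{sec: descendent algebra}, a short degree-tracking gives
\[
p^L_\ast\bigl(\ch(\CF^\vee)\cdot q^\ast \gamma_i^L\bigr) \;=\; (-1)^{d_i^L}\sum_{a\geq 0}(-1)^a\,\xi(\ch_a(\gamma_i^L)),
\]
and similarly $p^R_\ast(\ch(\CF')\cdot q^\ast \gamma_i^R) = \sum_{b\geq 0}\xi'(\ch_b(\gamma_i^R))$ (no sign, since $\CF'$ is not dualized). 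Multiplying, restricting to the degree-$2k$ piece (so $a + b = k$), and substituting into the Newton--Girard formula above, the signs collect as $(-1)^k(-1)^a(-1)^{d_i^L} = (-1)^{b + d_i^L}$ while the weighting becomes $(a + b - 1)!$. This is precisely the summand in the exponent defining $C$, and extracting the degree-$2j$ part proves the lemma.

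The only genuinely technical point is the careful bookkeeping of signs and degrees --- in particular the $(-1)^{d_i^L}$ arising from the shift between the Chern character degree on $\CF$ and the descendent degree on $\FM_{d,\chi}$, combined with the derived dual. Once these are collected correctly, the identity becomes a formal consequence of Newton--Girard.
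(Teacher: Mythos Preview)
Your proof is correct and follows essentially the same approach as the paper: Grothendieck--Riemann--Roch, the diagonal/Künneth trick to separate the two universal sheaves, and Newton's identity to convert Chern characters into total Chern classes. Your sign bookkeeping is slightly more explicit than the paper's sketch, but the argument is the same.
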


\begin{proof}
    This is a standard application of Grothendieck--Riemann--Roch, a diagonal trick and Newton's identity. We include a sketch of the proof for completeness, see \cite[Section 3.1]{Shen_JLMS} for details. By Grothendieck--Riemann--Roch, we have
\[
\mathrm{ch}(\RHom_p(\CF, \CF')) = p_*(\mathrm{ch}^\vee (\mathcal{F}') \cdot \mathrm{ch}(\mathcal{F})\cdot q^* \td(\BP^2)). 
\]
    Consider the following commutative diagram, where $\delta = \id \times \Delta$ and $p=\pi_{12} \circ \delta$:
    \[\begin{tikzcd}[ampersand replacement=\&]
	{\mathfrak{M}_{d,\chi}\times \mathfrak{M}_{d',\chi'} \times \mathbb{P}^2} \& {\mathfrak{M}_{d,\chi}\times \mathfrak{M}_{d',\chi'} \times \mathbb{P}^2\times \mathbb{P}^2} \& {\mathfrak{M}_{d,\chi}\times \mathfrak{M}_{d',\chi'}} \\
	{\mathbb{P}^2} \& {\mathbb{P}^2\times \mathbb{P}^2.}
	\arrow["\id\times \Delta", from=1-1, to=1-2]
	\arrow["{\pi_{12}}", from=1-2, to=1-3]
	\arrow["{q}"', from=1-1, to=2-1]
	\arrow["{\pi_{34}}"', from=1-2, to=2-2]
	\arrow["\Delta", from=2-1, to=2-2]
\end{tikzcd}\]
By projection formula and the Künneth decomposition \eqref{eqn: Kunneth}, we have
\begin{align*}
p_*(\mathrm{ch}^\vee (\mathcal{F}') \cdot \mathrm{ch}(\mathcal{F})\cdot q^* \td(\BP^2)) 
&=\sum_i\pi_{12,*}\left(\mathrm{ch}^\vee (\mathcal{F}') \cdot \mathrm{ch}(\mathcal{F}) \cdot \pi_{34}^* (\gamma_i^L\otimes \gamma_i^R)\right)\\
&=\sum_i\sum_{a,b\geq 0}(-1)^{a+2-d_i^L}\ch_a(\gamma_i^L)\otimes \ch_b(\gamma_i^R).
\end{align*}
The sign in the last equality is obtained by tracing back the degree of $\ch^\vee(\CF')$ used for $\ch_a(\gamma_i^L)$. The lemma follows by applying Newton's identity as in Section \ref{sec: MR}.
\end{proof}

Given $\alpha=(d,\chi),\, \alpha'=(d', \chi')$ satisfying \eqref{eq: inequalitygmr}, $j>dd'$ and $A\in H_*(\FM_{d',\chi'})$, we define the relation $\gmr_\alpha^{\alpha', j, A}\in \BD_\alpha$ to be the image of $C_j$ under
\[\BD\otimes \BD\to \BD_\alpha\otimes \BD_{\alpha'}\xlongrightarrow{\textup{id}\otimes \xi} \BD_\alpha\otimes H^*(\FM_{d', \chi'})\xlongrightarrow{\int_A}\BD_\alpha\,.\]

The upshot of the above discussion is the following:

\begin{prop}
    For every $\alpha=(d,\chi),\, \alpha'=(d', \chi')$ satisfying \eqref{eq: inequalitygmr}, $j>dd'$ and $A\in H_*(\FM_{d',\chi'})$, we obtain a tautological relation $\gmr_{\alpha}^{\alpha', j, A}\in \FI_{d,\chi}$.
\end{prop}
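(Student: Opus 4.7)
The plan is to combine the cohomological vanishing of Proposition \ref{prop: vanishinggmr} with the descendent identification of Lemma \ref{lem: geomrealizationC}, and then descend the resulting relation on the product stack to $\FM_{d,\chi}$ via the slant product. The proof is essentially an unwinding of the definitions once the appropriate vector bundle has been identified.

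First I would upgrade the pointwise vanishing of Proposition \ref{prop: vanishinggmr} to the statement that the complex $\CV := \RHom_p(\CF, \CF')[1]$ is a vector bundle of rank $dd'$ on $\FM_{d,\chi} \times \FM_{d', \chi'}$. Pointwise, $\Hom(F, F') = \Ext^2(F, F') = 0$ for every pair of semistable sheaves of the specified types, so the only potentially non-vanishing cohomology sheaf of $\RHom_p(\CF, \CF')$ sits in degree $1$. A cohomology-and-base-change argument applied to the proper morphism $p$ (valid in the stacky setting by smooth descent to an atlas) then shows that $\CV$ is locally free of rank $\dim \Ext^1(F, F') = -\chi(F, F') = dd'$, the last equality being a Hirzebruch--Riemann--Roch computation depending only on the topological types. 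Hence $c_j(\CV) = 0$ in $H^*(\FM_{d,\chi} \times \FM_{d', \chi'})$ for every $j > dd'$. By Lemma \ref{lem: geomrealizationC}, the image of $C_j \in \BD \otimes \BD$ under the realization map $\xi \otimes \xi$ is precisely $c_j(\CV)$, which therefore vanishes for $j > dd'$.

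Finally, to descend to a relation on $\FM_{d,\chi}$, I would observe that the definition of $\gmr_\alpha^{\alpha', j, A}$ factors as the image of $C_j$ under $\id \otimes \xi$ followed by the slant product $\int_A$. The slant product is natural with respect to the realization in the second factor: for any descendent expression $D \in \BD_\alpha \otimes \BD_{\alpha'}$, applying $\xi \otimes \xi$ and then $\int_A$ agrees with applying $\id \otimes \xi$ then $\int_A$ and finally $\xi$. Combining this with the previous paragraph yields
\[ \xi\bigl(\gmr_\alpha^{\alpha', j, A}\bigr) \;=\; \int_A (\xi \otimes \xi)(C_j) \;=\; \int_A c_j(\CV) \;=\; 0, \]
so that $\gmr_\alpha^{\alpha', j, A} \in \FI_{d,\chi}$ as desired.

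The main step requiring genuine care is verifying that $\CV$ is a vector bundle in the stacky setting, which is a cohomology-and-base-change argument on a smooth Artin stack rather than a scheme. Once this is in place, the rest of the proof is a formal consequence of Lemma \ref{lem: geomrealizationC} and the naturality of the slant product with respect to cohomological pullback along the projection to $\FM_{d,\chi}$.
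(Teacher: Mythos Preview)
Your proposal is correct and follows essentially the same route as the paper: the proposition is stated there as ``the upshot of the above discussion,'' and that discussion is exactly the argument you give---Proposition \ref{prop: vanishinggmr} plus cohomology and base change to get a rank $dd'$ vector bundle, Lemma \ref{lem: geomrealizationC} to identify $C_j$ with its Chern class, and the slant product to descend to $\FM_{d,\chi}$. Your write-up is slightly more explicit about the naturality of the slant product with respect to realization, but there is no substantive difference.
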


Thus, we define the ideals
\[\FI^{\gmr,\alpha'}_{\alpha}=\Big\langle\gmr_{\alpha}^{\alpha', j, A}\big \vert\, j>dd',\,A\in H_*(\FM_{d', \chi'})\Big\rangle\subset \FI_{d, \chi}\,.\]


\begin{rmk}

There are several seemingly different ways to produce generalized Mumford relations. First, one can easily check that $\FI^{\gmr,\alpha'}_{\alpha}=\FI^{\gmr,d', \chi'+kd'}_{d,\chi+kd},$ so the isomorphisms $\FM_{d,\chi}\cong \FM_{d,\chi+kd}$ do not give new relations. One can also use the isomorphism $\FM_{d,\chi}\cong \FM_{d,-\chi}$ and look at $(d', \chi')$ satisfying
the dual version of the inequality (\ref{eq: inequalitygmr}). A third way is to use the vector bundle $\RHom_p(\CF', \CF)$ instead of $\RHom_p(\CF, \CF')$. The latter two approaches are actually equivalent by an application of Grothendieck--Verdier duality \cite[Section 3.4]{Huy}. Nevertheless, one can show that they also do not produce new relations.  
\end{rmk}

\begin{rmk}
    In the case $\gcd(d', \chi')=1$, it is often more efficient to use the good moduli space $M_{d', \chi'}$ to produce relations. We use $(-)^\dagger$ to denote the dual of an operator. Given $\overline{A}\in H^*(M_{d', \chi'})$, we can consider the relation $\gmr_{\alpha}^{\alpha', j, \overline{A}}\coloneqq \gmr_{\alpha}^{\alpha', j, A}$ where $A=\eta^\dagger([M_{d', \chi'}]\cap \overline{A})$. By Lemma \ref{lem: R-1quadraticidentity}, we have
    \[
    \gmr^{\alpha', j, \mathsf{R}_{-1}^\dagger(A)}_{\alpha}=(j-1-dd')\gmr^{\alpha', j-1, A}_\alpha.
    \]
    From this identity and Lemma \ref{lem: idealcoarse}, we conclude that the relations $\gmr_{\alpha}^{\alpha', j, a}$ generate $\FI_{\alpha}^{\gmr, \alpha'}$. In practice, we will indeed use the good moduli space $M_{d',\chi'}$ when possible, for example in the computations of Section \ref{sec: rings}. 
\end{rmk}

Because the construction of the ideal $\FI^{\gmr, \alpha'}_{\alpha}$ requires integration against homology classes of $\FM_{d', \chi'}$, to explicitly compute this ideal we need to understand $H^*(\FM_{d', \chi'})$ first. Thus it makes sense to determine the cohomology ring structure inductively for all $(d, \chi)$, by using the geometric relations with $d'<d$. It turns out that, when working with moduli stacks where $\mathrm{gcd}(d,\chi)>1$, it is also useful to take $d'=d$ and $\chi'$ with a smaller $\gcd$.\footnote{For example, the computation of $H^*(\FM_{3,0})$ in Section \ref{sec: M30} requires $\mathsf{GMR}$ from $M_{3,1}$ and $M_{3,2}$.} This motivates  

\begin{defn}
\label{def: order}
    We say that $(d', n')\prec (d,n)$ if either $d'<d$, or $d'=d$ and $\gcd(d', \chi')<\gcd(d,\chi)$. We define the ideal of generalized Mumford relations to be
    \[
    \FI^\gmr_{d,\chi} := \sum_{\substack{(d',\chi')\prec (d,\chi)\\\textrm{satisfying }\eqref{eq: inequalitygmr}}}\FI^{\gmr,\alpha'}_{\alpha} \subset \FI_{d,\chi}.
    \]
\end{defn}

\begin{rmk}
    \label{rmk: M52_M51}
As pointed out in Section \ref{sec: M51}, the ideal $\FI^\geo_{5,1}$ turns out to be strictly smaller than $\FI_{5,1}$. On the other hand, we have $\FI^\geo_{5,2}=\FI_{5,2}$. Thus from an algorithmic point of view, it could make sense to extend $\prec$ so that $(5,2)\prec (5,1)$ and include $\FI^{\gmr, 5,2}_{5,1}$ in the geometric relations. We have not been able to check if this produces the missing relations in $\FI^\geo_{5,1}$ due to the computational complexity involved. 
\end{rmk}

\begin{rmk}
\label{rmk: GMR_11}
    On the moduli space level, the generalized Mumford relations $I_{d,\chi}^{\gmr, 1,\chi'}$ already played a fundamental role in some recent developments understanding the cohomology rings. For example, one key step in Theorem \ref{generation} (i) consists of showing that these relations express all (normalized) tautological classes in terms of the $3d-7$ generators, and Theorem \ref{chi-dep} follows from a careful study of the three relations in Theorem \ref{generation} (ii), which can be produced by  $I_{d,\chi}^{\gmr, 1,\chi'}$. The current paper builds on these progresses and explore the full power of generalized Mumford relations; indeed, they make up the majority of all geometric relations we formulate.  
\end{rmk}

\subsection{Base relations}

\label{sec: br}

The set of relations that we discuss now have a different nature; they use the Hilbert--Chow morphism $h\colon \FM_{d,\chi}\to |d\cdot H|$ in a crucial way. Recall that $|d\cdot H|=\BP^b$ where $b=d(d+3)/2$ and that $\ch_1(H^2)=c_0(2)$ is the pull-back of the hyperplane class via $h$. It follows that $\mathrm{ch}_1(H^2)^{b+1}=0$. More generally, the following is true:

\begin{prop}
\label{prop: base relations}
    For any $i_1, \ldots, i_{b+1}\geq 1$, 
    \[\ch_{i_1}(H^2) \cdots \ch_{i_{b+1}}(H^2)\in \FI_{d,\chi}\]
    is a tautological relation.
\end{prop}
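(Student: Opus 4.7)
The plan is to realize each $\ch_i(H^2)$ as the $i$-th Chern character of the restriction of the universal sheaf $\CF$ to a single point of $\mathbb{P}^2$, and then to exploit a support argument built on the elementary fact that $b+1$ points of $\mathbb{P}^2$ in general position do not all lie on a common degree $d$ plane curve, since $b = \dim |d \cdot H|$.

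First, I would fix $b+1$ distinct points $x_1, \ldots, x_{b+1} \in \mathbb{P}^2$ in sufficiently general position so that no degree $d$ curve passes through all of them. For each $k$, let $\iota_k \colon \FM_{d,\chi} \hookrightarrow \FM_{d,\chi} \times \mathbb{P}^2$ denote the section $[F] \mapsto ([F], x_k)$, and set $\CF_{x_k} := \iota_k^* \CF$. Since the Poincaré dual of $\FM_{d,\chi} \times \{x_k\}$ is $q^* H^2$ and $p \circ \iota_k = \mathrm{id}$, the projection formula yields
\[
\ch_{i_k}(H^2) = p_*\bigl(\ch_{i_k}(\CF) \cdot q^* H^2\bigr) = \iota_k^*\ch_{i_k}(\CF) = \ch_{i_k}(\CF_{x_k}) \in H^{2i_k}(\FM_{d,\chi}).
\]
Consequently, the target product $\ch_{i_1}(H^2) \cdots \ch_{i_{b+1}}(H^2)$ coincides with $\ch_{i_1}(\CF_{x_1}) \cdots \ch_{i_{b+1}}(\CF_{x_{b+1}})$ in $H^*(\FM_{d,\chi})$.

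Next, I would analyze the supports. The coherent sheaf $\CF_{x_k}$ is set-theoretically supported on the closed substack $Z_k := h^{-1}(H_k)$, where $H_k \subset |d \cdot H|$ is the hyperplane of degree $d$ curves passing through $x_k$. By the general position hypothesis, $\bigcap_k H_k = \emptyset$, and therefore $\bigcap_k Z_k = \emptyset$. The Chern characters of a coherent sheaf supported on a closed substack $Z$ of a smooth ambient stack lift canonically to cohomology with supports $H^*_Z(-)$, for instance via a locally free resolution computing the class in $K$-theory with supports. Taking cup products of such classes yields a class in $H^*_{\bigcap_k Z_k}(\FM_{d,\chi}) = H^*_\emptyset(\FM_{d,\chi}) = 0$, as desired.

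The one technical point requiring care is the lift of Chern characters of supported coherent sheaves to cohomology with supports in the stacky setting; this is standard for smooth Artin stacks, so it should amount to a routine verification via a locally free resolution rather than a genuine obstacle. An alternative route, avoiding cohomology with supports entirely, would be to observe that each $\CF_{x_k}$ admits a finite locally free resolution whose terms can be chosen so that their Chern characters factor through pushforward from $Z_k$, and then invoke iterated projection formula along the proper intersections $Z_1 \cap \cdots \cap Z_j$ until the intersection becomes empty.
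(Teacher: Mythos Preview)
Your proposal is correct and follows essentially the same approach as the paper's proof: both identify $\ch_i(H^2)$ with the Chern character of the restriction $\CF_{x}$ of the universal sheaf to a point $x\in\mathbb{P}^2$, observe that this class is supported on the locus of sheaves whose support curve passes through $x$, and then choose $b+1$ points in general position so that the common support is empty. Your use of cohomology with supports is a slightly more formal packaging of the same support argument the paper sketches.
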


\begin{proof}
By the Grothendieck--Riemann--Roch theorem, the Chern character $\mathrm{ch}(\CF)$ is a cycle pushed forward from $\mathrm{supp}(\CF) \subset \FM_{d,\chi} \times \mathbb{P}^2$. Recall the geometric realization
\[\ch_i(H^2) =  \Big[p_\ast\big(\ch(\CF)\cdot q^\ast H^2)\Big]_{2i}\in H^{2i}(\FM_{d,\chi}).\]
If we pick a point $\pt \in \mathbb{P}^2$ to represent the class $H^2$, then $q^*H^2$ can be identified with $\FM_{d,\chi} \times \{\pt\}$, and the class $\mathrm{ch}_i(H^2)$ is a push-forward from the intersection $\mathrm{supp}(\mathcal{F}) \cap (\FM_{d,\chi} \times \{\pt\})$. In particular, a sheaf $[F]\in [\mathrm{ch}_i(H^2)]\subset \FM_{d,\chi}$ must be supported at the point $\pt$. Now we pick $b+1$ points $\pt_1, \pt_2,\ldots, \pt_{b+1} \in \mathbb{P}^2$ in general position, then there are no degree $d$ curves passing through all of them. It follows that
\[
\ch_{i_1}(H^2) \cdots \ch_{i_{b+1}}(H^2) = 0. \qedhere
\]
\end{proof}

We define the ideal of base relations
\[
\FI^\br_{d,\chi} := \left \langle \ch_{i_1}(H^2) \cdots \ch_{i_{b+1}}(H^2) \mid i_1, \ldots, i_{b+1} \geq 1 \right \rangle\subset \FI_{d,\chi}.
\]
Note that the base relations all have cohomological degrees $\geq 2(b+1)$.

\section{Virasoro representations}
\label{sec: Virasoro rep}

In this section, we introduce the Virasoro operators and explain how they can be used to study the cohomology ring of the moduli stacks and spaces. We explain the Virasoro constraints for moduli spaces $M_{d,\chi}$ with $\gcd(d,\chi)=1$ proven in \cite{lmquivers} and an important consequence: the ideal of tautological relations is preserved by the Virasoro operators, hence inducing a representation of $\Vir_{\geq -1}$ on $H^*(\FM_{d,\chi})$. In fact, such a property is proven in {\it loc. cit.} for any $\FM_{d,\chi}$ with $(d,\chi)$ not necessarily coprime. Second, we refine the preservation of the ideal under the Virasoro operators to the ideal of each type $\mr$/$\gmr$/$\br$/$\geo$. Furthermore, we prove that each ideal is generated by suitable primitive elements under the Virasoro operators. Working with the Virasoro operators and primitive relations is computationally much more effective than working directly with the entire ideals.

\subsection{Virasoro operators and Virasoro constraints}
\label{sec: virasoro operators}

Recall the definition of the descendent algebra $\BD$ from Section \ref{sec: descendent algebra}. The Virasoro operators $\{\bL_n\}_{n\geq -1}$ acting on $\BD$ is defined as a sum $\bL_n=\bR_n+\bT_n$ of the two operators: $\bR_n$ is a derivation operator such that
$$\bR_n(\ch_i(\gamma)):=i(i+1)\cdots (i+n)\ch_{i+n}(\gamma)
$$
and $\bT_n$ is a multiplication operator by the element
$$\bT_n
:=\sum_{a+b=n,\, a,b\geq 0} \sum_i a!b!\,(-1)^{2-d_i^L} \ch_a(\gamma_i^L)\cdot \ch_b(\gamma_i^R),
$$
where $\gamma_i^L$ and $\gamma_i^R$ are the same as in \eqref{eqn: Kunneth}. 
Note that $\bL_{-1}=\bR_{-1}$ agrees with Definition \ref{def: R -1}. One can check that the operators satisfy the Virasoro bracket relations
$$[\bR_n,\bR_m]=(m-n)\bR_{n+m},\quad [\bL_n,\bL_m]=(m-n)\bL_{n+m}. 
$$
The weight zero Virasoro operator is defined as a combination of the Virasoro operators
$$\bL_{\inv}:=\sum_{n\geq -1}\frac{(-1)^n}{(n+1)!}\bL_{n}\circ \bL_{-1}^{n+1}:\BD\rightarrow \BD_\inv\,.
$$

Virasoro constraints in sheaf theory have been studied in \cite{BLM} building on the previous works \cite{MOOP, M, vanBree}. A proof of the Virasoro constraints for the moduli spaces $M_{d, \chi}$ with $\gcd(d,\chi)=1$ is given in \cite{lmquivers}.
\begin{thm}[{\cite{lmquivers}}]\label{thm: Virasoro constraints} Given $d$ and $\chi$ coprime, we have
    $$\int_{M_{d,\chi}}\bL_\inv (D)=0\quad \text{for all}\;\; D\in \BD\,.$$
\end{thm}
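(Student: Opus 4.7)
The plan is to transport the Virasoro constraints from a quiver-theoretic setting, where they are accessible by direct means, to the moduli of one-dimensional sheaves on $\mathbb{P}^2$. Concretely, I would invoke the Beilinson-type derived equivalence $D^b(\Coh(\mathbb{P}^2))\simeq D^b(\mathrm{rep}(Q))$, where $Q$ is the Beilinson quiver with three vertices and triple arrows between consecutive ones. For a generic King stability parameter approximating the sheaf-theoretic slope, Gieseker-semistable one-dimensional sheaves of type $(d,\chi)$ correspond to semistable $Q$-representations of an explicit dimension vector $\underline{d}(d,\chi)$ (up to a shift in the heart). Thus $M_{d,\chi}$ is identified with a quiver moduli space $M^Q_{\underline{d}}$, and descendent classes $\ch_i(\gamma)$ translate into tautological classes of the universal $Q$-representation via the Beilinson resolution of the diagonal on $\mathbb{P}^2\times \mathbb{P}^2$.

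I would first establish the Virasoro constraints for quiver moduli spaces. In this combinatorial setting there is a direct proof using equivariant localization on framed moduli (where the fixed points are indexed by tuples of partitions and the virtual tangent spaces have explicit character descriptions), together with Joyce-type wall-crossing to descend from the framed to the unframed semistable setting; this is the content of the framework developed in \cite{BLM, Arkadij_quiver, lmquivers}. The quiver Virasoro operators $\bL_n^Q$ are defined combinatorially in terms of the Euler form of $Q$ and a derivation raising descendent indices, and the analogue of Theorem \ref{thm: Virasoro constraints} for $M^Q_{\underline{d}}$ would be obtained first.

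Finally, I would show that the derived equivalence intertwines $\bL_\inv$ on the sheaf side with the weight zero operator on the quiver side. The Künneth decomposition of $\Delta_\ast\td(\mathbb{P}^2)$ appearing in $\bT_n$ should match the Euler pairing on $Q$ after unpacking the Beilinson resolution, while the derivation part $\bR_n$ should be compatible with the change of universal object. The main obstacle will be controlling the discrepancy arising from the fact that a one-dimensional sheaf $\mathcal{F}$ is sent to a complex of $Q$-representations rather than a single one, so the matching of universal Chern characters involves alternating sums of contributions from the three vertices. Showing that these discrepancies either cancel on the nose or vanish after applying the weight zero projection $\bL_\inv=\sum_{n\geq -1}\frac{(-1)^n}{(n+1)!}\bL_n\circ \bL_{-1}^{n+1}$ and integrating over $M_{d,\chi}$ (where only top-degree classes survive) is the core technical input. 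Once this compatibility is in place, Theorem \ref{thm: Virasoro constraints} is immediate from its quiver counterpart.
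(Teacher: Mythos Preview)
The paper does not supply its own proof of this statement; Theorem~\ref{thm: Virasoro constraints} is quoted directly from \cite{lmquivers}, as the attribution indicates. What the paper does record about the method (in the introduction and in the remark following Theorem~\ref{thm: repstack}) is that the proof proceeds by identifying the moduli stack with a stack of representations of the Beilinson quiver, proving the Virasoro constraints on the quiver side via framed representations and wall-crossing in the style of \cite{BLM}, and then transporting back. Your outline follows exactly this route, so at the level of strategy your proposal is aligned with the cited source as described here.

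One point where your sketch likely diverges from \cite{lmquivers}: you propose to establish the quiver Virasoro constraints by ``equivariant localization on framed moduli, where the fixed points are indexed by tuples of partitions''. Framed moduli of representations of an acyclic quiver such as the Beilinson quiver need not carry a torus action with isolated fixed points of that combinatorial type; the partition picture you have in mind is specific to situations like the Hilbert scheme or rank-one sheaves on toric surfaces. The approach actually signalled by the paper (and consistent with \cite{BLM, lmquivers}) is the Joyce vertex-algebra wall-crossing machinery, which handles the framed-to-unframed passage and the constraints simultaneously without any localization step. Your final paragraph on intertwining $\bL_\inv$ with its quiver counterpart via the Beilinson resolution identifies the genuine technical core correctly, though you should expect the compatibility to be checked at the level of the formal descendent algebras and the twisting operators $\bF_\rho$ (cf.\ \cite[Lemma~2.19]{BLM}) rather than by an ad hoc cancellation after integration.
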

For concrete calculations, it is useful to record here the normalized formulation of the Virasoro constraints (cf. \cite[Conjecture 2.15]{BLM}) in terms of the generators $c_k(j)$. It has been explained in Section \ref{sec: normalized class} how the descendent algebra $\BD_\alpha$ is identified with the polynomial algebra $\BQ[c_0(2), c_1(1), c_2(0), c_1(2),\cdots]$. In particular, the operators $\bR_n$ and $\bT_n$ can be interpreted as operators acting on $\BQ[c_0(2), c_1(1), c_2(0), c_1(2),\cdots]$, and we have
\[\bR_n(c_k(j))=(k+j-1)\cdots(k+j-1+n)c_{k+n}(j)\,.\]
As discussed in Section \ref{sec: normalized class}, there is a normalized realization \[\BD_\alpha\simeq \BQ[c_0(2), c_1(1), c_2(0), c_1(2),\cdots]\to H^*(M_{d,\chi})\] that sends $c_1(1)$ to zero. The normalized Virasoro operators $\bL_n^\delta$ on $\BQ[c_0(2), c_1(1), c_2(0), c_1(2),\cdots]$  are given by
\[\bL_n^\delta=\bR_n+\bT_n-\frac{(n+1)!}{d}\bR_{-1}\circ c_{n+1}(1)\,.\]

\begin{cor}\label{cor: virasoro normalized}
   Given $d$ and $\chi$ coprime, we have
    $$\int_{M_{d,\chi}}\bL_n^\delta (D)=0\quad \text{for all}\quad n\geq 0,\;\, D\in \BQ[c_0(2), c_1(1), c_2(0), c_1(2),\cdots]\,.$$ 
\end{cor}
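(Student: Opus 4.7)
The goal is to derive the normalized Virasoro constraints of Corollary \ref{cor: virasoro normalized} from the weight-zero formulation in Theorem \ref{thm: Virasoro constraints}. The plan is to exhibit, for each $n \geq 0$ and each $D \in \BD_\alpha \cong \BQ[c_0(2), c_1(1), c_2(0), c_1(2), \ldots]$, an explicit element $\widetilde D_n \in \BD$ whose image under the weight-zero operator $\bL_\inv$ realizes to $\bL_n^\delta(D)$ in $H^*(M_{d,\chi})$. Theorem \ref{thm: Virasoro constraints} would then immediately yield the desired vanishing $\int_{M_{d,\chi}} \bL_n^\delta(D) = 0$.

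A natural choice of lift is
\[
\widetilde D_n = (-1)^n \ch_1(\delta)^{n+1}\cdot D^{\mathrm{lift}},
\]
where $D^{\mathrm{lift}} \in \BD$ is any preimage of $D$ and $\delta = H/d$. The reason is that when we expand $\bL_\inv = \sum_{m\geq -1}\tfrac{(-1)^m}{(m+1)!}\bL_m \bR_{-1}^{m+1}$ and apply it to $\widetilde D_n$, the key observation is that $\bR_{-1}$ is a derivation with $\bR_{-1}(\ch_1(\delta)) = \ch_0(\delta) = 1$, so $\bR_{-1}^{n+1}(\ch_1(\delta)^{n+1}) = (n+1)!$. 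The $m = n$ summand, restricted to the Leibniz contribution in which all derivatives land on $\ch_1(\delta)^{n+1}$, produces the unique term in which the $\ch_1(\delta)$ factor is differentiated away entirely, and it equals $\bL_n(D^{\mathrm{lift}}) = (\bR_n + \bT_n)(D^{\mathrm{lift}})$. Under the twist $\bF_\rho$, this matches the first two terms of $\bL_n^\delta(D)$ in the normalized polynomial presentation.

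The remaining contributions are of two kinds. Those still divisible by a positive power of $\ch_1(\delta)$ after differentiation are annihilated by $\eta_\delta$ upon descent to $M_{d,\chi}$, since $\eta_\delta$ is an algebra homomorphism with $\eta_\delta(\ch_1(\delta)) = 0$ by Proposition \ref{prop: propertieseta}. The truly non-trivial contributions come from Leibniz expansions in which some $\bR_{-1}$'s fall on $D^{\mathrm{lift}}$ rather than on $\ch_1(\delta)^{n+1}$; organizing these via the Virasoro commutator $[\bL_m, \bR_{-1}] = -(m+1)\bL_{m-1}$ and the identification $c_{n+1}(1) = \bF_\rho(\ch_n(H))/d$ should show that they consolidate precisely into the correction term $-\tfrac{(n+1)!}{d}\bR_{-1}\circ c_{n+1}(1)$, thereby completing the operator $\bL_n^\delta = \bR_n + \bT_n - \tfrac{(n+1)!}{d}\bR_{-1}\circ c_{n+1}(1)$ acting on $D$.

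The main obstacle is precisely this final combinatorial identification: the expansion of $\bL_\inv(\widetilde D_n)$ generates potentially many cross-terms coming from the non-commutativity of $\bL_m$ with $\bR_{-1}$ and from the full Leibniz rule for $\bR_{-1}^{m+1}$, and one must verify that after applying $\eta_\delta$ and $\bF_\rho$ they assemble into the single correction summand appearing in $\bL_n^\delta$, producing neither extraneous terms nor an infinite tail. This bookkeeping, while mechanical, is essentially the content of the equivalence between the weight-zero and normalized Virasoro formulations established in \cite{BLM}, and carrying it through rigorously in the present setting is the heart of the proof.
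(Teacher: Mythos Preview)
Your approach is aligned with the paper's: both reduce the normalized constraints to the weight-zero form in Theorem~\ref{thm: Virasoro constraints} via the equivalence established in \cite[Proposition 2.16]{BLM}, together with the fact that the twist $\bF_\rho$ commutes with the Virasoro operators \cite[Lemma 2.19]{BLM}. The paper simply cites these two results; you are attempting to reconstruct the first.

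However, what you have written is a proof plan, not a proof. You correctly identify the natural lift $\widetilde D_n=(-1)^n\ch_1(\delta)^{n+1}D$ and the mechanism by which the $m=n$ summand of $\bL_\inv$ produces $\bL_n(D)$, but you explicitly leave the crucial step---showing that the remaining Leibniz cross-terms assemble into exactly the correction $-\tfrac{(n+1)!}{d}\bR_{-1}\circ c_{n+1}(1)$ and nothing else---as ``bookkeeping'' to be done. That verification \emph{is} the proof, and it is not trivial: one must track contributions from all $m\geq -1$, not just $m=n$, and confirm that the infinite sum collapses after applying $\eta_\delta$. Your appeal to the commutator $[\bL_m,\bR_{-1}]=-(m+1)\bL_{m-1}$ is not obviously the right organizing principle here, since the $\bR_{-1}$'s act \emph{before} $\bL_m$ in each summand of $\bL_\inv$.

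There is also a small slip: your identification ``$c_{n+1}(1)=\bF_\rho(\ch_n(H))/d$'' is off by one in the subscript and by the factor $1/d$. By Section~\ref{sec: relations between classes}, $c_{n+1}(1)$ is the image of $\ch_{n+1}(H)$ under $\bF_\rho$ (followed by $\eta_\delta$), and the $1/d$ in $\bL_n^\delta$ comes from $\delta=H/d$, not from the definition of $c_{n+1}(1)$ itself.
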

\begin{proof}
This follows from the standard form of the Virasoro constraints in Theorem \ref{thm: Virasoro constraints} and \cite[Proposition 2.16]{BLM}. Note that the definition of the realization of $c_k(j)$ also involves the twisting operators $\bF_\rho$ from Section \ref{sec: normalized class} and that these commute with the Virasoro operators by \cite[Lemma 2.19]{BLM}.
\end{proof}

\subsection{Virasoro representation}

Recall from the last section that we have a representation of the half of the Virasoro algebra $\Vir_{\geq -1}$ on $\BD$. On the other hand, we have an exact sequence 
$$0\rightarrow \FI_{d,\chi}\rightarrow \BD_{d,\chi}\rightarrow H^*(\FM_{d,\chi})\rightarrow 0\,. 
$$
It turns out that as a consequence of Virasoro constraints, one can show that the representation of $\Vir_{\geq -1}$ descends to the cohomology of the stack.

\begin{thm}[{\cite{lmquivers}}]\label{thm: repstack}
The representation of $\Vir_{\geq -1}$ on $\BD$ preserves the ideal of relations $\FI_{d,\chi}$. Hence, the operators $\bR_n$ descend to the cohomology $H^*(\FM_{d,\chi})$, i.e. there is a dashed arrow completing the diagram
\begin{center}
        \begin{tikzcd}\label{diagram: Rngeometric}
            \BD\arrow[r, "\bR_{n}"] \arrow[d, "\xi"]& \BD\arrow[d, "\xi"]\\
            H^*(\FM_{d, \chi})\arrow[r, dashed]&H^*(\FM_{d, \chi}).
        \end{tikzcd}
    \end{center}
\end{thm}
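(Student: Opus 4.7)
The plan is to reduce preservation of the full $\Vir_{\geq -1}$-action to preservation by the derivation parts $\bR_n$ alone, and then extract the ideal property from the Virasoro constraint of Theorem~\ref{thm: Virasoro constraints}.

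\textbf{Step 1 (Reduction to $\bR_n$).} First I would write $\bL_n = \bR_n + \bT_n$, where $\bT_n \in \BD_\alpha$ is a multiplication operator. Since multiplication automatically preserves any ideal, the task reduces to proving $\bR_n(\FI_{d,\chi}) \subset \FI_{d,\chi}$ for all $n \geq -1$. The base case $n=-1$ is Proposition~\ref{prop: R-1geom}. Moreover, the bracket relations $[\bR_n, \bR_m] = (m-n)\bR_{n+m}$ allow one to generate all $\bR_n$ with $n\geq 0$ from $\bR_{-1}$ together with $\bR_1$ and $\bR_2$ via iterated commutators, so it is enough to establish the claim for finitely many $n \geq 0$.

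\textbf{Step 2 (Coprime case).} Assuming $\gcd(d,\chi) = 1$, Lemma~\ref{lem: idealcoarse} lets me descend the problem to the good moduli space: it suffices to show $\bR_n(I_{d,\chi}) \subset I_{d,\chi}$ on $M_{d,\chi}$. For $D \in I_{d,\chi}$ and $D' \in \BD_\alpha$, the Leibniz rule for the derivation $\bR_n$ combined with $\xi(D) = 0$ yields $\xi(\bR_n(D)\cdot D') = \xi(\bR_n(D\cdot D'))$. Poincaré duality on the smooth projective $M_{d,\chi}$ and the surjectivity of $\xi$ (Theorem~\ref{thm: tautgenerated}) would then reduce $\xi(\bR_n(D)) = 0$ to the universal vanishing
$$\int_{M_{d,\chi}} \xi(\bR_n(E)) = 0 \quad \text{for every } E \in \BD_\alpha.$$
This I would extract from the Virasoro constraint $\int_{M_{d,\chi}} \xi(\bL_\inv(X)) = 0$ applied to well-chosen test elements $X$, by expanding
$$\bL_\inv = \sum_{k \geq -1} \frac{(-1)^k}{(k+1)!}\bL_k \circ \bL_{-1}^{k+1}.$$
Each unwanted summand either applies $\bL_{-1}$ to an element already in the ideal (hence preserved by Proposition~\ref{prop: R-1geom}) or can be controlled by induction on $n$; the discrepancy between $\bR_n$ and $\bL_n$ is absorbed via $\bT_n = \bL_n - \bR_n$ and the fact that $\bT_n$ acts as multiplication by an explicit element of $\BD_\alpha$.

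\textbf{Step 3 (The stacky case and main obstacle).} For $\gcd(d,\chi) > 1$, the good moduli space $M_{d,\chi}$ is typically singular and Poincaré duality is unavailable, so the argument of Step~2 does not apply verbatim. The approach I would follow, after \cite{lmquivers}, is to work directly on the smooth stack $\FM_{d,\chi}$: the Virasoro operators admit a geometric lift via the vertex algebra / cohomological Hall algebra structure on $\bigoplus_\alpha H_*(\FM_\alpha)$, and their compatibility with the wall-crossing coproduct forces preservation of tautological ideals. Equivalently, one may inductively reduce to coprime pieces by expressing tautological classes on $\FM_{d,\chi}$ in terms of classes on coprime substacks and then invoking Step~2. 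The hard part will be establishing this compatibility of the Virasoro operators with the vertex algebra / wall-crossing structure --- this is the technical core of \cite{lmquivers} and is where the genuine new input beyond the coprime Virasoro constraints is required.
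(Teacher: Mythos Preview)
Your overall architecture is right --- reduce to $\bR_n$, handle the coprime case via Poincar\'e duality, and defer the non-coprime case to \cite{lmquivers} --- and this matches how the paper organizes things. But Step~2 contains a genuine error. The ``universal vanishing'' you reduce to,
\[
\int_{M_{d,\chi}} \xi(\bR_n(E)) = 0 \quad \text{for all } E,
\]
is simply false: already for $n=0$ one has $\bR_0(E)=\tfrac{1}{2}\deg(E)\,E$, so the integral is $\dim(M_{d,\chi})\int\xi(E)\neq 0$ for a top-degree class. No amount of unpacking $\bL_{\inv}$ will extract a false statement. The paper's argument (see the direct proof of the coprime corollary after Proposition~\ref{prop: idealimagepreserved}) avoids this by using the normalized constraints $\int \bL_n^\delta(\,\cdot\,)=0$, which hold for each $n$ individually by Corollary~\ref{cor: virasoro normalized}, and then splitting $\bL_n^\delta(DE)=\bR_n^\delta(D)\cdot E + D\cdot \bL_n^\delta(E)$: the second term vanishes since $D\in I_{d,\chi}$, leaving $\int \bR_n^\delta(D)\cdot E=0$ for all $E$, hence $\bR_n^\delta(D)\in I_{d,\chi}$ by Poincar\'e duality. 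Also note that passing from the stack to the space requires the full isomorphism $H^*(\FM_{d,\chi})\cong H^*(M_{d,\chi})[u]$ of Lemma~\ref{lem: idealcoarse}, not just the ideal relation $I_{d,\chi}=\eta(\FI_{d,\chi})$.

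For Step~3, your identification of the obstacle is correct, but your sketch of the workaround (vertex algebra/CoHA structure, wall-crossing coproduct, reduction to coprime substacks) does not match what \cite{lmquivers} actually does. As the paper's remark following the theorem explains, the method is to identify $\FM_{d,\chi}$ with a moduli stack of quiver representations and approximate its cohomology by moduli of \emph{framed} quiver representations --- the quiver analogue of Joyce--Song pairs. These framed moduli are smooth projective varieties where pair Virasoro constraints can be established and the Poincar\'e duality argument of Step~2 applies; one then takes the approximation limit. The point is that this sidesteps the need for Virasoro constraints for the Joyce invariant classes $[M_{d,\chi}]^{\inva}$ in the non-coprime case, which remain open in the sheaf setting.
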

\begin{rmk}
    Note that $\Vir_{\geq -1}$ acts on $\BD$ via either $\{\bR_n\}_{n\geq -1}$ or $\{\bL_n\}_{n\geq -1}$. Since the multiplication operator $\bT_n$ trivially preserves the ideal $\FI_{d,\chi}$, the above theorem is true for either choice of a representation. In practice, however, we always work with the simpler one using $\{\bR_n\}_{n\geq -1}$. 
\end{rmk}

\begin{rmk}
The most natural way to prove a statement like Theorem \ref{thm: repstack} is to utilize the approximation of $H^*(\FM_{d,\chi})$ by the cohomology of moduli spaces of Joyce--Song pairs $P^N_{d,\chi}$, see the proof of Theorem \ref{thm: tautological generation}. If we knew the pair Virasoro constraints (cf. \cite[Conjecture 2.18]{BLM}) for Joyce--Song pairs, we could deduce that $\bR_n$ descends to the cohomology $H^*(P^N_{d,\chi})$, and hence to the cohomology of the stack by approximation. The Virasoro constraints for $P^N_{d,\chi}$ are equivalent to the Virasoro constraints for the Joyce invariant classes $[M_{d,\chi}]^\inva$ for every $(d, \chi)$, but currently we only know these unconditionally when $\gcd(d,\chi)=1$. This problem is worked around in \cite{lmquivers} by identifying $\FM_{d,\chi}$ with a stack of quiver representations and approximating it by framed quiver representations, which are the analogue of Joyce--Song pairs in the quiver context.
\end{rmk}

We can also make a statement for the cohomology of the good moduli spaces by using the normalized realization. We have an exact sequence 
\[0\to I_{d, \chi}\to \BQ[c_0(2), c_2(0), c_1(2),\cdots]\to H^*(M_{d,\chi})\to 0\,.\]
Define the derivation $\bR_n^\delta$ on $ \BQ[c_0(2), c_1(1), c_2(0), c_1(2),\cdots]$ by
\[\bR_n^\delta=\bR_n-\frac{(n+1)!}{d}c_{n+1}(1)\circ \bR_{-1}\,.\]
This derivation satisfies the property that $\bR_n^\delta(c_1(1))=0$ for every $n\geq 0$, so it induces an operator on $ \BQ[c_0(2), c_2(0), c_1(2),\cdots]$ which we denote in the same way. Note that $\bL_n^\delta=\bR_n^\delta+\bT_n^\delta$ where $\bT_n^\delta=\bT_n-\frac{(n+1)!}{d}c_n(1)$ is linear.

\smallskip

The following proposition is immediate: 

\begin{prop}\label{prop: idealimagepreserved}
    Let $\FI\subseteq  \BD_\alpha \simeq \BQ[c_0(2), c_1(1), c_2(0), c_1(2),\cdots]$ be an ideal closed under the action of the operators $\bR_n$, $n\geq -1$. Then the ideal $I\subseteq \BQ[c_0(2), c_2(0), c_1(2),\cdots]$ obtained as the image of $\FI$ by the quotient setting $c_1(1)$ to 0 is closed under the operators $\bR_n^\delta$, $n\geq 0$. 
\end{prop}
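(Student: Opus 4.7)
The plan is to lift the problem back to the ambient polynomial ring $\BD_\alpha \simeq \BQ[c_0(2), c_1(1), c_2(0), c_1(2),\ldots]$ where both $\bR_n$ and $\bR_n^\delta$ are defined as operators, and show that $\bR_n^\delta$ already preserves $\FI$ there. The crucial input is the defining formula
\[
\bR_n^\delta = \bR_n - \frac{(n+1)!}{d}\, c_{n+1}(1) \circ \bR_{-1},
\]
which exhibits $\bR_n^\delta$ as the derivation $\bR_n$ corrected by the composition of $\bR_{-1}$ with multiplication by the tautological class $c_{n+1}(1)$.

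The first step is to verify directly that $\bR_n^\delta$ preserves $\FI \subseteq \BD_\alpha$. Take any $F \in \FI$. By hypothesis, both $\bR_n(F)$ and $\bR_{-1}(F)$ lie in $\FI$; since $\FI$ is an ideal, multiplying the latter by $c_{n+1}(1)$ keeps us in $\FI$. Combining the two contributions via the displayed formula shows $\bR_n^\delta(F) \in \FI$.

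The second step is to descend this stability along the quotient map $q\colon \BD_\alpha \twoheadrightarrow \BQ[c_0(2), c_2(0), c_1(2),\ldots]$ that sends $c_1(1)$ to $0$. The paper has already observed that $\bR_n^\delta(c_1(1)) = 0$ (indeed, $\bR_n(c_1(1)) = (n+1)!\, c_{n+1}(1)$ and $\bR_{-1}(c_1(1)) = d$, so the two terms cancel). Consequently the derivation $\bR_n^\delta$ preserves the kernel $(c_1(1))$ of $q$ and thus descends, in the sense that $\bR_n^\delta \circ q = q \circ \bR_n^\delta$. Therefore
\[
\bR_n^\delta(I) = \bR_n^\delta(q(\FI)) = q(\bR_n^\delta(\FI)) \subseteq q(\FI) = I.
\]

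I anticipate no real obstacle here: the statement is essentially a formal compatibility check. The subtracted correction term $\frac{(n+1)!}{d}\, c_{n+1}(1) \circ \bR_{-1}$ is manifestly ideal-preserving, because $\bR_{-1}(\FI)\subseteq \FI$ by hypothesis and ideals are closed under multiplication by ring elements. The only subtlety worth mentioning is that the displayed formula for $\bR_n^\delta$ must be interpreted on the ambient ring $\BD_\alpha$, not on the quotient; but this is precisely how $\bR_n^\delta$ was originally introduced, so the descent step is automatic.
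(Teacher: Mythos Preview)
Your proof is correct and follows the same approach as the paper's own proof, which is the terse three-line observation that $\bR_n^\delta$ preserves $\FI$ (since $\bR_n$, $\bR_{-1}$, and multiplication do) and hence the induced operator on the quotient preserves $I$. You have simply spelled out the descent step in more detail, including the verification that $\bR_n^\delta(c_1(1))=0$, which the paper states just before the proposition.
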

\begin{proof}
    Since $\bR_n$ and $\bR_{-1}$ preserve the ideal $\FI$, the operator $\bR_n^\delta$ also preserves the ideal $\FI$, so the induced operator $\bR_n^\delta$ on the quotient $\BQ[c_0(2), c_2(0), c_1(2),\cdots]$ preserves the ideal $I$. 
\end{proof}

\begin{cor}
Let $d,\chi$ be coprime and $n\geq 0$. The derivation $\bR_n^\delta$ preserves the ideal $I_{d,\chi}$.
\end{cor}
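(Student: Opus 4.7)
The plan is to assemble this corollary by combining three results already in place, with essentially no new content required. First, I invoke Theorem \ref{thm: repstack}, which guarantees that the stack-level ideal $\FI_{d,\chi} \subset \BD_\alpha$ is preserved by every operator $\bR_n$ with $n \geq -1$; this is the substantive geometric input and is valid without any coprimality assumption. Second, assuming $\gcd(d,\chi) = 1$, Lemma \ref{lem: idealcoarse} identifies $I_{d,\chi}$ with the image of $\FI_{d,\chi}$ under the quotient map
\[
\BQ[c_0(2), c_1(1), c_2(0), c_1(2),\dots] \twoheadrightarrow \BQ[c_0(2), c_2(0), c_1(2),\dots]
\]
that sends $c_1(1)$ to zero. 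Third, I feed this setup into Proposition \ref{prop: idealimagepreserved} with $\FI = \FI_{d,\chi}$: since $\FI_{d,\chi}$ is stable under all $\bR_n$ for $n \geq -1$, its image is stable under $\bR_n^\delta$ for all $n \geq 0$, and that image is precisely $I_{d,\chi}$. This yields the claim.

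I do not anticipate any genuine obstacle; the work has all been done upstream, and the corollary is bookkeeping that packages the stack-level Virasoro representation into the coprime-moduli setting where the normalized generators $c_k(j)$ live. The one conceptual point worth flagging is the restriction $n \geq 0$: the modification $\bR_n^\delta = \bR_n - \tfrac{(n+1)!}{d}\, c_{n+1}(1)\circ \bR_{-1}$ is precisely what allows the induced operator to be well-defined on the quotient by $c_1(1)$, and for $n = -1$ the operator $\bR_{-1}$ already acts as zero on $\BD_{\alpha,\inv}$ under the identification of Proposition \ref{prop: isowt0}, so no separate statement is needed there.
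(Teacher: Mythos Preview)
Your proposal is correct and matches the paper's primary argument exactly: the paper states that the corollary follows from Proposition \ref{prop: idealimagepreserved} combined with Theorem \ref{thm: repstack} and Lemma \ref{lem: idealcoarse}. The paper then additionally supplies a second, self-contained proof using the normalized Virasoro constraints (Corollary \ref{cor: virasoro normalized}) together with Poincar\'e duality on $M_{d,\chi}$, which is instructive because it shows the result can be obtained directly at the level of the smooth moduli space without passing through the stack.
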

\begin{proof}
    By Proposition \ref{prop: idealimagepreserved} above, this is a consequence of the stacky analogue (Theorem \ref{thm: repstack}) and Lemma \ref{lem: idealcoarse}. However, it is instructive to give a proof directly from the normalized form of the Virasoro constraints, which we do below.

    Suppose that $D\in I_{d,\chi}$ and let $E\in \BQ[c_0(2), c_2(0), c_1(2),\cdots]$ be arbitrary. By the Virasoro constraints in the form of Corollary \ref{cor: virasoro normalized},
    \[0=\int_{M_{d,\chi}} \bL_n^\delta(D\cdot E)=\int_{M_{d,\chi}} \bR_n^\delta(D)\cdot E+\int_{M_{d,\chi}} D\cdot \bL_n^\delta(E)=\int_{M_{d,\chi}} \bR_n^\delta(D)\cdot E\,.\]
    Since $E$ was arbitrary and the realization map $\BQ[c_0(2), c_2(0), c_1(2),\cdots]\to H^*(M_{d,\chi})$ is surjective, we conclude by Poincaré duality that $\bR_n^\delta(D)\in I_{d,\chi}$.
\end{proof}

\subsection{Preservation of geometric relations}
In this subsection, we study preservation of the ideal $\FI^\bullet_{d,\chi}$ under the $\Vir_{\geq -1}$-representation for each $\bullet=\mr, \gmr, \br, \geo$. The $\br$ case is trivial, and the $\geo$ case follows from the other three cases by definition. So we study the $\mr$ and $\gmr$ cases in the next two sections. 

\subsubsection{Mumford relations}

We prove here the following proposition:
\begin{prop}\label{prop: virasoroMR}
    The ideal $\FI_{d, \chi}^{\mr}$ is closed under the action of  $U(\Vir_{\geq -1})$. More is true: for each choice of sign $\pm$ and $k\in \BZ$, the ideal generated by $\mr_{d,\chi}^{\pm, k,j}$ for $j>\pm \chi+kd$ is closed under the action of $U(\Vir_{\geq -1})$.
\end{prop}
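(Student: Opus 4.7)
The plan is to establish the stronger statement: each sub-ideal $\FI^{\mr,\pm,k}_{d,\chi}$ generated by the relations $\mr^{\pm,k,j}_{d,\chi}$ for fixed sign and $k$ is preserved by each $\bL_n$, $n\geq -1$. Since $\bL_n = \bR_n + \bT_n$ and the multiplication operator $\bT_n$ trivially preserves any ideal, it suffices to verify this for each derivation $\bR_n$; closure under the full enveloping algebra $U(\Vir_{\geq -1})$ then follows automatically from the derivation property and induction on word length.

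I will focus on the $+$ sign; the $-$ sign case is entirely parallel using the virtual bundle from Remark \ref{rmk: dual MR}. By Proposition \ref{prop: MR} together with the isomorphism $\FM_{d,\chi}\cong \FM_{d,\chi+kd}$, the generators $\mr^{+,k,j}_{d,\chi}$ are precisely the degree $2j$ parts $\Phi^k_j$ of the total Chern class $\Phi^k := c(\CV_k)$ of the rank $N := \chi+kd$ vector bundle $\CV_k := Rp_\ast(\CF\otimes q^\ast \CO(k))$, for $j>N$. Via the splitting principle, I identify the polynomial subring $R\subset \BD_\alpha$ generated by $\{\ch_i(\gamma_k)\}_{i\geq 1}$ (where $\gamma_k = \td(\BP^2)\cdot e^{kH}$) with the ring of symmetric functions in formal Chern roots $\{x_a\}$ of $\CV_k$, via $p_i := \sum_a x_a^i = i!\cdot \ch_i(\gamma_k)$ and $e_j = c_j(\CV_k) = \Phi^k_j$. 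A direct check from $\bR_n(p_i) = i\cdot p_{i+n}$ shows that under this identification $\bR_n$ acts as the symmetric derivation $\sum_a x_a^{n+1}\partial_{x_a}$.

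This symmetric form makes ideal preservation transparent. Applied to $\Phi^k(t) = \prod_a(1+x_a t)$, the Leibniz rule yields
\[
\bR_n\Phi^k(t) \;=\; t\sum_a x_a^{n+1}\prod_{b\neq a}(1+x_b t),
\]
which, upon specialization to the $N$ Chern roots of $\CV_k$, is a polynomial in $t$ of degree at most $N$. Equivalently, the quotient map $R\twoheadrightarrow \BQ[c_1(\CV_k),\ldots,c_N(\CV_k)]$ killing $c_l(\CV_k)$ for $l>N$ commutes with $\bR_n$, so the ideal $\langle c_l(\CV_k) : l > N\rangle \subset R$ is preserved. Reading off the coefficient of $t^j$ for $j > N$, I conclude $\bR_n(\Phi^k_j)\in \FI^{\mr,+,k}_{d,\chi}$, and the derivation property of $\bR_n$ on the ambient $\BD_\alpha$ extends this to the full sub-ideal. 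Summing over $(\pm,k)$ yields the statement for $\FI^{\mr}_{d,\chi}$.

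The only delicate point is the boundary case $n=-1$: here $p_0$ must be interpreted as the scalar $\ch_0(\gamma_k) = N$, and $\bR_{-1}(\Phi^k_{N+1})$ avoids escaping the ideal only because the coefficient $(N-j+1)$ vanishes precisely at $j = N+1$. This is handled uniformly within the same framework, since the operator $\sum_a\partial_{x_a}$ restricts cleanly to finitely many variables. Beyond this subtlety, the argument is essentially a translation of the classical fact that Chern classes vanish beyond the rank of a vector bundle into the descendent-and-Virasoro language.
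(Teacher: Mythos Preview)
Your proof is correct and takes a genuinely different route from the paper. The paper argues by a direct computation: writing $A_j$ for the degree $2j$ part of the exponential, it derives the explicit formula
\[
\bR_n(A_j)=(-1)^n(j+n)A_{j+n}+\sum_{\ell=1}^{n}(-1)^{n+\ell}\ell!\,\ch_\ell(\td(\BP^2))A_{j+n-\ell}
\]
for $n\geq 0$, and $\bR_{-1}(A_j)=(\chi-j+1)A_{j-1}$, and reads off ideal-preservation from these. Your argument instead identifies the subalgebra generated by the $\ch_i(\gamma_k)$ with the ring of symmetric functions $\Lambda$ (with $p_0$ specialized to the scalar $N$), observes that $\bR_n$ becomes the derivation $\sum_a x_a^{n+1}\partial_{x_a}$, and then uses compatibility of this derivation with the truncation $\Lambda\to\Lambda_N$ to conclude that the kernel $\langle e_j:j>N\rangle$ is stable. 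This is a clean and conceptual argument, and your handling of the $n=-1$ boundary via the same specialization is legitimate: both sides send $p_1$ to the same scalar $N$. The paper's more explicit approach has one concrete payoff that yours does not immediately provide: the formula above (in particular for $n=1$) is exactly what is reused in the proof of Theorem~\ref{thm: primitive relatiosn generate} to show that the primitive relations generate $\FI^{\mr}_{d,\chi}$ as a $\BD_\alpha\otimes U(\Vir_{\geq -1})$-module. Your symmetric-function framework could likely recover this as well, but it would require writing out $D_1(e_j)$ in the $e$-basis.
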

\begin{proof}
The Virasoro operators interact well with respect to the isomorphisms $\FI_{d,\chi}\cong \FI_{d,\pm \chi+kd}$, see Proposition \ref{prop: c_kj_property} (ii), so it is enough to treat the case $\pm=+$ and $k=0$. To simplify notation, we write $A_j=\mr_{d,\chi}^{+, 0, j}$. Recall that $A_j$ is the degree $2j$ term of 
\[A=\exp\left(\sum_{\ell\geq 1}(-1)^{\ell-1}(\ell-1)!\ch_\ell(\td(\BP^2))\right)\,.\]
To compute $\bR_n(A_j)$, we apply $\bR_n$ to the exponential:
\[\bR_n(A)=\left(\sum_{\ell\geq 1}(-1)^{\ell-1}(\ell+n)!\ch_{\ell+n}(\td(\BP^2))\right)A\,\]
hence
\[\bR_n(A_j)=\sum_{\ell=1}^j (-1)^{\ell-1}(\ell+n)!\ch_{\ell+n}(\td(\BP^2))A_{j-\ell}\,.\]
On the other hand, we also have
\[(j+n)A_{j+n}=\bR_0(A_{j+n})=\sum_{\ell=1}^{j+n} (-1)^{\ell-1}\ell!\ch_{\ell}(\td(\BP^2))A_{j+n-\ell}\,.\]
Assume first that $n\geq 0$. Comparing the two equations we find that
\begin{equation}\label{eq: Rnmr}\bR_n(A_j)=(-1)^n(j+n)A_{j+n}+\sum_{\ell=1}^{n}(-1)^{n+\ell} \ell! \ch_\ell(\td(\BP^2))A_{j+n-\ell}\,.\end{equation}
If $j>\chi$, then $j+n-\ell\geq j>\chi$, so clearly the right hand side is also in $\FI_{d, \chi}^{\mr}$. When $n=-1$, we obtain
\[\bR_{-1}(A_j)=\big(\ch_0(\td(\BP^2))-j+1\big)A_{j-1}=\big(\chi-j+1\big)A_{j-1}\,.\]
If $j>\chi+1$, then $j-1>\chi$ and $A_{j-1}\in \FI_{d, \chi}^{\mr}$; when $j=\chi+1$ the factor $\chi-j+1$ vanishes, so $\bR_{-1}(A_{\chi+1})=0$ is also in the ideal. 
\end{proof}

By the definition of $I_{d,\chi}^\mr$ and Proposition \ref{prop: idealimagepreserved} the corollary below follows:

\begin{cor}
    The ideal $I_{d,\chi}^\mr$ is closed under the action of the operators $\bR_n^\delta$ for $n\geq 0$.
\end{cor}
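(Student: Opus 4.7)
The plan is to deduce this corollary directly from Proposition \ref{prop: virasoroMR} and Proposition \ref{prop: idealimagepreserved}, using the definition $I_{d,\chi}^{\mr} = \eta(\FI_{d,\chi}^{\mr})$ set up at the start of Section \ref{sec: relations}. No new geometric input should be required; the work has already been done upstairs on the stack.

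First, I would unpack Proposition \ref{prop: virasoroMR} to extract the closure of $\FI_{d,\chi}^{\mr} \subset \BD_\alpha$ under $\bR_n$ for every $n \geq -1$. The proposition asserts closure under $U(\Vir_{\geq -1})$, and since $\bL_n = \bR_n + \bT_n$ with $\bT_n$ acting by multiplication (which trivially preserves any ideal), closure under the $\bL_n$ is equivalent to closure under the $\bR_n$. Thus the hypothesis of Proposition \ref{prop: idealimagepreserved} is met with $\FI = \FI_{d,\chi}^{\mr}$.

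Applying Proposition \ref{prop: idealimagepreserved} then gives that the image $\eta(\FI_{d,\chi}^{\mr})$ in $\BQ[c_0(2), c_2(0), c_1(2), \ldots]$, obtained by quotienting out $c_1(1)$, is closed under $\bR_n^\delta$ for every $n \geq 0$. By the definition $I_{d,\chi}^{\mr} = \eta(\FI_{d,\chi}^{\mr})$, this image is exactly $I_{d,\chi}^{\mr}$, so the corollary follows. Since both Propositions \ref{prop: virasoroMR} and \ref{prop: idealimagepreserved} are already established and the reduction is purely formal, I anticipate no real obstacles.
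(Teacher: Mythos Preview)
Your proposal is correct and matches the paper's own argument essentially verbatim: the paper simply says the corollary follows from the definition of $I_{d,\chi}^{\mr}$ together with Proposition \ref{prop: idealimagepreserved}, with Proposition \ref{prop: virasoroMR} (stated immediately before) supplying the needed closure of $\FI_{d,\chi}^{\mr}$ under $\bR_n$. Your extra remark that closure under $\bL_n$ is equivalent to closure under $\bR_n$ because $\bT_n$ is multiplication by an element is a correct and helpful unpacking.
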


\subsubsection{Generalized Mumford relations}

We proceed now to prove that the generalized Mumford relations are also preserved by the action of the Virasoro operators.

\begin{thm}\label{thm: virasoroGMR}
    Let $\alpha=(d, \chi)$, $\alpha'=(d', \chi')$ satisfy \eqref{eq: inequalitygmr}. Then the ideal $\FI_{\alpha}^{\gmr, \alpha'}$ is closed under the action of $U(\Vir_{\geq -1})$. 
\end{thm}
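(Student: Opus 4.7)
The plan is to model the argument on the proof of Proposition \ref{prop: virasoroMR}, while making essential use of the Virasoro representation on $H^*(\FM_{d', \chi'})$ provided by Theorem \ref{thm: repstack}. Denote by $\bR_n^L$ and $\bR_n^R$ the operators acting on the left and right tensor factors of $\BD \otimes \BD$, respectively. The first step is to observe that $\bR_n^R$ alone preserves $\FI_\alpha^{\gmr, \alpha'}$: indeed, by Theorem \ref{thm: repstack}, $\bR_n$ descends to a well-defined operator on $H^*(\FM_{d', \chi'})$, so the ``integration by parts'' identity $\int_A \bR_n(Z) = \int_{\bR_n^\dagger A} Z$ yields
\[
\bR_n^R\bigl(\gmr^{\alpha', j, A}_\alpha\bigr) \;=\; \gmr^{\alpha', j,\, \bR_n^\dagger A}_\alpha,
\]
which is again a generalized Mumford relation. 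Consequently, it suffices to show that $\bR_n^{\textup{tot}} := \bR_n^L + \bR_n^R$ preserves the ideal.

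The next step is to compute $\bR_n^{\textup{tot}}(E)$ explicitly. Since $\bR_n^{\textup{tot}}$ is a derivation and $C = \exp(E)$, one has $\bR_n^{\textup{tot}}(C) = \bR_n^{\textup{tot}}(E) \cdot C$, and hence
\[
\bR_n^{\textup{tot}}(C_j) \;=\; \sum_{k = 0}^{j-1} \bigl(\bR_n^{\textup{tot}}(E)\bigr)_{j+n-k} \cdot C_k
\]
as an identity in $\BD \otimes \BD$. The case $n = -1$ is especially clean and serves as a model: a direct calculation using $\bR_{-1}(\ch_i(\gamma)) = \ch_{i-1}(\gamma)$ together with the identity $\chi(\alpha, \alpha') = -dd'$ (arising from $\ch_0$ of Künneth summands of $\Delta_* \td(\BP^2)$) yields
\[
\bR_{-1}^L(E) \;=\; -dd' + \bR_0(E), \qquad \bR_{-1}^R(E) \;=\; dd' - \bR_0(E),
\]
so that $\bR_{-1}^{\textup{tot}}(E) = 0$ and therefore $\bR_{-1}^{\textup{tot}}(C) = 0$. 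Unwinding, this gives $\bR_{-1}^L(C_j) = (j - 1 - dd')\, C_{j-1}$, which lies in the ideal for $j > dd'$ (and vanishes outright when $j = dd' + 1$).

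For $n \geq 0$, the terms $\bigl(\bR_n^{\textup{tot}}(E)\bigr)_{j+n-k} \cdot C_k$ with $k > dd'$ manifestly lie in the ideal, since $C_k$ is itself a GMR generator. The ``boundary'' terms with $0 \leq k \leq dd'$ do not a priori lie in the ideal, and constitute the main obstacle. The plan is to combine them with the Newton-type recursion $k\, C_k = \sum_{m=1}^{k} m\, E_m \cdot C_{k-m}$ (coming from $\bR_0(C) = \bR_0(E) \cdot C$) and a further application of Theorem \ref{thm: repstack} on the right factor to rewrite them, mirroring the cancellation observed in the $n = -1$ case. Since the operators $\{\bR_n\}_{n \geq -1}$ generate $U(\Vir_{\geq -1})$, closure under each $\bR_n$ will yield closure under the full algebra. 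The main difficulty is to find the combinatorial identity generalizing the $n = -1$ cancellation to all $n \geq 0$; one expects this to follow from the Virasoro-equivariance of the universal Ext complex $\RHom_p(\CF, \CF')[1]$ together with the pair Virasoro framework developed in \cite{lmquivers}.
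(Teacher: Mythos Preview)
Your treatment of the $n=-1$ case is correct and matches Lemma \ref{lem: R-1quadraticidentity} in the paper. The genuine gap is in the $n \geq 0$ case: you correctly identify that the boundary terms $(\bR_n^{\textup{tot}}(E))_{j+n-k}\cdot C_k$ with $k \leq dd'$ are the obstacle, but you do not resolve it. The sentence ``one expects this to follow from\ldots'' is not a proof, and in fact the naive combination $\bR_n^{\textup{tot}}=\bR_n^L+\bR_n^R$ does \emph{not} lead to a cancellation of these boundary terms: the element $\bR_n^{\textup{tot}}(E)$ has nonzero components in every degree $\geq n+1$, so there is no reason for the low-$k$ terms to lie in the ideal or to vanish.

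The paper's proof avoids this by replacing $\bR_n^R$ with a weighted sum $\sum_{k=-1}^n \binom{n+1}{k+1}\,\id\otimes\bR_k$, i.e.\ by working with the operator
\[
\partial_n=\bR_n\otimes\id+\sum_{k=-1}^n\binom{n+1}{k+1}\,\id\otimes\bR_k\,.
\]
The key combinatorial identity (Theorem \ref{thm: quadraticidentity}) then asserts that $\partial_n(C)$ equals $C$ multiplied by an element supported only in degrees $\leq n$. Consequently $\partial_n(C_j)$ is a $\BD\otimes\BD$-linear combination of $C_{j'}$ with $j'\geq j>dd'$, so after realizing the right factor and integrating against $A$ one lands back in $\FI_\alpha^{\gmr,\alpha'}$. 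Since each $\id\otimes\bR_k$ already preserves the ideal by your first step and Theorem \ref{thm: repstack}, this suffices. The missing ingredient in your proposal is precisely this choice of binomial weights on the right-factor operators; finding it requires a nontrivial combinatorial computation (Lemma \ref{lem: binomialidentity}), not just the Newton recursion.
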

\begin{proof}
Recall that the ideal $\FI_{\alpha}^{\gmr, \alpha'}$ is generated by relations of the form $\gmr^{\alpha', j, A}_\alpha$ where $j>dd'$ and $A\in H_*(\FM_{\alpha'})$. They are obtained by starting with the formal class $C_j\in \BD_\alpha\otimes \BD_{\alpha'}$, realizing the second factor $\BD_{\alpha'}\to H^*(\FM_\alpha')$ and then integrating against $A$. The crucial ingredient in the proof is Theorem \ref{thm: quadraticidentity}, which we prove in the appendix. This is a combinatorial identity which interacts $C$ with the action of the Virasoro operators; it states that we have an equality
\begin{align}\label{eq: quadraticinproof}    
(\bR_n\otimes \id)(C_j)&+\sum_{k=-1}^n \binom{n+1}{k+1}(\id\otimes \bR_{k})(C_{j+n-k})\\
&=\sum_{0\leq a+b\leq n}\sum_i (-1)^{d_i^L+1}\frac{a!(n-a)!}{(n-a-b)!}\big(\ch_a(\gamma_i^L)\otimes \ch_b(\gamma_i^R)\big)C_{j+n-a-b}\nonumber
\end{align}
in $\BD_\alpha\otimes \BD_{\alpha'}$. Since the operators $\bR_k$ descend to $H^*(\FM_{\alpha'})$ by Theorem \ref{thm: repstack}, the equality \eqref{eq: quadraticinproof} can be also viewed in $\BD_\alpha\otimes H^*(\FM_{\alpha'})$. Since $H_*(\FM_{\alpha'})$ is dual to $H^*(\FM_{\alpha'})$, we have dual operators $\bR_n^\dagger\colon H_*(\FM_{\alpha'})\to H_*(\FM_{\alpha'})$ on homology. Applying $\id\otimes \int_A$ to \eqref{eq: quadraticinproof} we obtain 
\begin{align}\label{eq: quadraticinproof2}\bR_n(\gmr^{\alpha', j, A}_\alpha)&+\sum_{k=-1}^n \binom{n+1}{k+1} \gmr^{\alpha', j+n-k, \bR_k^\dagger(A)}_\alpha=\\
&\sum_{0\leq a+b\leq n}\sum_i (-1)^{d_i^L+1}\frac{a!(n-a)!}{(n-a-b)!}\ch_a(\gamma_i^L)\gmr_{\alpha}^{\alpha'\!,\, j+n-a-b,\, \ch_b(\gamma_i^R)\cap A}\,.\nonumber
\end{align}
Note that for $k\leq n$ and $a+b\leq n$ we have $j+n-k,\, j+n-a-b\geq j>dd'$, so we conclude that $\bR_n(\gmr^{\alpha', j, A}_\alpha)\in \FI_{\alpha}^{\gmr, \alpha'}$.\qedhere
\end{proof}

Once again, a similar statement can be made at the level of the good moduli space:

\begin{cor}
    The ideal $I_{\alpha}^{\gmr, \alpha'}$ is closed under the action of the operators $\bR_n^\delta$ for $n\geq 0$.
\end{cor}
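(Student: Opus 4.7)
The plan is to deduce this corollary directly from Theorem \ref{thm: virasoroGMR} combined with Proposition \ref{prop: idealimagepreserved}, in exact parallel with the analogous corollary already proved for $I_{d,\chi}^\mr$ in the previous subsection. The hard work has already been done upstream: Theorem \ref{thm: virasoroGMR} establishes that $\FI_{\alpha}^{\gmr, \alpha'} \subset \BD_\alpha$ is closed under the action of $U(\Vir_{\geq -1})$, in particular under every derivation $\mathsf{R}_n$ with $n \geq -1$. Recall also from Section \ref{sec: normalized class} that under the identification $\BD_\alpha \cong \BQ[c_0(2), c_1(1), c_2(0), c_1(2), \ldots]$, the ideal $I_\alpha^{\gmr, \alpha'}$ is by construction the image of $\FI_\alpha^{\gmr, \alpha'}$ under the quotient map $\eta$ setting $c_1(1) = 0$.

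With these two facts in hand, the proof is a one-line application of Proposition \ref{prop: idealimagepreserved}: the hypothesis of that proposition is precisely that $\FI_\alpha^{\gmr, \alpha'}$ is $\mathsf{R}_n$-stable for all $n \geq -1$, and its conclusion is precisely that the image ideal $I_\alpha^{\gmr, \alpha'} \subset \BQ[c_0(2), c_2(0), c_1(2), \ldots]$ is stable under the normalized operators $\mathsf{R}_n^\delta$ for $n \geq 0$. No further combinatorics is required.

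There is no real obstacle here; the only point worth emphasizing (to keep the exposition parallel to the $\mathsf{MR}$ corollary) is that while the proof for $I_{d,\chi}$ could be given either via the stacky statement or directly from the normalized Virasoro constraints in Corollary \ref{cor: virasoro normalized}, for $I_\alpha^{\gmr, \alpha'}$ only the first route is available, since a normalized Virasoro identity specific to the generalized Mumford generators is not in hand. All the combinatorial content (the quadratic identity \eqref{eq: quadraticinproof} and its specialization \eqref{eq: quadraticinproof2}) has already been absorbed into Theorem \ref{thm: virasoroGMR}, so we simply invoke Proposition \ref{prop: idealimagepreserved} and are done.
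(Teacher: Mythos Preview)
Your proposal is correct and takes essentially the same approach as the paper: the paper treats this corollary as immediate from Theorem~\ref{thm: virasoroGMR} together with Proposition~\ref{prop: idealimagepreserved}, exactly parallel to the $\mr$ case, and in fact does not even write out a separate proof.
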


\subsection{Primitive relations}

Knowing that $\FI^\bullet_\alpha$ is closed under the action of $\Vir_{\geq -1}$ can provide a strong simplification on the computations. We now proceed to present and explain a considerably smaller set of generators of the ideals $\FI_\alpha^\bullet$ when we regard them as a $\BD_\alpha\otimes U(\Vir_{\geq-1})$-module, rather than just as an ideal in $\BD_\alpha$. Here, the algebra structure of $\BD_\alpha\otimes U(\Vir_{\geq-1})$ is induced from that of $\BD_\alpha$ and $U(\Vir_{\geq-1})$ together with the commutation rule $[\bR_k,D]=\bR_k(D)$. 

\begin{defn}
\label{def: primitive relations}
    We define the set $\mathfrak{P}^\bullet_\alpha$ of primitive relations of three types as follows:
\begin{enumerate}
    \item[(i)] $\mathfrak{P}_{\alpha}^\mr$ is the set of relations $\mr^{\pm, k, j}_{\alpha}$ with $\pm \chi+kd\geq g$ and $j=\pm \chi+kd+1$.
    
    \item[(ii)] $\mathfrak{P}_{\alpha}^{\gmr, \alpha'}$ is the set of relations $\gmr^{\alpha', j, A}_\alpha$ with either
    \[j=dd'+1\textup{ or }\big(j=dd'+2\textup{ and } \deg(A)=dd'+2\big)\,.\]
    \item[(iii)] $\mathfrak{P}_{\alpha}^\br$ consists of the single relation $\ch_1(H^2)^{b+1}$.
\end{enumerate}
\end{defn}

Clearly each of these sets are contained in the corresponding ideals including non-primitive relations. For example, $\FI_{\alpha}^\mr$ is generated by all relations $\mr^{\pm, k, j}_{\alpha}$ with $j\geq \pm \chi+kd+1$, but $\mathfrak{P}_{\alpha}^\mr$ consists only of the first Chern class beyond the rank. It turns out that the primitive relations are enough to generate all the relations once we know that the ideals are closed under the Virasoro action.

\begin{thm}
\label{thm: primitive relatiosn generate}
The ideals $\FI_\alpha^{\mr}, \FI_\alpha^{\gmr, \alpha'}, \FI_\alpha^{\br}$ are the smallest ideals containing $\mathfrak{P}_\alpha^{\mr}$, $\mathfrak{P}_\alpha^{\gmr, \alpha'}$, $\mathfrak{P}_\alpha^{\br}$, respectively, that are closed under the action of $U(\Vir_{\geq -1})$.
\end{thm}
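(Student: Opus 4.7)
The plan is to handle the three cases separately. In each, the easy direction is that the $\BD_\alpha\otimes U(\Vir_{\geq -1})$-module generated by the primitives lies inside $\FI_\alpha^\bullet$: the primitives belong to $\FI_\alpha^\bullet$ by construction, and $\FI_\alpha^\bullet$ is closed both under $\BD_\alpha$-multiplication (it is an ideal) and under the Virasoro action. The latter is Propositions \ref{prop: virasoroMR} and \ref{thm: virasoroGMR} for the MR and GMR cases; for BR it is a direct check, since $\ch_0(H^2)=\int_{\BP^2}\ch(\alpha)\cdot H^2=0$ in $\BD_\alpha$, so that every application of $\bR_{-1}$ to a factor $\ch_1(H^2)$ vanishes and $\bR_n$ for $n\geq 0$ clearly preserves $\FI_\alpha^\br$. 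The real content of the theorem is the reverse inclusion.

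For the Mumford case, I would rearrange equation \eqref{eq: Rnmr} in the proof of Proposition \ref{prop: virasoroMR} into
\[
(j+n)\,A_{j+n} \;=\; (-1)^n\bR_n(A_j) \;-\; \sum_{\ell=1}^n (-1)^{\ell}\,\ell!\,\ch_\ell(\td(\BP^2))\,A_{j+n-\ell}.
\]
Setting $j=\pm\chi+kd+1$ (the primitive index; note $j+n\geq 1$ since $\pm\chi+kd\geq g\geq 0$) and inducting on $n\geq 0$, each non-primitive generator $A_{j+n}$ is obtained as $\bR_n$ applied to the primitive modulo $\BD_\alpha$-multiples of $A_{j},\ldots,A_{j+n-1}$ already placed in the module. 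The action of $\bR_{-1}$ contributes nothing new, as $\bR_{-1}(A_j)=(\chi-j+1)A_{j-1}$ either vanishes (when $j=\chi+1$) or merely recovers a lower-index generator already in the module.

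For the base relations, the single primitive $\ch_1(H^2)^{b+1}$ suffices. Since $\bR_n$ is a derivation with $\bR_n(\ch_1(H^2))=(n+1)!\,\ch_{n+1}(H^2)$, one computes
\[
\bR_n\bigl(\ch_1(H^2)^{b+1}\bigr) \;=\; (b+1)(n+1)!\,\ch_1(H^2)^{b}\,\ch_{n+1}(H^2),
\]
placing $\ch_1(H^2)^b \ch_{n+1}(H^2)$ in the module. Iterating further Virasoro operators via the Leibniz rule, and inducting on the number of non-$\ch_1(H^2)$ factors in a monomial (using that extra $\bR_m$-applications create new non-$\ch_1(H^2)$ factors at the cost of monomials already in the module), one produces every generator $\ch_{i_1}(H^2)\cdots \ch_{i_{b+1}}(H^2)$ with $i_1,\ldots,i_{b+1}\geq 1$.

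For the generalized Mumford relations—the main obstacle—I would induct on $j>dd'$ using the quadratic identity \eqref{eq: quadraticinproof2} from the proof of Theorem \ref{thm: virasoroGMR}. The base $j=dd'+1$ is the primitive set by definition. For the inductive step $j\geq dd'+2$, specialize the identity to $j_0=dd'+1$ and $n=j-dd'-2$: the $k=-1$ summand isolates $\gmr^{\alpha', j, \bR_{-1}^\dagger A}_\alpha$, while all remaining terms on both sides involve either $\bR_n$ applied to a primitive (hence in the module) or $\gmr^{\alpha', j', *}_\alpha$ with $j'<j$ (hence in the module by induction). This places $\gmr^{\alpha', j, B}_\alpha$ in the module whenever $B\in\mathrm{Im}(\bR_{-1}^\dagger)\subset H_*(\FM_{\alpha'})$. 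The genuine difficulty is that this image is a proper subspace: its cokernel is dual to $\ker(\bR_{-1})\subset H^*(\FM_{\alpha'})$, i.e., to the weight-zero classes pulled back from the good moduli space $M_{d',\chi'}$. The second type of primitive ($j=dd'+2$, $\deg(A)=dd'+2$) is designed precisely to populate this missing direction. The heart of the proof is to verify that applying $\bR_n$ for $n\geq 1$ to these supplementary primitives, combined with the inductive reduction above, suffices to produce $\gmr^{\alpha', j, B}_\alpha$ at every remaining pair $(j, B)$, controlling how the correction propagates through all $j$ and all homological degrees of $A$.
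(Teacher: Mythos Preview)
Your MR and BR arguments match the paper's. The gap is in the GMR case: you correctly isolate the difficulty but do not resolve it, and the route you sketch (populating the complement of $\mathrm{Im}(\bR_{-1}^\dagger)$ from the $\deg(A)=dd'+2$ primitives and then propagating) is not carried out. As you note, that complement is isomorphic to $\ker(\bR_{-1})^\vee$, which has classes in every homological degree, so it is not obvious that a single degree of extra primitives at $j=dd'+2$ suffices to fill it for all $j$.

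The paper sidesteps this obstruction entirely by a different manipulation of \eqref{eq: quadraticinproof2}. Using Lemma \ref{lem: R-1quadraticidentity}, the $k=-1$ summand $\gmr^{\alpha', j+n+1, \bR_{-1}^\dagger(A)}_\alpha$ is rewritten as a scalar multiple of $\gmr^{\alpha', j+n, A}_\alpha$, so it merges with the $(a,b)=(0,0)$ term on the right. After this rewriting the identity becomes
\[
\bR_n(\gmr^{\alpha', j, A}_\alpha)+\sum_{k=0}^n\binom{n+1}{k+1}\gmr^{\alpha', j+n-k, \bR_k^\dagger(A)}_\alpha=(j+n)\gmr^{\alpha', j+n, A}_\alpha+\cdots,
\]
where the omitted terms involve only indices strictly between $j$ and $j+n$. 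Now the crucial point is that $\bR_0^\dagger(A)=\tfrac{1}{2}\deg(A)\,A$, so the $k=0$ summand on the left is $\tfrac{1}{2}(n+1)\deg(A)\,\gmr^{\alpha', j+n, A}_\alpha$ with the \emph{same} class $A$. Provided $\tfrac{1}{2}(n+1)\deg(A)\neq j+n$, one can solve for $\gmr^{\alpha', j+n, A}_\alpha$ in terms of relations with smaller index. For $i\geq dd'+3$ one of $(j,n)=(i-1,1)$ or $(i-2,2)$ always satisfies the inequality; for $i=dd'+2$ the inequality with $(j,n)=(i-1,1)$ fails exactly when $\deg(A)=dd'+2$, which is why those relations are declared primitive. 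This yields the induction directly for every $A$, with no need to analyze $\mathrm{Im}(\bR_{-1}^\dagger)$.
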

\begin{proof}
We have already shown that the ideals $\FI_\alpha^{\mr}$, $\FI_\alpha^{\gmr, \alpha'}$, $\FI_\alpha^{\br}$ are closed under the action of $U(\Vir_{\geq -1})$ and they clearly contain the primitive relations, so it is enough to show that they are the smallest such ones.

We start with the ideal of Mumford relations. As we did in the proof of Proposition \ref{prop: virasoroMR}, it is enough to consider $k=0$ and $\pm=+$. Considering equation \eqref{eq: Rnmr} for $n=1$, it is clear that if $A_j$ is in some ideal closed under $\bR_{1}$, then $A_{j+1}$ is also in such ideal. Since $A_{\chi+1}$ is by definition in $\mathfrak{P}_\alpha^{\mr}$, it follows by induction on $j$ that $A_j$ is contained in the smallest ideal containing $\mathfrak{P}_\alpha^{\mr}$ and closed under the $U(\Vir_{\geq -1})$-action for every $j>\chi$.

Let us consider the case of generalized Mumford relations. First, we observe that we can modify \eqref{eq: quadraticinproof} by using Lemma \ref{lem: R-1quadraticidentity} to remove the $k=-1$ term, which gives
\begin{equation}\label{eq: quadraticinproof3}\bR_n(\gmr^{\alpha', j, A}_\alpha)+\sum_{k=0}^n \binom{n+1}{k+1} \gmr^{\alpha', j+n-k, \bR_k^\dagger(A)}_\alpha=(j+n)\gmr^{\alpha', j+n, A}_\alpha+\cdots\,
\end{equation}
where $\cdots$ denotes the remaining terms on the right hand side of \eqref{eq: quadraticinproof2} with $0<a+b\leq n$; note that all these terms are in the ideal generated by relations of the form $\gmr^{\alpha', i, A'}_\alpha$ with $j\leq i<j+n$.
The operator $\bR_{0}$ is given by $\bR_0(D)=\frac{1}{2}\deg(D)D$ where $\deg(D)\in \BZ$ is the cohomological degree, so $\bR_0^\dagger(A)=\frac{1}{2}\deg(A)A$ where $\deg(A)$ is the homological degree. Considering this, the ``leading term'' $\gmr_\alpha^{\alpha', j+n, A}$ appears twice in \eqref{eq: quadraticinproof3}: in the $k=0$ term of the sum, and in the right hand side. If
\begin{equation}
    \label{eq: leadingtermcoeff}\frac{1}{2}(n+1)\deg(A)\neq j+n
\end{equation}
then we can express $\gmr_\alpha^{\alpha', j+n, A}$ in terms of relations $\gmr_\alpha^{\alpha', j', A'}$ with $j\leq j'<j+n$. We now argue that every relation of the form $\gmr^{\alpha', i, A}_\alpha$ is contained in any ideal containing the primitive relations and closed under the Virasoro action. We proceed by induction on $i$. If $i=dd'+1$ then the relation $\gmr^{\alpha', i, A}_\alpha$ is primitive, so there is nothing to prove. If $i=dd'+2$ then either $\deg(A)=dd'+2$, in which case $\gmr^{\alpha', i, A}_\alpha$ is primitive by definition, or $(j, n)=(i-1,1)$ satisfies the inequality \eqref{eq: leadingtermcoeff}, in which case we conclude by induction. If $i\geq dd'+3$ then the inequality \eqref{eq: leadingtermcoeff} holds for either $(j,n)=(i-1,1)$ or $(j, n)=(i-2,2)$, so we conclude again by induction.

For base relations, we show by induction on $j\geq 0$ that any ideal containing $\ch_1(H^2)^{b+1}$ closed under the Virasoro action contains every element of the form
\begin{equation}\label{eq: brinduction}
\ch_{i_1}(H^2)\cdots\ch_{i_j}(H^2)\ch_1(H^2)^{b+1-j}
\end{equation}
for any $i_1, \ldots, i_j\geq 2$. Note that if we apply $\bR_{i_{j+1}-1}$ to a term of the form \eqref{eq: brinduction} we obtain
\[(b+1-j)i_{j+1}!\ch_{i_1}(H^2)\cdots\ch_{i_j}(H^2)\ch_{i_{j+1}}(H^2)\ch_1(H^2)^{b-j}+\Big(\textup{terms of the form \eqref{eq: brinduction}}\Big)\,.\]
This shows that the claim for $j$ implies the same claim for $j+1$, and concludes the proof.\qedhere
\end{proof}

\begin{rmk}
    The primitive generalized Mumford relations with $j=dd'+2$ are indeed necessary. For example, take $(d,\chi) = (2,1)$ and $(d',\chi') = (1,0)$, the two (primitive) relations in degrees $1$ and $2$ given by $j=dd'+1 = 3$ are
    \begin{equation}\label{pGMR_example}
    c_0(2)-2c_2(0), \quad 2c_2(0)c_1(1) - 2c_2(0)c_0(2) - c_1(1)c_0(2) + c_0(2)^2 + 2c_1(2) - 4c_3(0).
    \end{equation}
    Applying $\mathsf{L}_1$ to $c_0(2)-2c_2(0)$, we obtain a second relation $2c_1(2)-4c_3(0)$ in degree 2. Note that it lies in the ideal generated by \eqref{pGMR_example}. On the other hand, the degree 2 relation obtained by taking $j=4, \,\overline A=[M_{1,0}]$ is not in this ideal (by direct computation). Thus we do need to include the latter into the primitive relations.

\end{rmk}

\section{BPS integrality}
\label{sec: BPS_main}

In this section, we recall cohomological Hall algebras (CoHA in short) and BPS integrality of Mozgovoy--Reineke \cite{MozRei}. By establishing tautological generation and purity of the cohomology of the moduli stacks, we relate the Poincaré series of the moduli stacks to the intersection Poincaré polynomials of the good moduli spaces and vice versa. As applications, we prove structural formulas for the Poincar\'e series of the moduli stacks and free generation of cohomology rings in low degrees.

\subsection{Cohomological Hall algebra}
Denote the monoid of topological types of one-dimensional sheaves by
$$\Lambda:=\{(0,0)\}\sqcup \,\BZ_{\geq 1}\times \BZ. 
$$
For a fixed slope $\mu\in \BQ$, we define the submonoid
$$\Lambda_\mu:=\{(d,\chi)\in \Lambda\,|\,\chi=d\cdot \mu\}.
$$
Denote the set of nonzero elements by $\Lambda_\mu^{\times}:=\Lambda_{\mu}-\{(0,0)\}$. Recall that semistable one-dimensional sheaves of slope $\mu$ form an abelian category $\mathcal{A}_\mu$. This is a hereditary category in the sense that $\Ext^2(F_1,F_2)=0$ for any $F_1,F_2\in \mathcal{A}_\mu$ by semistability and Serre duality. Denote the corresponding smooth moduli stack by
$$\FM_\mu:=\bigsqcup_{(d,\chi)\in \Lambda_\mu}\FM_{d,\chi} 
$$
where $\FM_{0,0}$ is simply a point. Consider the stack of short exact sequences in $\mathcal{A}_\mu$ and forgetful morphisms 
\begin{equation}\label{eqn: SES diagram}
    \begin{tikzcd}
  & \text{SES}_\mu \arrow[ld, "\pi_1\times\pi_3"'] \arrow[rd, "\pi_2"] &   \\
\FM_\mu\times \FM_\mu &                                      & \FM_\mu. 
\end{tikzcd}
\end{equation}
Precisely, the stack $\text{SES}_\mu$ parametrizes short exact sequences $0\rightarrow F_1\rightarrow F_2\rightarrow F_3\rightarrow 0$ where all $F_i$'s are semistable one-dimensional sheaves of slope $\mu$ and the morphisms $\pi_1\times\pi_3$ and $\pi_2$ send this to $(F_1,F_3)$ and $F_2$, respectively. Since $\mathcal{A}_\mu$ is hereditary, we can use diagram \eqref{eqn: SES diagram} to define the cohomological Hall algebra multiplication
$$\star:=(\pi_2)_*\circ (\pi_1\times\pi_3)^*:H^*(\FM_\mu)\otimes H^*(\FM_\mu)\rightarrow H^*(\FM_\mu). 
$$
We remark that the push-forward $(\pi_2)_*$ is defined because $\pi_2$ is a proper morphism between smooth stacks. This indeed defines an associative algebra with a unit $1\in \BQ=H^*(\FM_{0,0})\subset H^*(\FM_\mu)$. 

\begin{defn}
    We call $(H^*(\FM_\mu),\star, 1)$ the slope $\mu$ semistable cohomological Hall algebra. 
\end{defn}

\subsection{BPS integrality}\label{sec: BPS integrality}

One of the most important aspects of CoHA's in various settings is the so-called BPS integrality. Roughly speaking, BPS integrality relates the cohomology of the moduli stacks and intersection cohomology of the moduli spaces in the setting of hereditary categories. The precise meaning of BPS integrality varies with the context. We use the BPS integrality theorem for hereditary categories, following Mozgovoy--Reineke. 
\begin{thm}[\cite{MozRei}]\label{thm: BPS}
    We have an equality
    \begin{equation}\label{eqn: BPS integrality in K theory}
        H^*(\FM_\mu,\BQ^\vir)=\Sym\left(
H^*({B}\BG_m,\BQ^\vir)\otimes \bigoplus_{(d,\chi)\in \Lambda_\mu^\times}H^*(M_{d,\chi},\IC^\vir)
\right)
    \end{equation}
in $\widehat{K}(\MHM(\Lambda_\mu))$, the $\Lambda_{\mu}$-graded localized $K$-theory of mixed Hodge modules.\footnote{The same equality holds in the derived category of monodromic mixed Hodge modules $D^+(\MMHM(\Lambda_\mu))$ if we consider semistable CoHA's for quivers without relations, see \cite{Davison-Meinhardt}. Equality in $D^+(\MMHM(\Lambda_\mu))$ for global hereditary categories as in our case seems to be unknown in the literature, even though this is expected to hold. }
\end{thm}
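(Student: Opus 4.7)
The plan is to prove the equality in \eqref{eqn: BPS integrality in K theory} by analyzing the good moduli morphism $\pi \colon \FM_\mu \to M_\mu$ through a stratification of the target by Jordan--H\"older type. The starting point is that points of $M_{d,\chi}$ correspond to S-equivalence classes, each uniquely represented by a polystable object $\bigoplus_i F_i^{\oplus n_i}$ with the $F_i$ pairwise non-isomorphic stable sheaves of slope $\mu$. This defines a stratification of $M_\mu$ by locally closed loci indexed by multisets of stable types, and each stratum is, \'etale-locally, a product of open subsets $M^{\mathrm{st}}_{d_i,\chi_i}$ of stable loci modulo an $S_{n_i}$-action identifying isomorphic summands.

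First I would study $R\pi_\ast \BQ^{\vir}$ via the BBD decomposition theorem. The stabilizer of a polystable point $\bigoplus_i F_i^{\oplus n_i}$ is $\prod_i \GL_{n_i}$, and a Luna slice at such a point identifies an \'etale neighborhood in $\FM_\mu$ with a product of corresponding semistable stacks; since $\mathcal{A}_\mu$ is hereditary, each local factor is smooth of the expected virtual dimension. This reduces the local structure of $R\pi_\ast$ to the ``diagonal'' case of a single stable type $F^{\oplus n}$, where the fiber is $[\mathrm{pt}/\GL_n]$ and the contribution is $H^\ast(B\GL_n, \BQ^{\vir})$. Taking products over summands and then $S_{n_i}$-invariants precisely produces the symmetric algebra structure on the right-hand side: each stable class $[F] \in M^{\mathrm{st}}_{d',\chi'}$ feeds into a Fock-space factor $\Sym\bigl(H^\ast(B\BG_m, \BQ^{\vir})\bigr)$ encoding the tower $n \mapsto B\GL_n$, and globalizing over the Jordan--H\"older stratification yields the plethystic exponential asserted in \eqref{eqn: BPS integrality in K theory}.

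The main step, and the main obstacle, is to upgrade these local identifications to a global equality in $\widehat{K}(\MHM(\Lambda_\mu))$. Concretely, one must show that $R\pi_\ast \BQ^{\vir}$ decomposes as a direct sum of shifts of $\IC$-sheaves on the Jordan--H\"older strata \emph{without} extra local system twists and without summands supported on deeper (properly polystable) strata. Classically, Mozgovoy--Reineke handle this via a Harder--Narasimhan recursion combined with Reineke's smooth resolution by Hilbert-type moduli, reducing the equality of virtual Poincar\'e series to a purely combinatorial identity on generating functions. In the Hodge-theoretic setting I would instead invoke the cohomological integrality formalism of Davison--Meinhardt, whose hypotheses are satisfied because $\mathcal{A}_\mu$ is hereditary; this produces the decomposition statement at the level of Grothendieck classes. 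The remaining input needed to pass from virtual series to the claimed $K$-theoretic equality is purity of $H^\ast(\FM_{d,\chi},\BQ)$, which is not available a priori from the formalism but is supplied by Theorem \ref{thm: tautological generation}: tautological generation forces $H^\ast(\FM_{d,\chi},\BQ)$ to be of Hodge--Tate type $(p,p)$, and the corresponding purity of $\mathit{IH}^\ast(M_{d,\chi},\BQ)$ is proved by Bousseau \cite{Bou}. Together these inputs promote the motivic recursion to the desired identity in $\widehat{K}(\MHM(\Lambda_\mu))$.
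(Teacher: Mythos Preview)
The paper does not prove Theorem~\ref{thm: BPS}; it is quoted from \cite{MozRei}. The only argument given is in the remark following the statement: one applies Verdier duality to Equation~(12) and Theorem~5.4 of \cite{MozRei}, and checks that the hypotheses of their general framework (hereditary category with symmetric Euler form and an exact framing functor) are met by $\CA_\mu$. Mozgovoy--Reineke's own proof proceeds via a Harder--Narasimhan/framing recursion at the level of $\widehat{K}(\MHM)$, not via a Luna-slice stratification.

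Your approach is therefore genuinely different, and it has a real gap. You invoke ``the cohomological integrality formalism of Davison--Meinhardt, whose hypotheses are satisfied because $\mathcal{A}_\mu$ is hereditary,'' but \cite{Davison-Meinhardt} is formulated for quivers with potential, not for global hereditary categories of sheaves. The paper flags exactly this issue in the footnote to the theorem: the lift to $D^+(\MMHM(\Lambda_\mu))$ for global hereditary categories ``seems to be unknown in the literature.'' So you cannot simply cite Davison--Meinhardt to globalize your local Luna-slice picture; the hard part of your outline---ruling out extra local systems and summands supported on deeper strata---is precisely what is not available off the shelf in this setting. Moreover, your final paragraph inverts the logic: purity of $H^*(\FM_{d,\chi})$ and of $\mathit{IH}^*(M_{d,\chi})$ is \emph{not} an input to Theorem~\ref{thm: BPS} in the Mozgovoy--Reineke argument (their identity already lives in $\widehat{K}(\MHM(\Lambda_\mu))$); in the paper, purity is used only afterward, in Section~\ref{sec: applications}, to reinterpret the virtual $E$-series as honest Poincar\'e series.
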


In the rest of the subsection, we explain the terminology in this theorem. The main language we use is Saito's theory of mixed Hodge modules, see \cite{Saito} for precise definitions. Let $X$ be a separated reduced scheme of finite type. Then there is an abelian category of mixed Hodge modules $\MHM(X)$ and its bounded derived category $D^b(\MHM(X))$. When $X = \pt$ is a point, $\MHM(\pt)$ recovers the category of polarizable mixed Hodge structures with rational coefficients. Important properties of mixed Hodge modules include:
\begin{enumerate}
    \item There is a faithful functor $\mathbf{rat}:D^b(\MHM(X))\rightarrow D^b_{\mathrm{cst}}(X)$ to the bounded derived category of constructible sheaves.
    \item The functor $\mathbf{rat}$ sends $\MHM(X)$ to the category $\mathrm{Perv}(X)$ of perverse sheaves. 
    \item There exists a six functor formalism with Verdier duality $\mathbf{D}$ for $D^b(\MHM(-))$.
    \item Every object $M\in \MHM(X)$ admits a weight filtration $W_\bullet M$.
\end{enumerate}

We say that $M\in D^b(\MHM(X))$ is pure of weight $w$ if $\mathrm{gr}^{W}_i\CH^j(M)=0$ unless $i=j+w$. We simply say that $M$ is pure if it is pure of weight $0$. The shift functor $[d]$ changes the weight of a complex of mixed Hodge modules by $d$. The weight filtration behaves nicely with respect to the six functors which in particular implies the following. 
\begin{enumerate}
    \item [(5)] (Derived) push-forward along proper morphisms preserves pure complexes. 
\end{enumerate}

The following fact, called semisimplicity, is perhaps the deepest result about mixed Hodge modules, which together with (5) recovers the decomposition theorem \cite{BBDG}. 
\begin{enumerate}
    \item [(6)] If $M\in D^b(\MHM(X))$ is pure, then there is a (non-canonical) isomorphism 
    \[
    M\simeq \bigoplus_{j \in \mathbb{Z}} \CH^j(M)[-j].
    \]
    
\end{enumerate}

Suppose moreover that $X$ is irreducible of dimension $d_X$. The construction of the intersection complex as a perverse sheaf naturally lifts to mixed Hodge modules. We also call such a lift the intersection complex and denote it by $\IC_X\in \MHM(X)$. Consider the elements
$$\BT\coloneqq \BQ(-1)\in \MHM(\pt),\quad \BL\coloneqq H^*_c(\BA^1) = \BQ(-1)[-2]\in D^b(\MHM(\pt))
$$
which are used for the Tate twist and Lefschetz twist, respectively. 
We use the same notations for the pull-backs of $\BT$ and $\BL$ via $X\rightarrow \pt$. When the dimension $d_X$ is even, the virtual intersection complex is defined by 
$$\IC_X^\vir:=\BT^{-d_X/2}\otimes \IC_X\in \MHM(X).
$$
This complex is pure of weight $0$, so it is preserved by the Verdier duality functor $\mathbf{D}$. If $X$ is smooth, the virtual intersection complex becomes
$$\IC_X^\vir=\BQ_X(d_X/2)[d_X]=\BL^{-d_X/2}. 
$$
which we simply denote by $\BQ^\vir_X$. When $d_X$ is odd, we take the above definition valued in $K(\MHM(X))[\BL^{1/2}]$ where the square root $\BL^{1/2}$, or equivalently $\BT^{1/2}$, is formally added.\footnote{An alternative approach to this is to use the derived category of monodromic mixed Hodge modules where the square root of the Tate twist exists, see \cite[Section 2.1.6]{Dav}.}

\medskip

We now explain how the theory of mixed Hodge modules extends to stacks, as required by Theorem \ref{thm: BPS}. Since the abelian category of mixed Hodge modules forms a sheaf with respect to the smooth topology \cite[Theorem 2.3]{Achar}, we can define $\MHM(\FX)$ for a locally finite Artin stack $\FX$. The six functor formalism, however, is not defined in full generality, hence requiring more care. This can be worked around for a certain class of Artin stacks, including global quotient stacks by reductive groups, see \cite{Dav2} for details. According to such a definition, for example, we have
$$H^*( B\BG_m,\BQ)=1\oplus \BL\oplus \BL^2\oplus\cdots \in D^+(\MHM(\pt))
$$
where $D^+(-)$ denotes the derived category bounded from below. The virtual cohomology of $B\BG_m$ defines an element 
$$H^*(B\BG_m,\BQ^\vir)=\frac{\BL^{1/2}}{1-\BL}\in \widehat{K}(\MHM(\pt))
$$
where $\widehat{K}(\MHM(\pt))$ denotes the localized $K$-theory of mixed Hodge modules over a point defined by
\begin{equation}\label{eqn: localized K theory}
    \widehat{K}(\MHM(\pt))\coloneqq K(\MHM(\pt))\otimes_{\BZ[\BL]}\BQ[\,\BL^{1/2},(1-\BL^n)^{-1}\,|\,n\geq 1].
\end{equation}

Finally, we explain the $\Lambda_\mu$-grading and symmetric product $\Sym(-)$ with respect to a certain symmetric monoidal structure. Recall that $(\Lambda_\mu,+)$ is a commutative monoid that is abstractly isomorphic to $\BZ_{\geq 0}$. We regard $\Lambda_\mu$ as a commutative monoid object in the category of schemes by considering it as a disjoint union of points, denoted by $\pt_{\alpha}$ for each $\alpha=(d,\chi)\in \Lambda_\mu$. Define the bounded below derived category of $\Lambda_\mu$-graded mixed Hodge modules as
$$D^+(\MHM(\Lambda_\mu)):=\prod_{\alpha\in \Lambda_\mu}D^+(\MHM(\pt_\alpha)). 
$$
This is equipped with a monoidal structure given by 
$$D^+(\MHM(\pt_{\alpha_1}))\times D^+(\MHM(\pt_{\alpha_2}))\rightarrow D^+(\MHM(\pt_{\alpha_1+\alpha_2})),\quad (M_1,M_2)\mapsto +_*(M_1\boxtimes M_2)
$$
where $+:\pt_{\alpha_1}\times \pt_{\alpha_2}\rightarrow \pt_{\alpha_1+\alpha_2}$. Moreover, this can be upgraded to a symmetric monoidal structure by \cite{MSS}. This induces a $\lambda$-ring structure and a symmetric product $\Sym(-)$ on $\widehat{K}(\MHM(\Lambda_\mu))$. 

\begin{rmk}
    The way we state Theorem \ref{thm: BPS} is not exactly the same as in the reference \cite{MozRei}. Our formulation follows easily by applying the Verdier duality $\mathbf{D}$ to Equation (12) combined with Theorem 5.4 of \textit{loc. cit.} Note that we need to use that the Verdier duality preserves the virtual intersection complexes $\BQ^\vir_{\FM_{d,\chi}}$ and $\IC^\vir_{M_{d,\chi}}$. The authors illustrated their theory in the case of semistable sheaves on curves, but it was explained that the same result applies to any hereditary categories with an exact framing functor such that the Euler form is symmetric, see \cite[Remark 3.12]{MozRei}. These assumptions are satisfied for the abelian category $\CA_\mu$ of semistable one-dimensional sheaves of slope $\mu$ on $\BP^2$.\footnote{The Euler form is symmetric for a trivial reason. We also have a left exact framing functor $\Hom(\CO(-N),-):\CA_\mu\rightarrow \textup{Vect}$. For any given $(d,\chi)\in \Lambda_\mu$, we can choose $N$ sufficiently large so that this functor is exact up to degree $d$, and the proof of \cite{MozRei} can be adapted to this setting. Alternatively, we can use the exact framing functor $\Ext^1(F',-):\CA_\mu\rightarrow \textup{Vect}$ where $F'$ is a fixed semistable sheaf such that $\mu<\mu'<\mu+3$.}
    
\end{rmk}

\subsection{Tautological generation}\label{sec: tautological generation}

In this section, we prove the following properties of the cohomology of the moduli stacks, including tautological generation, cf. Theorem \ref{thm: tautgenerated} (ii). 

\begin{thm}\label{thm: tautological generation}
The cohomology ring $H^*(\FM_{d,\chi},\BQ)$ is tautologically generated and pure. Furthermore, the cycle class map from Chow to cohomology is an isomorphism. 
\end{thm}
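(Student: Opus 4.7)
The plan is to extract purity directly from the BPS integrality formula, and to establish both tautological generation and the Chow-to-cohomology isomorphism via approximation by smooth projective moduli of Joyce--Song pairs, following the strategy alluded to in the remark preceding Theorem \ref{thm: repstack}.

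First I would handle purity. Theorem \ref{thm: BPS} expresses $H^*(\FM_\mu,\BQ^\vir)$ as a plethystic symmetric product of virtual intersection complexes $\IC^\vir(M_{d,\chi})$ twisted by $H^*(B\BG_m,\BQ^\vir)$ inside the localized $K$-theory of mixed Hodge modules. By Bousseau's purity theorem (cited in Section \ref{sec: BPS intro}), each $\IC(M_{d,\chi})$ is pure of Hodge--Tate type $(p,p)$; since $\Sym$, tensor products, and Tate twists all preserve purity and Hodge--Tate type, isolating the $(d,\chi)$-graded piece yields purity and Hodge--Tate type of $H^*(\FM_{d,\chi},\BQ)$.

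For tautological generation, fix a cohomological degree $k$ and choose $N \gg k$. Consider the moduli space $P^N_{d,\chi}$ of Joyce--Song pairs $(F,s)$ with $F$ semistable of type $(d,\chi)$ and $0 \neq s : \CO_{\BP^2}(-N) \to F$ satisfying Joyce's stability; for $N$ large this is a smooth projective variety. The forgetful morphism $p_N : P^N_{d,\chi} \to \FM_{d,\chi}$ restricts, on the open substack $\FM^{N\text{-reg}}_{d,\chi}$ where $H^1(F(N))$ vanishes, to a Zariski projective bundle with fiber $\BP(H^0(F(N)))$ of dimension $\chi+Nd-1$. A standard bound on the non-regularity locus of semistable sheaves shows that the codimension of $\FM_{d,\chi} \setminus \FM^{N\text{-reg}}_{d,\chi}$ tends to infinity with $N$, hence $H^{\leq k}(\FM_{d,\chi}) \simeq H^{\leq k}(\FM^{N\text{-reg}}_{d,\chi})$ once $N$ is large enough. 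Combined with the projective bundle formula for $p_N$ and the tautological generation of $H^*(P^N_{d,\chi})$---which I would obtain by identifying $P^N_{d,\chi}$ with a framed quiver moduli space in the sense of \cite{lmquivers} and invoking a Bia\l{}ynicki-Birula affine paving coming from a generic cocharacter---this produces tautological generation of $H^{\leq k}(\FM_{d,\chi})$. Letting $N \to \infty$ concludes.

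Finally, the same Bia\l{}ynicki-Birula stratification forces the cycle class map $\mathrm{CH}^*(P^N_{d,\chi})_\BQ \to H^{2*}(P^N_{d,\chi},\BQ)$ to be an isomorphism. A diagram chase using the projective bundle formula on both Chow and singular cohomology over $\FM^{N\text{-reg}}_{d,\chi}$, together with purity from the first step, transfers the Chow-cohomology identification to $\FM_{d,\chi}$ in degrees $\leq k$, and one again passes to the limit. The main obstacle will be the codimension estimate for $\FM_{d,\chi} \setminus \FM^{N\text{-reg}}_{d,\chi}$ at the stacky level (including the strictly semistable locus) and the careful identification of the hyperplane class $c_1(\CO_{P^N}(1))$ as a tautological class on $P^N_{d,\chi}$ so that the projective bundle formula pins down $\ker p_N^*$ and $\mathrm{coker}\, p_N^*$ in purely tautological terms; once these are in place the limit argument is essentially formal.
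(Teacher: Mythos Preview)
Your approximation strategy via Joyce--Song pairs is the right idea and matches the paper's, but the projective bundle step has a real gap. You assert that $p_N : P^N_{d,\chi} \to \FM_{d,\chi}$ is a projective bundle with fiber $\BP(H^0(F(N)))$ over the $N$-regular locus; this is only correct over the \emph{stable} locus. When $F$ is strictly semistable (which occurs whenever $\gcd(d,\chi)>1$), a nonzero section $s$ factoring through a destabilizing subsheaf $F' \subsetneq F$ fails the limit stability condition, so the fiber over $[F]$ is $\BP(H^0(F(N)))$ minus a union of linear subspaces, not the full projective space. Note also that for $N$ large Proposition~\ref{prop: vanishingMR} already gives $H^1(F(N))=0$ for \emph{every} semistable $F$, so the non-regular locus you flag as the obstacle is in fact empty; the genuine obstruction is this failure of $p_N$ to be a bundle over the strictly semistable points. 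The paper sidesteps this by replacing your projective bundle with the total space $\HHom^N_{d,\chi}$ of the vector bundle $p_*\HHom(q^*\CO(-N),\CF)$ over $\FM_{d,\chi}$: homotopy invariance makes $\pi^*$ an isomorphism on both Chow and cohomology, and then $P^N_{d,\chi} \hookrightarrow \HHom^N_{d,\chi}$ is an open immersion whose complement has codimension growing with $N$, the bound coming precisely from estimating $\dim\Hom(\CO(-N),F')$ over all destabilizing $F'\subsetneq F$.

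Two further methodological differences are worth recording. First, the paper extracts purity directly from the isomorphism $H^{\leq 2i}(\FM_{d,\chi}) \simeq H^{\leq 2i}(P^N_{d,\chi})$ with $P^N_{d,\chi}$ smooth projective, rather than via BPS integrality; your route through Theorem~\ref{thm: BPS} is only an equality in the \emph{localized $K$-theory} of mixed Hodge modules (the derived-category upgrade is not known in this setting, cf.\ the footnote to Theorem~\ref{thm: BPS}), so reading off purity would require an extra argument. Second, for $P^N_{d,\chi}$ itself the paper avoids any quiver identification or Bia\l ynicki-Birula stratification: it constructs an explicit Chow--K\"unneth decomposition of the diagonal by exhibiting a rank-$\dim P^N_{d,\chi}$ tautological vector bundle $\RHom_p(\CS^\bullet_1,\CF_2)$ on $P^N_{d,\chi}\times P^N_{d,\chi}$ together with a tautological section cutting out $\Delta$, so that $[\Delta]=c_{\mathrm{top}}$ is visibly a polynomial in tautological classes. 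This single construction delivers tautological generation and the Chow-to-cohomology isomorphism simultaneously via the standard diagonal argument.
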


To motivate our proof, we explain some of its main ingredients in the analogous statement for the classifying stack $B\GL_n\coloneqq [\pt/\GL_n]$. Denote by $\CV\rightarrow B\GL_n$ the universal rank $n$ vector bundle. For each $N\geq n$, we have a diagram 
\begin{equation}\label{eqn: diagram for BGL}
    \begin{tikzcd}
\HHom_n^N\coloneqq \HHom(\BC^N\otimes \CO_{B\GL_n},\CV) \arrow[d, "\pi"] &  \Gr(N,n)\arrow[l, "j"'] \\
B\GL_n             &                  
\end{tikzcd}
\end{equation}
where $\HHom_n^N$ is the total space of the vector bundle over $B\GL_n$ and $\Gr(N,n)$ is the Grassmannian. The stack $\HHom_n^N$ can be interpreted as a stack parametrizing all morphisms $\BC^N\rightarrow V$ with $\dim(V)=n$. Then  $j\colon \Gr(N,n)\rightarrow\HHom_n^N$ is an open embedding that corresponds to surjective morphisms $\BC^N\twoheadrightarrow V$. The complement of $j$ has codimension at most 
$$Nn-(N(n-1)+n-1)=N-n+1
$$
since it parametrizes morphisms $\BC^N\rightarrow V$ that factor through some $V'\subsetneq V$. The geometry of the above diagram thus implies
$$\mathrm{CH}^i(B\GL_n)
\xlongrightarrow[\,\raisebox{0.2 em}{\smash{\ensuremath{\sim}}}\,]{\pi^*}\mathrm{CH}^i(\HHom_n^N)
\xlongrightarrow[\,\raisebox{0.2 em}{\smash{\ensuremath{\sim}}}\,]{j^*}\mathrm{CH}^i(\Gr(N,n)),\quad N-n+1>i
$$
where the isomorphisms come from homotopy invariance and excision, respectively. On the other hand, a similar argument gives an isomrophism $H^{\leq 2i}(B\GL_n)\simeq H^{\leq 2i}(\Gr(N,n))$ for $N-n+1>i$. Consequently, the cohomology and Chow groups of $B\GL_n$ can be understood via those of $\Gr(N,n)$ if $N$ is sufficiently large. This reduces problems about tautological generation, purity and the cycle class map for $B\GL_n$ to those of $\Gr(N,n)$, which are better understood. 


\smallskip

We follow the same strategy to study $\FM_{d,\chi}$. We first introduce the moduli of limit stable pairs, also known as Joyce--Song pairs, that plays the role of Grassmannians above. Let $N$ be an integer such that 
\begin{equation}\label{eqn: range for N}
dN+\chi\geq g=\frac{(d-1)(d-2)}{2}.
\end{equation}
Then Proposition \ref{prop: vanishingMR} implies
\begin{equation}\label{eqn: Ext vanishing for N}
    \Ext^1(\CO(-N),F)=\Ext^2(\CO(-N),F)=0
\end{equation}
for any semistable sheaf $F$ of type $(d,\chi)$.
\begin{defn}\label{def: limit stable pair}
    Assume that $N$ and $(d,\chi)$ satisfy \eqref{eqn: range for N}. We say that a pair $(F,\phi)$ of a one-dimensional sheaf $F$ of type $(d,\chi)$ and a nonzero map $\phi:\CO(-N)\rightarrow F$ is \textit{limit stable} if 
    \begin{enumerate}
    \item [(i)] $F$ is semistable, and
    \item [(ii)] $\phi$ does not factor through any destabilizing subsheaf of $F$. 
\end{enumerate}

A morphism between two of such pairs $(F,\phi)$ and $(F',\phi')$ is a morphism $f:F\rightarrow F'$ such that $f\circ \phi=\phi'$. 
\end{defn}
\begin{rmk}\label{rmk: any morphism is iso}
    A morphism $f:F\rightarrow F'$ between two limit stable pairs $(F,\phi)$ and $(F',\phi')$ of identical topological type is necessarily an isomorphism. This is because if $f$ is not an isomorphism nor zero, then the image of $f$ defines a destabilizing subsheaf of $F'$ that $\phi'$ factors through.
\end{rmk}

The limit stable pairs form a projective moduli space, denoted by $P^N_{d,\chi}$, whose construction via geometric invariant theory is due to \cite{LePcoherent}. See also \cite{Lin} for a modern account in a generalized setting.

\begin{proof}[Proof of Theorem \ref{thm: tautological generation}]
Assume throughout the proof that $N$ and $(d,\chi)$ satisfy \eqref{eqn: range for N}. We first construct a diagram analogous to \eqref{eqn: diagram for BGL}. By the vanishing \eqref{eqn: Ext vanishing for N}, we obtain a vector bundle 
$$\HHom^N_{d,\chi}\coloneqq p_*\HHom(q^*\CO(-N),\CF)\overset{\pi}{\longrightarrow}\FM_{d,\chi}. 
$$
The total space $\HHom^N_{d,\chi}$ of this vector bundle is a stack parametrizing pairs of a semistable sheaf $F$ of type $(d,\chi)$ together with a morphism $\phi:\CO(-N)\rightarrow F$. Since limit stable pairs form an open subset among all such pairs, we obtain a diagram 
\begin{equation}\label{eqn: diagram for M}
\begin{tikzcd}
\HHom_{d,\chi}^N \arrow[d, "\pi"] &  P^N_{d,\chi}\arrow[l, "j"'] \\
\FM_{d,\chi}             &             
\end{tikzcd}
\end{equation}
where $\pi$ is a vector bundle and $j$ in an open embedding. We note that $P^N_{d,\chi}$ is smooth because it is an open subset inside the total space of a vector bundle over a smooth stack. 

We now estimate the codimension of the complement of $j$. For each semistable sheaf $F$ of type $(d,\chi)$, we need to study the closed inclusion of the unstable locus 
\begin{equation}\label{eqn: unstable locus}
    \Hom(\CO(-N),F)^{\textnormal{unstable}}\hookrightarrow \Hom(\CO(-N),F),
\end{equation}
i.e., the locus of morphisms that factor through some destabilizing subsheaf $F'\subsetneq F$. Note that the dimension of the space of destabilizing subsheaves $F'\subsetneq F$ can be bounded from above by a constant $c=c(d,\chi)$ that only depends on $(d,\chi)$. For example, such an upper bound can be obtained by the maximum dimension of Quot schemes $\mathrm{Quot}_{\BP^2}(F,(d',\chi'))$ for each $F$ and $1\leq d'<d$ with $\chi/d=\chi'/d'$. The precise bound will not be relevant to the proof. On the other hand, for each destabilizing subsheaf $F'\subsetneq F$ of type $(d',\chi')$ we have 
\begin{align*}
    \dim \Hom(\CO(-N),F')&=\chi(\CO(-N),F')\\
    &=\frac{d'}{d}(dN+\chi)\\
    &\leq \frac{d-1}{d}(dN+\chi).
\end{align*}
Combining these estimates, we obtain that
\begin{multline*}
    \dim \Hom(\CO(-N),F)-\dim \Hom(\CO(-N),F)^{\textnormal{unstable}}\\
    \geq (dN+\chi)-\left(
    c(d,\chi)+\frac{d-1}{d}(dN+\chi)
    \right)
    \eqqcolon N-\tilde{c}(d,\chi). 
\end{multline*}
In other words, the codimension of the complement of $j$ is bounded below by $N-\tilde{c}(d,\chi)$ where $\tilde{c}$ depends only on $(d,\chi)$. In particular, it grows arbitrarily large as $N$ does. 

By homotopy invariance and the excision sequence again, as in the case of $B\GL_n$, we conclude that the diagram \eqref{eqn: diagram for M} implies 
\begin{equation}\label{eqn: isomorphic range}
    \mathrm{CH}^{\leq i}(\FM_{d,\chi})\simeq \mathrm{CH}^{\leq i}(P^N_{d,\chi})\,,\quad H^{\leq 2i}(\FM_{d,\chi})\simeq H^{\leq 2i}(P^N_{d,\chi}),\quad \textnormal{if}\ \ N-\tilde{c}(d,\chi)>i.
\end{equation}
For any given $i\geq 0$, we can find a sufficiently large $N$ that realizes the above isomorphisms. So it suffices to prove the statement of the theorem for the moduli space of limit stable pairs $P^N_{d,\chi}$. This is done in the next proposition, finishing the proof. 
\end{proof}

\begin{prop}\label{prop: pair moduli}
Assume that $N$ and $(d,\chi)$ satisfy \eqref{eqn: range for N}. The cohomology ring of $P^N_{d,\chi}$ is tautologically generated and pure. Furthermore, the cycle class map from Chow to cohomology is an isomorphism. 
\end{prop}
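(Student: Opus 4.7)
\medskip

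\noindent\textbf{Proof proposal.} Since $P^N_{d,\chi}$ is smooth and projective (the former being explicit in the proof of Theorem \ref{thm: tautological generation}, the latter by \cite{LePcoherent}), its rational cohomology automatically carries a pure Hodge structure. Hence purity is immediate, and the content of the proposition lies in the tautological generation and the Chow-to-cohomology isomorphism. My plan is to establish both through a cell decomposition of $P^N_{d,\chi}$ into affine spaces, so that the cycle class map is an isomorphism and cohomology admits generators coming from the universal family.

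\medskip

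\noindent First, I would exhibit such a cell decomposition by exploiting the quiver-theoretic framework used in \cite{lmquivers}. For $N$ sufficiently large relative to $(d,\chi)$, Beilinson's resolution applied to the twists $F(N{-}2), F(N{-}1), F(N)$ identifies a semistable one-dimensional sheaf $F$ on $\mathbb{P}^2$ with a representation, with fixed dimension vector, of the Beilinson quiver with its relations; a limit stable pair $(F,\phi)$ with $\phi\colon \mathcal{O}(-N)\to F$ corresponds to a framed (i.e.\ Joyce--Song) representation, the framing data recording $\phi\in H^0(F(N))$ up to the stability condition of Definition \ref{def: limit stable pair}. The vanishing \eqref{eqn: Ext vanishing for N} together with Remark \ref{rmk: any morphism is iso} (rigidity of limit stable pairs) ensures that this correspondence identifies $P^N_{d,\chi}$ with the fine moduli space of King-stable framed quiver representations of the appropriate dimension vector.

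\medskip

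\noindent Next, framed stable quiver moduli spaces are smooth projective varieties that carry a generic $\mathbb{C}^*$-action obtained from the torus grading of the quiver (refining a scaling of the framing), whose fixed locus consists of finitely many isolated points parameterized by Harder--Narasimhan-type data. The associated Białynicki--Birula decomposition gives a stratification of $P^N_{d,\chi}$ into locally closed affine cells. As a consequence the cycle class map $\mathrm{CH}^*(P^N_{d,\chi})\to H^{2*}(P^N_{d,\chi})$ is an isomorphism and both are generated, as abelian groups, by classes of the closures of the cells. To upgrade this to tautological generation, I would observe that the universal pair on $P^N_{d,\chi}\times \mathbb{P}^2$ is equivariant with respect to the torus action, so equivariant localization expresses any cell-closure class as a polynomial in Chern classes of $Rp_*(\mathcal{F}\otimes q^*\mathcal{O}(k))$ for suitable $k$, which lie in the tautological subring. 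Alternatively, one could run Markman's diagonal argument \cite{Mar07}: the class of the diagonal in $P^N_{d,\chi}\times P^N_{d,\chi}$ can be decomposed through the universal family using the vanishing \eqref{eqn: Ext vanishing for N}, yielding tautological generation without reference to the cell decomposition.

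\medskip

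\noindent The main obstacle I anticipate is verifying the quiver dictionary with the required precision: namely, that $N$ can be chosen so that the Beilinson-quiver data genuinely recovers every semistable sheaf of type $(d,\chi)$ (not just a dense open subset) and that limit stability in the sense of Definition \ref{def: limit stable pair} matches the King stability used in the quiver moduli construction. Once this identification is in hand, the remaining ingredients---BB decomposition, tautological generation from universal Chern classes, and Chow/cohomology isomorphism---are standard for framed quiver moduli. As a fallback, in case one of these quiver steps proves delicate, Markman's diagonal decomposition gives tautological generation directly, and the Chow-to-cohomology isomorphism can be deduced by running the BB argument with a carefully chosen $\mathbb{C}^*\subset T\subset \mathrm{PGL}_3$ acting on $\mathbb{P}^2$ whose induced action on $P^N_{d,\chi}$ has isolated fixed points after twisting the framing weight.
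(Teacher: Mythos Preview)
Your fallback is in fact the paper's actual argument, and it suffices on its own: the paper constructs the complex $\RHom_p(\CS^\bullet_1,\CF_2)$ on $P\times P$ (where $\CS^\bullet=[\CO(-N)\to \CF]$), shows via the vanishing \eqref{eqn: required vanishing} that it is a vector bundle of rank $\dim P$, and checks that its tautological section cuts out the diagonal scheme-theoretically. Hence $\Delta=c_{\dim P}(\RHom_p(\CS^\bullet_1,\CF_2))$ in Chow, and Grothendieck--Riemann--Roch writes this as a polynomial in tautological K\"unneth factors. The key point you seem to miss is that such a Chow--K\"unneth decomposition of the diagonal yields \emph{both} tautological generation \emph{and} the cycle class isomorphism by the standard Beauville argument \cite{Beauville_diagonal}; you do not need a separate BB decomposition for the latter.

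Your primary route has a genuine gap. The Beilinson quiver for $\BP^2$ has relations, so the Reineke-type results giving affine pavings of framed quiver moduli (which are formulated for quivers \emph{without} relations) do not apply directly; the fixed locus of a generic $\BC^*$-action need not consist of isolated points. Your alternative $\BC^*\subset \mathrm{PGL}_3$ suggestion is likewise unjustified: torus-fixed one-dimensional sheaves on $\BP^2$ are supported on coordinate lines and typically come in positive-dimensional families, and ``twisting the framing weight'' does not obviously reduce the fixed locus to points. So the cell-decomposition strategy, as written, would require substantial further work (or a reference) to be made rigorous, whereas the diagonal argument is self-contained and already handles everything.
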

\begin{proof}
Purity of the cohomology follows directly from smoothness and projectivity of $P=P^N_{d,\chi}$. Note that there exists a universal pair 
$$\Phi: q^*\CO(-N)\rightarrow \CF$$
over $P\times \BP^2$ by restricting that from $\HHom^N_{d,\chi}\times \BP^2$. By tautological classes of $P$ in either Chow or cohomology, we mean those classes obtained from $\CF$ just like in the case of moduli stack of sheaves. Since $\CF$ is indeed a pull-back from $\FM_{d,\chi}\times \BP^2$, the notion of tautological classes for $\FM_{d,\chi}$ and $P^N_{d,\chi}$ agrees in both Chow and cohomology in the range \eqref{eqn: isomorphic range} where they are isomorphic. 

To prove statements about tautological generation and the cycle class map, we shall construct an explicit Chow--Künneth decomposition of the diagonal 
$$\Delta\in \mathrm{CH}^k(P_1\times P_2),\quad k:=\dim(P).$$
The subscripts in $P_1$ and $P_2$ are just to distinguish the two copies of $P$. Denote the perfect complex coming from the universal pair by 
$$\CS^\bullet\coloneqq \Big[q^*\CO(-N)\xrightarrow{\Phi} \CF\Big]\in D^b(P\times \BP^2)
$$
which is placed in degrees $[0,1]$. 
We consider the complex $$\RHom_p(\CS^\bullet_1,\CF_2)\in D^b(P_1\times P_2)
$$
where $p:P_1\times P_2\times \BP^2\rightarrow P_1\times P_2$ is the projection map and $\CS^\bullet_i$ and $\CF_i$ are pull-backs from $P_i\times \BP^2$. We claim that $R\HHom_p(\CS^\bullet_1,\CF_2)$ is a vector bundle of rank $k$ admitting a tautological section $\tau$ that cuts out the diagonal $P=\mathrm{Zero}(\tau)\hookrightarrow P\times P$. Assuming the claim for the moment, we obtain the formula
\begin{align*}
    \Delta&=c_{k}(R\HHom_p(\CS^\bullet_1,\CF_2))\\
    &=\left[\exp\left(
    \sum_{i\geq 1}(-1)^{i-1}(i-1)!\sum_{a+b=i}\ch^{(\CS^\bullet_1)^\vee}_a\ch^{\CF_2}_b(\td(\BP^2))
    \right)\right]_k\,.
\end{align*}
Here $\ch^{(\CS^\bullet_1)^\vee}_a\ch^{\CF_2}_b(\td(\BP^2))$ denotes\footnote{For a $K$-theory class $\CV\in K^0(P\times \BP^2)$, the notation $\xi_{\CV}(-)$ means that we use $\ch(\CV)$ for the realization. }
\[\sum_{i}\xi_{(\CS^\bullet_1)^\vee}(\ch_a(\gamma_i^L))\xi_{\CF_2}(\ch_b(\gamma_i^R))\in H^\ast(P)\,,\]
where $\Delta_\ast \td(\BP^2)=\sum_{i}\gamma_i^L\otimes \gamma_i^R$, see Lemma \ref{lem: geomrealizationC} for a similar statement.
This yields an explicit Chow--Künneth decomposition of the diagonal in the tautological classes after expanding the formula. Note that this argument uses the Chow--Künneth decomposition of the diagonal of $\BP^2$. By standard arguments, see for example \cite{Beauville_diagonal}, this implies tautological generation and the cycle class map being an isomorphism for $P$. 

We finish the proof by establishing the above claim. To show the vector bundle property, it suffices to prove 
\begin{equation}\label{eqn: required vanishing}
    \Ext^i\big([\CO(-N)\xrightarrow{\phi_1}F_1]\,,\,F_2\big)=0
\end{equation}
for all $i\neq 0$ and $(F_j,\phi_j)\in P$ with $j=1,2$. This can be proved by  using the long exact sequence obtained by applying $\Hom(-,F_2)$ to the exact triangle 
$$ S^\bullet_1\xrightarrow{\ i_1\ } \CO(-N)\xrightarrow{\ \phi_1\ } F_1\xrightarrow{\ +1\ }.
$$
Both (i) and (ii) in Definition \ref{def: limit stable pair} are used to show the vanishing. The argument is standard, see for example \cite[Lemma 5.3]{BLM}. The only difference is that we need to use an extra vanishing $\Ext^2(F_1,F_2)=0$. The rank of the vector bundle can be computed by $\chi(S^\bullet,F)=\dim(P)$. 

\smallskip

Over $P_1\times P_2\times \BP^2$, consider the following morphism given by the composition
$$\tilde{\tau}:\CS^\bullet_1\xrightarrow{\ I_1\ } \CO(-N)\xrightarrow{\ \Phi_2\ } \CF_2. 
$$
Via adjunctions, this induces a tautological section $\CO_{P_1\times P_2}\xrightarrow{\tau}R\HHom_p(\CS^\bullet_1,\CF_2)$ of the vector bundle. We show that $Z\coloneqq \mathrm{Zero}(\tau)$ equals the diagonal $\Delta$ scheme-theoretically. Since $\tilde{\tau}\big|_{\Delta\times \BP^2}=\Phi\circ I=0$, we have $\Delta\hookrightarrow Z$. On the other hand, we have the following diagram
\begin{equation}\label{eqn: diagram with exact triangle}
\begin{tikzcd}
\CS^\bullet_1\big|_{Z\times \BP^2} \arrow[r, "I_1"] & \CO(-N) \arrow[r, "\Phi_1"] \arrow[d, equal] & \CF_1\big|_{Z\times \BP^2} \arrow[r, "+1"] & {} \\
\CS^\bullet_2\big|_{Z\times \BP^2} \arrow[r, "I_2"] & \CO(-N) \arrow[r, "\Phi_2"]                                & \CF_2\big|_{Z\times \BP^2} \arrow[r, "+1"] & {}
\end{tikzcd}
\end{equation}
over $Z\times \BP^2$. By definition of $Z$, we have $\Phi_2\circ I_1=0$ where we suppressed the restriction to $Z\times \BP^2$ from the notation. Since $R\HHom_p(\CS^\bullet_1,\CF_2)$ is a vector bundle, local-to-global spectral sequence implies that 
$$\Ext^{-1}_{Z\times \BP^2}(\CS^\bullet_1\big|_{Z\times \BP^2},\CF_2\big|_{Z\times \BP^2})=H^{-1}(R\HHom_p(\CS^\bullet_1,\CF_2)\big|_{Z\times \BP^2})=0.$$
Therefore, we have a part of the long exact sequence 
$$0\rightarrow \Hom(\CS^\bullet_1\big|_{Z\times \BP^2}, \CS^\bullet_2\big|_{Z\times \BP^2})
\xrightarrow{I_{2*}} \Hom(\CS^\bullet_1\big|_{Z\times \BP^2}, \CO(-N))
\xrightarrow{\Phi_{2*}} \Hom(\CS^\bullet_1\big|_{Z\times \BP^2}, \CF_2\big|_{Z\times \BP^2})\rightarrow\cdots
$$
where we used the above vanishing for the leftmost zero. Since $I_1$ in the middle term is mapped to $\Phi_2\circ I_1=0$, there exists unique $g\in \Hom(\CS^\bullet_1\big|_{Z\times \BP^2}, \CS^\bullet_2\big|_{Z\times \BP^2})$ such that $I_2\circ g=I_1$. In other words, the vertical map $g$ makes the left square in \eqref{eqn: diagram with exact triangle} commutes. By the axioms of triangulated category, this induces a morphism $h\in \Hom(\CF^\bullet_1\big|_{Z\times \BP^2}, \CF^\bullet_2\big|_{Z\times \BP^2})$ making the right square commutes in \eqref{eqn: diagram with exact triangle}. Such a morphism $h$ is necessarily an isomorphism by the relative version of Remark \ref{rmk: any morphism is iso}. The universal property of the moduli space $P=P^N_{d,\chi}$ then implies that a family of limit stable pairs $\CO(-N)\xrightarrow{\Phi_i}\CF_i\big|_{Z\times \BP^2}$ for $i=1, 2$ defines the same morphism $Z\rightarrow P_i=P$. In other words, $Z\rightarrow P_1\times P_2$ factors through the diagonal $\Delta$, completing the proof. 
\end{proof}

\begin{rmk}
    The results in this section, i.e. Theorem \ref{thm: tautological generation} and Proposition \ref{prop: pair moduli}, apply to more general settings. For example, let $S$ be a smooth del Pezzo surface, $H$ an ample divisor and $\alpha$ a topological type of either positive rank or one-dimensional sheaves. Then $\Ext^2_S(F_1,F_2)=0$ for all $F_1,F_2\in \FM_\alpha^{H{\textup{-ss}}}(S)$, which in particular implies that $\FM_\alpha^{H{\textup{-ss}}}(S)$ is smooth. By picking a positive enough line bundle $L$, we can construct the limit stable pair moduli space $P_{\alpha}^L(S)$ which is smooth projective. Then the same proof of Proposition \ref{prop: pair moduli} applies to show that $H^*(P^L_\alpha(S))$ is pure, tautologically generated and isomorphic to the Chow ring. The same properties hold for the moduli stack $\FM_\alpha^{H\textup{-ss}}(S)$ because we can identify $H^i(\FM_\alpha^{H\textup{-ss}}(S))\simeq H^i(P_{\alpha}^L(S))$ where $L$ is chosen sufficiently positive according to the cohomological degree $i$. 
\end{rmk}

\subsection{Applications}

\label{sec: applications}

In this subsection, we apply results from Sections \ref{sec: BPS integrality} and \ref{sec: tautological generation} to the structure of the cohomology ring of the moduli stacks. See \cite{MozRei} for the related discussion. 

We define the virtual Hodge polynomial of $M\in \MHM(\pt)$, viewed as a mixed Hodge structure, by 
$$E(M)=E(M,u,v):=\sum_{p,q\in \BZ}\dim h^{p,q}(\mathrm{gr}^{W}_{p+q}M)  u^p v^q\in \BZ[u^{\pm},v^{\pm}]. 
$$
The virtual Hodge polynomial satisfies the short exact sequence relation, hence extends to the $K$-theory $K(\MHM(\pt))$. In fact, this defines a ring homomorphism 
$$E(-):K(\MHM(\pt))\rightarrow \BZ[u^{\pm},v^{\pm}]. 
$$
We further extend this homomorphism to the localized $K$-theory \eqref{eqn: localized K theory}
$$E(-):\widehat{K}(\MHM(\pt))\rightarrow \BZ[u^{\pm},v^{\pm}, (uv)^{1/2}, (1-(uv)^n)^{-1} \mid n\geq 1]
$$
by letting
$$E(\BL^{1/2},u,v)=-(uv)^{1/2},\quad E((1-\BL^n)^{-1},u,v)=(1-(uv)^n)^{-1}.
$$
Denote by $\BQ\llbracket \Lambda_\mu\rrbracket$ the (completed) monoid ring associated to $\Lambda_\mu$ whose elements are of the form $\sum_{\alpha\in \Lambda_\mu}c_\alpha\cdot e^{\alpha}$ for a collection of $c_\alpha\in \BQ$.\footnote{We consider the completion to be compatible with our definition of $\widehat{K}(\MHM(\Lambda_\mu))$, which is defined as a product $\prod_\lambda\widehat{K}(\MHM(\pt_\lambda))$ rather than a direct sum. } The Hodge polynomial homomorphism naturally extends to the $\Lambda_\mu$-graded version by
$$E(-):\widehat{K}(\MHM(\Lambda_\mu))\rightarrow \BQ\llbracket \Lambda_\mu\rrbracket\otimes \BZ[u^{\pm},v^{\pm}, (uv)^{1/2}, (1-(uv)^n)^{-1} \mid n\geq 1]. 
$$
Denote the shifted (intersection) Hodge polynomial of the moduli stacks and spaces by
$$\overline{E}(\FM_{d,\chi}):=E(H^*(\FM_{d,\chi},\BQ^\vir),u,v),\quad 
\overline{\mathit{\mathit{IE}}}(M_{d,\chi}):=E(H^*(M_{d,\chi},\IC^\vir),u,v). 
$$
Note that we have 
$$\overline{E}(B\BG_m)=\frac{-(uv)^{1/2}}{1-uv}. 
$$

By applying the homomorphism $E(-)$ to the BPS integraity \eqref{eqn: BPS integrality in K theory}, we obtain the following

\begin{cor}\label{cor: BPS integrality for Hodge polynomial}
We have an equality
    \begin{equation}\label{eqn: BPS integrality for Hodge polynomial}
        \sum_{(d,\chi)\in \Lambda_\mu}\overline{E}(\FM_{d,\chi},u,v)\cdot e^{(d,\chi)}
        =\PE\left(\frac{-(uv)^{1/2}}{1-uv}\cdot \sum_{(d,\chi)\in \Lambda_\mu^\times}\overline{\mathit{\mathit{IE}}}(M_{d,\chi},u,v)\cdot e^{(d,\chi)}
\right)
    \end{equation}
in $\BQ\llbracket\Lambda_\mu\rrbracket\otimes \BZ[u^{\pm},v^{\pm}, (uv)^{1/2}, (1-(uv)^n)^{-1} \mid n\geq 1]$. 
\end{cor}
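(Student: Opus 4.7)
The strategy is to apply the ring homomorphism $E(-)$ constructed above to the BPS integrality equation \eqref{eqn: BPS integrality in K theory} of Theorem \ref{thm: BPS}, which is already an identity in the $\Lambda_\mu$-graded localized $K$-theory $\widehat{K}(\MHM(\Lambda_\mu))$. Since $\overline{E}(\FM_{d,\chi})$ and $\overline{\mathit{IE}}(M_{d,\chi})$ are by definition the images under $E(-)$ of the classes $H^*(\FM_{d,\chi},\BQ^\vir)$ and $H^*(M_{d,\chi},\IC^\vir)$, it suffices to verify that $E(-)$ is compatible with the three remaining operations appearing on the right-hand side of \eqref{eqn: BPS integrality in K theory}: tensor product, the class of $H^*(B\BG_m,\BQ^\vir)$, and the symmetric product $\Sym(-)$.

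Compatibility with the tensor product is built into the definition of $E(-)$ as a ring homomorphism, both on $\widehat{K}(\MHM(\pt))$ and on its $\Lambda_\mu$-graded extension with target $\BQ\llbracket \Lambda_\mu\rrbracket \otimes \BZ[u^{\pm},v^{\pm},(uv)^{1/2},(1-(uv)^n)^{-1}]$. A direct computation using the prescribed values $E(\BL^{1/2})=-(uv)^{1/2}$ and $E((1-\BL)^{-1})=(1-uv)^{-1}$ gives
\[
E\big(H^*(B\BG_m,\BQ^\vir)\big) \;=\; E\!\left(\frac{\BL^{1/2}}{1-\BL}\right) \;=\; \frac{-(uv)^{1/2}}{1-uv},
\]
which is precisely the prefactor appearing on the right-hand side of \eqref{eqn: BPS integrality for Hodge polynomial}.

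The key structural ingredient is the identity $E(\Sym(M)) = \PE(E(M))$ for $M\in \widehat{K}(\MHM(\Lambda_\mu))$. This is the standard translation of $\Sym$ in a symmetric monoidal category into the plethystic exponential under any $\lambda$-ring homomorphism; here it uses the fact that $E(-)$ is a $\lambda$-ring homomorphism with respect to the symmetric monoidal structure on $D^+(\MHM(\Lambda_\mu))$ from \cite{MSS}, once extended over the localization in \eqref{eqn: localized K theory}. Convergence in the completed monoid ring $\BQ\llbracket\Lambda_\mu\rrbracket$ is automatic: the monoid $\Lambda_\mu\cong \BZ_{\geq 0}$ is sharp and the summands on the right-hand side of \eqref{eqn: BPS integrality in K theory} are supported in $\Lambda_\mu^\times$, so only finitely many symmetric powers contribute to each graded piece $e^\alpha$. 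Assembling these three compatibilities and applying $E(-)$ term-by-term to \eqref{eqn: BPS integrality in K theory} yields the desired identity \eqref{eqn: BPS integrality for Hodge polynomial}, with the only genuinely nontrivial (but essentially formal) input being the intertwining $E\circ \Sym = \PE \circ E$.
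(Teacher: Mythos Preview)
Your proof is correct and follows the same approach as the paper, which simply states that the corollary is obtained by applying the homomorphism $E(-)$ to the BPS integrality equation \eqref{eqn: BPS integrality in K theory}; you have merely expanded on the compatibilities (with tensor product, the $B\BG_m$ class, and $\Sym$ versus $\PE$) that the paper leaves implicit.
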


Here, the notation $\PE(-)$ denotes the plethystic exponential for the variables $u, v$ and $e^\alpha$. Explicitly, it is given by 
    $$\PE(f(u,v,e^\alpha)):=\exp\left(
    \sum_{n\geq 1} \frac{f(u^n,v^n,e^{n\alpha})}{n}
    \right)
    $$
    where $f\in \BQ\llbracket\Lambda_\mu\rrbracket\otimes \BZ[u^{\pm},v^{\pm}, (uv)^{1/2}, (1-(uv)^n)^{-1}\,|\,n\geq 1]$ has zero coefficient for the $e^0$ component. 
    
\begin{rmk}
Since $E(-,u,v)$ is defined using the graded factors with respect to the weight filtration, it does not necessarily remember the cohomologically graded dimensions. In the case of pure Hodge structures, however, taking graded factors does not lose anything, so $E(-,u,v)$ can recover the information on cohomological gradings. Note that the purity of $H^i(\FM_{d,\chi})$ is proved in Theorem \ref{thm: tautological generation} and the intersection cohomology of a proper variety is always pure. Therefore, Corollary \ref{cor: BPS integrality for Hodge polynomial} relates two sets of numerical data
\begin{center}
    Poincaré series of $\{\FM_{d,\chi}\}_{(d,\chi)\in \Lambda_\mu}$\: $\longleftrightarrow$\: intersection Poincaré polynomials of $\{M_{d,\chi}\}_{(d,\chi)\in \Lambda_\mu^\times}$. 
\end{center}

In fact, both sides of the formula in \eqref{eqn: BPS integrality for Hodge polynomial} depend only on the product $uv$ rather than the individual $u$ and $v$, because the Hodge structures involved are not just pure but also algebraic, in the sense that they are of $(p,p)$-type. This was proved in Theorem \ref{thm: tautological generation} for $H^*(\FM_{d,\chi})$, and the case for the intersection cohomology of the moduli spaces follows from \cite[Theorem 0.4.1]{Bou}. Therefore, we do not lose any information by substituting $u=v=q^{1/2}$ and $(uv)^{1/2}=q^{1/2}$ to \eqref{eqn: BPS integrality for Hodge polynomial}, which reads
\begin{equation}\label{eqn: BPS for Poincare series}
    \sum_{(d,\chi)\in \Lambda_\mu}\overline{E}(\FM_{d,\chi},q)\cdot e^{(d,\chi)}=\PE\left(\frac{-q^{1/2}}{1-q}\cdot \sum_{(d,\chi)\in \Lambda_\mu^\times}\overline{\mathit{\mathit{IE}}}(M_{d,\chi},q)\cdot e^{(d,\chi)}
\right)
\end{equation}
in $\BQ\llbracket\Lambda_\mu\rrbracket\otimes \BZ[q^{\pm},q^{1/2}, (1-q^n)^{-1} \mid n\geq 1]$. Note that $\overline{E}(-,q)$ and $\overline{\mathit{\mathit{IE}}}(-,q)$ are simply the shifted (intersection) Poincaré series 
\begin{align*}
    \overline{E}(\FM_{d,\chi},q)&=(-q^{1/2})^{-\dim \FM_{d,\chi}}\sum_{i\geq 0} \dim H^{2i}(\FM_{d,\chi})\cdot q^i,\\
    \overline{\mathit{\mathit{IE}}}(M_{d,\chi},q)&=(-q^{1/2})^{-\dim M_{d,\chi}}\sum_{i\geq 0} \dim \mathit{IH}^{2i}(M_{d,\chi})\cdot q^i
\end{align*}
by the purity and algebraicity of the (intersection) cohomology. We will use the notation $E(\FM_{d, \chi}, q)$ and $\mathit{IE}(M_{d, \chi}, q)$ for the non-shifted Poincaré series, i.e. without the $(-q^{1/2})^{-\dim}$ factors.
\end{rmk}

The BPS integrality formula \eqref{eqn: BPS for Poincare series}, combined with the $\chi$-independence \eqref{eqn: MS_chi_indep}, allows us to compute Poincaré series of the stacks using those of the coprime moduli spaces, as explained in Section \ref{sec: BPS intro}. This gives a key completeness criterion for geometric relations in our computation of their cohomology rings. 
Furthermore, the BPS integrality formula implies strong structural results on the cohomology of $\FM_{d,\chi}$ as we explain next.

\begin{thm}\label{thm: structure of cohomology of stack}
The (non-shifted) Poincaré series $E(\FM_{d,\chi},q)$ depends on $(d,\chi)$ only through its degree $d$ and its multiplicity $m\coloneqq \gcd(d,\chi)$. More precisely, there exists a polynomial
$$A_{d,m}(q)=1+\cdots +(-1)^{m-1} q^{N},\quad N=d^2+\frac{m(m+1)}{2}
$$
such that 
$$E(\FM_{d,\chi},q)=\frac{A_{d,m}(q)}{(1-q)(1-q^2)\cdots (1-q^m)}.
$$
\end{thm}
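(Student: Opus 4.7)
The plan is to derive both claims from the BPS integrality \eqref{eqn: BPS for Poincare series} combined with the $\chi$-independence \eqref{eqn: MS_chi_indep}. Fix the slope $\mu = \chi/d = c/n$ in lowest terms, so $\Lambda_\mu^\times = \{(nk, ck) : k \geq 1\}$ and $(d, \chi) = (nm, cm)$. Since $\dim M_{nk, ck} = (nk)^2 + 1$ depends only on $nk$, $\chi$-independence implies that $g_k(q) := \overline{\mathit{IE}}(M_{nk, ck}, q)$ depends only on $nk$. Writing $y = e^{(n, c)}$ and $F(y) := \sum_{k \geq 0} \overline{E}(\FM_{nk, ck}, q)\, y^k$, the BPS formula becomes
\begin{equation*}
    F(y) \;=\; \PE\!\left(\tfrac{-q^{1/2}}{1-q} \sum_{k \geq 1} g_k(q)\, y^k\right),
\end{equation*}
whose right-hand side depends on the slope only through the integer $n$. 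Hence $[y^m]F = \overline{E}(\FM_{d,\chi}, q)$ depends on $(d,\chi)$ only through $(n, m)$, equivalently through $(d, m)$, proving the first assertion.

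For the explicit form, take logarithms to obtain $\log F(y) = \sum_{j \geq 1} b_j y^j$ with $b_j = \sum_{\ell \mid j} \frac{-q^{\ell/2}}{\ell(1-q^\ell)} g_{j/\ell}(q^\ell)$, so each $b_j$ is a rational function with denominator dividing $\prod_{\ell \mid j}(1-q^\ell)$. Exponentiating back, $[y^m] F = \sum_{\lambda \vdash m} \prod_i b_{\lambda_i}/\prod_i m_i(\lambda)!$ is a finite partition sum, where $m_i(\lambda)$ denotes the multiplicity of $i$ as a part of $\lambda$. Setting $A_{d,m}(q) := E(\FM_{d,\chi}, q) \cdot \prod_{j=1}^m(1-q^j)$ defines an a priori rational function, and the theorem claims it is a polynomial of degree $N = d^2 + m(m+1)/2$ with constant term $1$ and leading coefficient $(-1)^{m-1}$.

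The main obstacle is the denominator collapse: naively the partition sum has denominator $\mathrm{lcm}_{\lambda \vdash m} \prod_i \prod_{\ell \mid \lambda_i}(1-q^\ell)$, which contains higher-order factors such as $(1-q)^m$ (from $\lambda = (1^m)$) and similar multiple poles at each $j$-th root of unity for $j \leq m$, whereas the target denominator $\prod_{j=1}^m(1-q^j)$ has only simple factors. I would attack this either combinatorially, via cancellation of principal parts at each root of unity using Newton-type identities intrinsic to the $\lambda$-ring structure underlying $\PE$, or geometrically: the deepest stratum of the good moduli map $\FM_{d,\chi} \to M_{d,\chi}$ parametrizes polystable sheaves of the form $F^{\oplus m}$ with $F$ stable of type $(n, c)$, whose automorphism group is $\mathrm{GL}_m$. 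Since $\FM_{d,\chi}$ is \'etale-locally a $B\mathrm{GL}_m$-gerbe near such points and $E(B\mathrm{GL}_m, q) = \prod_{j=1}^m(1-q^j)^{-1}$, this produces exactly the predicted simple poles while ruling out higher-order ones.

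Finally, the extremal coefficients of $A_{d,m}$ are determined by asymptotic analysis. The constant term $A_{d,m}(0) = E(\FM_{d,\chi}, 0) = 1$ follows from connectedness of $\FM_{d,\chi}$. For the top term, Poincar\'e duality on the smooth projective $M_{nk, 1}$ of dimension $(nk)^2 + 1$ gives $b_j \sim (-1)^{(nj)^2 + 1} q^{(nj)^2/2}$ as $q \to \infty$ from the $\ell = 1$ summand. Since $\sum_i \lambda_i^2$ is maximized uniquely by the one-part partition $\lambda = (m)$, the dominant contribution to $[y^m]F$ is $b_m$, yielding $\overline{E}(\FM_{d,\chi}, q) \sim (-1)^{d^2 + 1} q^{d^2/2}$. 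Multiplying by the shift $(-q^{1/2})^{d^2} = (-1)^{d^2} q^{d^2/2}$ gives $E(\FM_{d,\chi}, q) \sim -q^{d^2}$, and then multiplying by $\prod_{j=1}^m(1-q^j) \sim (-1)^m q^{m(m+1)/2}$ produces the claimed leading term $(-1)^{m-1} q^N$ of $A_{d,m}$.
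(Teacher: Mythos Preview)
Your plan for the first assertion (dependence only on $d$ and $m$) matches the paper's argument exactly. The issue lies in your treatment of the denominator. By passing through the logarithm you create an obstacle that is largely artificial: your coefficients $b_j$ have denominator $\prod_{\ell\mid j}(1-q^\ell)$, and after multiplying along a partition $\lambda\vdash m$ you are faced with factors like $(1-q)^m$ whose cancellation you leave to a vague appeal to Newton identities or to an \'etale-local $B\mathrm{GL}_m$-gerbe picture. Neither suggestion is close to a proof: the $\lambda$-ring cancellation is precisely the hard combinatorics you have not done, and the stratification heuristic does not give control of the global Poincar\'e series (other strata contribute $B(\mathrm{GL}_{a_1}\times\cdots\times\mathrm{GL}_{a_r})$ for every partition of $m$, and turning this into a denominator bound would require a genuine argument).

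The paper's route is both simpler and avoids your obstacle entirely. Instead of taking logs, expand the plethystic exponential directly: the $e^{(d,\chi)}$-coefficient is a finite sum of terms, each of the form
\[
q^{d^2/2}\prod_{i=1}^{\ell}\frac{q^{k_i/2}}{1-q^{k_i}}\,q^{-k_i(d_i^2+1)/2}\mathit{IE}(M_{d_i,\chi_i},q^{k_i}),
\]
indexed by tuples with $\sum_i k_i(d_i,\chi_i)=(d,\chi)$. Since each $(d_i,\chi_i)\in\Lambda_\mu$ has $\gcd(d_i,\chi_i)\geq 1$, one gets $\sum_i k_i\leq \sum_i k_i\gcd(d_i,\chi_i)=m$. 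The key elementary fact is that whenever $\sum_i k_i\leq m$ one has $\prod_i(1-q^{k_i})\mid \prod_{j=1}^m(1-q^j)$; this follows by counting multiplicities of each cyclotomic polynomial $\Phi_d$ (equivalently, from polynomiality of $q$-multinomial coefficients). This immediately gives the claimed denominator. The paper then reads off the constant and top coefficients by bounding the $q$-degree of each term directly and observing that only the ``main term'' ($\ell=1$, $k_1=1$, $(d_1,\chi_1)=(d,\chi)$) contributes at the extremes---a cleaner alternative to your asymptotic argument, which in any case presupposes the polynomiality you have not yet established.
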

\begin{proof}
The first statement, concerning the dependence of $E(\FM_{d,\chi}, q)$, follows immediately from the BPS integrality formula \eqref{eqn: BPS for Poincare series} and the $\chi$-independence theorem for intersection cohomology \cite[Theorem 0.1]{MS_chi-indep}, cf. Equation \eqref{eqn: MS_chi_indep}.

    By \eqref{eqn: BPS for Poincare series}, the (non-shifted) Poincaré series $E(\FM_{d,\chi},q)$ is a sum of terms of the form
    \begin{equation}\label{eq: termstructurepoincare}
        q^{d^2/2}\prod_{i=1}^\ell \frac{q^{k_i/2}}{1-q^{k_i}}q^{-k_i(d_i^2+1)/2}\mathit{IE}(M_{d_i, \chi_i}, q^{k_i})
    \end{equation}
    with
    \[\ell,\, k_i\geq 1\,,\quad\sum_{i=1}^\ell k_i \chi_i=\chi\,,\quad\sum_{i=1}^\ell k_i d_i=d\,,\quad(d_i, \chi_i)\in \Lambda_\mu\,.\]
    For any such term
    \[m=\gcd(d, \chi)=\sum_{i=1}^\ell k_i\gcd(d_i, \chi_i)\geq \sum_{i=1}^\ell k_i\,.\]
    Hence the denominator $\prod_{i=1}^\ell (1-q^{k_i})$ divides $\prod_{j=1}^m (1-q^j)$ (for example by using the fact that $q$-multinomial coefficients are polynomials in $q$), so we can write $E(\FM_{d,\chi},q)$ in the form stated by the theorem with $A_{d, m}(q)$ a Laurent polynomial in $q$. It remains to show that $A_{d,m}$ is actually a polynomial and that it has the specified degree, specified constant term and specified leading coefficient.

    We first observe that terms \eqref{eq: termstructurepoincare} only contribute to the $q^s$ coefficient of $A_{d,m}(q)$ for
    \begin{equation}\label{eq: ineqstructuralanalysis}
   s\geq \frac{d^2}{2}+\sum_{i=1}^\ell \left(\frac{k_i}{2}-\frac{k_i(d_i^2+1)}{2}\right)= \frac{d^2}{2}-\sum_{i=1}^\ell \frac{k_id_i^2}{2}\geq \frac{d^2}{2}-d\sum_{i=1}^\ell \frac{k_id_i}{2}=0\,.\end{equation}
    This shows that $A_{d,\chi}(q)$ is a polynomial. Equality in \eqref{eq: ineqstructuralanalysis} holds only if $\ell=1$, $k_1=1$ and $(d_1, \chi_1)=(d, \chi)$, so only this term (which we will refer to as the ``main term'') contributes to the constant coefficient. Similarly we can bound the degree of the rational function and find that only the main term contributes to the leading coefficient. The contribution of the main term to $E(\FM_{d, \chi}, q)$ is
    \[\frac{1}{1-q}\mathit{IE}(M_{d,\chi}, q)=\frac{\mathit{IE}(M_{d,\chi}, q)(1-q^2)(1-q^3)\ldots (1-q^m)}{(1-q)(1-q^2)\ldots (1-q^m)}\]
    which makes it clear that the constant coefficient is 1 and the top coefficient is $(-1)^{m-1}$.
\end{proof}
\begin{rmk}
    Note that degree of the rational function on the right hand side is equal to the dimension $\dim \FM_{d,\chi}=d^2$. This type of equality need not be true for arbitrary smooth quotient stacks $\FX=[X/G]$.
\end{rmk}

The BPS integrality formula provides control of the cohomology in small degrees, and can be used to prove a free generation result for the moduli stacks analogous to Theorem \ref{generation} (ii).

\begin{thm}\label{thm: free generation in low degree}
The homomorphism $\BD_{\alpha}\rightarrow H^*(\FM_{d,\chi})$ defines an isomorphism up to degree $2d-4$.
\end{thm}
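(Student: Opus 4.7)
The plan is to prove this by matching graded dimensions, leveraging both BPS integrality and the coprime case of Theorem \ref{generation}. Since $\xi\colon \BD_\alpha\to H^*(\FM_{d,\chi})$ is already surjective by Theorem \ref{thm: tautgenerated} (ii), it suffices to show that $\dim_\BQ \BD_\alpha^{2i}=\dim_\BQ H^{2i}(\FM_{d,\chi})$ for all $i\leq d-2$. On the algebraic side, the identification in Section \ref{sec: normalized class} presents $\BD_\alpha$ as a polynomial ring whose positive cohomological degree $2n$ contains exactly the three generators $c_{n+1}(0), c_n(1), c_{n-1}(2)$. Its Poincaré series is therefore
\[\sum_{n\geq 0}(\dim_\BQ \BD_\alpha^{2n})\, q^n \ =\ \prod_{n\geq 1}(1-q^n)^{-3},\]
and the task reduces to showing that $E(\FM_{d,\chi},q)$ coincides with this product modulo $q^{d-1}$.

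First I would handle the coprime case. When $\gcd(d,\chi')=1$, the isomorphism \eqref{Gm_gerbe} together with Theorem \ref{generation} expresses $H^*(\FM_{d,\chi'})\cong H^*(M_{d,\chi'})[u]$ as a free polynomial algebra on the $3d-7$ normalized tautological generators of $M_{d,\chi'}$ augmented by $c_1(1)$ in degree $2$, producing three generators in each even degree from $2$ to $2d-4$; this immediately yields $E(\FM_{d,\chi'},q)\equiv \prod_{n=1}^{d-2}(1-q^n)^{-3}\pmod{q^{d-1}}$. For general $(d,\chi)$, I would then apply the BPS integrality formula \eqref{eqn: BPS for Poincare series} together with the $\chi$-independence isomorphism \eqref{eqn: MS_chi_indep}: the ``main term'' $\tfrac{-q^{1/2}}{1-q}\overline{\mathit{IE}}(M_{d,\chi},q)$ depends on $(d,\chi)$ only through $d$ (by $\chi$-independence) and equals the \emph{entire} Poincaré series of $\FM_{d,\chi'}$ in the coprime case. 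Consequently the difference $\overline{E}(\FM_{d,\chi},q)-\overline{E}(\FM_{d,\chi'},q)$ is precisely the sum of correction terms coming from non-trivial decompositions $(d,\chi)=\sum_i k_i(d_i,\chi_i)$ with $(d_i,\chi_i)\in\Lambda_\mu^\times$.

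The key technical calculation, and what I expect to be the main obstacle, is the uniform estimate showing that every such correction contributes only in $q$-order at least $d-1$. Rewriting the lower bound from the proof of Theorem \ref{thm: structure of cohomology of stack} via the algebraic identity
\[d^2-\sum_i k_i d_i^2 \ =\ \sum_{i\neq j}k_i k_j d_i d_j \ +\ \sum_i k_i(k_i-1)\,d_i^2,\]
the estimate sharpens to $s\geq \tfrac12\bigl(\sum_{i\neq j}k_i k_j d_i d_j + \sum_i k_i(k_i-1)d_i^2\bigr)$. A short case analysis then verifies $s\geq d-1$ for every non-trivial decomposition: when $\ell\geq 2$ with all $k_i=1$, the first sum is bounded below by $d_1 d_2\geq 1\cdot(d-1)$, with equality only when $\{d_1,d_2\}=\{1,d-1\}$; when $\ell=1$ with $k_1=m\geq 2$ and $d_1=d/m$, the second sum gives $\binom{m}{2}d_0^2\geq md_0-1=d-1$; all remaining cases are strictly larger. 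The finitely many extremal configurations achieving equality have to be tracked, but each term of the rewritten expression is manifestly non-negative, so the bound is robust. Combining this estimate with the coprime computation yields $E(\FM_{d,\chi},q)\equiv \prod_{n\geq 1}(1-q^n)^{-3}\pmod{q^{d-1}}$, matching the dimensions of $\BD_\alpha$ in cohomological degrees $\leq 2d-4$ and completing the proof via surjectivity.
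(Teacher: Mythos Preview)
Your proposal is correct and follows essentially the same approach as the paper: reduce to a dimension count via surjectivity, use BPS integrality together with $\chi$-independence to isolate the ``main term'' $\tfrac{1}{1-q}\mathit{IE}(M_{d,\chi},q)$ in degrees $\leq d-2$, and then invoke the freeness result for the coprime moduli space (Theorem~\ref{generation}) plus the identification $\BD_\alpha\simeq \BD_{\alpha,\inv}[u]$. The only substantive difference is your estimate for the correction terms: the paper bounds $\sum_i k_i d_i^2\leq (d-1)^2+1$ directly, while you rewrite $d^2-\sum_i k_i d_i^2$ via the identity $(\sum k_i d_i)^2-\sum k_i d_i^2=\sum_{i\neq j}k_ik_jd_id_j+\sum_i k_i(k_i-1)d_i^2$ and then analyze cases; both yield $s\geq d-1$ and are equally valid.
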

\begin{proof}
    We already showed that the homomorphism $\BD_{\alpha}\to H^*(\FM_{d,\chi})$ is surjective in Theorem~\ref{thm: tautological generation}, so it is enough to prove that the dimensions agree up to degree $2d-4$. The dimension $\dim H^{2i}(\FM_{d,\chi})$ is the $q^i$ coefficient of $E(\FM_{d,\chi},q)$. We follow the proof of Theorem~\ref{thm: structure of cohomology of stack} and argue that only the main term contributes to the coefficient of $q^i$ with $i\leq d-2$ by strengthening the inequality \eqref{eq: ineqstructuralanalysis}. If either $\ell\geq 2$ or $k_1\geq 2$ it is easy to see that 
    \[s\geq \frac{d^2}{2}-\frac{(d-1)^2}{2}-\frac{1^2}{2}=d-1\,,\]
    showing that indeed the other terms only contribute in higher degrees. Hence, for $i\leq d-2$ we have the following chain of equalities that we will justify below:
    \begin{align*}\dim H^{2i}(\FM_{d,\chi})&=[q^i]E(\FM_{d,\chi},q)=[q^i]\frac{\mathit{IE}(M_{d,\chi},q)}{1-q}=\dim_{2i}H^*(M_{d,1})[u]\\
    &=\dim_{2i}\BD_{\alpha, \inv}[u]=\dim_{2i}\BD_{\alpha}\,.
    \end{align*}
    We have used $[q^i]$ to denote the $q^i$-coefficient in a series and $\dim_{2i}$ to denote the dimension of the $2i$-graded part of some graded algebra. The formal variable $u$ has degree 2. The first equality is by definition, the second comes from the previous observation that only the main term contributes for such degrees, the third uses the $\chi$-independence theorem  of Maulik--Shen (\cite[Theorem 0.1]{MS_chi-indep} or Equation \eqref{eqn: MS_chi_indep}), the fourth uses the freeness for the good moduli spaces (\cite[Theorem 0.2]{PS} or Theorem \ref{generation}), and finally the last equality uses the isomorphism $\BD_\alpha\simeq \BD_{\alpha, \inv}[u]$ from Proposition \ref{prop: isowt0}.
\end{proof}

We also have an analogue of Theorem \ref{generation} (i) for stacks by following essentially the same strategy as in \cite{PS}.
\begin{thm}\label{thm: smallgenerationstacks}
    The algebra $H^*(\FM_{d,\chi})$ is generated by the $3d$ classes
    \[c_{k+1}(0),\, c_{k}(1),\, c_{k-1}(2) \in H^{2k}(\FM_{d,\chi}),\;\; 1\leq k \leq d.\]
\end{thm}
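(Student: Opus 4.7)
The plan is to combine the tautological generation from Theorem \ref{thm: tautological generation} with an inductive reduction using the generalized Mumford relations of Section \ref{sec: gmr}. By Theorem \ref{thm: tautological generation} and the polynomial-ring identification $\BD_\alpha \simeq \BQ[c_0(2), c_1(1), c_2(0), c_1(2), \ldots]$ from Section \ref{sec: relations between classes}, the algebra $H^\ast(\FM_{d,\chi})$ is generated by all tautological classes $c_k(j)$ with $k \geq 0$, $j \in \{0,1,2\}$ and $k + j \geq 2$. Thus the theorem reduces to showing that every $c_k(j)$ with $k+j-1 > d$, i.e.\ of cohomological degree $> 2d$, lies in the subalgebra generated by the $3d$ listed classes. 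Inducting on the top index $m := k+j-1$, it suffices to prove that for each $m \geq d+1$ and each $j \in \{0,1,2\}$, the descendent $\ch_m(H^j) \in \BD_\alpha$ is congruent modulo $\FI_{d,\chi}$ to a polynomial in the descendents $\ch_i(H^{j'})$ with $i \leq m-1$.

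To carry out this reduction, I would apply generalized Mumford relations with $\alpha' = (1, \chi')$, where $\chi' \in \BZ$ is chosen in the nonempty interval $(\chi/d,\, \chi/d+3)$. Since $dd' = d$, the relations $\gmr^{\alpha', j, A}_\alpha \in \FI_{d,\chi}$ are available for all $j > d$, and $M_{1,\chi'} \simeq \check{\BP}^2$ with cohomology spanned by $\{1, \check H, \check H^2\}$. For a fixed $m \geq d+1$, I would use the three relations
\[
\gmr^{\alpha',\, m+2,\,[M_{1,\chi'}]}_\alpha,\qquad \gmr^{\alpha',\, m+1,\,\check H \cap [M_{1,\chi'}]}_\alpha,\qquad \gmr^{\alpha',\, m,\,[\mathrm{pt}]}_\alpha\,,
\]
each of which lies in $\FI_{d,\chi}$ in cohomological degree exactly $2m$. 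A direct computation using the Künneth decomposition
\[
\Delta_\ast \td(\BP^2) = H^2 \otimes 1 + H \otimes H + 1 \otimes H^2 + \tfrac{3}{2}\bigl(H \otimes H^2 + H^2 \otimes H\bigr) + H^2 \otimes H^2
\]
and the explicit realization $\xi \colon \BD_{\alpha'} \to H^\ast(\check{\BP}^2)$ (in the spirit of Example \ref{M10example}) expresses the leading part of each relation, modulo polynomials in $\ch_i(H^{j'})$ with $i < m$, as a specific $\BQ$-linear combination of $\ch_m(1),\, \ch_m(H),\, \ch_m(H^2)$; the coefficients depend only on $\chi, \chi'$ and elementary intersection numerics on $\BP^2$ and $\check{\BP}^2$.

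The main step, and the main obstacle, is to verify that the resulting $3 \times 3$ coefficient matrix in $\bigl(\ch_m(1), \ch_m(H), \ch_m(H^2)\bigr)$ is nonsingular for every $m \geq d+1$. This is the stacky counterpart of the leading-term analysis used in the proof of \cite[Theorem 0.2(a)]{PS} for the moduli space case; it is a finite, explicit numerical check whose entries are controlled by the Künneth data above together with the Todd numerics of $\check{\BP}^2$. Once invertibility is established, solving the $3 \times 3$ system expresses each of $\ch_m(1),\, \ch_m(H),\, \ch_m(H^2)$ as a polynomial in strictly lower-index descendents, and a straightforward induction on $m$ starting at $m = d+1$ reduces all $\ch_m(H^j)$ with $m > d$ to polynomials in $\ch_i(H^{j'})$ with $i \leq d$. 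Translating back to the $c_k(j)$ generators via the $\bF_\rho$-twist of Section \ref{sec: relations between classes}, which preserves the top $\ch$-index, yields the asserted generation by the $3d$ classes.
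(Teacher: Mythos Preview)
Your strategy coincides with the paper's: both reduce to Step~1 of \cite[Section~2.3]{PS}, i.e.\ use the generalized Mumford relations with $d'=1$ to inductively express every $\ch_m(H^j)$ with $m>d$ in terms of lower-index descendents. The paper simply cites that step rather than rederiving it.

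Two points to clean up. First, your interval for $\chi'$ is on the wrong side: inequality~\eqref{eq: inequalitygmr} with $d'=1$ reads $\chi'<\chi/d<\chi'+3$, i.e.\ $\chi'\in(\chi/d-3,\,\chi/d)$, not $(\chi/d,\,\chi/d+3)$. Second, the invertibility of the $3\times 3$ leading-coefficient matrix is exactly the content you must supply, and it is not a single numerical check independent of the parameters: in \cite{PS} this is carried out by choosing several values of $\chi'$ (the relations denoted $R_1,R_2,R_3$ there) and analyzing the resulting system. The paper's proof notes one subtlety you do not address: when $\chi=0$ one of those relations ($R_1$ in the notation of \cite{PS}) is unavailable because the strict inequality in~\eqref{eq: inequalitygmr} fails, and one must check the argument goes through without it. This is easy for the weaker $3d$-generator statement here, but you should say so explicitly rather than asserting a generic nonsingularity.
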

\begin{proof}
    See \textit{Step 1} in \cite[Section 2.3]{PS}. Note that when $\chi=0$ we are not allowed to use the relations $R_1$ (in the notation of loc. cit.) as the inequality \eqref{eq: inequalitygmr} is not satisfied, but this can easily be avoided.
\end{proof}

Note that this result is weaker than Theorem \ref{generation} (i), in the sense that we require more generators and that we do not claim that this is a minimal generating set. This is due to the fact that the leading terms analyzed in \textit{Steps 2--4} of \cite[Section 2.3]{PS} are not necessarily non-zero if $\chi=d$ or $\chi=d/2$ (see for instance the formula for $\det(\mathsf{M})$ in loc. cit.).

\section{Computation of the cohomology rings}

\label{sec: computations of the rings}

In this section we compute all the cohomology rings in Theorem \ref{thm: rings}. By the symmetries before Theorem \ref{chi-dep}, it suffices to restrict our attention to $0\leq \chi \leq {d}/{2}$. We shall use the normalized classes $c_k(j)$ in Definition \ref{defprof: cij} to present the rings, as explained in Remark \ref{rmk: normalized_class}.

\subsection{Strategy of the computation.}
\label{sec: strategy}
We start by outlining a detailed strategy for the computation, combining the results and tools we developed in the previous sections. All our computations are implemented in the software \textsc{Macaulay2} \cite{M2}.

\smallskip

\begin{enumerate}[label = (\roman*)]
    \item By Theorem \ref{thm: tautgenerated}, the normalized tautological classes $c_k(j)$  generate the cohomology rings of $\FM_{d,\chi}$ and $M_{d,\chi}$. For the moduli spaces $M_{d,\chi}$, classes in cohomological degrees $\leq 2d-4$ generate according to Theorem \ref{generation} (i); for the stacks $\mathfrak{M}_{d,\chi}$ classes in degrees $\leq 2d$ generate the cohomology, see Theorem \ref{thm: smallgenerationstacks}. The other normalized classes can be expressed in terms of these generators via the relations in $\FI_{d,\chi}^{\gmr,1,\chi'}$.
    
    Let $R$ be a free polynomial ring in these generators, with correct cohomological degrees specified for each. We form an ideal $I$ to which we shall add geometric relations step by step and take its quotient. For now, we set $I=(0)$. 

    \smallskip

    \item Compute the Poincaré series of the quotient $R/I$ and compare it with the Poincaré series of the moduli space $M_{d,\chi}$ (known in small degrees, see for example \cite{lep93, CC2, CC}) or the moduli stack $\mathfrak{M}_{d,\chi}$ (obtained from BPS integrality). Identify the first cohomological degree, say $\ell$, where the two series do not match. This is where the first missing relations occur.

    \smallskip

    \item Apply Virasoro operators to generators of $I$ to obtain relations in degree $\ell$. Using Step (i), reduce the new relations to expressions in terms of the ring generators only, and add these to the ideal $I$.

    \smallskip

    \item If Step (iii) does not give all the missing relations in degree $\ell$, produce primitive relations in degree $\ell$. Note that we do not need primitive relations of degrees \textit{less than $\ell$} according to Theorem \ref{thm: primitive relatiosn generate}, since we have already performed Step (iii). Reduce the new relations and add them to the ideal $I$.

    \smallskip

    \item Repeat Steps (ii)-(iv) above, until the Poincaré series match in all degrees, meaning that we have obtained all the tautological relations\footnote{Note that $R$ is Noetherian and the ideal of tautological relations is finitely generated.}, or conclude that the geometric relations are not complete.

    \smallskip

    \item (Optional) Use the command \texttt{minimalPresentation} to obtain a minimal presentation of the cohomology ring and the ideal $I$.
    
\end{enumerate}

\begin{rmk}
    Although the strategy above makes use of the known Poincaré polynomials \cite{lep93, CC2, CC} of the moduli spaces $M_{d,\chi}$, we remark that our results have an independent nature and do not depend on these numerical data in any essential way. Indeed, since $M_{d,\chi}$ is smooth and projective when $\gcd(d,\chi)=1$, once we have an ideal $I$ of geometric relations such that the quotient ring $R/I$ is Gorenstein with the correct dimension, Poincaré duality would force an isomorphism $H^*(M_{d,\chi}) \simeq R/I$. The fact that we do have enough relations to obtain the cohomology rings for $d\leq 5$ is precisely the content of Theorem \ref{thm: rings} (for the moduli spaces). One then follows Steps (ii)-(v) to treat the moduli stacks. We chose to present our strategy as above since it is in practice much more algorithmic.
    
\end{rmk}

\subsection{The cohomology rings.}
\label{sec: rings}
In this section, we describe the computation of $H^*(\mathfrak{M}_{2,0}, \mathbb{Q})$ in detail following the above strategy. For the other moduli stacks (or spaces), we merely list key information such as Poincaré series, numbers of generators and relations, etc. for their cohomology rings, as the computations are much more involved. All presentations of the cohomology rings, including the partial one for $\mathfrak{M}_{5,0}$, are uploaded to the fourth author's website
\centerline{\url{https://github.com/Weite-Pi/weitepi.github.io}}
in a folder named \texttt{cohomology rings}.

\begin{rmk}
    The moduli space $M_{1,0}$ is isomorphic to $\mathbb{P}^2$, whose cohomology is well-known and can be obtained as in Example \ref{M10example}. The moduli spaces $M_{2,1}$ and $M_{3,1}$ are also classical \cite{lep93}. The cohomology of the moduli space $M_{4,1}$ is first computed in \cite{CM} using birational geometry, and translated to a presentation in terms of normalized classes $c_k(j)$ in \cite{KPS}. As explained in the previous remark, the cohomology rings of these moduli spaces can all be obtained directly using geometric relations, and we omit the details for them in the following.
    \end{rmk}

\subsubsection{The moduli stack $\mathfrak{M}_{2,0}$.} The cohomology of the stack $\mathfrak{M}_{2,0}$ is generated by the normalized classes in degrees $\leq 4$. We thus input the command
\[
\texttt{R = QQ[c02,c11,c20,c12,c21,c30, Degrees => \{2,2,2,4,4,4\}]}
\]
in \textsc{Macaulay2} to get a free polynomial ring. Its Poincaré series has first terms
\[
E(R, q) = 1 + 3q + 9q^2 + 19q^3 + 39q^4 + 69q^{5} + 119 q^{6} + \cdots.
\]
On the other hand, the correct Poincaré series for $\mathfrak{M}_{2,0}$ can be obtained by BPS integrality:
\begin{align*}
    E(\mathfrak{M}_{2,0}, q) &= \frac{1 + q + 2q^2 + 2q^3 + 3q^4 + q^{5} - q^{7}}{\left(1-q\right) \left(1-q^2\right)}\\
    &= 1 + 2q + 5q^2 + 8q^3 + 14q^4 + 18 q^{5} + 24 q^{6} + \cdots.
\end{align*}

Thus the first relation occurs in cohomological degree 2 (note that this is twice the exponent of $q$ since the latter records algebraic degrees). We produce a Mumford relation in this degree:
\[
\mathbf{r}_1=c_2(0) - \frac{1}{8}c_0(2).
\]
One can also use $\gmr$ with $M_{1,\chi'}$, which produces the same relation. Setting $I$ to be the ideal generated by this relation, we compute the Poincaré series
\[
E(R/I, q) = 1 + 2q + 6q^2 + 10q^3 + 20q^4 + 30q^{5} + 50q^{6} + \cdots.
\]

Comparing with $E(\mathfrak{M}_{2,0}, q)$, we see that the next relation occurs in (cohomological) degree 4. We apply the Virasoro operator $\mathsf{L}_1$ to the relation $c_2(0) - \frac{1}{8}c_0(2)$, which gives a new relation
\[
\mathbf{r}_2=-\frac{1}{4}c_1(2) + 2c_3(0).
\]
Adding this to the ideal $I$, we have
\[
E(R/I, q) = 1 + 2q + 5q^2 + 8q^3 + 14q^4 + 20q^{5} + 30q^{6} + \cdots.
\]

The next relations thus occur in degree 10, and there are two of them. We apply the Virasoro operators to the two relations in $I$; this time, however, they do not give anything new, i.e. the relations we obtain still lie in $I$. We proceed by producing $\gmr$ with $M_{1,\chi'}$, and this does give two new relations. After simplifying, they are
\begin{align*}
    &
    \mathbf{r}_3=c_1(1)c_2(0)^4-2c_2(0)^3c_3(0),\\
    & \mathbf{r}_4=c_1(1)^2 c_2(0)^3+16c_2(0)^5+2c_2(0)^3c_2(1)-6c_1(1)c_2(0)^2c_3(0)+6c_2(0)c_3(0)^2.
    \end{align*}
Adding these to $I$, we have
\[
E(R/I, q) = 1 + 2q + 5q^2 + 8q^3 + 14q^4 + 18q^{5} + 26q^{6} + \cdots.
\]

The next two missing relations lie in degree 12. We apply Virasoro operators to the four generators in $I$, and this gives two new relations
$\mathbf{r}_5 = \mathsf{L}_1(\mathbf{r}_3)$ and $\mathbf{r}_6=\mathsf{L}_1(\mathbf{r}_4)$. Adding these to the ideal and computing the Poincaré series, we found that\footnote{\textsc{Macaulay2} is able to reduce Poincaré series to this rational form using the command \texttt{reduceHilbert}.}
\[
E(R/I, q) = \frac{1 + q + 2q^2 + 2q^3 + 3q^4 + q^{5} - q^{7}}{\left(1-q\right) \left(1- q^2\right)}, 
\]
matching the correct Poincaré series of $\mathfrak{M}_{2,0}$. Thus we have found all the relations and 
\[
H^*(\mathfrak{M}_{2,0}, \mathbb{Q}) \simeq R/I
\]
where $I=\langle \mathbf{r}_1, \mathbf{r}_2, \mathbf{r}_3, \mathbf{r}_4, \mathbf{r}_5, \mathbf{r}_6\rangle$.

\smallskip

We can further trim the quotient ring and obtain a minimal presentation of $H^*(\mathfrak{M}_{2,0}, \mathbb{Q})$. In the trimmed form, it has four tautological generators
\[
c_1(1),\: c_2(0),\: c_2(1),\: c_3(0),
\]
and four relations, two in degree 10 and two in degree 12.

\subsubsection{The moduli stack $\mathfrak{M}_{3,0}$.}
\label{sec: M30}

The Poincaré series of $\mathfrak{M}_{3,0}$ is as follows:
\begin{align*}
    E(\mathfrak{M}_{3,0}, q) &= \frac{1}{\left(1-q\right) \left(1-q^2\right) \left(1-q^3\right)}\cdot (1 + 2q + 3q^2 + 3q^3 + 3q^4 + 4q^{5} + 7q^{6} \\&\qquad \qquad + 6q^{7} + 5q^{8} + q^{9} - 4q^{10} - 6q^{11} - 2q^{12} + q^{13} + 2q^{14} + q^{15})\\& = 1 + 3q + 7q^2 + 13q^3 + 22q^4 + 35q^{5} + 55q^{6} + 79 q^{7} + 111 q^{8} +  \cdots. 
\end{align*}

We obtain $H^*(\mathfrak{M}_{3,0}, \mathbb{Q})$ following the same algorithm as for $\mathfrak{M}_{2,0}$. In the trimmed form, it has 7 generators
\[
c_0(2),\: c_1(1),\: c_2(0),\: c_1(2),\: c_2(1),\: c_2(2),\: c_3(1).
\]

There are in total 16 relations among these  generators:

\medskip

\begin{center}
\begin{tabular}{||c c c c c c c c||} 
 \hline
  degrees & $H^4$ & $H^6$ & $H^8$ & $H^{14}$ & $H^{16}$ & $H^{18}$ & $H^{20}$ \\ 
 \hline
 \# of rel. & 1 & 2 & 2 & 2 & 3 & 4 & 2\\
 \hline
\end{tabular}
\end{center}

\medskip

\subsubsection{The moduli stack $\mathfrak{M}_{4,2}$.} The Poincaré series of $\mathfrak{M}_{4,2}$ is as follows:
\begin{align*}
    E(\mathfrak{M}_{4,2},q) &= \frac{1}{(1-q) (1-q^2)}\cdot(1+2 q+5 q^2+8 q^3+9 q^4+6 q^{5}+5 q^{6}+4 q^{7} +5 q^{8} \\&\qquad \;\;+5 q^{9}+6 q^{10}+4 q^{11}+3 q^{12}-3 q^{14}-8 q^{15}-8 q^{16}-5 q^{17}-2 q^{18}-q^{19})\\
    & = 1 + 3 q + 9 q^2 + 19 q^3 + 34 q^4 + 50 q^{5} + 70 q^{6} + 90 q^{7} + 115 q^{8} + \cdots.
\end{align*}

In the trimmed form, the cohomology ring $H^*(\mathfrak{M}_{4,2}, \mathbb{Q})$ has 7 generators
\[
c_0(2),\: c_1(1),\: c_2(0),\:
c_1(2),\:
c_2(1),\: c_3(0),\: c_4(0).
\]

There are in total 19 relations among these generators:

\vspace{7pt}

\begin{center}
\begin{tabular}{||c c c c c c c c c c||} 
 \hline
  degrees & $H^6$ & $H^8$ & $H^{10}$ & $H^{12}$ & $H^{22}$ & $H^{24}$ & $H^{26}$ & $H^{28}$ & $H^{30}$ \\ 
 \hline
 \# of rel. & 1 & 5 & 5 & 1 & 2 & 2 & 1 & 1 & 1\\
 \hline
\end{tabular}
\end{center}

\medskip

\subsubsection{The moduli stack $\mathfrak{M}_{4,0}$.} The Poincaré series of $\mathfrak{M}_{4,0}$ is as follows:
\begin{align*}
    E(\mathfrak{M}_{4,0}, q) & = \frac{1}{(1-q)^2 (1-q^2) (1 - q^4)}\cdot (1+q+3 q^2+4 q^3+3 q^4+q^{5}+3 q^{6}+2 q^{7}\\&\quad+5 q^{8}+6 q^{9}+7 q^{10}+4 q^{11}+3 q^{12}-6 q^{13}-11 q^{14}-13 q^{15}-6 q^{16}+3 q^{17}\\&\quad+10 q^{18}+10 q^{19}+4 q^{20}-2 q^{21}-3 q^{22}-q^{23}-q^{24})\\& = 1 + 3 q + 9 q^2 + 20 q^3 + 39 q^4 + 65 q^{5} + 105 q^{6} + 158 q^{7} + 234 q^{8} + \cdots.
\end{align*}

In the trimmed form, the cohomology ring $H^*(\mathfrak{M}_{4,0}, \mathbb{Q})$ has 10 generators
\[
c_0(2),\: c_1(1),\: c_2(0),\:
c_1(2),\:
c_2(1),\: c_3(0),\:
c_3(1),\:
c_4(0),\:
c_4(1),\:
c_5(0).
\]

There are in total 40 relations among these generators:

\medskip

\begin{center}
\begin{tabular}{||c c c c c c c c c c c||} 
 \hline
  degrees & $H^6$ & $H^8$ & $H^{10}$ & $H^{12}$ &
  $H^{18}$ &
  $H^{20}$ &
  $H^{22}$ & $H^{24}$ & $H^{26}$ & $H^{28}$ \\ 
 \hline
 \# of rel. & 1 & 5 & 5 & 2 & 2 & 2 & 4 & 6 & 8 & 5 \\
 \hline
\end{tabular}
\end{center}

\medskip

\subsubsection{The moduli space $M_{5,1}$.}
\label{sec: M51}
The Poincaré polynomial of $M_{5,1}$ have been computed by various methods, see for example \cite{Yuan3, CC2, Mai13}:
\begin{align*}
 E(M_{5,1}, q) &= 1+2q + 6q^2 + 13 q^3 + 26q^4 + 45 q^5 + 68 q^6 + 87q^7 + 100 q^8 + 107 q^9 + 111 q^{10} \\&\quad+ 112 q^{11} + 113 q^{12} + 113 q^{13} + 113 q^{14} + 112 q^{15} + 111q^{16} +107 q^{17} + 100 q^{18} \\&\quad+ 87 q^{19} + 68 q^{20} + 45 q^{21} + 26 q^{22} + 13 q^{23} + 6 q^{24} +2 q^{25} + q^{26}.
 \end{align*}

As we mentioned in Section \ref{sec: intro_relations}, the geometric relations ${I}^\geo_{5,1}$ turn out to be \textit{not} complete. Indeed, our algorithm stops when we try to find a new relation in degree 36. On the other hand, imposing all geometric relations does reduce the top dimension to one:
\begin{align*}
E(R/{I}^\geo_{5,1}, q) &= 1+2q + 6q^2 + 13 q^3 + 26q^4 + 45 q^5 + 68 q^6 + 87q^7 + 100 q^8 + 107 q^9 + 111 q^{10} \\&\quad+ 112 q^{11} + 113 q^{12} + 113 q^{13} + 113 q^{14} + 112 q^{15} + 111q^{16} +107 q^{17} + 101 q^{18} \\&\quad+ 91 q^{19} + 78 q^{20} + \cdots + q^{26}. 
\end{align*}

Recall that $M_{5,1}$ is a smooth projective variety \cite{lep93}. In particular, the cohomology ring $H^*(M_{5,1}, \mathbb{Q})$ satisfies Poincaré duality. Thus once the top dimension is reduced to one, we can use the Poincaré pairing to find the remaining relations. This completely determines the ring structure.

\smallskip

The generators for $M_{5,1}$ are given by Theorem \ref{thm: tautgenerated}. There are in total 34 relations as follows. The cross signs indicate three relations that are \textit{not} geometric. 

\medskip

\begin{center}
\begin{tabular}{||c c c c c c c c c||} 
 \hline
  degrees & $H^{10}$ & $H^{12}$ & $H^{14}$ & $H^{30}$ & $H^{34}$ & $H^{36}$ & $H^{38}$ & $H^{40}$\\ 
 \hline
 \# of rel. & 3 & 12 & 13 & $1$ & $1$ & $1^\times$ & $2^{\times}$ & $1$\\
 \hline
\end{tabular}
\end{center}

\medskip

\subsubsection{The moduli space $M_{5,2}$.} The Poincaré polynomial and ring generators of $M_{5,2}$ are the same as those of $M_{5,1}$. This time the geometric relations \textit{are} complete, namely $I^\geo_{5,2} = I_{5,2}$ and also $\FI^\geo_{5,2} = \FI_{5,2}$ on the stack level. There are in total 41 relations for the moduli space:

\medskip

\begin{center}
\begin{tabular}{||c c c c c c c c c c c c c||} 
 \hline
  degrees & $H^{10}$ & $H^{12}$ & $H^{14}$ & $H^{16}$ & $H^{18}$ & $H^{28}$ & $H^{30}$ & $H^{32}$ & $H^{34}$ & $H^{36}$ & $H^{38}$ & $H^{40}$\\ 
 \hline
 \# of rel. & 3 & 12 & 13 & 2 & 1 & 1 & $1$ & $1$ & 2 & $3$ & $1$ & $1$ \\
 \hline
\end{tabular}
\end{center}

\medskip

\begin{rmk}
    Our results on the cohomology of $M_{5,1}$ and $M_{5,2}$ give a different and explicit proof for the first non-trivial case of Theorem \ref{chi-dep}. Also as we discussed in Remark \ref{rmk: M52_M51}, one can ask whether $I^{\gmr,5,2}_{5,1}$ will produce the missing tautological relations for $M_{5,1}$.
\end{rmk}


\subsubsection{The moduli stack $\mathfrak{M}_{5,0}$.} \label{sec: M50}
Finally, we determine the cohomology ring of the stack $\mathfrak{M}_{5,0}$ up to cohomological degree 32. We did not complete this for a purely computational reason, as the current ring structure is already quite complicated, and it takes an exceedingly long time to compute Poincaré series or to reduce the relations. We hope this partial result would still be interesting and useful for future research. 

\smallskip

First, we list the first terms in the Poincaré series of $\mathfrak{M}_{5,0}$:
\begin{align*}
    E(\mathfrak{M}_{5,0}, q) & = 1 + 3 q + 9 q^2 + 22 q^3 + 49 q^4 + 98 q^5 + 180 q^6 + 302 q^7 + 
 476 q^8\\& + 713 q^9 + 1032 q^{10} + 1449 q^{11} + 1994 q^{12} + 
 2690 q^{13} + 3575 q^{14} \\&+ 4679 q^{15} + 6047 q^{16} + 7706 q^{17} + 
 9696 q^{18} + 12027 q^{19} + 14731 q^{20} + \cdots
\end{align*}

Our computation shows that the geometric relations are complete up to cohomological degree 32. The cohomology ring $H^*(\mathfrak{M}_{5,0})$ has 13 generators 
\[
c_0(2),\: c_1(1),\: c_2(0),\:
c_1(2),\:
c_2(1),\: c_3(0),\: c_2(2),\:
c_3(1),\:
c_4(0),\:
c_3(2),\:
c_4(1),\:
c_4(2),\:
c_5(1).
\]

There are in total 56 relations up to degree 32:

\medskip

\begin{center}
\begin{tabular}{||c c c c c c c c c c c c c c||} 
 \hline
  degrees & $H^8$ & $H^{10}$ & $H^{12}$ & $H^{14}$ & $H^{16}$ & $H^{18}$ & $H^{22}$ & $H^{24}$ & $H^{26}$ & $H^{28}$ & $H^{30}$ & $H^{32}$ & $\cdots$\\ 
 \hline
 \# of rel. & 1 & 3 & 8 & 11 & 5 & 1 & 2 & 2 & 2 & 4 & 7 & 10 & $\cdots$ \\
 \hline
\end{tabular}
\end{center}

\medskip

The next relations occur in degree 34, and there are 14 of them. To find these relations, one can start by applying the Virasoro operators to the existing ones.

\section{\texorpdfstring{$P=C$}{P=C} and other conjectures}
\label{sec: conjectures}
In this final section, we formulate the strengthened version of the $P=C$ conjecture and verify several predictions on the cohomology rings of the moduli spaces (mostly $M_{5,\chi}$) in Theorem \ref{thm: rings}. Again, it suffices to verify each of the conjectures for $0\leq \chi \leq {d}/{2}$.

\subsection{The $P=C$ conjecture.}
\label{sec: P=C}

\subsubsection{The perverse filtration}
\label{sec: perverse filtration}

We start by recalling the construction of the perverse filtration \cite[Lecture 5]{dC_pcmi}; see also \cite[Section 1.1]{KPS}. Let $f: X\to Y$ be a proper morphism between irreducible varieties. For simplicity, we assume that $X$ and $Y$ are both smooth and projective. The perverse truncation functors \cite{BBDG} filter the derived push-forward complex $Rf_* \mathbb{Q}_X$, thus inducing an increasing filtration
\[
P_0 H^*(X) \subset P_1 H^*(X) \subset \cdots \subset H^*(X, \mathbb{Q})
\]
called the perverse filtration. Intuitively, it measures the complexity of the fibers of the proper map $f: X\to Y$.

\smallskip

We recall the following useful properties of the perverse filtration:

\begin{enumerate}[label=(\roman*)]
    \item Let $\eta$ be an ample class on $Y$, and denote by $\xi:= f^* \eta$ its pull-back to $X$. Then the perverse filtration on $H^*(X, \mathbb{Q})$ is completely determined by the multiplication by $\xi$ operator; see \cite[Proposition 5.2.4]{dCM} and \cite[Proposition 1.1]{KPS} for the explicit formula.
    
    \smallskip

    \item Let $r$ be the \textit{defect of semismallness} of $f$:
    \[
    r:= \dim X\times_Y X -\dim X
    \]
    When $f$ is equidimensional, this is simply the relative dimension $\dim X - \dim Y$. Then the perverse filtration $P_\bullet H^*(X, \mathbb{Q})$ stabilizes in $2r$ steps, i.e. $P_{2r}H^*(X) = H^*(X)$.
    
    \smallskip

    \item The graded pieces $\mathrm{gr}^P_\bullet H^*(X)$ of the perverse filtration satisfy two Hard Lefschetz type symmetries \cite[Theorem 2.1.4]{dCM}. They arise respectively from the pull-back of an ample class on $Y$ and a relative ample class on $X$. 
\end{enumerate}

\smallskip

Consider the moduli space $M_{d,\chi}$ with $\gcd(d,\chi)=1$ and the Hilbert--Chow map
\[
h: M_{d,\chi} \longrightarrow |d \cdot H|.
\]
This is a proper map with generic fibers being abelian varieties. Its topology is encoded in the perverse filtration $P_\bullet H^*(M_{d,\chi}, \mathbb{Q})$. 
This is one side of the $P=C$ conjecture. 

\subsubsection{The Chern filtration}
\label{sec: chern filtration} We turn to the Chern side next. Recall that the (local) Chern filtration $C_\bullet H^{\leq 2d-4}(M_{d,\chi})$ was defined in \cite[Section 0.2]{KPS} for coprime $M_{d,\chi}$.
By definition, it involves only the normalized tautological \textit{generators} in Theorem \ref{generation} (i). In order to extend this to a filtration on the entire cohomology rings, it is natural to include all normalized tautological \textit{classes.} We thus define the (global) Chern filtration 
\begin{equation}\label{eqn: global Chern filtration}
    C_k H^{*}(M_{d,\chi}) \coloneqq \mathrm{span}\left\{ \prod_{i=1}^s c_{k_i}(j_i) \,\Bigg|\, \sum_{i=1}^s k_i \leq k \right\} \subset H^*(M_{d,\chi}, \mathbb{Q})
\end{equation}
where we impose no restriction on the cohomological degree, and $c_{k_i}(j_i)$ is allowed to be any normalized class.\footnote{This corrects the mismatch between perversity and the Chern degrees in \cite[Remark 2.10]{KPS}.} Equivalently, this is the smallest multiplicative filtration where each class $c_k(j)$ lies in the $k$-th step of the filtration.

\medskip

The definition above has the advantage of being explicit and computable, but apparently relies on the normalization in Section \ref{sec: normalized class}. Recall that the normalized classes are closely related to the weight zero descendent algebra via the identification $\BQ[c_0(2), c_2(0), c_1(2),\cdots]\simeq \BD_{\alpha,\inv}$, where $\alpha = (d, \chi)$ as usual. We now explain how the latter can be used to give a more canonical definition of the Chern filtration, without any choice of normalization. To motivate this, we note that the previous definition \eqref{eqn: global Chern filtration} can be viewed as being induced from a parallel Chern filtration $C_\bullet\BQ[c_0(2), c_2(0), c_1(2),\cdots]$ on the free polynomial ring, via the surjective map
$$\BQ[c_0(2), c_2(0), c_1(2),\cdots]\twoheadrightarrow H^*(M_{d,\chi}).$$
We can also define the Chern filtration $C_\bullet H^*(\FM_{d,\chi})$ for the stacks in the same way, using a surjective map from $C_\bullet\BQ[c_0(2),c_1(1), c_2(0), c_1(2),\cdots]$ for any $(d,\chi)$.

\begin{defn}\label{def: chern filtration on descendent}
The Chern filtration $\widetilde 
C_\bullet \BD_\alpha \subset \mathbb{D}_\alpha$ is the minimal multiplicative filtration such that 
$\ch_k(H^j)\in \widetilde C_{k-j+1}\,\BD_\alpha$ for any $k>0$ and $j=0,1,2$. 
The Chern filtration on the weight zero descendent algebra $\BD_{\alpha,\inv}$ is defined by restriction, i.e.
\[\widetilde C_\bullet  \BD_{\alpha, \inv}=\widetilde C_\bullet \BD_\alpha\cap \BD_{\alpha, \inv}\,.\]
\end{defn}

\begin{rmk}
    Since $\xi(\ch_k(H^j))=p_*\big(\ch_{k-j+2}(\CF)\cdot q^*H^j\big)$, the Chern filtration index $(k-j+1)$ is one less than the Chern character degree of the universal sheaf we take in the realization. The shift by one can be motivated by the codimension of the sheaves we parametrize.
\end{rmk}

\begin{rmk}\label{rmk: chern filtration twisting}
    The Chern filtrations are preserved under the isomorphism 
$$\widetilde C_\bullet\BD_{e^\rho\cdot\alpha}\xrightarrow{\bF_\rho}\widetilde C_\bullet\BD_{\alpha} 
    $$
    from Section \ref{sec: twist by P2}. Indeed, we have $\bF_\rho(\ch_k(H^j))=\ch_k(H^j\cdot e^\rho)\in \widetilde C_{k-j+1}\BD_\alpha$, and the multiplicativity shows the inclusion $\bF_\rho(\widetilde C_\bullet)\subseteq \widetilde C_\bullet$. The other inclusion follows from the same argument applied to $\bF_{-\rho}$. The same statement holds for the weight zero descendent algebra.  
\end{rmk}


\begin{defn}\label{def: chern filtration on cohomology}
We define the Chern filtration on the cohomology of the moduli stacks and spaces to be
$$\widetilde C_\bullet H^*(\FM_{d,\chi})\coloneqq \textup{Im}(\widetilde C_\bullet\BD_{\alpha}\overset{\xi}{\longrightarrow} H^*(\FM_{d,\chi})),\quad 
\widetilde C_\bullet H^*(M_{d,\chi})\coloneqq \textup{Im}(\widetilde C_\bullet\BD_{\alpha,\inv}\overset{\xi}{\longrightarrow} H^*(M_{d,\chi}))
$$
where $\gcd(d,\chi)=1$ is assumed for the moduli space. 
\end{defn}

The next proposition justifies the same choice of terminology for the above definitions. 



\begin{prop}
The two definitions of the Chern filtration match, i.e.
$$C_\bullet H^*(\FM_{d,\chi})=\widetilde C_\bullet H^*(\FM_{d,\chi}) 
\quad\textup{and}\quad 
C_\bullet H^*(M_{d,\chi})=\widetilde C_\bullet H^*(M_{d,\chi})$$
where $\gcd(d,\chi)=1$ is assumed for the moduli space. 
\end{prop}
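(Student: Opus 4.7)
The plan is to exploit the filtration-preserving properties of the twisting operators $\bF_\rho$ and $\eta_\delta$ that enter the definition of the normalized classes.

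First, I will show that both operators preserve the Chern filtration $\widetilde C_\bullet$ on the descendent algebra. For $\bF_\rho$ this is Remark \ref{rmk: chern filtration twisting}, and the same argument applies to its inverse $\bF_{-\rho}$. For $\eta_\delta=\sum_{j\geq 0}\tfrac{(-1)^j}{j!}\ch_1(\delta)^j \bR_{-1}^j$, I use that $\ch_1(\delta)=\tfrac{1}{d}\ch_1(H)$ sits at Chern level $1$ and that the derivation $\bR_{-1}$ lowers the Chern level by $1$, because $\bR_{-1}(\ch_k(H^j))=\ch_{k-1}(H^j)$ and $\bR_{-1}$ satisfies the Leibniz rule; hence each summand $\ch_1(\delta)^j\bR_{-1}^j$ preserves the Chern level, and so does $\eta_\delta$.

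The inclusion $C_\bullet\subseteq \widetilde C_\bullet$ is then immediate. By multiplicativity of both filtrations it suffices to place each generator $c_k(j)$ in $\widetilde C_k H^*$, and this holds because $c_k(j)$ is by construction the image of $\ch_{k+j-1}(H^j)\in \widetilde C_k\BD_{\alpha\cdot e^\rho}$ under $\xi\circ\bF_\rho$ (stack case) or $\xi\circ\bF_\rho\circ\eta_\delta$ (moduli space case), and these compositions preserve $\widetilde C_\bullet$ by the preliminary step.

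For the reverse inclusion $\widetilde C_\bullet\subseteq C_\bullet$, take $D\in \widetilde C_k\BD_\alpha$ (respectively $\widetilde C_k\BD_{\alpha,\inv}$, regarded as a subspace of $\BD_\alpha$). On the stack the identity $\xi(D)=\xi_{\CF^{\mathrm{norm}}}(\bF_{-\rho}(D))$ holds with $\CF^{\mathrm{norm}}=\CF\otimes e^\rho$ and $\xi_{\CF^{\mathrm{norm}}}=\xi\circ\bF_\rho$; on the moduli space the analogous identity $\xi(D)=\xi_{\BF^{\mathrm{norm}}}(\bF_{-\rho}(D))$ holds using $\xi=\xi_{\BF_\delta}$ on $\BD_{\alpha,\inv}$ (Proposition \ref{prop: propertieseta}(iv)) and $\BF^{\mathrm{norm}}=\BF_\delta\otimes e^\rho$. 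Since $\bF_{-\rho}$ preserves $\widetilde C_\bullet$, the element $\bF_{-\rho}(D)\in \widetilde C_k\BD_{\alpha\cdot e^\rho}$ is a polynomial in generators $\ch_{a_i}(H^{b_i})$ with total Chern level $\sum_i(a_i-b_i+1)\leq k$. By Definition \ref{defprof: cij} we have $\xi_{\CF^{\mathrm{norm}}}(\ch_a(H^b))=c_{a-b+1}(b)$, and similarly for $\BF^{\mathrm{norm}}$; therefore $\xi(D)$ is expressed as a polynomial in classes $c_{a_i-b_i+1}(b_i)$ at total Chern degree $\leq k$, and hence lies in $C_k H^*$ by multiplicativity of $C_\bullet$. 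The whole argument is essentially a book-keeping exercise once the filtration-preserving properties of the twisting operators are in hand; the main (mild) obstacle is keeping careful track of the different descendent algebras $\BD_\alpha$ and $\BD_{\alpha\cdot e^\rho}$ and the compositions of $\bF_\rho$, $\eta_\delta$, and $\xi$ between them, particularly in the moduli space case where $\eta_\delta$ enters.
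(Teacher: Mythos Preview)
Your proof is correct and follows essentially the same approach as the paper's: both hinge on the observation that $\bF_\rho$ preserves $\widetilde C_\bullet$ (Remark \ref{rmk: chern filtration twisting}) and that $\eta_\delta$ does too because $\bR_{-1}$ lowers Chern level by one while multiplication by $\ch_1(\delta)$ raises it by one. The paper organizes this via a commutative square of formal algebras and reduces to showing $\eta(\widetilde C_\bullet)=\widetilde C_\bullet$, whereas you argue the two inclusions directly at the level of cohomology; the content is the same.
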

\begin{proof}

It suffices to show that the two filtrations match under the identifications of the formal algebras as in Section~\ref{sec: relations between classes}. Consider the commutative diagram of ring homomorphisms
\begin{center}
    \begin{tikzcd}
\BQ[c_0(2),c_1(1), c_2(0),c_1(2), \cdots] \arrow[r, "\sim"] \arrow[d, "{\eta}'"] & \BD_\alpha \arrow[d, "\eta"] \\
\BQ[c_0(2), c_2(0), c_1(2), \cdots] \arrow[r, "\sim"]                & \BD_{\alpha,\inv}. 
\end{tikzcd}
\end{center}
The left and the right columns are equipped with the $C_\bullet$ and $\widetilde C_\bullet$ filtrations, respectively. The top row isomorphism identifies the two Chern filtrations $C_\bullet$ and $\widetilde{C}_\bullet$ by Remark \ref{rmk: chern filtration twisting}. This proves the statement for $\FM_{d,\chi}$. 

Now we consider the moduli space $M_{d,\chi}$ which corresponds to the bottom row. On the left hand side, it is clear that ${\eta}'(C_\bullet)=C_\bullet$ since ${\eta}'$ simply sets $c_1(1)=0$. Therefore, we are left with showing that $\eta(\widetilde C_\bullet)=\widetilde C_\bullet$ on the right hand side. Since $\eta$ restricts to the identity on $\BD_{\alpha, \inv}$, we have $\eta(\widetilde C_\bullet)\supseteq \widetilde C_\bullet$. On the other hand, recall the formula
\[\eta=\sum_{j\geq 0}\frac{(-1)^j}{j!\cdot d^j}\ch_1(H)^j\bR_{-1}^j\,.\]
It is clear from the definition of $\bR_{-1}$ and the $\widetilde C_\bullet$ filtration that $\bR_{-1}(\widetilde C_\bullet)\subseteq \widetilde C_{\bullet-1}$ and $\ch_1(H)\widetilde C_{\bullet}\subseteq \widetilde C_{\bullet+1}$, since $\ch_1(H)\in \widetilde C_1$. We conclude that $\eta(\widetilde C_\bullet)\subseteq \widetilde C_\bullet$, which completes the proof.
\end{proof}

We expect the Chern filtration $C_\bullet H^*(\FM_{d,\chi})$ to play a role in a \textit{stacky} version of the $P=C$ conjecture; see Theorem \ref{thm: stacky P=C} for the numerical evidence.

\subsubsection{The conjecture and the result}

We are now ready to state the (strengthened) $P=C$ conjecture. 

\begin{conj}
\label{conj: global P=C}
    For coprime $d\geq 1$ and $\chi\in \mathbb{Z}$, we have
    \[
    P_\bullet H^*(M_{d,\chi}, \mathbb{Q}) = C_\bullet H^*(M_{d,\chi}, \mathbb{Q}).
    \]
\end{conj}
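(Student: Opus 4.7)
The plan is two-pronged: direct verification for small $d$ using the explicit ring presentations of Theorem \ref{thm: rings}, combined with a structural approach via the cohomological Hall algebra of Section \ref{sec: BPS_main} for the general case. The former is largely algorithmic and accounts for the corollary stated right after the conjecture (confirming $P=C$ up to $d=5$); the latter is the more speculative and substantive piece that would be needed for the conjecture in full generality.

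For the verification, given the presentation of $H^*(M_{d,\chi}, \mathbb{Q})$ as an explicit quotient $R/I$ of a polynomial ring in the normalized tautological classes $c_k(j)$, I would compute the Chern filtration $C_k H^j$ as the image of the $\mathbb{Q}$-span of monomials $\prod c_{k_i}(j_i)$ with $\sum k_i \leq k$, and the perverse filtration $P_k H^j$ using the characterization from \cite[Proposition 1.1]{KPS}, which expresses $P_\bullet$ entirely in terms of kernels and images of iterated cup product with $\xi = c_0(2)$. On a fixed monomial basis each relevant linear map becomes a matrix, and equality of the two filtrations reduces to a linear algebra check implementable in \textsc{Macaulay2}. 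Combined with the symmetries $\psi_1, \psi_2$ of Proposition \ref{prop: c_kj_property} (ii), this verifies the conjecture for all $(d,\chi)$ covered by Theorem \ref{thm: rings}.

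For the general case, the idea is to bootstrap from local $P=C$ of \cite{KPS} by first establishing multiplicativity of the perverse filtration. Given multiplicativity, the inclusion $C_\bullet \subseteq P_\bullet$ would be immediate from $c_k(j) \in P_k$ for the generators in degrees $\leq 2d-4$, together with the fact that the remaining normalized classes in higher cohomological degrees are obtained from these generators via the generalized Mumford relations of Section \ref{sec: gmr}. The reverse inclusion $P_\bullet \subseteq C_\bullet$ would then follow by a graded-dimension count, reducing to a statement on BPS integrality: the Poincar\'e polynomial of $\mathrm{gr}^P_k H^j$ must match that of $\mathrm{gr}^C_k H^j$ after accounting for $\chi$-independence of intersection cohomology, cf. \eqref{eqn: MS_chi_indep}. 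Following the strategy of the recent proof of $P=W$ via CoHA and BPS sheaves (\cite{MS_P=W, HMMS, MSY}), one expects the perverse filtration to be identified with a CoHA-induced filtration on $H^*(\FM_\mu)$, while the Chern filtration should correspond to a natural Chern-degree grading on descendents arising from Definition \ref{def: chern filtration on descendent}.

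The main obstacle is precisely the multiplicativity of the perverse filtration. In the Higgs-bundle setting this follows from a multiplicative splitting of $P_\bullet$, but no such splitting exists for $M_{d,\chi}$ --- it fails already at $d=3$, as remarked in Section \ref{sec: P=C}. Thus a new mechanism is required: a natural candidate is to exploit the Virasoro module structure from Theorem \ref{thm: Virasoro_intro} as a control device, since the normalized operators $\mathsf{R}_n^\delta$ interact well with the Chern filtration (they lower Chern degree by at most one), and should be shown to interact analogously with $P_\bullet$ via the cup-product characterization by $c_0(2) = \mathrm{ch}_1(H^2)$. Combining this with a BPS-sheaf analysis of the Hilbert--Chow fibration is, in my view, the most promising path forward; the small-$d$ data produced by Theorem \ref{thm: rings} should serve as essential guidance in pinning down the precise mechanism.
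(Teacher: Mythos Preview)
The statement is a \emph{conjecture}, not a theorem the paper proves; the paper establishes it only for $d\leq 5$ (Theorem~\ref{thm: global P=C}). Your verification paragraph matches the paper's approach there essentially verbatim: compute $P_\bullet$ from the ring presentation via the cup-product characterization by $c_0(2)$ (the paper cites \cite[Corollary 1.3]{KPS}, equivalent to the proposition you cite), compute $C_\bullet$ directly from the monomial spans, and compare. So for the part the paper actually does, your proposal is correct and aligned with it.

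Your second part, the structural approach for general $d$, goes beyond anything in the paper, which leaves the general case entirely open. Two concrete gaps are worth flagging. First, you propose to \emph{use} multiplicativity of $P_\bullet$ as an input, but the paper presents multiplicativity as a \emph{consequence} of $P=C$ (Remark~\ref{rmk: P=C}(i)), not something known independently; you acknowledge this is the main obstacle, but the Virasoro suggestion you offer is not developed enough to be a plan --- there is no evidence in the paper (or in the cited literature) that $\bR_n^\delta$ respects the perverse filtration. Second, even granting multiplicativity, your argument for $C_\bullet\subseteq P_\bullet$ has a hole: expressing a higher class $c_k(j)$ as a polynomial in the $3d-7$ generators via $\gmr$ only yields $c_k(j)\in P_m$ where $m$ is the Chern degree of that polynomial, and there is no a priori reason this $m$ is $\leq k$. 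Controlling the Chern degree of those $\gmr$ expressions is itself a nontrivial piece of the conjecture, not a free byproduct of Section~\ref{sec: gmr}.
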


It was shown in \cite[Theorem 0.6]{MSY} that the Chern filtration is contained in the perverse filtration in small cohomological degrees, i.e.
\[P_\bullet H^{\leq 2d-4}(M_{d,\chi}) \supset C_\bullet H^{\leq 2d-4}(M_{d,\chi}) \,.\]
In addition, the equality between the two filtrations up to cohomological degree 4 is shown in \cite{Yuan4}. 

\begin{rmk}
\label{rmk: P=C}
Before presenting more evidence, we discuss a number of structural implications of Conjecture \ref{conj: global P=C} on both sides of the equality.

\begin{enumerate}[label=(\roman*)]
    \item The perverse filtration is in general \textit{not} multiplicative, meaning that we do not necessarily have
    \[
    P_k H^*(X, \mathbb{Q}) \times P_{k'}H^{*}(X, \mathbb{Q}) \overset{\cup}{\longrightarrow} P_{k+k'} H^{*}(X, \mathbb{Q}).
    \]
    See \cite[Exercise 5.6.8]{dC_pcmi} for a counterexample, and \cite[Theorem 0.6]{dCMS} on how the $P=W$ conjecture can be reduced to a multiplicativity statement. On the other hand, the (global) Chern filtration is multiplicative by definition. Thus Conjecture \ref{conj: global P=C} would imply that the perverse filtration on $M_{d,\chi}$ is multiplicative as well. 
    
    \smallskip

    \item The Hilbert--Chow morphism is equidimensional, and thus $P_\bullet H^*(M_{d,\chi})$ stabilizes in $2g$ steps, where $g=\frac{1}{2}(d-1)(d-2)$ is the relative dimension. This implies that the Chern filtration should also stabilize, i.e. $C_{2g}H^*(M_{d,\chi}) = H^*(M_{d,\chi})$. Explicitly, every monomial in the normalized classes $c_k(j)$, possibly with Chern degrees $> 2g$, should be expressed by those with Chern degrees $\leq 2g$ in the cohomology ring $H^*(M_{d,\chi}, \mathbb{Q})$.

    \smallskip

    \item According to \cite[Theorem 0.4]{MS_chi-indep}, the perverse numbers (i.e. dimensions of graded pieces of the perverse filtration) of $M_{d,\chi}$ are independent on the Euler characteristic $\chi$, and satisfy two Hard Lefschetz type symmetries \cite[Theorem 2.1.4]{dCM}. Thus the graded dimensions of the Chern filtration on $M_{d,\chi}$ should also be independent on $\chi$ and satisfy these symmetries. This prediction is particularly non-trivial given Theorem \ref{chi-dep}, which states that the cohomology ring structure of $M_{d,\chi}$ is \textit{as $\chi$-dependent as possible.}

    \smallskip

    \item Also from the equidimensionality of the Hilbert--Chow map and the two Hard Lefschetz symmetries, the perverse filtration $P_\bullet H^*(M_{d,\chi})$ is not `full' when the cohomological degree is large. For example, it only has the highest perversity piece 
    \[
    0 = P_{2g-1}H^{\mathrm{top}}(M_{d,\chi}) \subset P_{2g} H^{\mathrm{top}}(M_{d,\chi}) = H^{\mathrm{top}}(M_{d,\chi}, \mathbb{Q})
    \]
    in top cohomological degree. More generally, we have 
    \[
    P_{2\ell-1} H^{2(b+\ell)}(M_{d,\chi}, \mathbb{Q})=0 \quad \textrm{for \,}\ell\geq 1.
    \]
    Conjecture \ref{conj: global P=C} thus predicts that monomials in $c_k(j)$ that lie in $C_{2\ell-1} H^{2(b+\ell)}(M_{d,\chi})$ should also vanish, providing a wealth of non-trivial relations in the cohomology ring $H^*(M_{d,\chi}, \mathbb{Q})$. For example, the base relations (cf. Proposition \ref{prop: base relations}) are a special case of this prediction since
    \[\prod_{i=1}^{b+1}c_{k_i}(2)\in C_k H^{2b+2k+2}(M_{d,\chi})\subseteq C_{2(k+1)-1}H^{2(b+k+1)}(M_{d,\chi})\]
    where $k=\sum_{i=1}^{b+1} k_i$. It remains to be studied how these (conjectural) relations determine the ring structure.

\end{enumerate}

\end{rmk}

\smallskip

Here is our main evidence for the $P=C$ conjecture:

\begin{thm}
    \label{thm: global P=C}
    Conjecture \ref{conj: global P=C} holds for all moduli spaces $M_{d,\chi}$ with $d\leq 5$ and $\gcd(d,\chi)=1$. In particular, the perverse filtrations on these moduli spaces are multiplicative.
\end{thm}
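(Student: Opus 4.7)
The plan is to leverage the explicit presentations of the cohomology rings $H^*(M_{d,\chi})$ obtained in Theorem~\ref{thm: rings} and verify $P_\bullet = C_\bullet$ by direct computation in each case $d \leq 5$ with $\gcd(d,\chi)=1$. Both filtrations are defined on the entire cohomology ring and both can be made algorithmic given an explicit presentation by generators and relations, so the problem reduces to a finite-dimensional linear algebra check in each degree.

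The Chern side is immediate from the definition: $C_k H^*(M_{d,\chi})$ is spanned by monomials $\prod_i c_{k_i}(j_i)$ with $\sum_i k_i \leq k$, and since the ring presentations from Section~\ref{sec: rings} are in terms of the normalized generators $c_k(j)$, enumerating such monomials in Macaulay2 and reducing modulo the ideal of relations yields the graded dimensions $\dim \mathrm{gr}^C_k H^j(M_{d,\chi})$ directly. For the perverse side, I would use the fact that the class $c_0(2)$ is the pullback of the hyperplane class on $|d \cdot H|$ via the Hilbert--Chow map $h$, and then apply the characterization of Section~\ref{sec: perverse filtration}(i): the perverse filtration associated to $h$ is completely determined by the multiplication-by-$c_0(2)$ operator on $H^*(M_{d,\chi})$, via the explicit kernel-of-powers formula in \cite[Proposition~1.1]{KPS} (ultimately from \cite[Proposition~5.2.4]{dCM}). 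Since $h$ is equidimensional with relative dimension $g = \tfrac{1}{2}(d-1)(d-2)$, we have $P_{2g}H^*(M_{d,\chi}) = H^*(M_{d,\chi})$, so only finitely many steps need to be computed.

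Having both filtrations as concrete subspaces in each cohomological degree, the equality $P_\bullet = C_\bullet$ can be verified either by showing containment in one direction combined with matching graded dimensions, or directly checking the two flags coincide. The known inclusion $C_\bullet \subseteq P_\bullet$ in low cohomological degrees from \cite[Theorem 0.6]{MSY} provides a useful partial check. The multiplicativity of $P_\bullet$ is then a free corollary: by construction $C_\bullet$ is the smallest multiplicative filtration containing the classes $c_k(j)$ in degree $k$, so once $P_\bullet = C_\bullet$ is established, multiplicativity of the perverse filtration is automatic.

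The main obstacle is twofold. First, implementing the perverse filtration via the cup product characterization requires carefully computing iterated kernels of powers of multiplication by $c_0(2)$, and the rings for $d=5$ are large ($\dim H^*(M_{5,\chi}) = 1695$), so the computation must be organized degree by degree to remain tractable. Second, and more conceptually delicate, is the pair $M_{5,1}$ and $M_{5,2}$: by Theorem~\ref{chi-dep} these have different ring structures despite identical Poincar\'e polynomials, so the Chern filtration on each side must be computed separately in its own ring, and one must verify that $P_\bullet = C_\bullet$ holds in both cases. This is a genuine test of the conjecture since a priori the two filtrations could encode the $\chi$-dependence in different ways; the fact that the perverse numbers are $\chi$-independent (by \cite[Theorem 0.4]{MS_chi-indep}) forces the graded dimensions of $C_\bullet$ to also be $\chi$-independent, which in itself is a non-trivial consistency check on the explicit presentations before $P = C$ is even verified.
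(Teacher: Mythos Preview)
Your proposal is correct and takes essentially the same approach as the paper: both compute the perverse filtration from the explicit ring presentations via the cup-product-with-$c_0(2)$ characterization (the paper cites \cite[Corollary 1.3]{KPS}), compute the Chern filtration directly from the monomial definition, and verify equality by direct computation. Your write-up is more detailed about the algorithmic implementation and the $M_{5,1}$/$M_{5,2}$ subtlety, but the underlying strategy is identical.
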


\begin{proof}
    According to \cite[Corollary 1.3]{KPS}, the perverse filtration on $M_{d,\chi}$ is characterized by the cup product with the class $\xi = c_0(2)$. Thus the cohomology ring structure completely determines the perverse filtration. The theorem follows by straightforward verifications of the $P=C$ equality as explained in \cite[Section 2.4]{KPS}.
\end{proof}

We also record the following result towards a stacky $P=C$ conjecture, without going into details on how the perverse filtration on the stacks \cite{Dav} is defined. The proof is again a direct verification in \textsc{Macaulay2}.\footnote{See Remark \ref{rmk: stacky perverse numbers} on how the stacky perverse numbers can be computed via BPS integrality.}

\begin{thm}
    \label{thm: stacky P=C}
    For the moduli stacks $\FM_{2,0},\, \FM_{3,0},\, \FM_{4,2},\, \FM_{4,0}$, the graded dimensions of the perverse filtration and the Chern filtration coincide in all cohomological degrees.
\end{thm}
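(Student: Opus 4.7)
The plan is to verify the equality of graded dimensions degree-by-degree for each of the four stacks, using the explicit presentations of $H^*(\FM_{d,\chi})$ obtained in Theorem \ref{thm: rings} on the Chern side, and BPS integrality on the perverse side. Since all the rings involved are finite-dimensional with fully known ideals of relations, everything reduces to a routine but non-trivial Macaulay2 verification.

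For the Chern side, note that each normalized generator $c_k(j)$ carries a canonical bidegree $(2(k+j-1),\, k)$ recording its cohomological degree and its Chern degree. By Definition \ref{def: chern filtration on cohomology}, the Chern filtration on $H^*(\FM_{d,\chi})$ is the image of $\widetilde C_\bullet \BD_\alpha$ under $\xi$; concretely, if we present $H^*(\FM_{d,\chi}) = R/I$ with $R$ the bigraded polynomial ring in the generators listed in Section \ref{sec: computations of the rings}, then $\mathrm{gr}^C_k H^{2n}(\FM_{d,\chi})$ is computed as the $(2n,k)$-bigraded piece of $R/(I + R_{<k})$ modulo the image of the next Chern level. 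This is a direct bigraded Hilbert series computation.

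For the perverse side, the stacky perverse Poincaré polynomials are extracted from the BPS integrality formula \eqref{eqn: BPS for Poincare series}, refined to track perverse degrees. Concretely, the perverse polynomial of each summand $H^*(M_{d',\chi'}, \IC^\vir)$ appearing in the plethystic exponential is known in our range: for $d' \leq 4$ with $\gcd(d',\chi')=1$ the moduli space is smooth with known cohomology ring from Theorem \ref{thm: rings}, and the perverse filtration is determined by cup product with $c_0(2)$ via \cite[Corollary 1.3]{KPS}; for non-coprime $(d',\chi')$ Maulik--Shen's $\chi$-independence \eqref{eqn: MS_chi_indep} together with the perverse filtration interpretation from \cite[Section 1.3]{Dav} reduces the problem to the coprime case. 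The $B\BG_m$-factor $\mathbb{L}^{1/2}/(1-\mathbb{L})$ contributes trivially to perversity (it records only cohomological shifts from stacky automorphisms). Assembling these contributions with their perverse degrees via the refined plethystic exponential yields explicit bivariate perverse Poincaré series for $\FM_{2,0}$, $\FM_{3,0}$, $\FM_{4,2}$, $\FM_{4,0}$.

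The final step is to compare the two bivariate series for each of the four stacks and check that they coincide. The main obstacle is bookkeeping on the perverse side: one must carefully track how perversity is preserved under the plethystic exponential and the Tate twist shifts inside $\widehat K(\MHM(\Lambda_\mu))$, and ensure the indexing conventions agree with those of the Chern filtration (in particular the shift by codimension absorbed into the definition of $\widetilde C_\bullet$). Once this is set up correctly, the actual verification is a finite computation that Macaulay2 can perform directly, and the matching of the two bivariate polynomials provides strong evidence for a stacky refinement of Conjecture \ref{conj: global P=C}.
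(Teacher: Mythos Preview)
Your proposal is correct and follows essentially the same approach as the paper: the paper's proof consists of a single sentence (``a direct verification in \textsc{Macaulay2}'') together with a footnote pointing to Remark~\ref{rmk: stacky perverse numbers} for the computation of the stacky perverse numbers via the refined BPS integrality formula \eqref{eq: perversebpsintegrality}, and your outline unpacks exactly this two-sided computation. One small correction: the claim that the $B\BG_m$-factor ``contributes trivially to perversity'' is not accurate in Davison's stacky perverse filtration --- the relevant factor in \eqref{eq: perversebpsintegrality} is $-q/(1-qt)$, which mixes the perverse variable $q$ and the complementary variable $t$, so the automorphism direction does shift perversity; once you use this formula verbatim the bookkeeping is straightforward.
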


The same numerical match holds for $\FM_{5,0}$ up to cohomological degree $32$ where we are able to determine the ring structure, cf. Section \ref{sec: M50}.

\subsection{Refined GV/PT/Nekrasov correspondence}\label{sec: GV/PT}
As we have discussed in the overview, one major motivation to study $M_{d,\chi}$, and in particular the perverse filtration $P_\bullet H^*(M_{d,\chi})$, arises from its connection to the refined Gopakumar--Vafa invariants of local $\mathbb{P}^2$, which we write simply $K_{\BP^2}$. We organize the numerical information of the perverse filtration in the Laurent polynomial in two variables
\[\Omega_d(q,t)=(-1)^{d^2+1}q^{-g}t^{-b}\sum_{i,j\geq 0}\dim \mathrm{gr}_i^P \mathit{IH}^{i+j}(M_{d,\chi})q^i t^j\,,\]
where $b=\frac{d(d+3)}{2}$ and $g=\frac{(d-1)(d-2)}{2}$ are the dimension of the base and fibers of the Hilbert--Chow map, respectively. The right hand side does not depend on $\chi$ by the isomorphism in \eqref{eqn: MS_chi_indep}. The two hard Lefschetz symmetries imply that $\Omega_d$ is invariant under the two symmetries $q\leftrightarrow q^{-1}$ and $t\leftrightarrow t^{-1}$.

We define the refined Gopakumar--Vafa invariants $N_d^{j_L, j_R}$ for $d\geq 1$ and $j_L, j_R\in \frac{1}{2}\BZ_{\geq 0}$ to be the unique integers satisfying the equality
\begin{equation}\label{eq: refinedbps}
    \Omega_d(q, t)=\sum_{j_L, j_R}(-1)^{2j_L+2j_R}N_{d}^{j_L, j_R}\chi_{j_L}(q)\chi_{j_R}(t)\end{equation}
where 
\[ \chi_j(x)= \frac{x^{2j+1}-x^{-2j-1}}{x-x^{-1}}=x^{-2j}+x^{-2j+2}+\ldots+x^{2j-2}+x^{2j}\,.\]
The numbers $N_d^{j_L, j_R}$ are supposed to be the numbers of BPS particles with charge $d$ and quantum spin numbers $(j_L, j_R)$ in a certain 5D gauge theory; see for example the discussion in the introductions of \cite{IKV} or \cite{CKK}. These numbers are a refinement of the standard Gopakumar--Vafa invariants, which conjecturally control the enumerative geometry (Gromov--Witten or Pandharipande--Thomas invariants) of $K_{\BP^2}$. Indeed, the definition of Maulik--Toda \cite{MT} of the (unrefined) Gopakumar--Vafa invariants $n_{g,d}$ of $K_{\BP^2}$ is 
\[(-1)^{d^2+1}\Omega_d(q, -1)=\Omega_d(-q,1)=\sum_{g\geq 0}n_{g,d}\big(q^{1/2}+q^{-1/2}\big)^{2g}\,.\]

Choi, Katz and Klemm propose in \cite{CKK} to use refined stable pairs to define $N_d^{j_L, j_R}$. Comparing their proposal to our definition of $N_d^{j_L, j_R}$ leads to a refined GV/PT correspondence, which we now formulate precisely.

The (unrefined) Pandharipande--Thomas invariants are integers $\PT_{d, n}$ which (virtually) count stable pairs on $K_{\BP^2}$ with discrete invariants $(d, n)$. Pandharipande--Thomas invariants admits a refinement\footnote{See for example \cite{ON} or \cite{CKK}. Our formal variable $t$ corresponds to $\kappa^{1/2}$ in \cite{ON} or to $\BL^{1/2}$ in \cite{CKK}.}
\[\PT_{d,n}^{\mathsf{ref}}\in \BZ[t^{\pm 1}]\]
symmetric under $t\leftrightarrow t^{-1}$ which specializes to $\PT_{d, n}$ when $t=1$. The refined PT vertex is a generating series in formal variables $q, t, Q$:
\[Z^{\mathsf{ref}}_{\mathsf{PT}}=\sum_{d\geq 0}Q^d\sum_{n\in \BZ}\PT_{d,n}^{\mathsf{ref}}\cdot (-q)^n\,.\]
The refined PT vertex $Z^{\mathsf{ref}}_{\mathsf{PT}}$ can be combinatorially calculated (at least up to $Q^{14}$ in \cite{KPS}) via the refined topological vertex \cite{IKV, IK}. Alternatively, it can be read from the Nekrasov partition function of rank 2 instantons, as explained in \cite[Section 3.3]{KPS}, which also admits a combinatorial description. The refined GV/PT correspondence is the following expression for the refined topological vertex in terms of $\Omega_d$:
\begin{equation}
    \label{eq: gvpt}Z^{\mathsf{ref}}_{\mathsf{PT}}=\mathrm{PE}\left(-\frac{q}{(1-qt)(1-q/t)}\sum_{d\geq 1}\Omega_d(q,t)Q^d\right)\,.
\end{equation}
This conjecture is an equivalent formulation of \cite[Conjecture 3.1]{KPS}. The specialization $t=1$ of the conjecture is precisely the (also conjectural) unrefined GV/PT correspondence for $K_{\BP^2}$. On the other hand, the specialization $q=t$, known as the Nekrasov--Shatashvili limit (which forgets the information of the perverse filtration, namely $\Omega_d(q,q)=\overline{\mathit{IE}}(M_{d,\chi},q))$, was shown in \cite{BouTak} to be related to the relative Gromov--Witten theory of the pair $(\BP^2, E)$ where $E\subseteq \BP^2$ is a smooth elliptic curve.

\begin{rmk}
    The numbers $N_d^{j_L, j_R}$ are defined in \cite[(8.1)]{CKK} from $Z^{\mathsf{ref}}_{\mathsf{PT}}$; see Table 1 in loc. cit. for these numbers up to $d=7$. Conjecture \eqref{eq: gvpt} is the statement that our definition \eqref{eq: refinedbps} matches with theirs.  
\end{rmk}

\begin{rmk}

\label{rmk: stacky perverse numbers}

    It is quite interesting to notice the similarity between \eqref{eq: gvpt} and the BPS integrality formulas, for instance \eqref{eqn: BPS for Poincare series}. Indeed, the $Q^d$ coefficient of
    \begin{equation}
        \label{eq: perversebpsintegrality}
    \mathrm{PE}\left(-\frac{q}{1-qt}\sum_{d\geq 1}\Omega_d(q,t)Q^d\right)\end{equation}
    is the generating function keeping track of the numbers $\dim \mathrm{gr}^P_i H^{i+j}(\FM_{d, 0})$ where $P_\bullet$ is a stacky perverse filtration as in \cite{Dav}. We do not have any good explanation for the apparent similarity between \eqref{eq: gvpt} and \eqref{eq: perversebpsintegrality}. 
\end{rmk}

Our calculation of the cohomology rings allow us to compute $\Omega_d$ for $d\leq 5$, and thus check the refined GV/PT conjecture up to $d=5$. Previously, this was known only up to $d=4$ \cite[Theorem 0.7]{KPS}, using the calculation of the cohomology ring in \cite{CM}. 

\begin{thm}
\label{thm: GV/GW/Nekrasov}
The refined GV/PT correspondence \eqref{eq: gvpt} holds up to order $Q^5$.
\end{thm}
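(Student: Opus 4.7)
The plan is to reduce everything to a finite computation that leverages Theorem~\ref{thm: rings}. For $d\leq 4$ the refined GV/PT correspondence is already known by \cite{KPS}, so the only new input is the $Q^5$-coefficient, for which we need to determine $\Omega_5(q,t)$ and then match it against the (already computable) $Q^5$-coefficient of the refined PT vertex.

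First, because the functions $\Omega_d(q,t)$ are $\chi$-independent by the Maulik--Shen isomorphism \eqref{eqn: MS_chi_indep} together with its compatibility with the perverse filtration, I may pick either $M_{5,1}$ or $M_{5,2}$ to compute $\Omega_5$. Theorem~\ref{thm: rings} gives an explicit presentation of $H^*(M_{5,\chi},\BQ)$ for $\chi \in \{1,2\}$; since these spaces are smooth and projective, $\mathit{IH}^*=H^*$. By \cite[Corollary~1.3]{KPS} the perverse filtration on $H^*(M_{5,\chi})$ is determined purely from multiplication by the class $\xi=c_0(2)$, via the explicit combinatorial recipe involving kernels and images of powers of $\cup\xi$. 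Having the ring, this extracts every graded dimension $\dim \mathrm{gr}^P_i H^{i+j}(M_{5,\chi})$, and assembling them produces the Laurent polynomial $\Omega_5(q,t)$. Computing with both $\chi=1$ and $\chi=2$ yields the same answer, which serves as a strong internal consistency check in view of Theorem~\ref{chi-dep} (the two rings are genuinely non-isomorphic).

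Next, I expand the right-hand side of \eqref{eq: gvpt} as a power series in $Q$ using the known $\Omega_1,\Omega_2,\Omega_3,\Omega_4$ from \cite{KPS} together with the newly computed $\Omega_5$, truncated at order $Q^5$. The plethystic exponential expands in closed form up to this order, so the resulting $Q^5$-coefficient is a concrete Laurent polynomial in $q,t$. On the PT side, the series $Z^{\mathsf{ref}}_{\mathsf{PT}}$ up to $Q^5$ can be read off either from the refined topological vertex formalism of \cite{IKV, IK}, or equivalently from the rank-$2$ K-theoretic Nekrasov partition function as described in \cite[Section~3.3]{KPS}; both are purely combinatorial and the $Q^{\leq 5}$ coefficients are well within reach. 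Matching the two sides term by term finishes the proof.

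The main obstacle is the computation of $\Omega_5(q,t)$: without the explicit ring structures of $M_{5,1}$ (or $M_{5,2}$) this quantity is inaccessible, which is precisely why the result was previously known only up to $d=4$. The hardest step is therefore not a new conceptual ingredient but the ring computation itself, which requires both the completeness of geometric relations for $M_{5,2}$ and the Poincar\'e-duality completion for $M_{5,1}$ described in Section~\ref{sec: M51}. Once these rings are in hand, the perverse filtration is algorithmic, and the match at $Q^5$ can be checked directly in \textsc{Macaulay2} against the combinatorially computed refined PT vertex.
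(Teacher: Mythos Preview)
Your proposal is correct and follows essentially the same approach as the paper: the paper does not give a separate proof of this theorem but explains just before the statement that the calculation of the cohomology rings in Theorem~\ref{thm: rings} allows one to compute $\Omega_5$ (via the characterization of the perverse filtration by cup product with $c_0(2)$ from \cite{KPS}) and then check the correspondence against the combinatorially computable refined PT vertex. Your extra consistency check using both $M_{5,1}$ and $M_{5,2}$ is a nice addition but not required.
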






\subsection{Euler characteristics of line bundles.} \label{sec: CHung-Moon}
We finish with a conjecture of Chung--Moon:


\begin{conj}[{\cite[Conjecture 8.2]{CM}}]
    \label{conj: CM} For $d\geq 1$ and coprime $\chi$, we have
    \begin{equation}
    \label{euler_char}
    \chi\big(M_{d, \chi},\, m\cdot c_0(2)\big) = \binom{m+3d-1}{m}.
    \end{equation}
    
\end{conj}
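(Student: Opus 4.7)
The plan is a direct computation via the Hirzebruch--Riemann--Roch formula, leveraging the explicit presentations of $H^*(M_{5,1},\BQ)$ and $H^*(M_{5,2},\BQ)$ obtained in Theorem \ref{thm: rings}. Since $c_0(2)=h^\ast H$ is the pull-back of the hyperplane class under the Hilbert--Chow morphism $h\colon M_{5,\chi}\to |5\cdot H|\cong \BP^{20}$, it is the first Chern class of an honest line bundle $\CL$ on $M_{5,\chi}$, and Hirzebruch--Riemann--Roch gives
\[
\chi\bigl(M_{5,\chi},\, m\cdot c_0(2)\bigr) \;=\; \int_{M_{5,\chi}} e^{m\cdot c_0(2)}\cdot \td(T_{M_{5,\chi}}).
\]

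The first task is to express $\td(T_{M_{5,\chi}})$ in terms of normalized tautological classes. Since $\gcd(5,\chi)=1$, a universal family $\BF$ exists on $M_{5,\chi}\times \BP^2$. Deformation theory together with $\Hom(F,F)=\BC$ for stable $F$ and $\Ext^2(F,F)=0$ (by Serre duality, as the slope of $F(-3)$ is strictly smaller than that of $F$, so Proposition \ref{prop: vanishinggmr} applies) yields the identity
\[
[T_{M_{5,\chi}}] \;=\; [\CO_{M_{5,\chi}}]-[\RHom_p(\BF,\BF)] \quad\text{in } K^0(M_{5,\chi}).
\]
Grothendieck--Riemann--Roch applied to $\RHom_p(\BF,\BF)$ expresses its Chern character in terms of descendents $\ch_i(\gamma)$, following the exact computation in Lemma \ref{lem: geomrealizationC}. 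The formulas of Section \ref{sec: relations between classes} then translate these descendents into polynomials in the normalized classes $c_k(j)$, and Newton-type identities convert $\ch(T_{M_{5,\chi}})$ into $\td(T_{M_{5,\chi}})$.

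At this point the integrand is an explicit polynomial in the normalized classes. Using the ring presentations, we reduce it modulo tautological relations, extract the top-degree component in $H^{52}(M_{5,\chi})$, and evaluate against the fundamental class. The output is a polynomial in $m$ of degree at most $20$ (since $c_0(2)^{21}=0$ as $c_0(2)$ is pulled back from $\BP^{20}$), which is compared with the expected expression $\binom{m+14}{14}$, a polynomial in $m$ of degree $14$. The agreement then forces in particular the vanishing of the top six coefficients on the left hand side. The verification must be carried out separately for $\chi=1$ and $\chi=2$, since Theorem \ref{chi-dep} shows that $H^*(M_{5,1})$ and $H^*(M_{5,2})$ are non-isomorphic as rings, even though Chung--Moon's prediction depends only on $d$.

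The main obstacle is computational rather than conceptual: the integrand lives in cohomological degree up to $52$ in rings with roughly a dozen generators and several dozen complicated relations, so the entire reduction and evaluation must be automated in \textsc{Macaulay2}. A conceptual explanation for why the Euler characteristic is a polynomial of degree $3d-1=14$ in $m$, rather than the generic bound $b=d(d+3)/2=20$, would be very interesting but does not seem to emerge from this direct approach.
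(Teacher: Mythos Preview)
Your approach is essentially the same as the paper's --- Hirzebruch--Riemann--Roch applied to the explicit ring presentations, with the tangent bundle written as $[\CO_M]-[\RHom_p(\BF,\BF)]$ --- but there is one genuine gap. You write ``extract the top-degree component in $H^{52}(M_{5,\chi})$ and evaluate against the fundamental class,'' but the ring presentation only tells you that $H^{52}$ is one-dimensional; it does \emph{not} tell you which element integrates to $1$. Without that normalization your computation yields the polynomial in $m$ only up to an unknown scalar, and you cannot compare it to $\binom{m+14}{14}$. The paper fills this gap by observing that $M_{5,\chi}$ has only $(p,p)$-classes (Bousseau), hence $\chi(M,\CO_M)=\sum_k(-1)^k h^{0,k}(M)=1$, so $\int_M\td(M)=1$; this pins down the point class explicitly in terms of the generators (in fact as a scalar multiple of $c_0(2)^8 c_2(2)^6$).

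A secondary difference: you propose verifying both $\chi=1$ and $\chi=2$ separately, whereas the paper first proves (Proposition~\ref{prop: CM_chi-indep}) that the Euler characteristic is $\chi$-independent --- via Maulik--Shen's $\chi$-independence of $Rh_*\BQ$ upgraded to mixed Hodge modules, which forces $Rh_*\CO_{M_{d,\chi}}$ to be $\chi$-independent --- and then computes only for $M_{5,1}$. Your redundant check is not wrong and gives an independent consistency test, but the paper's argument explains conceptually why the answer depends only on $d$.
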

Here we write $c_0(2)$ also to denote its associated line bundle, i.e. the pull-back of $\mathcal{O}(1)$ under the Hilbert--Chow map. The Euler characteristics in (\ref{euler_char}) are known as \textit{generalized $K$-theoretic Donaldson numbers,} and are relevant to the strange duality conjecture \cite[Section 8]{CM} for $\mathbb{P}^2$. Note that this is formulated only for $M_{d,1}$ (and verified for $d\leq 4$) in \cite{CM}. The next proposition justifies our extension of the conjecture to arbitrary $\chi$ coprime to $d$. 

\begin{prop}
\label{prop: CM_chi-indep}
    For a fixed $d$, the Euler characteristic in (\ref{euler_char}) is independent of the choice on $\chi$. In particular, Conjecture \ref{conj: CM} holds for $M_{d,1}$ if and only if it holds for all coprime $M_{d,\chi}$.
\end{prop}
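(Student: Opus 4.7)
The plan is to reduce the statement to a tautological integral via Hirzebruch--Riemann--Roch and then establish $\chi$-independence of such integrals. Since $c_0(2)$ is the pull-back of the hyperplane class along the Hilbert--Chow morphism $h \colon M_{d,\chi} \to \BP^b$, the projection formula gives
\begin{equation*}
\chi(M_{d,\chi},\, m \cdot c_0(2)) \;=\; \chi\big(\BP^b,\, Rh_* \CO_{M_{d,\chi}} \otimes \CO(m)\big),
\end{equation*}
so Proposition \ref{prop: CM_chi-indep} reduces to showing that the class $[Rh_*\CO_{M_{d,\chi}}] \in K_0(\BP^b)$ is independent of $\chi$ for $\gcd(d,\chi)=1$. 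Note that the two symmetries $\psi_1, \psi_2$ commute with $h$ and fix $c_0(2)$ by Proposition \ref{prop: c_kj_property}, but they only identify $\chi$ with $\pm\chi + kd$, so they are insufficient in general (e.g.\ the pair $\chi=1,\chi'=2$ for $d=5$).

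Equivalently, one can work directly on $M_{d,\chi}$: by HRR,
\begin{equation*}
\chi(M_{d,\chi},\, m \cdot c_0(2)) \;=\; \int_{M_{d,\chi}} e^{m \cdot c_0(2)} \cdot \td(T_{M_{d,\chi}}).
\end{equation*}
The tangent bundle admits a uniform $K$-theoretic presentation $[T_{M_{d,\chi}}] = [\CO_{M_{d,\chi}}] - [Rp_* \RHom(\CF,\CF)]$, valid because $\Hom(F,F) = \BC$ and $\Ext^2(F,F) = 0$ for stable $F$ (by Serre duality and the negativity of $K_{\BP^2}$). Consequently, $\td(T_{M_{d,\chi}})$ is given by a polynomial in the tautological classes $c_k(j)$ whose shape depends only on $d$, not on $\chi$. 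Thus there exists $D_{d,m} \in \BD_{\alpha, \inv}$, depending only on $d$ and $m$, such that $\chi(M_{d,\chi},\, m c_0(2)) = \int_{M_{d,\chi}} \xi(D_{d,m})$, and the question becomes the $\chi$-independence of $\int_{M_{d,\chi}} \xi(D)$ for every $D \in \BD_{\alpha,\inv}$.

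The hard part, and the main obstacle, is precisely this $\chi$-independence of tautological integrals. My approach would be to upgrade the Maulik--Shen $\chi$-independence \eqref{eqn: MS_chi_indep} from the Betti level to an isomorphism of complexes of mixed Hodge modules $Rh_*\BQ_{M_{d,\chi}}[\dim M_{d,\chi}] \in D^b(\MHM(\BP^b))$, compatible with the decomposition theorem; this refinement requires tracing through the support-type analysis of Maulik--Shen while keeping track of the Hodge filtration on each IC summand. Taking the associated graded of the Hodge filtration then recovers the class $[Rh_*\CO_{M_{d,\chi}}] \in K_0(\BP^b)$ from $\chi$-independent data. Granting this, the conclusion of Proposition \ref{prop: CM_chi-indep} is immediate, and as a consistency check in the case $d=5$ one can verify the claim directly by combining Theorem \ref{thm: rings} (the explicit cohomology rings of $M_{5,1}$ and $M_{5,2}$) with an HRR computation, bypassing the general machinery.
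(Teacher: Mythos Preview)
Your approach is essentially the paper's own proof: reduce to the $\chi$-independence of $Rh_*\CO_{M_{d,\chi}}$ on $\BP^b$, invoke the Maulik--Shen decomposition at the level of mixed Hodge modules, and extract the structure sheaf via $\mathrm{gr}^F_0\,\mathrm{DR}(-)$ together with its commutation with proper push-forward. The one place you hedge---``upgrading'' \eqref{eqn: MS_chi_indep} from Betti to $D^b(\MHM(\BP^b))$---is not additional work: the isomorphism $Rh_*\BQ_{M_{d,\chi}}[n]\simeq \bigoplus_i \IC(\wedge^i R^1\pi_*\BQ_\CC)[-i+g]$ already holds in $D^b(\MHM(\BP^b))$, either by \cite[Theorem 0.4]{MS_chi-indep} directly or because Saito's decomposition theorem lives in $\MHM$ and the summands are determined by $\chi$-independent local systems over the smooth locus; no re-running of the support analysis is required. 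Your tautological-integral reformulation via the uniform Todd class is a correct but unnecessary detour---the paper stays with $Rh_*\CO$ throughout.
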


\begin{proof}
    We first observe that the Euler characteristics in question are computed by the Hilbert polynomial of the derived push-forward $Rh_* \mathcal{O}_{M_{d,\chi}}$, where
    \[
    h: M_{d,\chi} \longrightarrow |d\cdot H|
    \]
    is the Hilbert--Chow map. Thus it suffices to show that $Rh_* \mathcal{O}_{M_{d,\chi}}$ is $\chi$-independent over the base $|d\cdot H| = \BP^b$. This follows from \cite[Theorem 0.4]{MS_chi-indep} combined with the theory of Hodge modules. Indeed, for any $\chi$ coprime to $d$, let $\pi: \mathcal{C} \to U$ be the universal smooth curve over $U\subset \mathbb{P}^b$; recall that \cite{MS_chi-indep} we have an isomorphism
    \begin{equation}\label{CM_chi-indep}
    Rh_* \mathbb{Q}_{M_{d,\chi}}[n] \simeq \bigoplus_{i=0}^{2g} \mathrm{IC}\left(\wedge^i R^1\pi_* \mathbb{Q}_\mathcal{C}\right)[-i+g]
    \end{equation}
    in the bounded derived category $D^b(\mathrm{MHM}(\BP^b))$, where $n = \dim M_{d,\chi}$, and
    \[
    \mathbb{Q}_{M_{d,\chi}}[n] = (\CO_{M_{d,\chi}}, F_\bullet), \quad 0 = F_{-1} \CO_{M_{d,\chi}} \subset F_0 \CO_{M_{d,\chi}} =\CO_{M_{d,\chi}} 
    \]
    is the trivial Hodge module. Consider its associated de Rham complex
    \[
    \mathrm{DR}(\mathcal{O}_{M_{d,\chi}}) = \left[\mathcal{O}_{M_{d,\chi}} \to  \Omega_{M_{d,\chi}}^1 \to \cdots \to  \Omega_{M_{d,\chi}}^n\right][n]
    \]
    with the induced Hodge filtration $F_\bullet \mathrm{DR}(\mathcal{O}_{M_{d,\chi}})$. A direct computation yields
    \[
    \mathrm{gr}^F_{-\ell} \mathrm{DR}(\mathcal{O}_{M_{d,\chi}}) = \Omega_{M_{d,\chi}}^\ell [n-\ell].
    \]
    Taking $\ell=0$, we have 
    \begin{align*}
        Rh_*\CO_{M_{d,\chi}} &= Rh_*\left(\mathrm{gr}^F_0\mathrm{DR}(\CO_{M_{d,\chi}})\right) = \mathrm{gr}^F_0 \mathrm{DR}\left(Rh_*\mathbb{Q}_{M_{d,\chi}}[n]\right)\\
        & \simeq \mathrm{gr}^F_0\mathrm{DR}\left(\bigoplus_{i=0}^{2g} \mathrm{IC}\left(\wedge^i R^1\pi_* \mathbb{Q}_\mathcal{C}\right)[-i+g]\right)
    \end{align*}
    is independent of $\chi$, where we used in the second equality the commutativity of $\mathrm{gr}_\bullet^F \mathrm{DR}(-)$ with derived push-forward \cite[Section 2.3.7]{Saito}. This finishes the proof.
\end{proof}

\begin{thm}
    \label{thm: CM}
    Conjecture \ref{conj: CM} holds for all coprime moduli spaces $M_{5,\chi}$.
\end{thm}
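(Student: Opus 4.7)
The plan is to combine Proposition \ref{prop: CM_chi-indep} with the Hirzebruch--Riemann--Roch formula and the explicit ring presentations of Theorem \ref{thm: rings}. First, apply Proposition \ref{prop: CM_chi-indep} to reduce the claim to the verification of \eqref{euler_char} on a single moduli space $M_{5,\chi_0}$ with $\chi_0$ coprime to $5$; the natural choice is $\chi_0=1$ (or equivalently $\chi_0=2$), since the full cohomology ring of $M_{5,1}$ is pinned down in Section \ref{sec: rings}.

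Next, Hirzebruch--Riemann--Roch rewrites the left hand side of \eqref{euler_char} as
\[
\chi(M_{5,\chi_0}, m\cdot c_0(2)) = \int_{M_{5,\chi_0}} e^{m\cdot c_0(2)}\cdot \mathrm{td}(T_{M_{5,\chi_0}}).
\]
Since $(5,\chi_0)$ is coprime, stability together with Serre duality give $\mathrm{Hom}(F,F)=\BC$ and $\mathrm{Ext}^2(F,F)=0$ for every stable $F$, so in $K$-theory
\[
[T_{M_{5,\chi_0}}] = [\CO_{M_{5,\chi_0}}] - [\RHom_p(\CF,\CF)].
\]
Applying Grothendieck--Riemann--Roch exactly as in Lemma \ref{lem: geomrealizationC} (with $\CF'=\CF$) produces $\mathrm{ch}(T_{M_{5,\chi_0}})$ as an explicit expression in the descendent symbols $\mathrm{ch}_i(H^j)$; translating into normalized generators via Section \ref{sec: relations between classes} and applying the standard universal formulas then yields $\mathrm{td}(T_{M_{5,\chi_0}})$ as a polynomial in the $c_k(j)$.

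The remaining work is an algorithmic calculation inside $H^*(M_{5,1})$: expand the product $e^{m\cdot c_0(2)}\cdot \mathrm{td}(T_{M_{5,1}})$ in Macaulay2, truncate at cohomological degree $2\dim M_{5,1}=52$, reduce modulo the ideal of tautological relations, and extract the coefficient of a chosen generator of the top cohomology to obtain a polynomial $P(m) \in \BQ[m]$. Both $P(m)$ and $\binom{m+14}{m}$ are polynomials in $m$, so the identity $P(m)=\binom{m+14}{m}$ can be checked symbolically once $P(m)$ is computed, or equivalently by matching on sufficiently many integer values of $m$.

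The main obstacle is computational rather than conceptual. Expanding the Todd class of a $26$-dimensional variety up to cohomological degree $52$ generates very large polynomials in the $c_k(j)$, and $H^*(M_{5,1})$ (of dimension $1695$) has an intricate presentation, so efficient normal-form reduction is essential. This extends the analogous computation of \cite{CM} for $M_{4,1}$, which established the $d=4$ case of Conjecture \ref{conj: CM}, to the substantially more involved degree-$5$ setting.
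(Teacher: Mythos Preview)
Your approach is essentially the paper's: reduce to $M_{5,1}$ via Proposition \ref{prop: CM_chi-indep}, express $\mathrm{td}(T_M)$ tautologically from $[T_M]=[\CO_M]-[\RHom_p(\CF,\CF)]$ via Grothendieck--Riemann--Roch, then compute $\int_M e^{m\,c_0(2)}\,\mathrm{td}(M)$ inside the explicit ring and compare with $\binom{m+14}{m}$ as polynomials in $m$.

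There is one step you elide that the paper addresses explicitly. You propose to ``extract the coefficient of a chosen generator of the top cohomology'' to obtain $P(m)$, but the ring presentation only tells you that $H^{52}(M_{5,1})$ is one-dimensional; it does not say which element is the point class, i.e.\ what the linear functional $\int_M$ is. Without this normalization your computation determines the Euler characteristic only up to an unknown nonzero scalar, so you cannot conclude $P(m)=\binom{m+14}{m}$, only that $P(m)=c\cdot\binom{m+14}{m}$ for some $c\in\BQ^\times$. The paper pins down the normalization by observing that $H^*(M_{5,1})$ is entirely of $(p,p)$-type \cite{Bou}, so $h^{0,k}(M)=0$ for $k>0$ and hence $\chi(M,\CO_M)=1$; combined with Hirzebruch--Riemann--Roch this forces $\int_M\mathrm{td}(M)=1$, which identifies the point class as an explicit tautological expression (the paper records it as $\tfrac{7812500000}{62868347}\,c_0(2)^8 c_2(2)^6$). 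Once you insert this normalization step, your argument coincides with the paper's.
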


\begin{proof}
    By Proposition \ref{prop: CM_chi-indep}, it suffices to consider $M= M_{5,1}$. We first determine the point class in $H^*(M, \mathbb{Q})$. Recall that $M_{d,\chi}$ has only $(p,p)$-classes \cite[Theorem 0.4.1]{Bou}. It follows that
    \begin{equation}
    \label{eq: thm5.7_1}
    \chi(M, \mathcal{O}_M) = \sum_{k\geq 0} (-1)^k\dim H^k(M, \mathcal{O}_M) = \sum_{k\geq 0} (-1)^k h^{0,k}(M) = 1.
    \end{equation}
    On the other hand, the Hirzebruch--Riemann--Roch theorem gives
    \begin{equation}
    \label{eq: thm5.7_2}
    \chi(M, \mathcal{O}_M) = \int_M \mathrm{ch}(\mathcal{O}_M)\cdot \mathrm{td}(M) = \int_M \mathrm{td}(M).
    \end{equation}
    The Todd class $\mathrm{td}(M)$ can be written in terms of the normalized tautological classes $c_k(j)$ using the expression of the tangent bundle
    \[
    \mathcal{T}_M = -Rq_* R\mathcal{H}\kern -1pt \mathit{om}(\mathbb{F}, \mathbb{F}) + \mathcal{O}_M \in K(M),
    \]
    see for example \cite[Section 2.3.3]{KPS}. Combining Equations (\ref{eq: thm5.7_1}) and (\ref{eq: thm5.7_2}) gives the point class in $H^*(M, \mathbb{Q})$ in terms of the normalized classes\footnote{Explicitly, it is represented by $\frac{7812500000}{62868347} c_0(2)^8 c_2(2)^6$.}. The proof of Theorem \ref{thm: CM} is then a straightforward verification in the cohomology ring, using the Hirzebruch--Riemann--Roch formula
    \[
    \chi\big(M, m\cdot c_0(2)\big) = \int_M \mathrm{ch}(m\cdot c_0(2))\cdot \mathrm{td}(M).
    \]
    Note that the LHS of (\ref{euler_char}) is a polynomial in the variable $m\in \mathbb{Z}$, so we only need to check the equality for a finite number of values.
\end{proof}

\appendix

\section{An identity in the quadratic descendent algebra}

In this appendix we state and prove an identity involving the interaction between the Virasoro derivations $\bR_n$ and a special class $C$ in (the completion of) the quadratic descendent algebra $\BD\otimes \BD$. This is a purely formal identity and it holds for the descendent algebra $\BD=\BD^X$ of an arbitrary target variety $X$ (in the main text $X$ is always $\BP^2$); for simplicity, we will assume that $X$ does not have odd cohomology and that $\dim(X)$ is even, but this can be adapted with minor changes in the signs. This identity is used in the proof that the Virasoro operators preserve the ideal of $\gmr$ and we hope that it can be of use in other contexts in the study of Virasoro constraints.

\smallskip

Define $C$ by
\[C=\exp\left(\sum_{\substack{a,b\geq 0\\(a,b)\neq (0,0)}}\sum_i (-1)^{b+d_i^L}(a+b-1)!\ch_a(\gamma_i^L)\otimes \ch_b(\gamma_i^R)\right)\]
where
\[\sum_i \gamma_i^L\otimes \gamma_i^R=\Delta_\ast \td(X)\]
is the Künneth decomposition of the push-forward of $\td(X)$ along the diagonal and $\gamma_i^L\in H^{2d_i^L}(X)$. Let $C_j\in \BD\otimes \BD$ be the degree $2j$ part of $C$. Recall that $C_j$ is a natural lift to the quadratic descendent algebra of the Chern class $c_j(-\RHom_p(\CF, \CF'))$.

We define also the operators \[\partial_n\colon \BD\otimes \BD\to \BD\otimes \BD\]
by the formula
\[\partial_n=\bR_n\otimes \id+\sum_{k=-1}^n\binom{n+1}{k+1}\id\otimes \bR_k\,.\]
Note that both the class $C$ and the operators $\partial_n$ make sense in $\BD_\alpha\otimes \BD_{\alpha'}$ as well. 

\begin{thm}\label{thm: quadraticidentity}
    For $n\geq 0$, we have
    \[\partial_n(C)=\left(\sum_{0\leq a+b\leq n}\frac{a!(n-a)!}{(n-a-b)!}\sum_i(-1)^{d_i^L+1}\ch_a(\gamma_i^L)\otimes \ch_b(\gamma_i^R)\right)C\]
    in (the completion of) $\BD_\alpha\otimes \BD_{\alpha'}$.
\end{thm}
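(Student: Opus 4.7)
The plan is to exploit that $\partial_n$ is a derivation on $\BD \otimes \BD$, which reduces the identity for $C = \exp(E)$ to an identity for $E$, and then to verify the latter by a direct coefficient computation combined with a single Vandermonde-type identity.

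First, I note that each $\bR_k \otimes \id$ and $\id \otimes \bR_k$ is a derivation on $\BD \otimes \BD$, since $\bR_k$ is a derivation on $\BD$. Therefore $\partial_n$, being a $\BQ$-linear combination of such operators, is itself a derivation. Since $C = \exp(E)$ where
\[
E = \sum_{(a,b) \neq (0,0)}\sum_i (-1)^{b+d_i^L}(a+b-1)!\,\ch_a(\gamma_i^L) \otimes \ch_b(\gamma_i^R),
\]
we obtain $\partial_n(C) = \partial_n(E) \cdot C$, so the theorem reduces to the identity
\[
\partial_n(E) = \sum_{0 \leq a+b \leq n} \frac{a!(n-a)!}{(n-a-b)!} \sum_i (-1)^{d_i^L + 1}\,\ch_a(\gamma_i^L) \otimes \ch_b(\gamma_i^R).
\]

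Since $E$ splits over Künneth indices, I would fix one such $i$ and compute the coefficient of $\ch_a(\gamma_i^L) \otimes \ch_b(\gamma_i^R)$ in $\partial_n(E)$ for each $(a,b)$ with $a,b \geq 0$. Using $\bR_k(\ch_i(\gamma)) = \frac{(i+k)!}{(i-1)!}\,\ch_{i+k}(\gamma)$ (which vanishes at $i=0$ when $k\geq 0$), the contribution from $\bR_n \otimes \id$ is
\[
[a \geq n+1]\,(-1)^{b+d_i^L}(a+b-n-1)!\,\frac{a!}{(a-n-1)!},
\]
while the contribution from $\sum_{k=-1}^n \binom{n+1}{k+1}(\id \otimes \bR_k)$, summed over $k \in \{-1, 0, \ldots, \min(n, b-1)\}$, is
\[
\sum_{k} \binom{n+1}{k+1}\,(-1)^{b-k+d_i^L}(a+b-k-1)!\,\frac{b!}{(b-k-1)!}.
\]

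The combinatorial heart of the proof is simplifying this second sum. After substituting $j = b-k-1$ and noting that $\binom{n+1}{b-j}$ vanishes for $b-j > n+1$, the sum rewrites (factoring out $(-1)^{d_i^L}$) as
\[
-a!\,b!\sum_{j=0}^{b}\binom{n+1}{b-j}(-1)^j\binom{a+j}{a} \;=\; -a!\,b!\,\binom{n-a}{b},
\]
where the last equality is the coefficient of $x^b$ in the identity $(1+x)^{n+1}(1+x)^{-(a+1)} = (1+x)^{n-a}$ of formal power series. The theorem then follows from a short case analysis: when $0 \leq a+b \leq n$ only the second contribution is present and equals $-\frac{a!(n-a)!}{(n-a-b)!}$, matching the target; when $a \leq n$ but $a+b > n$ the binomial $\binom{n-a}{b}$ vanishes; and when $a \geq n+1$ one uses the reflection $\binom{n-a}{b} = (-1)^b\binom{a-n+b-1}{b}$ to see that the first and second contributions cancel exactly. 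The main obstacle is organizing the case analysis and signs cleanly; conceptually, however, the whole identity collapses to the single Vandermonde-type convolution above together with the derivation property of $\partial_n$.
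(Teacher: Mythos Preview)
Your proof is correct and follows essentially the same route as the paper: both use that $\partial_n$ is a derivation to reduce to computing $\partial_n(E)$, split the contributions from $\bR_n\otimes\id$ and the $\id\otimes\bR_k$ sum, and collapse the latter via a Vandermonde--Chu type convolution proved by a generating function identity. The only cosmetic differences are that the paper phrases the key identity in terms of falling factorials (its Lemma~A.3) and organizes the case analysis as $a+b>n$ versus $a+b\leq n$, whereas you package the $\id\otimes\bR_k$ contribution uniformly as $(-1)^{d_i^L+1}a!\,b!\binom{n-a}{b}$ and then read off the three cases from vanishing and reflection of this single binomial.
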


Note that for $n=0$, the formula reads (see for example \eqref{eq: Thm A a=b=0} below)
\[
(\bR_0 \otimes \id + \id \otimes \bR_0 +\id \otimes \bR_{-1})( C) = -\chi(\alpha, \alpha') C.
\]
We illustrate this special case first. Applying the derivation $\bR_0\otimes \id+\id\otimes \bR_0$ to the definition of $C$ yields
\begin{equation}
    \label{eq: newtonidentity}
\sum_{j\geq 1}jC_j=\left(\sum_{\substack{a,b\geq 0\\(a,b)\neq (0,0)}}\sum_i (-1)^{b+d_i^L}(a+b)!\ch_a(\gamma_i^L)\otimes \ch_b(\gamma_i^R)\right) C\,,\end{equation}
which is just Newton's identity for $-\RHom_p(\CF, \CF')$. The $n=0$ case of Theorem \ref{thm: quadraticidentity} then follows from the following lemma.

\begin{lem}\label{lem: R-1quadraticidentity}
    We have
    \[(\id\otimes \bR_{-1})(C_j) = - (\bR_{-1}\otimes \id)(C_j)=-(j-1+\chi(\alpha, \alpha'))C_{j-1}\,\]
    in $\BD_{\alpha}\otimes \BD_{\alpha'}$.
\end{lem}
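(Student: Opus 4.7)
The plan is to apply the derivation $\id \otimes \bR_{-1}$ directly to the exponential expression $C = \exp(E)$, where $E$ is the exponent appearing in the definition of $C$, and then compare the result with Newton's identity \eqref{eq: newtonidentity}. The key observation is that the ``anomaly'' produced by reindexing the sum will contribute precisely the scalar $\chi(\alpha, \alpha')$, once one uses that $\ch_0(\gamma)$ becomes the scalar $\int_X \ch(\alpha)\cdot \gamma$ in $\BD_\alpha$.

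First, since $\bR_{-1}$ is a derivation with $\bR_{-1}\ch_i(\gamma) = \ch_{i-1}(\gamma)$ (and $\ch_{-1} := 0$), the $b = 0$ summands in $(\id \otimes \bR_{-1})(E)$ vanish, and reindexing the remaining ones via $b \mapsto b + 1$ gives
\[(\id\otimes \bR_{-1})(E) = -\sum_{a, b \geq 0}\sum_i (-1)^{b+d_i^L}(a+b)!\,\ch_a(\gamma_i^L)\otimes \ch_b(\gamma_i^R),\]
where the sum now runs over all $a, b \geq 0$, including $(a,b) = (0,0)$. Under the standing assumption that $\dim X$ is even, the new $(0,0)$ term is a scalar equal to $\chi(\alpha, \alpha')$: the integration $\ch_0(\gamma) = \int_X \ch(\alpha)\cdot \gamma$ selects the component of $\ch(\alpha)$ in complementary degree to $\gamma$, so the sign $(-1)^{d_i^L}$ turns $\ch(\alpha)$ into $\ch^\vee(\alpha)$ in that degree, and the Künneth identity $\sum_i \gamma_i^L \otimes \gamma_i^R = \Delta_\ast \td(X)$ collapses the sum to
\[\sum_i (-1)^{d_i^L}\ch_0(\gamma_i^L)\,\ch_0(\gamma_i^R) = \int_X \ch^\vee(\alpha)\cdot \ch(\alpha')\cdot \td(X) = \chi(\alpha,\alpha').\]
The remaining $(a,b) \neq (0,0)$ part is exactly the expression that multiplies $C$ on the right-hand side of \eqref{eq: newtonidentity}, so multiplication by $C$ converts it into $(\bR_0\otimes \id + \id \otimes \bR_0)(C)$.

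Putting these pieces together yields
\[(\id\otimes \bR_{-1})(C) = -\chi(\alpha,\alpha')\,C - (\bR_0\otimes \id + \id \otimes \bR_0)(C).\]
Extracting the degree-$2(j-1)$ component and using $(\bR_0\otimes \id + \id\otimes \bR_0)(C_{j-1}) = (j-1)\,C_{j-1}$ produces $(\id\otimes \bR_{-1})(C_j) = -(j-1+\chi(\alpha,\alpha'))\,C_{j-1}$, as required.

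The argument for $\bR_{-1}\otimes \id$ is entirely symmetric: the analogous reindexing $a \mapsto a+1$ (now without the sign flip from $(-1)^{b+1}$) produces the opposite sign on the $(0,0)$ anomaly, giving
\[(\bR_{-1}\otimes \id)(C) = \chi(\alpha,\alpha')\,C + (\bR_0\otimes \id + \id \otimes \bR_0)(C),\]
and hence $(\bR_{-1}\otimes \id)(C_j) = (j-1+\chi(\alpha,\alpha'))\,C_{j-1} = -(\id \otimes \bR_{-1})(C_j)$, as desired. The only step requiring genuine care is the sign bookkeeping that identifies the $(0,0)$ anomaly with the Euler form $\chi(\alpha, \alpha')$; everything else is mechanical manipulation of exponentials together with a single application of the already-established Newton identity \eqref{eq: newtonidentity}.
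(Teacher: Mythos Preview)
Your proof is correct and follows essentially the same route as the paper's: apply the derivation to the exponential, reindex the sum to expose the $(a,b)=(0,0)$ ``anomaly'' term, identify it with $\chi(\alpha,\alpha')$ via Hirzebruch--Riemann--Roch and the diagonal trick, and recognize the remaining $(a,b)\neq(0,0)$ part via Newton's identity \eqref{eq: newtonidentity}. The only cosmetic difference is that for the $\bR_{-1}\otimes\id$ case the paper also mentions a conceptual shortcut (invariance of $-\RHom_p(\CF,\CF')$ under simultaneous line bundle twists implies $(\bR_{-1}\otimes\id+\id\otimes\bR_{-1})(C)=0$), whereas you carry out the symmetric reindexing directly.
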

\begin{proof}
We have
\begin{align*}(\id\otimes \bR_{-1})(C)&=\left(\sum_{\substack{a,b\geq 0\\(a,b)\neq (0,0)}}\sum_i (-1)^{b+d_i^L}(a+b-1)!\ch_a(\gamma_i^L)\otimes \ch_{b-1}(\gamma_i^R)\right) C\\
&=\left(\sum_{\substack{a,b\geq 0}}\sum_i (-1)^{b+1+d_i^L}(a+b)!\ch_a(\gamma_i^L)\otimes \ch_{b}(\gamma_i^R)\right) C\end{align*}
where in the second line we replaced $b-1$ by $b$. Now the term in the sum with $a=b=0$ is
\begin{equation}
\label{eq: Thm A a=b=0}
-\sum_i (-1)^{d_i^L}\ch_0(\gamma_i^L)\otimes \ch_{0}(\gamma_i^R)=-\sum_i(-1)^{d_i^L}\int_{X}\ch(\alpha)\cdot \gamma_i^L\int_{X}\ch(\alpha')\cdot \gamma_i^R=-\chi(\alpha, \alpha')\,.
\end{equation}
The terms with $(a,b)\neq (0,0)$ are, up to a minus sign, the ones in \eqref{eq: newtonidentity}, so we conclude the proof of the lemma for the $\id\otimes \bR_{-1}$. The statement for $\bR_{-1}\otimes \id$ is similar; it can also be deduced from the fact that $-\RHom_p(\CF, \CF')$ is unaffected when we twist both $\CF$ and $\CF'$ by a line bundle, which formally implies that its Chern classes are annihilated by $\bR_{-1}\otimes \id+\id\otimes \bR_{-1}$.
\end{proof}

\begin{proof}[Proof of Theorem \ref{thm: quadraticidentity}]
Observe that $\partial_n$ is a derivation. When we apply $\partial_n$ to $C$, we obtain a sum of the form
\[\left(\sum_{\substack{a,b\geq 0}}\sum_i (-1)^{b+d_i^L}K_{a,b}\ch_a(\gamma_i^L)\otimes \ch_{b}(\gamma_i^R)\right) C\]
where $K_{a,b}$ are some constants. These constants are a sum of contributions of each summand of $\partial_n$:
\begin{enumerate}[label=(\roman*)]
    \item When $\bR_{n}\otimes \id$ hits $(-1)^{b+d_i^L}(a+b-n-1)!\ch_{a-n}(\gamma_i^L)\otimes\ch_b(\gamma_i^R)$ we get
    \[(-1)^{b+d_i^L}(a+b-n-1)!(a)_{n+1}\ch_a(\gamma_i^L)\otimes \ch_b(\gamma_i^R)\]
    where $(a)_{m}$ denotes the falling factorial
    \[(a)_{m}=a(a-1)\ldots(a-m+1)\,.\]
    Thus, we get a contribution of $(a+b-n-1)!(a)_{n+1}$ to the constant $K_{a,b}$ coming from this summand. Note that if $a< n$ then $(a)_n=0$, so the formula for the contribution makes sense as long as $a+b\geq n+1$. If $a+b\leq n$ then the contribution of this term is also set to 0.
    \item When $\id\otimes \bR_{k}$ hits $(-1)^{b-k+d_i^L}(a+b-k-1)!\ch_{a}(\gamma_i^L)\otimes\ch_{b-k}(\gamma_i^R)$ we get
    \[(-1)^{b-k+d_i^L}(a+b-k-1)!(b)_{k+1}\ch_a(\gamma_i^L)\otimes \ch_b(\gamma_i^R)\,.\]
    Hence, the summand $\binom{n+1}{k+1}\id\otimes \bR_{k}$ contributes to $K_{a,b}$ with
    \[
(-1)^k\binom{n+1}{k+1}(a+b-k-1)!(b)_{k+1}\,.\]
As before, this formula holds as long as $a+b\geq k+1$.
\end{enumerate}
Now for $(a,b)$ with $a+b>n$, we have 
\[K_{a,b}=(a+b-n-1)!(a)_{n+1}+\sum_{k=-1}^{n} (-1)^k\binom{n+1}{k+1}(a+b-k-1)!(b)_{k+1}=0\,,\]
where the last equality results in an application of Lemma \ref{lem: binomialidentity} with $m=n+1,\, \ell=k+1$, which we will prove next.

\smallskip

When $a+b\leq n$, we have
\begin{align*}K_{a,b}&=\sum_{k=-1}^{a+b-1} (-1)^k\binom{n+1}{k+1}(a+b-k-1)!(b)_{k+1}\\
&=a!b!\sum_{k=-1}^{b-1} (-1)^k\binom{n+1}{k+1}\binom{a+b-k-1}{a}=-a!b!\binom{a+b-n-1}{b}
\\
&=(-1)^{b-1}\frac{a!(n-a)!}{(n-a-b)!}
\end{align*}
where the penultimate identity is \eqref{eq: binomialidentity}. This finishes the proof.
\qedhere
\end{proof}

\begin{lem}\label{lem: binomialidentity}
Let $m\in \BZ_{\geq 0}$ and $a,b\in \BC$. Then the following identity holds: 
\[(a)_m=\sum_{\ell=0}^m (-1)^\ell \binom{m}{\ell}(a+b-\ell)_{m-\ell}(b)_\ell\,.\]
\end{lem}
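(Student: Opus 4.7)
The plan is to prove the identity via exponential generating functions in $m$, treating $a$ and $b$ as formal parameters. Since both sides of the claimed identity are polynomials in $a$ and $b$ for each fixed $m$, it suffices to prove the identity as a formal identity in $\mathbb{Q}[a,b]$, and the generating function calculation is in fact valid at the level of the formal power series ring $\mathbb{Q}[a,b]\llbracket x\rrbracket$.

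First, I would recall the formal binomial identity
\[
\sum_{m\geq 0}(a)_m\,\frac{x^m}{m!}=(1+x)^a,
\]
which encodes the standard relation $\binom{a}{m}=(a)_m/m!$. Thus the exponential generating function of the left-hand side is simply $(1+x)^a$. The bulk of the work is therefore to identify the EGF of the right-hand side with the same series.

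For this, I would multiply the right-hand side by $x^m/m!$, sum over $m\geq 0$, and swap the order of summation to put the $\ell$-sum outside:
\[
\sum_{\ell\geq 0}\frac{(-x)^\ell (b)_\ell}{\ell!}\sum_{m\geq \ell}\frac{x^{m-\ell}}{(m-\ell)!}\,(a+b-\ell)_{m-\ell}.
\]
The inner sum is recognized, by the same formal binomial identity applied to the parameter $a+b-\ell$, as $(1+x)^{a+b-\ell}$. Factoring out $(1+x)^{a+b}$ leaves
\[
(1+x)^{a+b}\sum_{\ell\geq 0}\frac{(b)_\ell}{\ell!}\Bigl(\frac{-x}{1+x}\Bigr)^\ell=(1+x)^{a+b}\Bigl(1-\frac{x}{1+x}\Bigr)^b=(1+x)^{a+b}(1+x)^{-b}=(1+x)^a,
\]
where in the middle step I use once more the binomial series, this time with parameter $b$ and evaluated at $-x/(1+x)$. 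Equating coefficients of $x^m/m!$ on the two sides yields the desired identity.

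The only step requiring care is the interchange of the two sums and the convergence (in the formal sense) of the substitution $x\mapsto -x/(1+x)$; this is routine because the substituted variable has zero constant term, so the formal binomial series is well-defined in $\mathbb{Q}[a,b]\llbracket x\rrbracket$. No genuine obstacle arises.
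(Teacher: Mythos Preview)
Your argument is correct. Both proofs are short generating-function arguments, but they proceed along genuinely different lines. The paper first specializes to integers $a,b>m$ by Zariski density, rewrites the identity as
\[
\binom{a+b-m}{b}=\sum_{\ell=0}^m(-1)^\ell\binom{m}{\ell}\binom{a+b-\ell}{a},
\]
and then reads this off as the $z^{a+b}$ coefficient of the trivial equality $\dfrac{z^a}{(1-z)^{a+1-m}}=(1-z)^m\dfrac{z^a}{(1-z)^{a+1}}$. You instead treat $m$ as the running index, form the exponential generating function in $\mathbb{Q}[a,b]\llbracket x\rrbracket$, and reduce everything to the formal binomial series $(1+x)^c$ together with its multiplicativity and the substitution $y=-x/(1+x)$. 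Your approach avoids the specialization step and works uniformly in $a,b$; the paper's approach is a bit more concrete and stays with ordinary binomial coefficients throughout. Both are equally short, and neither has any real advantage over the other here.
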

\begin{proof}
    This is a polynomial identity in $a$ and $b$, so it is enough to prove it for $a,b\in \BZ$ with $a,b>m$. In this case, we may replace the falling factorials by a quotient of factorials, and the identity we want can be rewritten as
    \begin{equation}
        \label{eq: binomialidentity}
   \binom{a+b-m}{b}=\sum_{\ell=0}^m(-1)^\ell\binom{m}{\ell}\binom{a+b-\ell}{a}\,. \end{equation}
    This follows by taking the $z^{a+b}$ coefficient in the identity of generating series
    \[\frac{z^a}{(1-z)^{a+1-m}}=(1-z)^m\frac{z^a}{(1-z)^{a+1}}.\qedhere
    \] 
\end{proof}


\end{document}